\DeclareFontFamily{OML}{rsfs}{\skewchar\font'177}
\DeclareFontShape{OML}{rsfs}{m}{n}{ <5> <6> rsfs5 <7> <8> <9> rsfs7
  <10> <10.95> <12> <14.4> <17.28> <20.74> <24.88> rsfs10 }{}
\DeclareMathAlphabet{\mathfs}{OML}{rsfs}{m}{n}
\newtheorem{theorem}{Theorem}
\newtheorem*{maintheorem}{Main Theorem}
\newtheorem{lemma}[theorem]{Lemma}
\newtheorem{proposition}[theorem]{Proposition}
\newtheorem{corollary}[theorem]{Corollary}
\theoremstyle{definition}
\theoremstyle{remark}
\newtheorem{remark}[theorem]{\bf Remark}
\numberwithin{equation}{section}
\numberwithin{theorem}{section}
\newcommand{\intav}[1]{\mathchoice {\mathop{\vrule width 6pt height 3 pt depth  -2.5pt
\kern -8pt \intop}\nolimits_{\kern -6pt#1}} {\mathop{\vrule width
5pt height 3  pt depth -2.6pt \kern -6pt \intop}\nolimits_{#1}}
{\mathop{\vrule width 5pt height 3 pt depth -2.6pt \kern -6pt
\intop}\nolimits_{#1}} {\mathop{\vrule width 5pt height 3 pt depth
-2.6pt \kern -6pt \intop}\nolimits_{#1}}}
\newcommand{\intavl}[1]{\mathchoice {\mathop{\vrule width 6pt height 3 pt depth  -2.5pt
\kern -8pt \intop}\limits_{\kern -6pt#1}} {\mathop{\vrule width 5pt
height 3  pt depth -2.6pt \kern -6pt \intop}\nolimits_{#1}}
{\mathop{\vrule width 5pt height 3 pt depth -2.6pt \kern -6pt
\intop}\nolimits_{#1}} {\mathop{\vrule width 5pt height 3 pt depth
-2.6pt \kern -6pt \intop}\nolimits_{#1}}}
\newcommand{\un}{\underline}
\newcommand{\ve}{\varepsilon}
\newcommand{\wt}{\widetilde}
\newcommand{\wh}{\widehat}
\newcommand{\vf}{\varphi}
\newcommand{\R}{\mathbb{R}}
\newcommand{\N}{\mathbb{N}}
\newcommand{\Z}{\mathbb{Z}}
\newcommand{\Hol}[1]{{\textrm{H\"ol}}_{#1}}
\newcommand{\Tau}{\mathcal{T}} 
\newcommand{\kH}{\mathfrak{H}}
\newcommand{\kr}{\mathfrak{r}}
\renewcommand{\exp}[1]{{\rm exp}_{#1}}
\newcommand{\Lip}{{\rm Lip}}
\newcommand{\Sas}{d_{\rm Sas}}
\newcommand{\inj}{{\rm inj}}
\newcommand{\nuh}{{\rm NUH}}
\newcommand{\simN}{\stackrel{N}\sim}
\newcommand{\Sh}{\wh\Lambda}
\newcommand{\vertiii}[1]{{\left\vert\kern-0.2ex\left\vert\kern-0.2ex\left\vert #1 
    \right\vert\kern-0.2ex\right\vert\kern-0.2ex\right\vert}}
\newsavebox{\@brx}
\newcommand{\llangle}[1][]{\savebox{\@brx}{\(\m@th{#1\langle}\)}%
  \mathopen{\copy\@brx\kern-0.5\wd\@brx\usebox{\@brx}}}
\newcommand{\rrangle}[1][]{\savebox{\@brx}{\(\m@th{#1\rangle}\)}%
  \mathclose{\copy\@brx\kern-0.5\wd\@brx\usebox{\@brx}}}
\title[Symbolic dynamics of flows in high dimension]{Symbolic dynamics for non-uniformly\\ hyperbolic flows in high dimension}
\author[Y. Lima]{Yuri Lima}
\address{Instituto de Matemática e Estatística, Universidade de São Paulo, Rua do Matão, 1010, Cidade Universitária, 05508-090. São Paulo -- SP, Brazil}
\email{yurilima@gmail.com}
\author[J. Mongez]{Juan Carlos Mongez}
\address{Instituto de Matemática e Estatística, Universidade de São Paulo, Rua do Matão, 1010, Cidade Universitária, 05508-090. São Paulo -- SP, Brazil}
\email{mongez@usp.br}
\author[J. Nascimento]{João Paulo Nascimento }
\address{Departamento de Matem\'atica, Centro de Ci\^encias, Campus do Pici,
Universidade Federal do Cear\'a (UFC), Fortaleza -- CE, CEP 60455-760, Brasil}
\email{jpaulosousan@gmail.com}
\date{\today}
\keywords{}
\thanks{YL was supported by 
CNPq/MCTI/FNDCT project 406750/2021-1,
FUNCAP grant UNI-0210-00288.01.00/23,
and Instituto Serrapilheira, grant
``Jangada Din\^{a}mica: Impulsionando Sistemas Din\^{a}micos na Regi\~{a}o Nordeste''. JCM was partially supported by FAPESP-Bolsa No Pa\'is P\'os-Doutorado No. 2024/22609-1 and CNPq-Brazil-Bolsa de Pós-doutorado Júnior No. 175065/2023-3. JPN was supported by CAPES}
\begin{document}

\begin{abstract}
We construct symbolic dynamics for flows with positive speed in any dimension: for each $\chi>0$, we code a set that has full measure for every invariant probability measure which is $\chi$--hyperbolic. In particular, the coded set contains all hyperbolic periodic orbits with Lyapunov exponents outside of $[-\chi,\chi]$.
This extends the recent work of Buzzi, Crovisier, and Lima for three dimensional flows with positive speed \cite{BCL23}. As an application, we code homoclinic classes of measures by suspensions of irreducible countable Markov shifts, and prove that each such class has at most one probability measure that maximizes the entropy.
\end{abstract}

\maketitle

\tableofcontents

\section{Introduction}

Since the work of Sarig \cite{Sarig-JAMS} and the recent developments/applications of Markov partitions for non-uniformly hyperbolic systems, it has become clear that Markov partitions that code uncountably many invariant measures provide more information than Markov partitions for a single (or countably many) measure. The Markov partition for surface diffeomorphisms constructed by Sarig indeed has this desired property, as it codes all recurrent\footnote{Recurrence is defined in terms of a list of non-uniform hyperbolicity parameters.} points with some non-uniform hyperbolicity greater than a fixed threshold $\chi>0$. This was later extended to diffeomorphisms in any dimension by Ben Ovadia \cite{O18}. For flows with positive speed, the first construction of Markov partitions by Lima and Sarig only coded, for three dimensional flows, countably many ergodic hyperbolic measures at the same time \cite{Lima-Sarig} (the original statement only codes one measure at a time, but the arguments easily apply to code countably many). Although this is satisfactory to obtain many statistical consequences, there are applications that require a Markov partition coding more (uncountably many) measures. It took almost ten years until an improved result for flows was obtained, by Buzzi, Crovisier and Lima \cite{BCL23}. They constructed, as in the case of diffeomorphisms, a Markov partition that codes all recurrent points with some non-uniform hyperbolicity greater than a fixed threshold $\chi>0$. As communicated by Dinowitz \cite{Dinowitz}, the result in \cite{BCL23} is essential to make multifractal analysis for flows. 

The present paper goes in the same direction of \cite{BCL23} and constructs, for flows with positive speed {\em in any dimension}, a Markov partition that codes all recurrent points with some non-uniform hyperbolicity greater than a fixed threshold $\chi>0$. Let us be more precise.
Let $M$ be a smooth closed finite dimensional manifold
and $X$ be a $C^{1+\beta}$ vector field on $M$ with $\beta>0$
which is non-singular, i.e. $X_p\neq 0$ for all $p\in M$, and let $\vf=\{\vf^t\}_{t\in\R}$ be the flow generated by $X$.
We describe the coded set in terms of $\vf$--invariant probability measures, as follows.  
Let $\chi>0$.

\medskip
\noindent
{\sc $\chi$--hyperbolic measure:} A $\vf$--invariant probability measure $\mu$ on $M$
is {\em $\chi$--hyperbolic} if $\mu$--a.e. point has all Lyapunov exponents in directions not parallel to $X$ outside of the interval $[-\chi,\chi]$.

\begin{maintheorem}\label{maintheorem}
Let $X$ be a non-singular $C^{1+\beta}$ vector field ($\beta>0$) on a closed manifold $M$, as above. For each $\chi>0$, there exists a locally compact topological Markov flow
$(\Sigma_r,\sigma_r)$ and a map $\pi_r:\Sigma_r\to M$ such that
$\pi_r\circ \sigma_r^t=\vf^t\circ\pi_r$ for all $t\in\R$, and satisfying:
\begin{enumerate}[{\rm (1)}]
\item The roof function $r$ and the projection $\pi_r$ are H\"older continuous.
\item $\pi_r[\Sigma_r^\#]$ has full measure for every $\chi$--hyperbolic measure on $M$.
\item $\pi_r$ is finite-to-one on $\Sigma_r^\#$, i.e. $\operatorname{Card}(\{z\in \Sigma_r^\#:\pi_r(z)=x\})<\infty$, for all $x\in \pi_r[\Sigma_r^\#]$.
\end{enumerate}
\end{maintheorem}

\noindent
A more precise version of the Main Theorem is stated in 
Theorem~\ref{t.main}.
\smallskip

A topological Markov flow is the unit speed vertical flow on a suspension space whose
basis is a topological Markov shift and whose roof function is continuous, everywhere positive and uniformly bounded.
We can endow $(\Sigma_r,\sigma_r)$
with a natural metric, called the {\em Bowen-Walters metric}, that makes $\sigma_r$ a continuous flow.
It is for this metric that $\pi_r$ is H\"older continuous.
The set $\Sigma_r^\#$ is the {\em regular}  set of $(\Sigma_r,\sigma_r)$, consisting of all elements of
$\Sigma_r$ for which the symbolic coordinate has a symbol repeating infinitely often in the future
and a symbol repeating infinitely often in the past. See Section \ref{Section-Preliminaries} for the definitions.

The Main Theorem provides a {\em single} symbolic extension that codes all
$\chi$--hyperbolic measures at the same time, and that is finite-to-one almost everywhere.
This improves on the result for flows by
Araujo, Lima and Poletti \cite{ALP} and by Lima and Poletti \cite{Lima-Poletti},
whose codings depend on the choice of a measure (or countably many measures), and extends to any dimension the work of Buzzi, Crovisier and Lima \cite{BCL23}.

In applications, it is useful to work with {\em irreducible} Markov shifts
since, among other properties, they are topologically transitive and 
carry at most one equilibrium state for each H\"older continuous potential
(see Section~\ref{subsection-symbolic}).
This is related to the notion of homoclinically related measures and of
\emph{homoclinic classes of measures}, defined in Section~\ref{sec.homoclinic}.
In this context, we prove the following theorem.

\begin{theorem}\label{thm.homoclinic}
In the setting of the Main Theorem,
let $\mu$ be an ergodic hyperbolic measure.
Then $\Sigma_{r}$ contains an irreducible component
${\Sigma'_{r}}$ which lifts all ergodic $\chi$--hyperbolic measures $\nu$ that are homoclinically related to $\mu$.
\end{theorem}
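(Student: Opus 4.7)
The plan is to combine the lifting provided by the Main Theorem with the now-standard principle (\cite{Sarig-JAMS,O18,BCL23}) that irreducible components of the symbolic coding are in natural bijection with homoclinic classes of ergodic hyperbolic measures.

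First I would lift $\mu$ to the symbolic model. The base shift $(\Sigma,\sigma)$ underlying $(\Sigma_r,\sigma_r)$ decomposes into at most countably many irreducible components, whose suspensions partition $\Sigma_r$ up to a negligible boundary. By Main Theorem~(2), $\mu$ is carried by $\pi_r[\Sigma_r^\#]$, and since $\pi_r$ is finite-to-one on $\Sigma_r^\#$ by~(3), the standard measure lifting procedure produces a $\sigma_r$-invariant probability $\hat\mu$ on $\Sigma_r^\#$ with $(\pi_r)_*\hat\mu=\mu$. By ergodicity of $\mu$, I may take $\hat\mu$ itself ergodic, hence supported on the suspension of a single irreducible component, which I denote $\Sigma'_r$.

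Given an ergodic $\chi$--hyperbolic $\nu$ homoclinically related to $\mu$, the same procedure yields an ergodic lift $\hat\nu$ supported on an irreducible component $\Sigma''_r$. It then suffices to show $\Sigma''_r=\Sigma'_r$, since this already exhibits $\hat\nu$ as the required lift of $\nu$ to $\Sigma'_r$. To establish this equality I would translate the homoclinic relation between $\mu$ and $\nu$ into the existence of edge paths, in both directions, between vertices of $\Sigma'$ and vertices of $\Sigma''$. By the definition of homoclinic classes of measures in Section~\ref{sec.homoclinic}, there exist $\hat\mu$-typical $p$ and $\hat\nu$-typical $q$ whose Pesin unstable manifold of $\mathrm{orbit}(q)$ meets the Pesin stable manifold of $\mathrm{orbit}(p)$ transversally modulo the flow direction at some point $y$, and symmetrically at some point $y'$. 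Lifting $y$ to $\hat y\in\Sigma_r^\#$, the shadowing built into $\pi_r$ forces the forward symbols of $\hat y$ to eventually coincide with those of a lift of the forward orbit of $p$ (vertices of $\Sigma'$) and its backward symbols to eventually coincide with those of a lift of the backward orbit of $q$ (vertices of $\Sigma''$). A single two-sided admissible sequence thus exhibits an edge path from $\Sigma''$ to $\Sigma'$; lifting $y'$ similarly provides a path in the reverse direction, so $\Sigma''=\Sigma'$.

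The hard part will be the last lifting assertion: given a transverse homoclinic intersection $y$ on $M$, producing a preimage $\hat y\in\Sigma_r^\#$ whose forward (resp.\ backward) symbolic tail coincides with that of a lift of the orbit of $p$ (resp.\ $q$). This relies on the inverse theorem and local product structure underlying the construction of $\pi_r$, combined with the finite-to-one property of Main Theorem~(3); these are exactly the higher-dimensional refinements of the BCL23 strategy that the earlier parts of the paper provide. Once this lifting is in hand, the combinatorial conclusion (paths in both directions force equal irreducible components) is immediate.
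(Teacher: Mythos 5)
Your overall strategy has a genuine gap at its central step, and it also starts from a misreading of the hypotheses. First, $\mu$ is only assumed \emph{hyperbolic}, not $\chi$--hyperbolic, so Main Theorem~(2) does not apply to it: $\mu$ need not be carried by $\pi_r[\Sigma_r^\#]$ at all, and the component $\Sigma'_r$ cannot be defined as the support of an ergodic lift of $\mu$. (The paper instead defines $\Sigma'_r$ as the irreducible component that lifts all $\chi$--hyperbolic \emph{periodic orbits} homoclinically related to $\mu$.) Second, and more seriously, the key assertion you defer to the end --- that a lift $\hat y\in\Sigma_r^\#$ of a transverse homoclinic intersection point $y$ can be chosen so that its forward symbolic tail \emph{coincides} with the tail of the specific lift of the orbit of $p$ living in $\Sigma'$, and its backward tail with that of the specific lift of the orbit of $q$ living in $\Sigma''$ --- is not something the construction delivers, and it is exactly the point where the argument fails. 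The coding is finite-to-one but far from injective: two flow orbits that are forward-asymptotic do not have eventually equal itineraries; at best their symbols are \emph{affiliated} (Theorem~\ref{t.main}(6)), and for flows there is the additional parsing problem (the return times of $y$ and of $p$ to the section are parsed differently, so the two symbolic sequences are not even naturally aligned). Without exact tail agreement you produce no admissible edge path between $\Sigma''$ and $\Sigma'$, and the combinatorial conclusion evaporates. Note also that your reduction ``it suffices to show $\Sigma''_r=\Sigma'_r$'' aims at a statement stronger than the theorem: a measure may have ergodic lifts in several irreducible components, and the theorem only asserts that \emph{some} lift of $\nu$ lands in $\Sigma'_r$, not that an arbitrarily chosen lift does.

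The paper's route avoids both problems. It works with periodic orbits, for which the exact-tail issue disappears (a hyperbolic periodic orbit admits a \emph{periodic} symbolic lift, and a heteroclinic connection between two periodic orbits can be coded by a gpo whose two tails are those periodic codings); this yields a single component $\widehat\Sigma'_{\widehat r}$ lifting all $\chi$--hyperbolic periodic orbits homoclinically related to $\mu$. For a general $\nu$, one takes a recurrent $\overline\nu$--generic point $q$, approximates it by periodic points $q^i$ in its own component, observes via Lemmas~\ref{C7} and~\ref{C6} that their projections are $\chi$--hyperbolic and homoclinically related to $\mu$, hence admit periodic lifts $p^i$ in $\widehat\Sigma'_{\widehat r}$ with the same projections, and then uses the Bowen property together with the local finiteness of the affiliation relation to extract a limit $p\in\widehat{\Sigma'}^\#_{\widehat r}$ projecting to $\widehat\pi_{\widehat r}(q)$. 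The affiliation machinery is precisely the substitute for the exact symbolic agreement your argument needs but cannot obtain; if you want to salvage your approach, you would have to route it through periodic approximations and the Bowen relation in the same way.
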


This implies the following result for equilibrium states. Call a continuous potential $\psi:M\to\R$ {\em admissible} if $\psi\circ\pi_r:\Sigma_r\to\R$ is H\"older continuous, for every $\pi_r$ satisfying the Main Theorem. Clearly, every H\"older continuous $\psi$ is admissible.

\begin{corollary}\label{cor.local-uniq}
In the setting of the Main Theorem,
let $\mu$ be an ergodic hyperbolic measure, and let $\psi:M\to \mathbb{R}$ be an admissible potential.
Then there is at most one hyperbolic measure $\nu$ which is homoclinically related to $\mu$ and satisfies 
$$
h(\vf,\nu)+\int \psi  d\nu=\sup\left\{h(\vf,\eta) +\int \psi  d\eta:\eta \text{ \em hyperbolic and homoclinically related to $\mu$}\right\}.
$$
\end{corollary}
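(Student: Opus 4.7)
The plan is to lift the restricted variational problem from $M$ to the irreducible suspension $\Sigma'_r$ provided by Theorem~\ref{thm.homoclinic}, and conclude by invoking the uniqueness of equilibrium states for H\"older potentials on suspensions of irreducible countable Markov shifts. Given two ergodic hyperbolic measures $\nu_1$ and $\nu_2$ homoclinically related to $\mu$ and both realizing the supremum $S$ in the statement, I would show they necessarily coincide.

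First, I would invoke Theorem~\ref{thm.homoclinic} to produce ergodic $\sigma_r$--invariant lifts $\tilde\nu_i$ on $\Sigma'_r$ with $(\pi_r)_\ast\tilde\nu_i=\nu_i$. Item~(3) of the Main Theorem says that $\pi_r$ is finite-to-one on $\Sigma_r^\#$, and since $\tilde\nu_i$ is carried by $\Sigma_r^\#$, entropies are preserved by the factor map, so $h(\sigma_r,\tilde\nu_i)=h(\vf,\nu_i)$; the identity $\int(\psi\circ\pi_r)\,d\tilde\nu_i=\int\psi\,d\nu_i$ is immediate. Conversely, every ergodic $\sigma_r$--invariant probability on $\Sigma'_r$ projects under $\pi_r$ to an ergodic hyperbolic measure homoclinically related to $\mu$, since $\Sigma'_r$ is by construction an irreducible component attached to the homoclinic class of $\mu$. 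Together these facts match the two variational problems, so $S$ equals the topological pressure of $\psi\circ\pi_r$ on $(\Sigma'_r,\sigma_r)$, and both $\tilde\nu_1$ and $\tilde\nu_2$ are ergodic equilibrium states upstairs.

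Next, admissibility of $\psi$ makes $\psi\circ\pi_r$ H\"older continuous on $\Sigma_r$, and item~(1) of the Main Theorem furnishes the H\"older continuity of $r$. Abramov's formula then converts the suspension variational problem into one on the base countable Markov shift $\Sigma'$ for the induced H\"older potential
\[
\Psi(x)\;:=\;\int_{0}^{r(x)}(\psi\circ\pi_r)(x,t)\,dt\;-\;S\,r(x).
\]
Because $\Sigma'$ is irreducible, Sarig's uniqueness theorem for equilibrium states of H\"older potentials on countable Markov shifts yields at most one ergodic equilibrium state for $\Psi$, forcing $\tilde\nu_1=\tilde\nu_2$ and hence $\nu_1=\nu_2$. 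The main obstacle I anticipate is the bookkeeping of entropies, integrals, and pressures along the finite-to-one semiconjugacy $\pi_r$ and through the Abramov reduction to the base; this is precisely where item~(3) of the Main Theorem is essential, since without finite multiplicity the entropy identity $h(\sigma_r,\tilde\nu_i)=h(\vf,\nu_i)$ could fail and the pressures on $M$ and on $\Sigma'_r$ might differ.
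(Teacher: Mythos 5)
Your proof is correct and follows essentially the same route as the paper: lift both maximizers to the irreducible component $\Sigma'_r$ furnished by Theorem~\ref{thm.homoclinic}, use the finite-to-one property to identify free energies upstairs and downstairs (with Lemma~\ref{C6} guaranteeing that every ergodic measure on $\Sigma'_r$ projects into the competing class), and invoke uniqueness of equilibrium states on an irreducible topological Markov flow with H\"older roof. The only difference is one of completeness rather than method: you spell out the general admissible potential via the Abramov reduction to the induced potential $\Psi$ on the base shift, whereas the paper's written argument only treats $\psi\equiv 0$ and cites the uniqueness of the measure of maximal entropy from \cite{Lima-Sarig}, so your version actually covers the corollary as stated.
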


The above corollary {\em does not} claim the uniqueness of the equilibrium state for the potential $\psi$, since there could be non-hyperbolic measures achieving the above supremum. But for contexts where equilibrium states are known to be hyperbolic, Corollary \ref{cor.local-uniq} applies to provide the uniqueness of the equilibrium state. This is known, for instance, for geodesic flows on rank one manifolds \cite{Knieper-Rank-One-Entropy}, and it was recently reproved \cite{BCFT}. Using this information, Lima and Poletti  obtained a proof using symbolic dynamics of the uniqueness of the measure of maximal entropy for geodesic flows on rank one manifolds \cite{Lima-Poletti}, as well as results of \cite{BCFT}. Since our work extends the work of Lima and Poletti, it can be used to reprove the aforementioned results as well. We point out that, over the past ten years, there has been significant progress on the uniqueness of measures of maximal entropy for geodesic flows, see for instance~\cite{Gelfert-Ruggiero,CKP-20,CKW, Gelfert-Ruggiero-23,Mamani-24,Mamani-Ruggiero,pacifico-yang-yang,Wu}.

The field of symbolic dynamics has been extremely successful in analyzing systems displaying hyperbolic
behavior. Its modern history includes (but is not restricted to) the construction of Markov partitions in various 
uniformly and non-uniformly hyperbolic settings:
\begin{enumerate}[$\circ$]
\item Adler and Weiss for two dimensional hyperbolic toral automorphisms \cite{Adler-Weiss-PNAS}.
\item Sina{\u\i} for Anosov diffeomorphisms \cite{Sinai-Construction-of-MP}.
\item Ratner for Anosov flows \cite{Ratner-MP-three-dimensions,Ratner-MP-n-dimensions}.
\item Bowen for Axiom A diffeomorphisms \cite{Bowen-MP-Axiom-A,Bowen-LNM}
and Axiom A flows without fixed points \cite{Bowen-Symbolic-Flows}.
\item Katok for sets approximating hyperbolic measures of diffeomorphisms \cite{KatokIHES}.
\item Hofbauer \cite{Hofbauer-PMM} and Buzzi \cite{Buzzi-IJM,Buzzi-Invent} for piecewise maps on the interval and beyond.
\item Sarig for surface diffeomorphisms \cite{Sarig-JAMS}.
\item Lima and Matheus for surface maps with singularites, e.g. billiards \cite{Lima-Matheus}.
\item Ben Ovadia for diffeomorphisms in any dimension \cite{O18}.
\item Lima and Sarig for three dimensional flows without fixed points \cite{Lima-Sarig}.
\item Lima for one-dimensional maps \cite{Lima-AIHP}.
\item Araujo, Lima, and Poletti for non-invertible maps with singularities in any dimension \cite{ALP}.
\item Buzzi, Crovisier and Sarig for homoclinic classes of measures for diffeomorphisms in any dimension \cite{BCS-MME}.
\item Lima, Obata and Poletti for homoclinic classes of measures for non-invertible maps in any dimension \cite{LOP-24}.
\end{enumerate}

It is also relevant mentioning previous related work that dealt with homoclinic classes of measures:
\begin{enumerate}[$\circ$]
\item Rodriguez Hertz et al introduced ergodic homoclinic classes of hyperbolic periodic
points, and studied SRB measures for surface diffeomorphisms \cite{HHTU-CMP}. This was recently extended for flows by de Jesus, Espitia and Ponce \cite{JEP2025}.
\item Buzzi, Crovisier and Sarig introduced homoclinic classes of
measures and proved that the Markov partitions of
Sarig \cite{Sarig-JAMS} and Ben Ovadia \cite{O18} code
homoclinic classes by  irreducible countable topological Markov shifts,
and each homoclinic class supports at most one equilibrium state
for each admissible potential, see \cite[Thm. 3.1, Cor. 3.3]{BCS-MME}.
\item Buzzi, Crovisier and Lima studied homoclinic classes of measures
for 3--dimensional flows with positive speed \cite{BCL23}.
\item Lima and Poletti studied homoclinic classes of measures for geodesic flows on rank one manifolds
\cite{Lima-Poletti}.
\end{enumerate}
We also point out that, very recently, Zang proved that a $C^\infty$ flow with positive speed on a closed three-dimensional flow has finitely many measures of maximal entropy \cite{Yuntao2025}.

\subsection{Method of proof}\label{ss-method-proof}

We build on the work of Buzzi, Crovisier and Lima \cite{BCL23}, which was inspired by the work of Sarig \cite{Sarig-JAMS}, Lima and Sarig \cite{Lima-Sarig}, and Bowen \cite{Bowen-Symbolic-Flows}. The main steps of the construction are:
\begin{enumerate}[(1)]
\item Construct two global Poincar\'e sections $\Lambda,\widehat{\Lambda}$ such that
$\Lambda\subset\widehat{\Lambda}$. We use $\Lambda$ as the reference section
for the construction, and $\widehat{\Lambda}$ as a security section.
\item Let $f:\Lambda\to\Lambda$ be the Poincar\'e return map of $\Lambda$.
If $\mu$ is $\chi$--hyperbolic and $\nu$ is the measure induced on $\Lambda$,
then $\nu$--almost every $x\in\Lambda$ has a Pesin chart
$\Psi_x:[-Q(x),Q(x)]^2\to \widehat\Lambda$ whose sizes along the orbit satisfy $\lim\tfrac{1}{n}\log Q(f^n(x))=0$.
Note that the center of the chart is in $\Lambda$, while the image is on
$\widehat\Lambda$. Local changes of coordinates by linear maps of norm $Q^{-1}$
allow to conjugate $f$ to a uniformly hyperbolic map.
\item Introduce $\ve$--double charts $\Psi_x^{p^s,p^u}$, which are versions of Pesin charts
that controls separately the local stable and local unstable hyperbolicity at $x$. Define the
transition between $\ve$--double charts so that the parameters $p^s,p^u$ are {\em almost maximal}.
\item Construct a countable collection $\mathfs A$ of $\ve$--double charts that are dense
in the space of all $\ve$--double charts. The notion of denseness is defined in terms of
finitely many parameters of $x$. Using pseudo-orbits, shadowing
and the graph transform method, the collection $\mathfs A$ defines
a Markov cover $\mathfs Z$. In general, $\mathfs Z$ defines a symbolic coding that
is {\em usually infinite-to-one}. Fortunately, $\mathfs Z$ is locally finite (this crucial property is not direct and requires proof, which is long and delicate). 
\item $\mathfs Z$ satisfies a Markov property: for every $x\in\bigcup_{Z\in \mathfs Z}Z$
there is $k>0$ such that $f^k(x)$ satisfies a Markov property in the stable direction and
$\ell>0$ such that $f^{-\ell}(x)$ satisfies a Markov property in the unstable direction. The values
of $k,\ell$ are uniformly bounded.
\item The local finiteness of $\mathfs Z$ and the uniform bounds on $k,\ell$ allow 
to apply a refinement method to obtain a countable Markov partition, which 
defines a topological Markov flow $(\Sigma_r,\sigma_r)$ and a map
$\pi_r:\Sigma_r\to M$ satisfying the Main Theorem.
\end{enumerate}
Similarly to Bowen \cite{Bowen-Symbolic-Flows} and analogous to \cite{BCL23}, a good return of the center of
a chart is a return to $\Lambda$. The ideas of \cite{Bowen-Symbolic-Flows} are also used in steps
(5) and (6).

Steps (1), (3) and mostly of (4) are performed as in \cite{BCL23}. Due to the high dimension, step (2) requires a different approach, similar to \cite{O18,ALP}. The final part of step (4), that $\mathfs Z$ is locally finite, constitutes an important and substantial part of our work, and also requires a different approach from \cite{BCL23}. For that, we follow the techniques developed in \cite{O18,ALP}. We pay attention that the parameters estimating the rates of contraction/expansion along stable/unstable directions are defined in terms of integrals, but fortunately these integrals can be well approximated by sums. 

Similarly to \cite{BCL23}, our works solves the {\em parsing problem} for flows with positive speed in any dimension. This problem is related to the fact that there is no canonical way to parse a flow orbit into good returns,
hence a single orbit might be cut into different ways.
In order to address this important issue, \cite{BCL23} obtains an intrinsic solution to the  {\em inverse problem}, whose conclusion is that the parameters of the $\ve$--double
charts coding an orbit are defined ``up to bounded error''.
By intrinsic we mean that \cite{BCL23} compares the parameters of the $\ve$--double charts directly with those of the coded orbit.

As expected and claimed above, much of this work is related to \cite{BCL23}. This text strikes a balance between including all necessary details and avoiding a verbatim reproduction of the arguments in \cite{BCL23}. Therefore, 
to keep the text self-contained, many statements and proofs are similar to \cite{BCL23}, but we have made an effort to highlight the novel contributions of our work.

\medskip
\noindent
{\em{Acknowledgements.}} We are thankful to J. Buzzi, S. Crovisier, M. Poletti, and J. Yang for the suggestions that greatly improved the quality of the manuscript.

\subsection{Preliminaries}\label{Section-Preliminaries}

\subsubsection{Symbolic dynamics}\label{subsection-symbolic}
Let $\mathfs G=(V,E)$ be an oriented graph, where $V,E$ are the vertex and edge sets.
We denote edges by $v\to w$, and assume that $V$ is countable.

\medskip
\noindent
{\sc Topological Markov shift (TMS):} It is a pair $(\Sigma,\sigma)$
where
$$
\Sigma:=\{\text{$\Z$--indexed paths on $\mathfs G$}\}=
\left\{\un{v}=\{v_n\}_{n\in\Z}\in V^{\Z}:v_n\to v_{n+1}, \forall n\in\Z\right\}
$$
is the symbolic space and $\sigma:\Sigma\to\Sigma$, $[\sigma(\un v)]_n=v_{n+1}$, is the {\em left shift}. 
We endow $\Sigma$ with the distance $d(\un v,\un w):={\rm exp}[-\inf\{|n|\in\Z:v_n\neq w_n\}]$.
The {\em regular set} of $\Sigma$ is
$$
\Sigma^\#:=\left\{\un v\in\Sigma:\exists v,w\in V\text{ s.t. }\begin{array}{l}v_n=v\text{ for infinitely many }n>0\\
v_n=w\text{ for infinitely many }n<0
\end{array}\right\}.
$$

\medskip
We only consider TMS that are \emph{locally compact}, i.e.
for all $v\in V$ the number of ingoing edges $u\to v$ and outgoing edges $v\to w$ is finite.

\medskip
Given $(\Sigma,\sigma)$ a TMS, let $r:\Sigma\to(0,+\infty)$ be a continuous function.
For $n\geq 0$, let
$r_n=r+r\circ\sigma+\cdots+r\circ \sigma^{n-1}$ be $n$--th {\em Birkhoff sum} of $r$,
and extend this definition for $n<0$
in the unique way such that the {\em cocycle identity} holds: $r_{m+n}=r_m+r_n\circ\sigma^m$, $\forall m,n\in\Z$.

\medskip
\noindent
{\sc Topological Markov flow (TMF):} The TMF defined
by $(\Sigma,\sigma)$ and the \emph{roof function} $r$ is the pair $(\Sigma_r,\sigma_r)$ where
$\Sigma_r:=\{(\un v,t):\un v\in\Sigma, 0\leq t<r(\un v)\}$
and $\sigma_r:\Sigma_r\to\Sigma_r$ is the flow on $\Sigma_r$ given by
$\sigma_r^t(\un v,t')=(\sigma^n(\un v),t'+t-r_n(\un v))$, where
$n$ is the unique integer such that $r_n(\un v)\leq t'+t<r_{n+1}(\un v)$.
We endow $\Sigma_r$ with a natural metric $d_r(\cdot,\cdot)$,
called the {\em Bowen-Walters metric}, such that $\sigma_r$ is a continuous flow \cite{Bowen-Walters-Metric,Lima-Sarig}.
The {\em regular set} of $(\Sigma_r,\sigma_r)$ is $\Sigma_r^\#=\{(\un v,t)\in\Sigma_r:\un v\in \Sigma^\#\}$.

\medskip
In other words, $\sigma_r$ is the unit speed vertical flow on $\Sigma_r$ with the identification
$(\un v,r(\un v))\sim (\sigma(\un v),0)$. 
The roof functions we will consider will be H\"older continuous.
In this case, there exist $\kappa,C>0$ such that $d_r(\sigma_r^t(z),\sigma_r^{t}(z'))\leq C d_r(z,z')^\kappa$
for all $|t|\leq 1$ and $z,z'\in\Sigma_r$, see \cite[Lemma 5.8]{Lima-Sarig}.

\medskip
\noindent
{\sc Irreducible component:}
If $\Sigma$ is a countable Markov shift defined by an oriented graph
$\mathfs{G}=(V,E)$, its \emph{irreducible components} are the subshifts $\Sigma'\subset \Sigma$ over
maximal subsets $V'\subset V$ satisfying the following condition:
$$\forall v,w\in V',\;\exists \un v\in \Sigma \text{ and } n\geq 1\text{ such that } v_0=v \text{ and } v_n=w.$$
An irreducible component $\Sigma'_r$ of a suspended shift $\Sigma_r$ is a set of
elements $(\un v,t)\in \Sigma_r$ with $\un v$ in an irreducible component $\Sigma'$ of $\Sigma$.

\subsubsection{Metrics}\label{s.metric}

If $M$ is a smooth Riemannian manifold, we denote by
$d_M$ the distance induced by the Riemannian metric.
The Riemannian metric induces a Riemannian metric $\Sas(\cdot,\cdot)$ on $TM$,
called the {\em Sasaki metric}, see e.g. \cite[\S2]{Burns-Masur-Wilkinson}.
For nearby small vectors, the Sasaki metric is
almost a product metric in the following sense. Given a geodesic $\gamma$ in $M$
joining $y$ to $x$, let $P_\gamma:T_yM\to T_xM$
be the parallel transport along $\gamma$. If $v\in T_xM$, $w\in T_yM$ then
$\Sas(v,w)\asymp d(x,y)+\|v-P_\gamma w\|$ as $\Sas(v,w)\to 0$, see e.g.
\cite[Appendix A]{Burns-Masur-Wilkinson}. The rate of convergence depends on the
curvature tensor of the metric on $M$.

Given an open set $U\subset \R^n$ and $h:U\to \R^m$,
let $\|h\|_{C^0}:=\sup_{x\in U}\|h(x)\|$ denote the $C^0$ norm of $h$. For $0<\beta\leq 1$,
let $\Hol{\beta}(h):=\sup\frac{\|h(x)-h(y)\|}{\|x-y\|^\beta}$ 
where the supremum ranges over distinct elements $x,y\in U$. Note that $\Hol{1}(h)$ is a
Lipschitz constant of $h$, that we will also denote by $\Lip(h)$.
If $h$ is differentiable, let
$\|h\|_{C^1}:=\|h\|_{C^0}+\|dh\|_{C^0}$ denote its $C^1$ norm, and
$\|h\|_{C^{1+\beta}}:=\|h\|_{C^1}+\Hol{\beta}(dh)$ its $C^{1+\beta}$ norm.

For any $x,y$ close to some point $z$ in a Riemannian manifold $M$,
the parallel transport along the shortest geodesic between $x$ and $y$
induces a linear map $P_{x,y}:T_xM\to T_yM$.
To  any linear map $A\colon T_xM\to T_yM$, one associates a map
$\widetilde A :=P_{y,z} \circ A\circ P_{z,x}$.
By definition, $\widetilde{A}$ depends on $z$ but different basepoints $z$ define
maps that differ from $\widetilde{A}$ by pre and post composition with isometries.
In particular, $\|\widetilde{A}\|$ does not depend on the choice of $z$.

\subsubsection{Notations.}
For $a,b,\varepsilon>0$, we write $a=e^{\pm\ve}b$ when $e^{-\ve}\leq \frac{a}{b}\leq e^\ve$.
We also write $a\wedge b:=\min (a,b)$.
We write $\bigsqcup A_n$ to represent the {\em disjoint union} of sets $A_n$.

We fix a smooth Riemannian manifold $M$ of dimension $d+1$.

\subsection{Standing assumptions}\label{ss.standing}

Let $M$ be a closed smooth Riemannian manifold of dimension $d+1$, and let $X:M\to TM$ be 
a $C^{1+\beta}$ vector field such that $X(x)\neq 0$, $\forall x\in M$, and let $\varphi=(\varphi^t)_{t\in \R}$ 
be the flow generated by $X$.
We will denote the value of the vector field $X$ at $x$ also by $X_x$.
Given a set $Y\subset M$ and an interval $I\subset\R$, write $\vf^I(Y):=\bigcup_{t\in I}\vf^t(Y)$.

Since obtaining a coding for the flow generated by $X$ is equivalent to obtaining a coding for 
the flow generated by $cX$ for some 
$c>0$, we assume from now on that $\|\nabla X\|_0\leq1$ (just change $X$ to $cX$ for $c>0$ small enough)\footnote{ The notation $\nabla X$ represents the covariant differential, i.e. for each $x\in M$ 
we have a linear map $\nabla X(x):T_xM\to T_xM$ defined by $[\nabla X(x)](Y)=\nabla_Y X$.}.
This assumption
avoids the introduction of some multiplicative constants.
For instance, since an application of the Gr\"onwall inequality implies that $\|d\vf^t\|\leq e^{\|\nabla X\|_0 |t|}$ for all $t\in\R$
(see e.g. \cite{Kunzinger-flow}), we will simply write that $\|d\vf^t\|\leq e^{|t|}$, $\forall t\in\R$.
Another consequence is that every Lyapunov exponent of $\vf$ has absolute value
at most $1$, hence we can take $\chi\in (0,1)$ in the definition of $\chi$--hyperbolicity.

\section{Poincaré Sections}\label{Section-sections}

The goal of this section is to:
\begin{enumerate}[$\circ$]
\item Construct two sections $\Lambda$ and $\wh{\Lambda}$ with controlled geometrical properties such that $\Lambda \subset \wh\Lambda$, $d(\Lambda, \partial\wh\Lambda) >0$, and the trajectories under $\varphi$ of every point in $M$ intersects $\Lambda$ after some time $\rho \ll 1$. The smaller section $\Lambda$ defines a return map $f$ and a return time $r$. From now on, we call $\Lambda$ the reference section and $\wh\Lambda$ the security section.
\item Define the induced linear Poincaré flow $\Phi$, which is a flow that describes the infinitesimal behavior of $\varphi$ in the directions transverse to $X$.
\item Introduce the holonomy maps $g_x^+, g_x^-$ for each $x \in \Lambda$, which are local and {\em continuous} versions of Poincaré return maps. In Section \ref{sec-NUH-locus}, we will construct dynamically relevant systems of coordinates for these maps.
\end{enumerate}

\subsection{Transverse discs and flow boxes}
Let us start with the following definition.

\medskip
\noindent
{\sc $\rho$--transverse disc:} A codimension one open disc $D \subset M$ is {\em $\rho$--transverse} if:
\begin{enumerate}[$\circ$]
    \item $D$ is compactly contained in a $C^\infty$ codimension one submanifold of $M$.
    \item $\mathrm{diam}(D) < 4 \rho$.
    \item For every $x \in D$, $\angle (X (x), T_x D^\perp ) < \rho$.
\end{enumerate}

In other words, a $\rho$--transverse disc is a small codimension one submanifold that is almost orthogonal to $X$. It is easy to build $\rho$--transverse discs, e.g. using the tubular neighborhood theorem: by this theorem, $\varphi$ is conjugated in local charts to the vertical flow $\psi$ on $\R^n \times \R$ given by $\psi_t(x, t_0)=(x,t_0+t)$. If $\rho '$ is small enough, then the image of $B(0, \rho ') \times \{ t_0 \}$ under the local chart is a $\rho$--transverse disc.

\medskip
\noindent
{\sc Flow box:} Every $\rho$--transverse disc $D$ defines a {\em flow box} $\varphi^{[-4 \rho, 4 \rho]} D $.

\medskip
The assumption that $X$ has positive speed implies that if $\rho > 0$ is small enough then the map $(y,t) \in D \times [-4 \rho, 4 \rho] \mapsto \varphi^t (y)$ is a diffeomorphism onto the flow box $\varphi^{[-4 \rho, 4 \rho]} D$. We denote its inverse by $x \in \varphi^{[-4 \rho, 4 \rho]} D \mapsto (\mathfrak{q}_D (x), \mathfrak{t}_D (x))$, where $\mathfrak{q}_D: \varphi^{[-4 \rho, 4 \rho]} D \rightarrow D$ and $\mathfrak{t}_D: \varphi^{[-4 \rho, 4 \rho]} D \rightarrow [- 4 \rho, 4 \rho]$.

\begin{lemma}\label{lemma-local-coord}
There is $\rho_0 = \rho_0 (M,X) >0$ such that for every $\rho_0$--transverse discs $D, \ D'$:
\begin{enumerate}[{\rm (1)}]
    \item The maps $\mathfrak{q}_D$, $\mathfrak{t}_D$ are $C^{1+\beta}$.
    \item The map $\mathfrak{q}_D$ has a Lipschitz constant smaller than $2$.
    \item If $D'$ intersects the flow box $\varphi^{[-4 \rho, 4 \rho]} D$, then the restriction to $D'$ of the map $\mathfrak{t}_D$ has a Lipschitz constant smaller than $1$.
\end{enumerate}
\end{lemma}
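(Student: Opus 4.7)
My plan is to parametrize the flow box by $F_D\colon D\times[-4\rho_0,4\rho_0]\to M$, $F_D(y,t)=\varphi^t(y)$, whose inverse is exactly $(\mathfrak{q}_D,\mathfrak{t}_D)$, and to read off regularity and Lipschitz bounds from the differential
$$
dF_D(y,t)(\xi,\tau)=d\varphi^t(y)\xi+\tau X(\varphi^t(y)),\qquad \xi\in T_yD,\ \tau\in\R.
$$
I want to argue that this differential is a uniform isomorphism for $\rho_0$ small enough: the $\rho_0$-transversality of $D$ forces $T_yD$ to make angle close to $\pi/2$ with $X(y)$, $\|d\varphi^t-\mathrm{id}\|=O(\rho_0)$ for $|t|\le 4\rho_0$, and $X(\varphi^t(y))$ is $O(\rho_0)$-close to $X(y)$ (using $\|\nabla X\|_0\le 1$). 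Since $X$ is $C^{1+\beta}$, classical ODE regularity gives $F_D\in C^{1+\beta}$, and the $C^{1+\beta}$ inverse function theorem applied uniformly with this invertibility yields~(1), together with uniform $C^1$ bounds on $\mathfrak{q}_D$ and $\mathfrak{t}_D$ that do not depend on the particular disc $D$.

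\textbf{Parts (2) and (3).} From the unique decomposition $v=d\varphi^t(y)\xi+\tau X(x)$ of $v\in T_xM$ at $x=\varphi^t(y)$ I would read off $d\mathfrak{q}_D(x)v=\xi$ and $d\mathfrak{t}_D(x)v=\tau$. For (2), the near-orthogonality of $d\varphi^t(y)T_yD$ and $\R X(x)$ (by the same three ingredients as above) forces the oblique projection onto $d\varphi^t(y)T_yD$ to have norm $1+O(\rho_0)$; combining with $\|d\varphi^{-t}\|\le e^{4\rho_0}$ gives $\|\xi\|\le e^{4\rho_0}(1+O(\rho_0))\|v\|<2\|v\|$ for $\rho_0$ small. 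For (3), I would use that $\rho_0$-transversality of $D'$ forces $|\langle v,\widehat{X}(x)\rangle|\le \rho_0\|v\|$ for every $v\in T_xD'$; substituting into the decomposition and controlling $\langle d\varphi^t(y)\xi,\widehat{X}(x)\rangle$ by the same near-orthogonality yields $|\tau|\,\|X(x)\|\le C\rho_0\|v\|$. Since $\inf_M\|X\|>0$ by compactness and non-vanishing of $X$, this gives a Lipschitz constant strictly less than $1$ for $\rho_0$ small. Passing from these operator-norm bounds to genuine Lipschitz bounds on $M$ is done by a short-path argument, using that the flow box has diameter $O(\rho_0)$ and is geodesically nice at that scale.

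\textbf{Main obstacle.} The substantive task is keeping all constants \emph{uniform} across the entire class of $\rho_0$-transverse discs $D,D'$. This is made possible by the matching scales in the definition (transversality angle, diameter, and flow-time all of order $\rho_0$) together with compactness of $M$, which provides the global bounds $\|d\varphi^t\|\le e^{|t|}$ and $\inf_M\|X\|>0$; all resulting constants then depend only on $\rho_0$ and on $(M,X,\beta)$, which is exactly what the statement demands.
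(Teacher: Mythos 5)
Your proof is correct and follows essentially the same route as the reference proof in \cite{BCL23}: both rest on the fact that the flow-box parametrization $(y,t)\mapsto\varphi^t(y)$ is a $C^{1+\beta}$ diffeomorphism whose differential is, by $\rho_0$--transversality and $\|d\varphi^t\|\le e^{|t|}$, uniformly close to a map splitting $T_xM$ as (nearly horizontal) $\oplus\;\R X$, the only cosmetic difference being that you bound $d\mathfrak q_D$ and $d\mathfrak t_D$ directly from the decomposition $v=d\varphi^t(y)\xi+\tau X(x)$ while the cited proof straightens $X$ in bi-Lipschitz charts and reads the bounds off the vertical holonomy. The one step to phrase carefully is your final ``short-path'' passage from derivative bounds to Lipschitz bounds: the class of $\rho_0$--transverse discs carries no uniform bound on second fundamental forms, so ``geodesically nice'' should be justified not by curvature but by the angle condition itself, which exhibits each disc as a graph with small gradient over a hyperplane and hence makes its intrinsic and extrinsic distances comparable up to $1+O(\rho_0)$.
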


When $M$ has dimension three, this is \cite[Lemma 2.1]{BCL23}, and the proof in higher dimension goes without change.

\subsection{Proper sections and Poincaré return maps}

We start with some definitions.

\medskip
\noindent
{\sc Proper section:}
A \emph{proper section of size $\rho$} is a finite union $\Lambda = \bigcup_{i=1}^n D_i$ of $\rho$--transverse discs $D_1, \ldots , D_n$ such that:
\begin{enumerate}[(1)]
\item {\sc Cover:} $M = \bigcup_{i=1}^n \varphi^{[0, \rho)} D_i$.
\item {\sc Partial order:} For all $i \neq j$, at least one of the sets $\overline{D_i} \cap \varphi^{[0, 4 \rho]} \overline{D_j}$ or $\overline{D_j} \cap \varphi^{[0, 4 \rho]} \overline{D_i}$ is empty; in particular $\overline{D_i} \cap \overline{D_j} = \emptyset$.
\end{enumerate}

\medskip
The {\em return time function} $r_{\Lambda}: \Lambda \rightarrow (0, \rho )$ is defined by $r_\Lambda(x) := \inf \{ t>0 : \varphi^t (x) \in \Lambda \}$.

\medskip
\noindent
{\sc Poincaré return map:} The \emph{Poincaré return map} of a proper section $\Lambda$ is the map $f_\Lambda: \Lambda \rightarrow \Lambda$ defined by $f_\Lambda (x) := \varphi^{r_\Lambda (x)} (x)$.

\medskip
In the sequel, we fix $\rho < \min \{ 0.25, \rho_0 \}$ small and consider two proper sections $\Lambda, \wh\Lambda$ of size $\rho/2$ such that $\Lambda \subset \wh\Lambda$ and $d_M (\Lambda, \partial \wh\Lambda) > 0$. We let $d = d_{\wh\Lambda}$ be the metric on $\wh\Lambda$ defined by the induced Riemannian metric on $\wh\Lambda$. For $x \in \wh\Lambda$ and $r > 0$, we use the notation:
\begin{enumerate}[$\circ$]
    \item $B(x,r) \subset \wh\Lambda$ for the ball in the distance $d$ with center $x$ and radius $r$;
    \item $B_x [r] \subset T_x \wh\Lambda$ for the ball with center 0 and radius $r$.
\end{enumerate}

Since flow boxes are $C^{1+\beta}$, there is $L=L(\wh\Lambda)>0$ such that for any transverse disc $D_i$ defining $\wh\Lambda$ the maps $\mathfrak q_{D_i}, \mathfrak t_{D_i}$ satisfy
$\text{H\"ol}_{\beta} (d \mathfrak q_{D_i}) < L$ and  $\text{H\"ol}_{\beta} (d\mathfrak t_{D_i}) < L$.

\subsection{Exponential maps}\label{exponential map}

Given $x \in \wh\Lambda$, let $\inj(x)$ denote the injectivity radius of $\wh\Lambda$ at $x$, and let $\exp{x}$ be the \emph{exponential map} of $\wh\Lambda$ at $x$, wherever it can be defined. Below we list the properties of $\exp{x}$ that we will use.

\medskip
\noindent
{\sc Regularity of $\exp{x}$:} There is $\mathfrak{r} \in (0, \rho)$ such that for every $x \in \Lambda$ the following properties hold on the ball $B_x := B(x, 2 \mathfrak{r}) \subset \wh\Lambda$:
\begin{enumerate}[ii\, )]
\item[(Exp1)] If $y \in B_x$, then $\inj(y) \geq 2 \mathfrak{r}$, the map $\exp{y}^{-1} : B_x \rightarrow T_y \wh\Lambda$ is a diffeomorphism onto its image, and for all $v \in T_x \Sh$, $w \in T_y \Sh$ with $\| v \|, \| w \| \leq 2 \mathfrak{r}$ it holds
$$\tfrac{1}{2} (d(x,y) + \|v - P_{y,x} w \| ) \leq \Sas(v,w) \leq 2(d(x,y) + \| v -P_{y,x} w \| ),$$
where $P_{y,x}$ is the parallel transport along the minimizing geodesic joining $y$ to $x$.
\item[(Exp2)] If $y_1, y_2 \in B_x$, then $d(\exp{y_1} v_1, \exp{y_2} v_2) \leq 2 \Sas(v_1, v_2)$ for $\| v_1 \|, \| v_2 \| \leq 2 \mathfrak{r}$, and $\Sas(\exp{y_1}^{-1} z_1, \exp{y_2}^{-1} z_2) \leq 2[d(y_1, y_2) + d(z_1, z_2)]$ for $z_1, z_2 \in B_x$ when the expression is defined. In particular, $\| d(\exp{x})_v\| \leq 2$ for $\| v \| \leq 2 \mathfrak{r}$ and $\| d(\exp{x}^{-1})_y \| \leq 2$ for $y \in B_x$.
\end{enumerate}

\medskip
Conditions (Exp1)--(Exp2) express that the exponential maps and their inverses are well-defined and have Lipschitz constants at most 2 in balls of radius $2 \mathfrak{r}$. The existence of the constant $\mathfrak{r}$ follows from a compactness argument, using that $d_M (\Lambda, \partial \Sh) > 0$ and that $d(\exp{x})_0$ is the identity map.

The next two assumptions require some regularity on the derivatives of $\exp{x}$. For $x, x' \in \Sh$, let $\mathfs{L}_{x,x'} := \{ A: T_x \Sh \rightarrow T_{x'} \Sh \text{ is linear}\}$ and $\mathfs{L}_x := \mathfs{L}_{x,x}$. In particular, $P_{y,x}$ considered in (Exp1) belongs to $\mathfs{L}_{y,x}$. Given $y \in B_x, z \in B_{x'}$ and $A \in \mathfs{L}_{y,z}$, define $\wt{A} \in \mathfs{L}_{x,x'}$ by $\wt{A} := P_{z,x'} \circ A \circ P_{x,y}$. The norm $\| \wt{A} \|$ does not depend on the choice of $x, x'$. 
\label{geodesic-triangles} If $A_i \in \mathfs{L}_{y_i,z_i}$, then $\| \wt{A_1} - \wt{A_2} \|$ does depend on the choice of $x,x'$, but if we change the basepoints $x,x'$ to $w, w'$ then the respective differences differ by precompositions and postcompositions whose norms have the order of the areas of the geodesic triangles formed by $x, w, y_i$ and by $x',w',z_i$, which will be negligible to our estimates. Hence we are free to consider $\wt A$ without making an explicit choice of $x,x'$.

For $x \in \Lambda$, define the map $\tau = \tau_x : B_x \times B_x \rightarrow \mathfs{L}_x$ by $\tau (y,z) = \wt{d(\exp{y}^{-1})_z}$, where we use the identification $T_v (T_y \Sh) \cong T_y \Sh$ for all $v \in T_y \Sh$.

\medskip
\noindent
{\sc Regularity of $d\exp{x}$:} There is $\mathfrak{K}>1$ such that for all $x \in \Lambda$ the following holds:
\begin{enumerate}[ii\, )]
\item[(Exp3)] If $y_1, y_2 \in B_x$ then $\|\wt{d(\exp{y_1})_{v_1}} - \wt{d(\exp{y_2})_{v_2}} \| \leq \mathfrak{K} \Sas(v_1, v_2)$, for all $\| v_1 \|, \| v_2 \| \leq 2 \mathfrak{r}$, and $\| \tau (y_1, z_1) - \tau (y_2, z_2) \|\leq  \mathfrak{K} [d(y_1, y_2) + d(z_1,z_2)]$ for all $z_1, z_2 \in B_x$.
\item[(Exp4)] If $y_1, y_2 \in B_x$ then the maps $\tau (y_1, \cdot) - \tau (y_2, \cdot) : B_x \rightarrow \mathfs{L}_x$  has Lipschitz constant $\leq \mathfrak{K} d(y_1, y_2)$.
\end{enumerate}

\medskip
Condition (Exp3) bounds the Lipschitz constants of the derivatives of $\exp{x}$, and (Exp4) bounds the Lipschitz constants of the second derivatives of $\exp{x}$. The existence of $\mathfrak{K}$ is guaranteed whenever the curvature tensor of $\Sh$ is uniformly bounded, and this happens because $\Sh$ is a finite union of $\rho$--transverse discs.

\subsection{Induced linear Poincaré flows}\label{section-linear-flow}

The classical linear Poincaré flow is the $\R$--cocycle induced by $d \varphi$ in the bundle orthogonal to $X$. Here we employ the version considered in \cite{BCL23}: we fix a 1--form $\theta$ and consider parallel projections to $X$ onto the bundle $\mathrm{Ker}(\theta)$. The first step is to choose a suitable 1--form.

\begin{lemma}\label{Def-of-1-form}
If $\Sh$ is a proper section of size $\rho/2$, then there exists a 1--form $\theta$ on $M$ such that:
\begin{enumerate}[{\rm (1)}]
    \item $\theta ( X(x)) = 1$ and $\angle ( X(x),\mathrm{Ker} (\theta_x)^{\perp}) < \rho, \forall x \in M$.
    \item $\mathrm{Ker}(\theta_x) = T_x \Sh, \forall x \in \Sh$.
\end{enumerate}
\end{lemma}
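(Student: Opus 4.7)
The plan is to build $\theta$ by assembling natural local 1-forms via a partition of unity whose supports are arranged so that on each disc $D_i$ of $\Sh$ only one term contributes, thereby guaranteeing condition (2). Since the closures $\overline{D_i}$ are pairwise disjoint (by property (2) of a proper section), I can choose pairwise disjoint open tubular neighborhoods $U_i$ of $\overline{D_i}$, each contained in the flow box $\vf^{(-4\rho,4\rho)} D_i$; then fix a compact sub-neighborhood $K_i$ of $\overline{D_i}$ inside $U_i$, and set $U_0 := M\setminus \bigcup_{i\geq 1} K_i$ (an open set disjoint from $\Sh$). On each $U_i$ with $i\geq 1$, set $\theta^{(i)} := d\mathfrak t_{D_i}$; since $\mathfrak t_{D_i}(\vf^s(x)) = \mathfrak t_{D_i}(x) + s$, we have $\theta^{(i)}(X) \equiv 1$, and its kernel at $x$ is the tangent space to the level set $\vf^{\mathfrak t_{D_i}(x)}(D_i)$, which equals $T_x D_i$ at $x \in D_i$. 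On $U_0$, set $\theta^{(0)}_x(v) := \langle X(x)/\|X(x)\|^2, v\rangle$, which satisfies $\theta^{(0)}(X) = 1$ and has kernel exactly $X(x)^\perp$.

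Since the $U_i$ for $i\geq 1$ are pairwise disjoint, one can construct smooth bump functions $\chi_i\colon M \to [0,1]$ supported in $U_i$ with $\chi_i \equiv 1$ on $K_i$; automatically $\sum_{i\geq 1}\chi_i \leq 1$, so setting $\chi_0 := 1 - \sum_{i\geq 1}\chi_i$ yields a partition of unity $\{\chi_0,\chi_1,\dots,\chi_n\}$ subordinate to $\{U_0,U_1,\dots,U_n\}$ with the crucial property that $\chi_i \equiv 1$ and $\chi_j \equiv 0$ (for $j\neq i$) on $K_i$. Define $\theta := \chi_0\theta^{(0)} + \sum_{i=1}^n \chi_i\theta^{(i)}$ (each term extended by zero outside its domain). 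Then $\theta(X) = \chi_0 + \sum_i \chi_i \equiv 1$, and at any $x \in D_i \subset K_i$ only the $i$-th term survives, so $\theta_x = \theta^{(i)}_x$ and $\mathrm{Ker}(\theta_x) = T_x D_i = T_x\Sh$; this settles (2). For the angle condition (1), dualize via the Riemannian metric: each $\theta^{(j)}$ corresponds to a vector $v^{(j)} = X/\|X\|^2 + w^{(j)}$ with $w^{(j)} \perp X$, and the kernel-angle bound translates to a uniform bound on $\|w^{(j)}\|$; since $\theta$ has dual $X/\|X\|^2 + \sum_j \chi_j w^{(j)}$ and $\|\sum_j \chi_j w^{(j)}\| \leq \max_j \|w^{(j)}\|$, pointwise angle bounds for the $\theta^{(j)}$ transfer to $\theta$.

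The main technical step is thus a local angle estimate on each $U_i$. For $\theta^{(0)}$ the kernel is $X^\perp$, giving angle $0$. For $\theta^{(i)}$ with $i\geq 1$, at $x\in D_i$ the kernel $T_x D_i$ makes an angle $<\rho/2$ with $X(x)^\perp$ by $(\rho/2)$-transversality, and for $x$ slightly off $D_i$ the kernel $d\vf^{\mathfrak t_{D_i}(x)}(T_{\mathfrak q_{D_i}(x)} D_i)$ deviates from its value at $D_i$ by the rotation of $d\vf^{\mathfrak t_{D_i}(x)}$. The standing assumption $\|\nabla X\|_0 \leq 1$ together with Gr\"onwall yields $\|d\vf^t - I\| \leq |t|e^{|t|}$, so shrinking the tubular neighborhoods $U_i$ to have height $|\mathfrak t_{D_i}(x)|$ sufficiently small keeps the kernel angle below $\rho$. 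This angular-drift control is the only nontrivial technical point; the rest of the construction is routine.
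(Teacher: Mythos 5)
Your construction is correct and is essentially the paper's own argument: the paper likewise blends the metric-dual form $\eta_x(\cdot)=\langle\cdot,X(x)\rangle/\|X(x)\|^2$ with a form adapted to $\Sh$ via a bump function, and controls the angle by observing that the dual vector of the blend is a convex combination $X/\|X\|^2+\sum_j\chi_j w^{(j)}$ whose transverse part is no larger than the worst local one. The only cosmetic difference is that you build the section-adapted form disc-by-disc as $d\mathfrak t_{D_i}$ on disjoint tubes around the $\overline{D_i}$, whereas the paper invokes the tubular neighborhood theorem once on a neighborhood of all of $\Sh$; both yield the same result.
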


When $M$ has dimension three, this is \cite[Lemma 2.2]{BCL23}, and the proof in higher dimension goes without change.
From now on, we fix a 1--form $\theta$ satisfying Lemma \ref{Def-of-1-form}, and then introduce the $d$--dimensional bundle 
$$N := \bigsqcup_{x \in M}\mathrm{Ker}(\theta_x).$$
For each $x \in M$, let $\mathfrak{p}_x : T_x M \rightarrow N_x$ be the projection to $N_x$ parallel to $X(x)$. By Lemma \ref{Def-of-1-form}(1), for all $x \in M$ we have:
$$\| \mathfrak{p}_x \| = \tfrac{1}{\cos \angle (X(x),\mathrm{Ker} (\theta_x)^{\perp})} < \tfrac{1}{\cos \rho} < 1 + \rho.$$

\medskip
\noindent
{\sc Induced linear Poincaré flow:} The \emph{linear Poincaré flow of $\varphi$ induced by $\theta$} is the flow $\Phi = \{ \Phi^t \}_{t \in \R} : N \rightarrow N$ defined by $\Phi^t (v) = \mathfrak{p}_{\varphi^t (x)} [ d \varphi^t_x (v)]$ for $v \in N_x$.

\medskip
We will usually omit the subscripts $x$ and $\varphi^t (x)$, as they will become evident in the context. It is clear that $\Phi$ is Hölder continuous and satisfies $\| \Phi_x^t \| \leq \| \mathfrak{p}_{\varphi^t (x)} \|\cdot \|d \varphi_x^t \| \leq (1+\rho) e^{|t|} < e^{\rho + |t|}, \forall t \in \R$. Therefore:
\begin{equation}\label{Norm of ILPF}
\|\Phi^t\|=e^{\pm (\rho+|t|)},\ \forall t\in\R,\ \text{and} \ \| \Phi^t \| = e^{\pm 3 \rho},\ \forall |t| \leq 2 \rho.  
\end{equation}

\begin{lemma}\label{lemma-poincare-flow}
The following holds.
\begin{enumerate}[{\rm (1)}]
\item $\Phi$ is a flow: $\Phi^{t+t} = \Phi^t \circ \Phi^{t'}$ for all $t, t' \in \R$.
\item If $D \subset \Sh$ is a transverse disc, then for all $x \in D$ it holds $d(\mathfrak{q}_D)_x = \mathfrak{p}_x$.
\end{enumerate}
\end{lemma}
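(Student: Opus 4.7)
The plan is that both parts follow from short computations once one carefully exploits two features of the setup: that $d\varphi^t$ sends $X$ to $X$, and that on $\widehat\Lambda$ one has $\operatorname{Ker}(\theta_x) = T_x\widehat\Lambda$ by Lemma \ref{Def-of-1-form}(2).

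For part (1), I would first establish the identity
\[
\mathfrak{p}_{\varphi^t(y)}\circ d\varphi^t_y \;=\; \mathfrak{p}_{\varphi^t(y)}\circ d\varphi^t_y\circ \mathfrak{p}_y
\]
for all $y\in M$ and $t\in\R$. Any $w\in T_yM$ decomposes as $w = \mathfrak{p}_y(w) + \theta_y(w)X(y)$, and since the flow carries $X$ to $X$, namely $d\varphi^t_y(X(y)) = X(\varphi^t(y))$, one gets $d\varphi^t_y(w) = d\varphi^t_y(\mathfrak{p}_y(w)) + \theta_y(w)\,X(\varphi^t(y))$. The second summand lies in $\R\cdot X(\varphi^t(y))$, which is killed by $\mathfrak{p}_{\varphi^t(y)}$, giving the claim. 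Combined with the cocycle identity $d\varphi^{t+t'}_x = d\varphi^t_{\varphi^{t'}(x)}\circ d\varphi^{t'}_x$, this yields $\Phi^{t+t'}(v) = \mathfrak{p}_{\varphi^{t+t'}(x)}\circ d\varphi^t_{\varphi^{t'}(x)}\circ \mathfrak{p}_{\varphi^{t'}(x)}\circ d\varphi^{t'}_x(v) = \Phi^t\circ\Phi^{t'}(v)$ for every $v\in N_x$.

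For part (2), fix $x\in D\subset\widehat\Lambda$. Lemma \ref{Def-of-1-form}(2) gives $T_xD = T_x\widehat\Lambda = \operatorname{Ker}(\theta_x) = N_x$. The defining relation $x = \varphi^{\mathfrak{t}_D(x)}(\mathfrak{q}_D(x))$ can be rewritten as $\mathfrak{q}_D(x) = \varphi^{-\mathfrak{t}_D(x)}(x)$; differentiating in a direction $v\in T_xM$ and using $\mathfrak{t}_D(x)=0$, the chain rule yields
\[
d(\mathfrak{q}_D)_x(v) \;=\; v \;-\; d\mathfrak{t}_D|_x(v)\cdot X(x).
\]
Applying $\theta_x$ to both sides and using that $d(\mathfrak{q}_D)_x(v)\in T_xD = \operatorname{Ker}(\theta_x)$ together with $\theta_x(X(x))=1$, one reads off $d\mathfrak{t}_D|_x(v) = \theta_x(v)$. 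Substituting back gives $d(\mathfrak{q}_D)_x(v) = v - \theta_x(v)X(x)$, which is exactly the decomposition defining $\mathfrak{p}_x(v)$.

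Neither step presents a real obstacle; the only point requiring care is the chain rule differentiation in part (2), where one must correctly account for the variation of the flow time $\mathfrak{t}_D$ with the basepoint, and the invocation of the flow invariance $d\varphi^t X = X$ in part (1), which is exactly the mechanism that makes the projected cocycle satisfy a semigroup law despite the intermediate projections.
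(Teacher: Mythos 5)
Your proof is correct, and part (1) is in substance identical to the paper's argument: the paper writes the $X$--component of $d\varphi^t_x(v)-\Phi^t(v)$ as a scalar multiple $\gamma X(\varphi^t(x))$ and uses $d\varphi^{t'}(X)=X$ to see that it is annihilated by the final projection, which is exactly your identity $\mathfrak p_{\varphi^t(y)}\circ d\varphi^t_y=\mathfrak p_{\varphi^t(y)}\circ d\varphi^t_y\circ\mathfrak p_y$. For part (2) the paper argues slightly more directly---$d(\mathfrak q_D)_x$ kills $X(x)$ because $\mathfrak q_D(\varphi^t(x))=x$ for small $t$, and equals the identity on $N_x=T_xD$ because $\mathfrak q_D\restriction_D=\mathrm{id}$---whereas you differentiate the identity $\mathfrak q_D=\varphi^{-\mathfrak t_D}$ and deduce $d\mathfrak t_D|_x=\theta_x$ from $\operatorname{Im}(d(\mathfrak q_D)_x)\subset\operatorname{Ker}(\theta_x)$; this is an equally valid computation that yields the same formula $d(\mathfrak q_D)_x(v)=v-\theta_x(v)X(x)=\mathfrak p_x(v)$.
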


When $M$ has dimension three, this is \cite[Lemma 2.3]{BCL23}, and the proof in higher dimension goes without change.

\subsection{Holonomy maps}

So far, we have fixed two proper sections $\Lambda, \Sh$ of size $\rho /2$. From now on, we write $f:=f_{\Lambda}$. The maps $f, r_{\Lambda}$ usually admit discontinuities, caused by the boundaries of $\Lambda$, thus we introduce a family of local diffeomorphisms related to $f$. Recall that $\mathfrak{r} > 0$ is a fixed small parameter, and that $B_x := B(x, 2 \mathfrak{r})$. Write $\Sh = \bigcup_{i=1}^N D_i$ as the disjoint union of $\rho$--transverse discs $D_i$, and let $\mathfrak q_{D_i}$ as before. By Lemma \ref{lemma-local-coord}(2), $\mathrm{Lip}(\mathfrak{q}_{D_i}) < 2$.

Assume that $x, \varphi^t (x) \in \Lambda$ for some $|t| < \rho$ with $x \in D_i$ and $\varphi^t (x) \in D_j$. The restrictions $\mathfrak{q}_{D_j}\restriction_{B_x}$ and $\mathfrak{q}_{D_i}\restriction_{B_{\varphi^t (x)}}$ are diffeomorphisms onto their images, and one is the inverse of the other whenever the compositions make sense. When this occurs, we call these restrictions \emph{holonomy maps}.

\begin{lemma}\label{Lemma-map-g}
 Under the above conditions, the holonomy map $\mathfrak{q}_{D_j}\restriction_{B_x}$ is a $2$--bi-Lipschitz $C^{1+\beta}$ diffeomorphism onto its image, and its derivative at every $y \in B_x$ is $\Phi^s\restriction_{N_y}$, where $|s|<\rho$ satisfies $\mathfrak q_{D_j}(y)=\vf^s(y)$. 
\end{lemma}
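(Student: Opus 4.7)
The plan is as follows. Since $\varphi^t(x)\in D_j$ with $|t|<\rho$ and $\mathfrak{r}<\rho$, every $y\in B_x$ lies in the flow box $\varphi^{[-4\rho,4\rho]}D_j$: flowing $y$ for a time of order $\rho$ brings it to $D_j$. Hence the maps $\mathfrak{q}_{D_j}$ and $\mathfrak{t}_{D_j}$ are defined on $B_x$, and Lemma~\ref{lemma-local-coord}(1)--(2) yields that $\mathfrak{q}_{D_j}|_{B_x}$ is $C^{1+\beta}$ with Lipschitz constant strictly smaller than $2$. Applying the same reasoning to $\mathfrak{q}_{D_i}$ on a ball around $\varphi^t(x)$---which, on its image, is an inverse to $\mathfrak{q}_{D_j}|_{B_x}$ by the identity $\mathfrak{q}_{D_i}\circ \mathfrak{q}_{D_j}(y)=y$ whenever both compositions make sense---gives the desired $2$--bi-Lipschitz property.

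For the derivative, fix $y\in B_x$ and let $s=s(y):=-\mathfrak{t}_{D_j}(y)$, so that $|s|<\rho$ and $h(y):=\mathfrak{q}_{D_j}(y)=\varphi^{s(y)}(y)$. Differentiating this identity at $y$ in a direction $v\in T_y\wh\Lambda = N_y$ (where the equality uses Lemma~\ref{Def-of-1-form}(2)) yields
\[
d(\mathfrak{q}_{D_j})_y[v] \;=\; d\varphi^{s}_y[v] \,+\, ds_y[v]\cdot X(h(y)).
\]
Since the left-hand side belongs to $T_{h(y)}D_j=\mathrm{Ker}(\theta_{h(y)})$ and $\theta(X)\equiv 1$, applying $\theta_{h(y)}$ to both sides gives $ds_y[v] = -\theta_{h(y)}\bigl(d\varphi^{s}_y[v]\bigr)$, and substituting back
\[
d(\mathfrak{q}_{D_j})_y[v] \;=\; d\varphi^{s}_y[v] - \theta_{h(y)}\bigl(d\varphi^{s}_y[v]\bigr)\,X(h(y)) \;=\; \mathfrak{p}_{h(y)}\bigl(d\varphi^{s}_y[v]\bigr) \;=\; \Phi^{s}(v),
\]
by the very definition of the induced linear Poincaré flow on $N_y$.

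I do not expect a serious obstacle here: the bi-Lipschitz part is immediate from Lemma~\ref{lemma-local-coord}, and the derivative identification reduces to a one-line computation once the $1$--form $\theta$ is used to split $T_{h(y)}M = N_{h(y)} \oplus \R\cdot X(h(y))$. As a sanity check, when $y\in D_j$ one has $s(y)=0$ and the formula specializes to $d(\mathfrak{q}_{D_j})_y|_{N_y} = \Phi^0|_{N_y} = \mathrm{id}_{N_y} = \mathfrak{p}_y|_{N_y}$, consistent with Lemma~\ref{lemma-poincare-flow}(2).
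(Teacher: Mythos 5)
Your proof is correct and follows essentially the same route as the paper: the bi-Lipschitz claim from Lemma~\ref{lemma-local-coord}(2) applied to $\mathfrak q_{D_j}$ and its inverse holonomy, and the derivative identified as $\mathfrak p_{\varphi^s(y)}\circ d\varphi^s_y\restriction_{N_y}=\Phi^s\restriction_{N_y}$. The only (harmless) difference is that you re-derive the projection identity by differentiating $\varphi^{s(y)}(y)$ and using the $1$--form $\theta$ to eliminate the $ds_y[v]\cdot X$ term, whereas the paper factors $g=\mathfrak q_{D_j}\circ\varphi^s$ and cites Lemma~\ref{lemma-poincare-flow}(2) for $d(\mathfrak q_{D_j})_z=\mathfrak p_z$; the two computations are equivalent.
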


\begin{proof}
When $M$ has dimension three, this is \cite[Lemma 2.3]{BCL23}, and the proof in higher dimension goes without change, as follows.
Write $g=\mathfrak{q}_{D_j}\restriction_{B_x}$. The first statement is direct from Lemma \ref{lemma-local-coord}(2). Now, since $g=\mathfrak{q}_{D_j} \circ \varphi^s$, Lemma \ref{lemma-poincare-flow}(2) implies that $dg_y = d(\mathfrak{q}_{D_j})_{\varphi^s (y)} \circ d\varphi_y^s\restriction_{T_y \Sh} = \mathfrak{p}_{\varphi^s(y)} \circ d \varphi_y^s\restriction_{N_y}= \Phi^s \restriction_{N_y}$.
\end{proof}

We are interested in a particular class of holonomy maps, defined as follows. Let $0 <t, t' < \rho$ such that $f(x) = \varphi^t (x) \in D_j$ and $f^{-1}(x) = \varphi^{-t'}(x) \in D_k$.

\medskip
\noindent
{\sc Holonomy maps:} The \emph{forward holonomy map} at $x$ is $g_x^+ := \mathfrak{q}_{D_j}\restriction_{B_x}$. The \emph{backward holonomy map} at $x$ is $g_x^- := \mathfrak{q}_{D_k} \restriction_{B_x}$.

\medskip
Clearly $(g_x^+)^{-1} = g_{f(x)}^-$. It is important to stress that $g_x^+$ might differ from $f$ and from the Poincaré return map to $\Sh$, e.g. we might have $y\approx x$ with $\vf^{t''}(y)\in\wh\Lambda$ for $0<t''\ll t$, in which case $g_x^+(y)\not= f_{\wh\Lambda}(y)$.

\section{The non-uniformly hyperbolic locus}\label{sec-NUH-locus}

Up to now, we have fixed the following objects: a flow $\varphi$ with positive speed, parameters $\chi,\rho>0$, sections $\Lambda,\wh\Lambda$ and a 1--form $\theta$. In this section, we will:
\begin{enumerate}[(1)]
\item Define the set $\nuh$ of points that exhibit a hyperbolicity of strength at least $\chi$. We fix $\varepsilon>0$ small
enough, and associate to each $x\in \nuh$ a number $Q(x)\in (0,1)$ that approaches zero 
as the quality of the hyperbolicity at $x$ deteriorates.
\item Introduce numbers $q(x)\in [0,1)$, that measure how fast $Q(\varphi^t(x))$
decreases to zero as $|t|\to\infty$.
We also associates analogous number $q^s(x)$ and $q^u(x)$
for future and past orbits.
\item Define the set $\nuh^\#$ of points $x\in\nuh$ whose hyperbolicity satisfies a recurrence property: there is $c(x)>0$ such that
$q(\varphi^t(x))>c(x)$ for some values of $t$ arbitrarily close to $\pm\infty$.
As we will prove, this set carries all $\chi$--hyperbolic measures.
\item Define Pesin charts $\Psi_x$ for each $x\in \Lambda\cap\nuh$.
We then prove that, in Pesin charts, the holonomy maps $g_x^{\pm}$ are close to hyperbolic linear maps.
\end{enumerate}

\subsection{Non-uniformly hyperbolic locus}\label{Def-NUH3}
We define a set with some non-uniform hyperbolicity.

\medskip
\noindent
{\sc Non-uniformly hyperbolic locus $\nuh=\nuh(\vf,\chi,\rho,\theta)$:}  It is the $\vf$--invariant set of points $x\in M$ for which there is a splitting $N_x=N^s_x\oplus N^u_x$ such that:
\begin{enumerate}[(NUH1)]
\item For every $v\in N_x^s$:
$$\liminf_{t \to +\infty} \tfrac{1}{t} \log \| \Phi^{-t}v \| > 0 \ \mbox{ and }\ \limsup_{t \to +\infty} \tfrac{1}{t} \log \| \Phi^{t}v \| \leq-\chi.
$$

\item  For every $w\in N^u_x $:
    $$\liminf_{t \to +\infty} \tfrac{1}{t} \log \| \Phi^{t}w \| > 0 \ \mbox{ and }\ \limsup_{t \to +\infty} \tfrac{1}{t} \log \| \Phi^{-t}w \|\leq -\chi.
$$

\item The parameters $s(x) =\displaystyle\sup_{v \in N^s_x\atop{\|v\|=1}} S(x,v)$  and $u(x) = \displaystyle\sup_{w\in N^u_{x}\atop{\|w\|=1}} U(x,w)$ are finite, where:
\begin{align*}
&S(x,v)^2 = 4e^{2\rho}\int_{0}^ \infty e^{2\chi t } \|  \Phi^{t}v\|^2dt \ \text{ and}\ \
U(x,w)^2 =4e^{2\rho}\int_{0}^ \infty e^{2\chi t } \|  \Phi^{-t}w\|^2 dt.
\end{align*}

\end{enumerate}

\medskip
It is clear that $N^s_x,N^u_x$ satisfying the above assumptions are unique. Recalling that $\chi \in (0,1)$, the estimate before \eqref{Norm of ILPF} gives that
\begin{align}\label{estimate-integral}
\int_0^{\infty}e^{2\chi t}\|\Phi^t v\|^2 dt\geq \int_0^{\infty}e^{2\chi t}e^{-2\rho -2t}dt=
\tfrac{e^{-2\rho}}{2(1-\chi)}>\tfrac{e^{-2\rho}}{2}
\end{align}
for all $v \in N^s_x$ unitary, hence for each $x\in \nuh$ we have $s(x),u(x)\in [\sqrt{2}, \infty)$.

\begin{proposition}\label{Prop-NUH}
    If $\mu$ is a $\chi$--hyperbolic probability measure, then $\mu[\nuh]=1$.
\end{proposition}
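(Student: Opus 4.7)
The plan is to apply the Oseledets multiplicative ergodic theorem to the continuous linear cocycle $\Phi=\{\Phi^t\}_{t\in\R}$ on the bundle $N\to M$ over $(M,\varphi,\mu)$. By \eqref{Norm of ILPF} the cocycle is uniformly bounded, so the integrability hypothesis is trivially satisfied. Oseledets then yields, for $\mu$-a.e.\ $x\in M$, a splitting $N_x=\bigoplus_{i=1}^{k(x)}E^i_x$ with exponents $\lambda_1(x)<\cdots<\lambda_{k(x)}(x)$ such that
\[
\lim_{t\to\pm\infty}\tfrac{1}{t}\log\|\Phi^tv\|=\lambda_i(x),\qquad\forall v\in E^i_x\setminus\{0\}.
\]
Because $\Phi^t$ differs from $d\varphi^t|_{N_x}$ only by post-composition with the uniformly bounded projection $\mathfrak{p}$, and because $\langle X\rangle$ is $d\varphi$-invariant with zero Lyapunov exponent, the $\lambda_i(x)$ coincide with the Lyapunov exponents of $d\varphi$ in directions transverse to $X$. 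Thus $\chi$-hyperbolicity of $\mu$ gives $|\lambda_i(x)|>\chi$ for every $i$, $\mu$-a.e.\ $x$.

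I would then set $N^s_x:=\bigoplus_{\lambda_i(x)<-\chi}E^i_x$ and $N^u_x:=\bigoplus_{\lambda_i(x)>\chi}E^i_x$. Since no exponent lies in $[-\chi,\chi]$ we have $N_x=N^s_x\oplus N^u_x$ and both summands are $\Phi$-invariant. For (NUH1), Oseledets provides, for each $\varepsilon>0$, a constant $C(x,\varepsilon)>0$ with the uniform bound
\[
\|\Phi^t|_{N^s_x}\|\leq C(x,\varepsilon)\,e^{(\lambda^s_{\max}(x)+\varepsilon)t},\qquad t\geq 0,
\]
where $\lambda^s_{\max}(x):=\max\{\lambda_i(x):\lambda_i(x)<-\chi\}$. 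Taking $\varepsilon$ with $\lambda^s_{\max}(x)+\varepsilon<-\chi$ yields $\limsup_{t\to+\infty}\tfrac{1}{t}\log\|\Phi^tv\|\leq -\chi$ for every $v\in N^s_x$. For the backward direction, the analogous Oseledets lower bound on $\|\Phi^{-t}v\|$ (dominated by $-\lambda^s_{\min}(x)>\chi$) gives $\liminf_{t\to+\infty}\tfrac{1}{t}\log\|\Phi^{-t}v\|>0$. Condition (NUH2) is obtained symmetrically, applied to $\Phi^{-t}$ restricted to $N^u_x$.

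For (NUH3), which is what the definition was set up to control, the same $\varepsilon$-uniform Oseledets bound shows that, for any unit $v\in N^s_x$,
\[
S(x,v)^2 \leq 4e^{2\rho}\,C(x,\varepsilon)^2\int_0^\infty e^{2(\chi+\lambda^s_{\max}(x)+\varepsilon)t}\,dt \;=\; \frac{2e^{2\rho}\,C(x,\varepsilon)^2}{-(\chi+\lambda^s_{\max}(x)+\varepsilon)}<\infty,
\]
with a bound independent of $v$, giving $s(x)<\infty$; the symmetric estimate using $\Phi^{-t}$ on $N^u_x$ yields $u(x)<\infty$.

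The main subtle point is the identification of the Oseledets exponents of the cocycle $\Phi$ (built with the parallel projection onto $\ker\theta$) with the Lyapunov exponents used to define $\chi$-hyperbolicity, which are usually formulated via the orthogonal complement of $X$. Any two continuous complements of $\langle X\rangle$ in $TM$ yield continuously conjugate cocycles with uniformly bounded conjugacy, so their Oseledets exponents agree on a full-measure set; I would record this reduction once at the beginning. The remainder is a careful bookkeeping of the Oseledets bounds, with the only nontrivial input being the uniformity over unit vectors in $N^s_x$, which is built into the standard form of the theorem.
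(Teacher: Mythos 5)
Your route is viable and genuinely different from the paper's: you apply Oseledets directly to the induced linear Poincar\'e cocycle $\Phi$ on $N$ and then match its spectrum with the transverse spectrum of $d\vf$, whereas the paper applies Oseledets to $d\vf$ on $TM$ and transports the Oseledets subspaces $E^{s/u}_x$ into $N_x$ by the projection parallel to $X_x$, verifying (NUH1)--(NUH3) for the images by an explicit computation. Your version has the advantage that the two-sided limits and the $\varepsilon$-uniform bounds needed for (NUH3) come packaged with the Oseledets theorem for $\Phi$ itself (and you are in fact slightly more careful than the paper in deducing $s(x)<\infty$ from a uniform bound over unit vectors rather than from pointwise limits); the paper's version has the advantage that the relation between the two spectra is proved rather than quoted.

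The one step I would not accept as written is the identification of the exponents of $\Phi$ with the Lyapunov exponents of $d\vf$ transverse to $X$. The justification ``$\Phi^t$ differs from $d\vf^t|_{N_x}$ only by post-composition with the uniformly bounded projection $\mathfrak p$'' gives only $\|\Phi^t v\|\leq(1+\rho)\|d\vf^t v\|$, hence an upper bound on exponents; it does not give the lower bound, because $\mathfrak p_{\vf^t(x)}$ can contract a vector of $d\vf^t(N_x)$ by a factor comparable to $\sin\angle(d\vf^t v, X_{\vf^t(x)})$, and nothing a priori prevents this angle from decaying exponentially. The missing input is precisely the Oseledets regularity $\lim_{t\to\pm\infty}\tfrac1t\log|\sin\angle(E^{s/u}_{\vf^t(x)},X_{\vf^t(x)})|=0$ (equivalently, the standard fact that the spectrum of a cocycle is the union of the spectra of an invariant subbundle and of the quotient, applied to $\langle X\rangle\subset TM$). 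This is exactly what the paper's proof supplies through the scalars $\gamma_t=\pm e^{\pm4\rho}/\sin\angle(X_{\vf^t(x)},e^s_{x,t})$ and the identity $\|\Phi^t n^s_x\|=\tfrac{|\gamma|}{|\gamma_t|}\|d\vf^t e^s_x\|$. Your closing remark about comparing two continuous complements of $\langle X\rangle$ addresses a different and easier point (replacing $X^\perp$ by $\ker\theta$, where the conjugacy is uniformly bi-bounded); it does not cover the passage from $TM$ to the transverse bundle. With that one justification repaired, the rest of your argument goes through.
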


\begin{proof}
The proof is adapted from \cite[Lemma 3.1]{BCL23}.
Fix a $\chi$--hyperbolic measure $\mu$. By the Oseledets theorem, there is a set $X\subset M$
with $\mu[X]=1$ such that for all $x\in X$ there is a $d\vf$--invariant decomposition $E^s_x\oplus \langle X_x\rangle\oplus E^u_x=T_xM$ satisfying:
\begin{enumerate}[(1)]
\item $\lim\limits_{t \to \pm \infty} 
\frac{1}{t} \log \| d\varphi^t v^s \| < -\chi$ and $\lim\limits_{t \to \pm \infty} \frac{1}{t} \log \| d\varphi^t v^u \| > \chi$ for all $v^{s/u}\in E^{s/u}_x\setminus\{0\}$.
\item $\lim\limits_{t \to \pm \infty} \frac{1}{t} \log |\sin \angle (E_{\varphi^t(x)}^{s/u}, X_{\varphi^t(x)})| = 0$.
\end{enumerate}
Let $P=P_x$ be the projection to $N_x$ parallel to $X_x$, and define
$N^{s/u}_x=P(E^{s/u}_x)$. We claim that for all $x\in X$ these subspaces satisfy conditions (NUH1)--(NUH3). Once this is proved, it follows that $X\subset\nuh$, and so $\mu[\nuh]=1$. By symmetry, we just need to check conditions (NUH1)--(NUH3) for $N^s_x$. For that, it is enough to prove that if $x\in X$ then
$$\lim_{t\to\pm\infty}\tfrac{1}{t}\log\|\Phi^tn^s_x\|<-\chi
$$
for all $n^s_x\in N^s_x\backslash\{0\}$ (this automatically implies $s(x)<\infty$). We prove this in the sequel.

Fix $x\in X$ and $e^s_x\in E^s_x$ unitary. Let $n^s_x$ be a unitary vector parallel to $P(e^s_x)$. There are scalars 
$\gamma=\gamma(e^s_x)\neq 0$, $\delta=\delta(e^s_x)$ such that
\begin{equation}\label{definition-normal-vectors}
n^s_x=\gamma e^s_x+\delta X_x.
\end{equation}
In the case that $X_x\perp N_x$, we have 
$\gamma=\pm\tfrac{1}{\sin\angle (X_x,e^{s}_x)}$. In the general case, since by construction $\angle(X_x,N_x^\perp)<\rho$ (see Lemma \ref{Def-of-1-form}), we get that $\gamma=\pm\tfrac{e^{\pm 4\rho}}{\sin\angle (X_x,e^{s}_x)}$. Writing $e^s_{x,t}=\tfrac{d\vf^t e^s_x}{\|d\vf^t e^s_x\|}$ and defining $n^s_{x,t}$ as the unitary vector parallel to $P_{\vf^t(x)}(e^s_{x,t})$, we can similarly define $\gamma_t=\gamma(e^s_{x,t})\neq 0, \delta_t=\delta(e^s_{x,t})$ and obtain that 
$\gamma_t=\pm\tfrac{e^{\pm 4\rho}}{\sin\angle (X_{\vf^t(x)},e^{s}_{x,t})}$. Hence condition (2) implies that
$$
\lim\limits_{t\to\pm\infty}\tfrac{1}{t}\log|\gamma_t|=0.
$$

We claim that
$\|\Phi^tn^s_x\|=\tfrac{|\gamma|}{|\gamma_t|}\|d\vf^t e^s_x\|$ for all $x\in X$.
By (\ref{definition-normal-vectors}),
\begin{align*}
d\vf^t n^s_x=d\vf^t[\gamma e^s_x+\delta X_x]=
\gamma\|d\vf^t e^s_x\|e^s_{x,t}+\delta X_{\vf^t(x)}
\end{align*}
and so
\begin{align*}
&\ \Phi^tn^s_x={\mathfrak p}_{\vf^t(x)}[\gamma\|d\vf^t e^s_x\|e^s_{x,t}+\delta X_{\vf^t(x)}]=
\gamma\|d\vf^t e^s_x\|{\mathfrak p}_{\vf^t(x)}(e^s_{x,t})\\
&=\gamma\|d\vf^t e^s_x\|{\mathfrak p}_{\vf^t(x)}\left[\tfrac{1}{\gamma_t} n^s_{x,t}-
\tfrac{\delta_t}{\gamma_t}X_{\vf^t(x)}\right]=
\tfrac{\gamma}{\gamma_t}\|d\vf^t e^s_x\|n^s_{x,t}.
\end{align*}
Taking norms, we obtain that $\|\Phi^tn^s_x\|=\tfrac{|\gamma|}{|\gamma_t|}\|d\vf^t e^s_x\|$.
Hence
$$\lim_{t\to\pm\infty}\tfrac{1}{t}\log\|\Phi^tn^s_x\|
=\lim_{t\to\pm\infty}\tfrac{1}{t}\log\|d\vf^t e^s_x\|<-\chi.
$$
The proof is now complete.
\end{proof}

\subsection{Oseledets-Pesin reduction.}\label{Section-reduction}

In this section we construct a diagonalization for the restriction of the cocycle $\Phi$ to the set $\nuh$. We begin with the following definition.

\medskip
\noindent
{\sc Lyapunov inner product:} For each $x\in\nuh$, define an inner product $\llangle \cdot,\cdot\rrangle$ on $N_x$, which we
call {\em Lyapunov inner product}, by the following identities:

\begin{enumerate}[$\circ$]
    \item  for $v_1,v_2\in N^s_x$:
$$\llangle v_1,v_2\rrangle =4 e^{2\rho}\int_0^\infty e^{2\chi t}\langle \Phi^tv_1,\Phi^t v_2\rangle dt.$$
    \item  for $v_1,v_2\in N^u_x$:
$$\llangle v_1,v_2\rrangle =4 e^{2\rho}\int_0^\infty e^{2\chi t}\langle \Phi^{-t}v_1,\Phi^{-t} v_2\rangle dt.$$
\item for $v_1\in N^s_x$ and $v_2 \in N^u_x$: $\llangle v_1,v_2 \rrangle = 0$.
\end{enumerate}

\medskip
By conditions (NUH1)--(NUH3), the above integrals are finite. Let $\vertiii{\cdot}$ denote the norm induced by $\llangle\cdot,\cdot\rrangle$. Since $\vertiii{\cdot}$ uniquely defines $\llangle\cdot,\cdot\rrangle$, we call $\vertiii{\cdot}$ the {\em Lyapunov norm}. There is a relation between this norm and the Riemannian norm of $M$: if $v\in N^{s/u}$ then by (\ref{estimate-integral}) we have
$$
\vertiii{v}^2=4 e^{2\rho}\int_0^\infty e^{2\chi t}\|\Phi^{\pm t} v\|^2dt
\geq 4 e^{2\rho}\int_0^\infty e^{2\chi t}e^{-2\rho-2 |t|}\|v\|^2dt
=\tfrac{2}{1-\chi}\|v\|^2\cdot
$$
In particular, $\vertiii{v}^2\geq 2\|v\|^2$.

For $x \in \nuh$, let $d_s(x), d_u(x) \in \mathbb{N}$ be the dimensions of $N^s_{x}$, $N^u_{x}$ respectively. Since the splitting $N^s \oplus N^u$ is $\Phi$--invariant, the functions $d_s$, $d_u$ are $\vf$--invariant. In the following, we denote the canonical inner product in $\mathbb{R}^d$ by $\langle \cdot, \cdot \rangle_{\mathbb{R}^d}$.

\medskip
\noindent
{\sc Linear map $C(x)$:} For $x \in \nuh$, define $C(x):\mathbb{R}^d \to N_x$ as a linear map satisfying the following conditions:
\begin{enumerate}[$\circ$]
    \item $C(x)$ sends the subspace $\mathbb{R}^{d_s(x)} \times \{0\}$ to $N^s_{x}$ and $\{0\} \times \mathbb{R}^{d_u(x)}$ to $N^u_{x}$.
    \item $\langle v, w \rangle_{\mathbb{R}^d}=\llangle C(x)v, C(x)w \rrangle$ for all $v, w \in \mathbb{R}^d$, i.e. $C(x)$ is an isometry between $(\mathbb{R}^n, \langle\cdot, \cdot\rangle_{\mathbb{R}^d})$ and $(N_x, \llangle\cdot, \cdot\rrangle)$.
\end{enumerate}

\medskip
The map $C(x)$ is not uniquely defined (for instance, rotations inside $N^s_{x}$, $N^u_{x}$ preserve the above properties), but we can define it such that $x \in \nuh \mapsto C(x)$ is a measurable map, see e.g. \cite[Footnote at page 48]{O18}. Below we list the main properties of $C(x)$.

\begin{lemma}\label{Lemma-linear-reduction}
    The following holds for all $x \in\nuh$.
\begin{enumerate}[{\rm (1)}]
    \item $\|C(x)\|\leq 1$ and 
\[
\|C(x)^{-1}\|=\sup_{v \in N_x\setminus\{0\}} \frac{\vertiii{v}}{\|v\|}=
\sup_{v^s+v^u\in N^s_x \oplus N^u_x\atop{\|v^s + v^u\|}\neq 0} \frac{\sqrt{\vertiii{v^s}^2 + \vertiii{v^u}^2}}{\|v^s +v^u\|}\cdot
\]
    \item {\sc Oseledets-Pesin reduction:} For all $0 \leq t \leq 2 \rho$, the map $D(x,t)=C(\vf^t(x))^{-1} \circ \Phi^t \circ C(x)$ has the block form
$$
\begin{bmatrix}
    D_s (x,t) && \\ && D_u (x,t)
\end{bmatrix}$$
where $D_s(x,t)$ is a $d_s(x)\times d_s(x)$ matrix
with $e^{-4 \rho} < \|D_s(x,t)v_1\|< e^{-\chi t}$ for all unit vectors $v_1\in \R^{d_s(x)}$,
and  $D_u(x,t)$ is a $d_u(x)\times d_u(x)$ matrix with
$e^{\chi t}<\|D_u(x,t)v_2\|<e^{4 \rho}$ for all unit vectors $v_2\in \R^{d_u(x)}$.
    \item $\frac{\|C(\varphi^t(x))^{-1}\|}{\|C(x)^{-1}\|} = e^{\pm 2(\rho + |t|)}$ for all $t\in\R$; in particular, $\frac{\|C(\varphi^t(x))^{-1}\|}{\|C(x)^{-1}\|} =e^{\pm 6 \rho}$ for all $|t|\leq 2\rho$. 
\end{enumerate}
\end{lemma}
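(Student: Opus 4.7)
I start by upgrading the bound $\vertiii{v}^2\geq 2\|v\|^2$ of (3.2) (which holds when $v\in N^s_x$ or $v\in N^u_x$) to a bound valid on all of $N_x$. Writing $v=v^s+v^u$ and using the Lyapunov-orthogonality $\llangle N^s_x,N^u_x\rrangle=0$ (which is part of the definition of $\llangle\cdot,\cdot\rrangle$), one has $\vertiii{v}^2=\vertiii{v^s}^2+\vertiii{v^u}^2\geq 2(\|v^s\|^2+\|v^u\|^2)$, and the elementary inequality $\|v^s+v^u\|^2\leq 2(\|v^s\|^2+\|v^u\|^2)$ yields $\vertiii{v}\geq\|v\|$. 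Since $C(x)$ is by construction an isometry from $(\R^d,\langle\cdot,\cdot\rangle_{\R^d})$ onto $(N_x,\llangle\cdot,\cdot\rrangle)$, this directly gives $\|C(x)\|\leq 1$. The two equalities for $\|C(x)^{-1}\|$ follow by dualizing the isometry (first equality) and by applying once more the Lyapunov-orthogonality of $v^s+v^u$ (second equality).

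\textbf{Part (2).} The block-diagonal form is immediate, since $\Phi^t$ preserves the splitting $N^s\oplus N^u$ (so $d_s$ and $d_u$ are $\varphi$-invariant) and $C$ maps the canonical factors of $\R^d$ onto the stable and unstable parts. For the size estimate on the stable block, take a unit $v_1\in\R^{d_s(x)}$ and set $w:=C(x)v_1\in N^s_x$ (unit in $\vertiii{\cdot}_x$); the change of variable $u=s+t$ in the defining integral gives
$$
\vertiii{\Phi^t w}_{\varphi^t(x)}^2=e^{-2\chi t}\Bigl(1-4e^{2\rho}\int_0^t e^{2\chi u}\|\Phi^u w\|^2\,du\Bigr),
$$
so $\|D_s(x,t)v_1\|<e^{-\chi t}$ because the subtracted integral is strictly positive for $w\neq 0$. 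For the lower bound, the crude inequality $\|\Phi^{s+t}w\|\geq e^{-(\rho+t)}\|\Phi^s w\|$ (coming from $\|\Phi^{-t}\|\leq e^{\rho+t}$) inserted inside the same integral produces $\vertiii{\Phi^t w}^2\geq e^{-2(\rho+t)}\vertiii{w}^2$, and so $\|D_s(x,t)v_1\|\geq e^{-3\rho}>e^{-4\rho}$ on the range $t\in[0,2\rho]$. The symmetric substitution $u=s-t$ for $w\in N^u_x$ yields $\vertiii{\Phi^t w}^2=e^{2\chi t}(1+B)$ with $B\geq 0$, which gives the lower bound $>e^{\chi t}$; factoring $\Phi^{t-s}=\Phi^t\circ\Phi^{-s}$ and using $\|\Phi^t\|\leq e^{\rho+t}$ gives the matching upper bound $\vertiii{\Phi^t w}\leq e^{\rho+t}\leq e^{3\rho}<e^{4\rho}$.

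\textbf{Part (3).} The factorization trick used in the unstable upper bound works uniformly in both subspaces and for all $t\in\R$: writing $\|\Phi^{s+t}w\|\leq e^{\rho+|t|}\|\Phi^s w\|$ and $\|\Phi^{t-s}w\|\leq e^{\rho+|t|}\|\Phi^{-s}w\|$ in the stable and unstable Lyapunov integrals gives $\vertiii{\Phi^t w^\sigma}_{\varphi^t(x)}\leq e^{\rho+|t|}\vertiii{w^\sigma}_x$ for $\sigma\in\{s,u\}$, and Lyapunov-orthogonality promotes this to
$$
\vertiii{\Phi^t w}_{\varphi^t(x)}\leq e^{\rho+|t|}\vertiii{w}_x,\qquad w\in N_x,\ t\in\R.
$$
Combining with $\|\Phi^{-t}\|\leq e^{\rho+|t|}$ and the definition of $\|C(x)^{-1}\|$, every $v\in N_{\varphi^t(x)}$ satisfies
$$
\vertiii{v}_{\varphi^t(x)}=\vertiii{\Phi^t(\Phi^{-t}v)}_{\varphi^t(x)}\leq e^{\rho+|t|}\vertiii{\Phi^{-t}v}_x\leq e^{2(\rho+|t|)}\|C(x)^{-1}\|\,\|v\|,
$$
hence $\|C(\varphi^t(x))^{-1}\|\leq e^{2(\rho+|t|)}\|C(x)^{-1}\|$. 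The reverse inequality comes from exchanging $x$ with $\varphi^t(x)$ (equivalently, running the same argument with $\Phi^{-t}$), and the particular case $|t|\leq 2\rho$ recovers the factor $e^{\pm 6\rho}$.

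\textbf{Main obstacle.} None of the three parts is conceptually delicate; the proof is bookkeeping of two simple identities (the change of variable in the Lyapunov integral and the flow property $\Phi^{s+t}=\Phi^t\circ\Phi^s$) together with $\llangle N^s,N^u\rrangle=0$. The only point requiring care is to resist using the sharp dynamical decay coming from (NUH1)--(NUH2), and instead use the crude bound $\|\Phi^t\|\leq e^{\rho+|t|}$ from (2.1) in the ``wrong'' subspace: this is what keeps the constants clean (independent of how close $\chi$ is to $1$) and what makes the uniform estimate in part (3) possible.
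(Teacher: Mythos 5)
Your proof is correct and follows essentially the same route as the paper: Lyapunov-orthogonality of the splitting for part (1), the change of variable $s\mapsto s+t$ in the Lyapunov integral for the sharp bounds $e^{\mp\chi t}$ in part (2), and the crude two-sided estimate $\vertiii{\Phi^t v}=e^{\pm(\rho+|t|)}\vertiii{v}$ (which is exactly the paper's Claim 2) for part (3). The only divergence is the lower bound on the stable block in part (2): the paper decomposes $\vertiii{v^s}^2$ into a geometric series of integrals over $[jt,(j+1)t]$ to obtain the sharper function $\kappa(t)=e^{-\chi t}\bigl[1-e^{2\rho}(1-e^{-2(1-\chi)t})\bigr]^{1/2}$, whereas you use the cruder bound $\|\Phi^{s+t}w\|\geq e^{-(\rho+t)}\|\Phi^s w\|$; since the lemma only asserts the conclusion for $t\in[0,2\rho]$, your estimate $e^{-(\rho+t)}\geq e^{-3\rho}>e^{-4\rho}$ suffices and is slightly more elementary, at the cost of losing the sharper $\kappa(t)$ valid for all $t\geq 0$.
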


The proof is in Appendix \ref{Appendix-proofs}. Part (2) is known as Oseledets-Pesin reduction, and constitutes a diagonalization of $\Phi$.

Observe that within ${\rm NUH}$, we have defined a Lyapunov inner product that depends on a parameter $\chi > 0$. This construction is made possible by conditions (NUH1)--(NUH3). For any $\chi' < \chi$, we can carry out a similar construction inside the larger set ${\rm NUH}{\chi'}$, which produces a different Lyapunov inner product, denoted by $\vertiii{\cdot}_{\chi'}$. Since ${\rm NUH} \subset {\rm NUH}_{\chi'}$, the inner product $\vertiii{\cdot}_{\chi'}$ is also defined on ${\rm NUH}$. Note that $\vertiii{\cdot}_{\chi'} \leq \vertiii{\cdot}$, which means that $\vertiii{\cdot}$ induces a stronger norm than $\vertiii{\cdot}_{\chi'}$. For each $ x \in {\rm NUH}_{\chi'}$, we define a linear map $C_{\chi'}( x)$ associated with the inner product $\vertiii{\cdot}_{\chi'}$. The map $C_{\chi'}(x)$ satisfies a version of Lemma \ref{Lemma-linear-reduction} adapted to the weaker exponent $\chi'$, involving a corresponding block matrix $D_{\chi'}(x,t)$. Observe that $D(x,t)$ exhibits stronger hyperbolicity rates than $D_{\chi'}(x,t)$. Moreover, since $\vertiii{\cdot}_{\chi'} \leq \vertiii{\cdot}$, it follows from part (1) that
$\|C_{\chi'}( x)^{-1}\|\leq \|C( x)^{-1}\|$.

The Lyapunov inner product $\vertiii{\cdot}$ will be used throughout most of the paper. However, there is a subtle point where it becomes necessary to work with the weaker inner product $\vertiii{\cdot}_{\chi'}$ (see Section \ref{Section-finite}), following an approach successfully applied by Ben Ovadia \cite{Ova20}.

\subsection{ Quantification of hyperbolicity: the parameters $ Q(x), q(x), q^{s/u}(x) $}

We now introduce another positive parameter $\ve \ll \rho,\chi$ (how small $\ve$ is in comparison to $\rho,\chi$ depends on a finite number of inequalities).

\medskip
\noindent
{\sc Parameter $Q(x)$:} For $x\in\nuh$, let
$$
Q(x)= \ve^{6/\beta}\|C( x)^{-1}\|^{-48/\beta}.
$$
\medskip

This parameter depends on $\ve > 0$, but for simplicity, we will omit this dependence from the notation. It is important to note that most of the results remain valid as long as $\ve > 0$ is sufficiently small. The term $\ve^{6/\beta}$ helps absorb multiplicative constants, while $\|C( x)^{-1}\|$ reflects the rate of hyperbolicity. The longer it takes for hyperbolic behavior to manifest, the larger this quantity becomes.
By Lemma \ref{Lemma-linear-reduction}(3), we have
\begin{equation}\label{Relation-Q(x)}
    \frac{Q(\varphi^t(x))}{Q(x)} = e^{\pm\frac{288\rho}{\beta}}, \ \ \forall x \in \text{NUH}, \forall\, 0 \leq t \leq 2\rho.
\end{equation}

Moreover , we have the following bounds for $Q( x)$
for $\ve>0$ small enough:
\begin{align}\label{estimates-Q}
\begin{array}{l}
Q( x)\leq \ve^{6/\beta}, \ \|C( x)^{-1}\|Q( x)^{\beta/48}\leq \ve^{1/8},
\  \|C( f( x))^{-1}\|Q( x)^{\beta/12}\leq \ve^{1/4}.
\end{array}
\end{align}

One could also define a parameter $Q_{\chi'}(x)$ by replacing $\|C( x)^{-1}\|$ with $\|C_{\chi'}( x)^{-1}\|$. In this case, for any $ x \in {\rm NUH}$ and $\chi' < \chi$, we have the inequality $Q( x) \leq Q_{\chi'}( x)$. However, since all our estimates will be carried out using the smaller value $Q( x)$, the parameter $Q_{\chi'}$ plays no essential role in this paper.

In order to have a better dynamical understanding of trajectories, we focus on the orbits of $\nuh$ with some recurrence with respect to the parameter $Q$. Having that in mind, we introduce the following additional parameters.

\medskip
\noindent
{\sc Parameters $q(x), q^s(x), q^u(x) $:}
For $x \in \nuh$, define:
\begin{align*}
q(x) &:= \ve \inf \{ e^{\ve |t|} Q(\varphi^t(x)) : t \in \mathbb{R} \},\\
q^s(x)&:= \ve \inf \{ e^{\ve |t|} Q(\varphi^t(x)) : t \geq 0 \},\\
q^u(x)&:= \ve \inf \{ e^{\ve |t|} Q(\varphi^t(x)) : t \leq 0 \}.
\end{align*}

\medskip
We have $0 \leq q(x), q^s(x), q^u(x) \leq \ve Q(x)$, and so these parameters are much smaller than $Q(x)$ itself. Furthermore, $q^s(x) \land q^u(x) = q(x)$. The families $\{ q^s(\varphi^t(x)) \}_{t \in \mathbb{R}}$ and $\{ q^u(\varphi^t(x)) \}_{t \in \mathbb{R}}$ represent {\em local quantifications of hyperbolicity} for the invariant directions along the orbit $\{ \varphi^t(x) \}_{t \in \mathbb{R}}$. The following lemma states a slow variation property of $q$. Its proof may be found in \cite[Lemma 3.4]{BCL23}.

\begin{lemma}\label{Lemma-q}
For all $x\in\nuh$ and $t\in \R$, it holds $q(\varphi^t(x)) = e^{\pm \ve |t|} q(x)$.
\end{lemma}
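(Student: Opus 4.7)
The plan is to reduce the claim to a direct application of the triangle inequality, after a change of variables in the infimum defining $q$. Specifically, unwinding the definition
\[
q(\vf^t(x)) = \ve \inf_{s\in\R} e^{\ve|s|}\, Q(\vf^{s+t}(x)),
\]
and substituting $u = s+t$, I can rewrite this as
\[
q(\vf^t(x)) = \ve \inf_{u\in\R} e^{\ve|u-t|}\, Q(\vf^{u}(x)).
\]
Now $q(x) = \ve \inf_{u\in\R} e^{\ve|u|} Q(\vf^u(x))$ is an infimum of the same family of quantities $Q(\vf^u(x))$, only weighted by $e^{\ve|u|}$ instead of $e^{\ve|u-t|}$.

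Next I would compare the two weights pointwise using $|u-t| \leq |u| + |t|$ and $|u| \leq |u-t| + |t|$. From the first inequality, $e^{\ve|u-t|} \leq e^{\ve|t|} e^{\ve|u|}$ for every $u$, hence taking the infimum in $u$ gives $q(\vf^t(x)) \leq e^{\ve|t|} q(x)$. From the second inequality, $e^{\ve|u|} \leq e^{\ve|t|} e^{\ve|u-t|}$, again valid for every $u$, so taking the infimum yields $q(x) \leq e^{\ve|t|} q(\vf^t(x))$, i.e. $q(\vf^t(x)) \geq e^{-\ve|t|} q(x)$. Combining the two bounds produces $q(\vf^t(x)) = e^{\pm \ve|t|} q(x)$, which is the claim.

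There is no genuine obstacle here: once the change of variables is performed, the statement is purely a property of the weight $e^{\ve|\cdot|}$ under translation, and does not involve any of the dynamical content of $Q$ or the regularity provided by \eqref{Relation-Q(x)}. The analogous statements for $q^s$ and $q^u$ are not needed and in fact would not hold with this argument, since those infima are one-sided; this is why Lemma \ref{Lemma-q} is stated only for $q$.
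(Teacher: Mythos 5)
Your proof is correct: after the change of variables the two-sided bound follows from the triangle inequality applied to the weight $e^{\ve|\cdot|}$, which is exactly the standard argument (the paper itself defers the proof to \cite[Lemma 3.4]{BCL23}, where the same reduction is used). Your closing remark that the one-sided infima defining $q^{s/u}$ do not admit this argument is also accurate.
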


\subsection{ The recurrently non-uniformly hyperbolic locus $\nuh^\#$ }\mbox{}

\medskip
\noindent
{\sc Recurrently non-uniformly hyperbolic locus $ \nuh^\# = \nuh^\#(\varphi, \chi, \rho, \theta, \ve) $:} It is the invariant set of points \( x \in \text{NUH} \) such that:
\begin{enumerate}[(NUH3)]
    \item[(NUH4)] $q(x) > 0 $.
    \item[(NUH5)] $\limsup\limits_{t \to +\infty} q(\varphi^t(x)) > 0  \mbox{ and }  \limsup\limits_{t \to -\infty} q(\varphi^t(x)) > 0 .$
\end{enumerate}

Under condition (NUH4), Lemma \ref{Lemma-q} implies that $q(\varphi^t(x))>0$ for all $t \in \mathbb{R}$, and condition (NUH5) requires that these values do not degenerate to zero in the limit. Similarly to $\nuh$, the set $\nuh^\#$ carries all $\chi$--hyperbolic measures.

\begin{proposition}\label{Prop-adaptedness}
     If $\mu$ is a $\varphi$--invariant probability measure with $\mu[\nuh] = 1$, then $\mu[\nuh^\#]=1$. In particular, if $\mu$ is $\chi$--hyperbolic then $\mu[\nuh^\#]=1$.
\end{proposition}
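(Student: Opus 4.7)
\smallskip\noindent\emph{Proof plan.} The ``in particular'' clause will follow immediately from Proposition~\ref{Prop-NUH}, so I would only treat a $\vf$--invariant probability $\mu$ with $\mu[\nuh]=1$. Since both $\nuh$ and $\nuh^\#$ are $\vf$--invariant, I would pass to the ergodic decomposition and assume $\mu$ is ergodic, then verify~(NUH4) and~(NUH5) in turn.

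For~(NUH4), rewriting in terms of $C(x)$, the condition $q(x)>0$ is equivalent to
\[
\limsup_{|t|\to\infty}\tfrac{1}{|t|}\log\|C(\vf^t(x))^{-1}\|<\tfrac{\beta\ve}{48},
\]
so the plan is to establish the stronger temperedness $\tfrac{1}{|t|}\log\|C(\vf^t(x))^{-1}\|\to 0$ for $\mu$--a.e.\ $x$. This is the classical output of Oseledets--Pesin theory for the Lyapunov reduction constructed in Section~\ref{Section-reduction}: since $\|C(x)^{-1}\|$ is controlled by the $\mu$--a.e.\ finite quantities $s(x),u(x)$ from~(NUH3), I would stratify $\nuh$ by the Pesin blocks $K_n:=\{\|C^{-1}\|\le n\}$ and combine Poincar\'e recurrence on each $K_n$ with the between-visits growth bound from Lemma~\ref{Lemma-linear-reduction}(3); this yields sub-exponential growth at integer times, and continuous time then follows from the slow-variation estimate~\eqref{Relation-Q(x)}. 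Consequently $e^{\ve|t|}Q(\vf^t(x))\to+\infty$, so the infimum defining $q(x)$ is attained on a bounded set of times and is strictly positive.

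For~(NUH5), I would set $A_N:=\{q\ge 1/N\}$. By~(NUH4), $A_N\uparrow\{q>0\}$ has full $\mu$--measure, so I can choose $N$ with $\mu[A_N]>0$. Poincar\'e recurrence applied to $\vf^1$ and $A_N$ produces, for $\mu$--a.e.\ $x\in A_N$, integer sequences $k_j\to\pm\infty$ with $\vf^{k_j}(x)\in A_N$; Lemma~\ref{Lemma-q} then gives $q(\vf^t(x))\ge e^{-\ve}/N$ whenever $|t-k_j|\le 1$, so $\limsup_{t\to\pm\infty}q(\vf^t(x))\ge e^{-\ve}/N>0$ on a set of positive $\mu$--measure. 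Ergodicity upgrades this $\vf$--invariant conclusion to full $\mu$--measure; in the non-ergodic case I would repeat the argument in each ergodic component with a component-dependent $N$.

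The main obstacle I anticipate is the temperedness step in~(NUH4); the remaining arguments reduce to formal applications of Poincar\'e recurrence and Lemma~\ref{Lemma-q}. Temperedness itself is a well-known feature of the Oseledets--Pesin reduction: the specific structure of the Lyapunov inner product encodes the exact stable/unstable rates from~(NUH1)--(NUH3) and thereby reduces the orbital growth of $\|C^{-1}\|$ to a $\mu$--a.e.\ finite quantity with sub-exponential variation along typical orbits.
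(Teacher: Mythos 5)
Your proposal is correct and follows essentially the same route as the paper, which simply defers to the proof of \cite[Proposition 3.5]{BCL23}: (NUH4) is obtained from temperedness of $\|C(\vf^t(x))^{-1}\|$ along $\mu$--a.e.\ orbit (Pesin blocks, the bounded-increment estimate of Lemma~\ref{Lemma-linear-reduction}(3), and the ergodic theorem), and (NUH5) from two-sided Poincar\'e recurrence on the sets $\{q\ge 1/N\}$ together with Lemma~\ref{Lemma-q}. The one nuance to keep in mind is that the temperedness step requires the positive asymptotic frequency of visits to a Pesin block (Birkhoff/Kac), not merely recurrence, since the between-visit growth rate supplied by Lemma~\ref{Lemma-linear-reduction}(3) is of order $2$ rather than $o(1)$.
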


The proof is the same of \cite[Proposition 3.5]{BCL23}.

\subsection{ The $\mathbb{Z}$--indexed versions of $q^{s/u}(x)$: the parameters $ p^{s/u}(x) $}\label{section-Z-indexed}

To simplify the analysis of $q^{s/u}(x)$, we define a discrete time approximation of these numbers, which we call $ \mathbb{Z}$--indexed versions of $q^{s/u}(x)$. The benefit of working with the $\Z$--indexed versions is that they satisfy explicit recursive formulas.   Recall that we have fixed $\Lambda$ a proper section of size $\rho/2$ with Poincaré return time denoted by $r_\Lambda$. In particular, $0 < \inf(r_\Lambda) \leq \sup(r_\Lambda) \leq \rho/2 $.

\medskip
\noindent
{\sc $\mathbb{Z}$--indexed versions of $q^s,q^u$:} Let $ x \in \nuh$. For each sequence $ \Tau= \{t_n\}_{n \in \mathbb{Z}} $ of real numbers with $ \frac{1}{2} \inf(r_\Lambda) \leq t_{n+1} - t_n \leq 2 \sup(r_\Lambda)$, define:
\begin{align*}
p^s(x, \Tau, n) &:= \ve \inf \{ e^{\ve (t_m - t_n)} Q(\varphi^{t_m}(x)) : m \geq n \},\\
p^u(x, \Tau, n) &:= \ve \inf \{ e^{\ve (t_n - t_m)} Q(\varphi^{t_m}(x)) : m \leq n \}.
\end{align*}

\medskip
Note that $p^{s/u}(x, \Tau, n) \geq q^{s/u}(\varphi^{t_n}(x))$. Given that the choice of $\Tau$ will be clear from the context, we will simplify the notation $p^{s/u}(x, \Tau, n)$ writing $p^{s/u}(\varphi^{t_n}(x))$. 
The next proposition shows that the values $p^{s/u}(\varphi^{t_n}(x))$ are not very sensitive to the choice of $\Tau$.

\begin{proposition}\label{Prop-Z-par}
    The following holds for all $x \in \nuh^\#$ and $\Tau = \{t_n\}_{n \in \mathbb{Z}}$ with $\frac{1}{2} \inf(r_\Lambda) \leq t_{n+1} - t_n \leq 2 \sup(r_\Lambda)$.
\begin{enumerate}[{\rm (1)}]
    \item {\sc Robustness:} Let $\kH := \ve \rho + \frac{288 \rho}{\beta}$. For all $n \in \Z$ and $t \in [t_n, t_{n+1}]$, it holds:
    $$
    \frac{p^{s/u}(\varphi^{t_n}(x))}{q^{s/u}(\varphi^t(x))} = e^{\pm \kH}.
   $$
    \item {\sc Greedy algorithm:} For all $n \in \Z$ it holds:
    \begin{align*}
    p^s(\varphi^{t_n}(x)) &= \min \left\{ e^{\ve (t_{n+1} - t_n)} p^s(\varphi^{t_{n+1}}(x)), \ve Q(\varphi^{t_n}(x)) \right\}\\
    p^u(\varphi^{t_n}(x)) &= \min \left\{ e^{\ve (t_n - t_{n-1})} p^u(\varphi^{t_{n-1}}(x)), \ve Q(\varphi^{t_n}(x)) \right\}.
    \end{align*}
    In particular:
    \begin{align*}
    \ve Q(\varphi^{t_n}(x)) &\geq p^s(\varphi^{t_n}(x)) \geq e^{-\ve (t_n - t_m)} p^s(\varphi^{t_m}(x)),\ \forall n \geq m,\\
    \ve Q(\varphi^{t_n}(x)) &\geq p^u(\varphi^{t_n}(x)) \geq e^{-\ve (t_m - t_n)} p^u(\varphi^{t_m}(x)),\ \forall m \geq n.
    \end{align*}
    \item {\sc Maximality:} $p^s(\varphi^{t_n}(x)) = \ve Q(\varphi^{t_n}(x))$ for infinitely many $n>0$, and $p^u(\varphi^{t_n}(x)) = \ve Q(\varphi^{t_n}(x))$ for infinitely many $n < 0$.
\end{enumerate}
\end{proposition}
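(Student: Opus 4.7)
\medskip
\noindent
\textbf{Proof proposal.}
The plan is to treat the three parts in order, since (2) follows from unfolding the infimum defining $p^s,p^u$, (1) is a bounded--distortion comparison between the continuous--time infimum $q^{s/u}$ and the discrete--time infimum $p^{s/u}$, and (3) is a soft argument combining (2) and the recurrence property (NUH5).

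For \textbf{Greedy algorithm}, I would simply split the infimum:
\[
p^s(\vf^{t_n}(x))=\ve\inf_{m\geq n}e^{\ve(t_m-t_n)}Q(\vf^{t_m}(x))
=\min\Bigl\{\ve Q(\vf^{t_n}(x)),\;e^{\ve(t_{n+1}-t_n)}\cdot\ve\inf_{m\geq n+1}e^{\ve(t_m-t_{n+1})}Q(\vf^{t_m}(x))\Bigr\},
\]
which is exactly the claimed recursion; the symmetric argument gives $p^u$. The ``in particular'' inequalities follow by iterating and using that the minimum is $\leq$ each of the two arguments.

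For \textbf{Robustness} I work with $p^s$ and $q^s$ (the case of $p^u,q^u$ is symmetric). Given $t\in[t_n,t_{n+1}]$, I show the two inequalities separately. For any $m\geq n+1$, the value $s:=t_m-t$ is $\geq 0$ and satisfies $|t_m-t_n-s|=|(t-t_n)-((t+s)-t_m)|\leq\sup(r_\Lambda)\leq \rho$, and moreover $t+s=t_m$ so $Q(\vf^{t_m}(x))=Q(\vf^{t+s}(x))$ exactly; this yields $e^{\ve(t_m-t_n)}Q(\vf^{t_m}(x))=e^{\pm\ve\rho}\cdot e^{\ve s}Q(\vf^{t+s}(x))$. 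For the remaining term $m=n$, I use $|t-t_n|\leq\rho$ and (\ref{Relation-Q(x)}) to compare $Q(\vf^{t_n}(x))$ with $Q(\vf^t(x))=e^{\ve\cdot 0}Q(\vf^t(x))$, losing a factor $e^{\pm 288\rho/\beta}$. Conversely, for any $s\geq 0$ I pick $m\geq n$ with $t_m\leq t+s<t_{m+1}$ and control $|s-(t_m-t_n)|\leq\rho$ together with $|t_m-(t+s)|\leq \rho$, again applying (\ref{Relation-Q(x)}). Each individual term on one side is then comparable to a term on the other side up to a multiplicative factor $e^{\pm(\ve\rho+288\rho/\beta)}=e^{\pm\kH}$, which passes to the infima and proves part (1).

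For \textbf{Maximality}, I argue by contradiction. Suppose there is $N$ such that $p^s(\vf^{t_n}(x))<\ve Q(\vf^{t_n}(x))$ for all $n\geq N$. By the greedy recursion of (2), the minimum is then attained in its second argument, giving $p^s(\vf^{t_n}(x))=e^{\ve(t_{n+1}-t_n)}p^s(\vf^{t_{n+1}}(x))$ for $n\geq N$. Iterating,
\[
p^s(\vf^{t_n}(x))=e^{-\ve(t_n-t_N)}p^s(\vf^{t_N}(x))\xrightarrow[n\to+\infty]{}0,
\]
since $t_n\to+\infty$ (because $t_{n+1}-t_n\geq\tfrac{1}{2}\inf(r_\Lambda)>0$). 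On the other hand, (NUH5) gives $c>0$ and a sequence $s_j\to+\infty$ with $q(\vf^{s_j}(x))>c$; since $q\leq q^s$ and part (1) gives $p^s(\vf^{t_{n_j}}(x))\geq e^{-\kH}q^s(\vf^{s_j}(x))$ for $n_j$ such that $s_j\in[t_{n_j},t_{n_j+1}]$, we obtain $p^s(\vf^{t_{n_j}}(x))\geq e^{-\kH}c$ for infinitely many $n_j\to+\infty$, contradicting the decay above. The same argument with time reversed gives the statement for $p^u$.

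The main obstacle I foresee is keeping the constants clean in part (1): one must verify that all the multiplicative losses combine to the single constant $\kH=\ve\rho+288\rho/\beta$ rather than a strictly larger one, which requires using that $t+s=t_m$ exactly for at least one of the two directions and carefully bookkeeping which of the two bounds in (\ref{Relation-Q(x)}) is actually needed.
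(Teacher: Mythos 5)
Your proof is correct and follows essentially the same route as the argument the paper defers to (\cite[Prop. 3.6]{BCL23}): part (2) by splitting the infimum at $m=n$ versus $m\geq n+1$, part (1) by matching each term of the discrete infimum with a term of the continuous one (and vice versa) up to the two losses $e^{\ve\rho}$ (shift in the exponent, using $2\sup(r_\Lambda)\leq\rho$) and $e^{288\rho/\beta}$ (variation of $Q$ over a time $\leq 2\rho$, i.e.\ \eqref{Relation-Q(x)}), and part (3) by the geometric-decay contradiction against (NUH5) channelled through part (1). The constant bookkeeping in part (1) indeed closes at exactly $\kH=\ve\rho+\tfrac{288\rho}{\beta}$ as you claim, so no adjustment is needed.
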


The proof is the same of \cite[Proposition 3.6]{BCL23}.

\subsection{Pesin charts $\Psi_x$} 

For $x \in \nuh$ and $r>0$, let $R[x,r]= B^{d_s(x)}(r) \times B^{d_u(x)}(r) \subset \R^{d_s(x)} \times \R^{d_u(x)}=\R^d$ be the product of the $d_s(x)$ and $d_u(x)$ dimensional balls of radii $r$. As the dependence on $x$ is only on the dimensions $d_{s/u}(x)$ and the arguments will usually fix these values, we will simply write $R[r]$. We define Pesin charts for $x\in \Lambda \cap \nuh$.

\medskip
\noindent
{\sc Pesin chart at $x$:} It is the map $\Psi:R[\kr]\to \wh{\Lambda}$ defined by $\Psi_x:=\exp{x} \circ C(x)$.

\medskip
As in \cite{BCL23}, the center $x$ of the Pesin chart $\Psi_x$ always belongs to the reference section $\Lambda$, although its image can intersect $\wh{\Lambda}\setminus\Lambda$. The next lemma collects the basic properties of $\Psi_x$. This result is the higher dimensional version of \cite[Lemma 3.7]{BCL23}, and the proof is the same.

\begin{lemma}\label{lemma3.7}
    For all $x \in \Lambda\cap\nuh$, the Pesin chart $\Psi_x$ is a diffeomorphism onto its image and satisfies:
    \begin{enumerate}[{\rm (1)}]
        \item $\Psi_x$ is $2$--Lipschitz and $\Psi^{-1}_x$ is $2\|C(x)^{-1}\|$--Lipschitz.
        \item $\|\wt{d(\Psi_x)}_{v_1}-\wt{d(\Psi_x)}_{v_2}\|\leq \mathfrak{K}\|v_1-v_2\|$ for all $v_1,v_2 \in R[\kr]$.
    \end{enumerate}
\end{lemma}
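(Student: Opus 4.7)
The plan is to exploit the factorization $\Psi_x = \exp{x}\circ C(x)$, so that all statements reduce to combining the Lipschitz estimates for $\exp{x}$ from (Exp1)--(Exp4) with the isometry properties of $C(x)$ from Lemma~\ref{Lemma-linear-reduction}(1).

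First I would check that the hypotheses of (Exp1)--(Exp4) are satisfied on the relevant domain. Since $v\in R[\kr]$ satisfies $\|v\|_{\R^d}\leq \sqrt{2}\,\kr<2\kr$ and $\|C(x)\|\leq 1$, we have $C(x)v\in B_x[2\kr]\subset T_x\wh\Lambda$, so all the estimates of Section~\ref{exponential map} apply at $v$. In particular, $\exp{x}$ is a diffeomorphism on $B_x[2\kr]$, and since $C(x):\R^d\to N_x$ is a linear isomorphism (by Lemma~\ref{Lemma-linear-reduction}(1)), the composition $\Psi_x$ is a diffeomorphism onto its image.

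For part (1), by the chain rule,
$$d(\Psi_x)_v=d(\exp{x})_{C(x)v}\circ C(x).$$
Using $\|d(\exp{x})_{C(x)v}\|\leq 2$ from (Exp2) together with $\|C(x)\|\leq 1$ yields $\|d(\Psi_x)_v\|\leq 2$, and the mean value theorem gives $\Lip(\Psi_x)\leq 2$. Symmetrically, $\Psi_x^{-1}=C(x)^{-1}\circ\exp{x}^{-1}$ satisfies
$$d(\Psi_x^{-1})_y=C(x)^{-1}\circ d(\exp{x}^{-1})_y,$$
and $\|d(\exp{x}^{-1})_y\|\leq 2$ from (Exp2) combined with the definition of $\|C(x)^{-1}\|$ yields $\Lip(\Psi_x^{-1})\leq 2\|C(x)^{-1}\|$.

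For part (2), use the same factorization to write
$$\widetilde{d(\Psi_x)_{v_i}}=\widetilde{d(\exp{x})_{C(x)v_i}}\circ C(x),\quad i=1,2,$$
where the tilde is computed at basepoint $x$ (no pre-composition is needed since the domain of $d(\Psi_x)_{v_i}$ is $\R^d$). Subtracting,
$$\widetilde{d(\Psi_x)_{v_1}}-\widetilde{d(\Psi_x)_{v_2}}=\bigl(\widetilde{d(\exp{x})_{C(x)v_1}}-\widetilde{d(\exp{x})_{C(x)v_2}}\bigr)\circ C(x).$$
Applying (Exp3) with $y_1=y_2=x$ (so that the Sasaki distance reduces to the fiber norm $\|C(x)v_1-C(x)v_2\|$) and using $\|C(x)\|\leq 1$, I obtain
$$\|\widetilde{d(\Psi_x)_{v_1}}-\widetilde{d(\Psi_x)_{v_2}}\|\leq \mathfrak{K}\,\|C(x)v_1-C(x)v_2\|\cdot\|C(x)\|\leq \mathfrak{K}\|v_1-v_2\|,$$
which is exactly the claim.

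The only subtle point is book-keeping with the tilde operation when comparing differentials whose codomains live in different tangent spaces; once that is set up, everything reduces to the estimates of the exponential map and to the fact that $\|C(x)\|\leq 1$. There is no serious obstacle here: the lemma is a purely geometric/linear-algebra consequence of the setup.
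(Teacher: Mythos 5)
Your proof is correct and takes essentially the route the paper intends (it defers to \cite{BCL23}, Lemma 3.7, whose argument is exactly this): factor $\Psi_x=\exp{x}\circ C(x)$, check that $C(x)(R[\kr])\subset B_x[2\kr]$, and combine (Exp1)--(Exp3) with $\|C(x)\|\leq 1$ from Lemma~\ref{Lemma-linear-reduction}(1). The only cosmetic point is in part (2): invoking only (Exp1) to compare $\Sas$ with the fiber norm costs a factor $2$, so one should either use, as you implicitly do, that the fibers of $T\wh\Lambda$ are totally geodesic and flat for the Sasaki metric (hence $\Sas(C(x)v_1,C(x)v_2)\leq\|C(x)(v_1-v_2)\|$) or simply absorb the $2$ into $\mathfrak{K}$.
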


\subsection{ Holonomy maps $g^{\pm}$ in Pesin charts} 

At the scale $Q(x)$, we can control $g_x^{\pm}$ in Pesin charts, representing it as a small perturbation of a hyperbolic linear map.

\begin{theorem}\label{Thm-non-linear-Pesin}
The following holds for all $\ve>0$ small enough. For all $x\in\Lambda\cap\nuh$
the map $f_x^+:=\Psi_{f(x)}^{-1}\circ g_x^+\circ\Psi_x$ is well-defined on
$R[10Q(x)]$ and satisfies:
\begin{enumerate}[{\rm (1)}]
\item $d(f_x^+)_0=C(f(x))^{-1}\circ \Phi^{r_\Lambda(x)}\circ C(x)$ and $e^{-4 \rho} <m(d(f_x^+)_0)\leq \|d(f_x^+)_0\|< e^{4 \rho}$.
\item $f_x^+=\begin{bmatrix} D_s(x) & 0 \\ 0 & D_u(x) \end{bmatrix}+H$ where:
\begin{enumerate}[{\rm (a)}]
\item $e^{-4 \rho}< \| D_s(x) \|,\|D_u(x)^{-1}\| <e^{-\chi r_\Lambda(x)}$, cf. Lemma \ref{Lemma-linear-reduction}{\rm (2).}
\item $H(0)=0$ and $dH_0=0$.
\item $\|H\|_{C^{1+\frac{\beta}{2}}}<\ve$.
\end{enumerate}
\end{enumerate}
A similar statement holds for $f_x^-:=\Psi_x^{-1}\circ g_{f(x)}^-\circ \Psi_{f(x)}$.
\end{theorem}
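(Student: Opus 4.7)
First I would verify that $f_x^+$ is well-defined on $R[10Q(x)]$. By Lemma~\ref{lemma3.7}(1) the map $\Psi_x$ is $2$--Lipschitz, so the image $\Psi_x(R[10Q(x)])$ lies in a ball of radius $O(Q(x))$ around $x$, which by \eqref{estimates-Q} sits inside $B_x$ for $\ve$ small. The holonomy $g_x^+$ is $2$--bi-Lipschitz on $B_x$ by Lemma~\ref{Lemma-map-g}, so its image is a neighborhood of $f(x)$ of radius $O(Q(x))$; combined with \eqref{Relation-Q(x)} (which gives $Q(x)\asymp Q(f(x))$), this lies inside $B_{f(x)}$, and $\Psi_{f(x)}^{-1}$ is defined on it. For the derivative at $0$, note $d(\exp_y)_0 = \mathrm{id}$ for every $y$, hence $d(\Psi_x)_0 = C(x)$ and $d(\Psi_{f(x)}^{-1})_{f(x)} = C(f(x))^{-1}$; Lemma~\ref{Lemma-map-g} identifies $d(g_x^+)_x = \Phi^{r_\Lambda(x)}|_{N_x}$, so by the chain rule
\[
d(f_x^+)_0 = C(f(x))^{-1} \circ \Phi^{r_\Lambda(x)} \circ C(x) = D(x, r_\Lambda(x)).
\]
Since $0 < r_\Lambda(x) \leq \rho/2 < 2\rho$, Lemma~\ref{Lemma-linear-reduction}(2) gives the block-diagonal form and the bounds in (1) and (2)(a).

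Now set $H := f_x^+ - d(f_x^+)_0$. Since $\Psi_x(0)=x$ and $g_x^+(x) = f(x)$, we have $f_x^+(0) = \Psi_{f(x)}^{-1}(f(x)) = 0$, so $H(0) = 0$ and $dH_0 = 0$, proving (2)(b).

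The core of the argument is the $C^{1+\beta/2}$ bound (2)(c). Expanding by the chain rule,
\[
d(f_x^+)_v = C(f(x))^{-1} \circ d(\exp_{f(x)}^{-1})_{g_x^+(\exp_x C(x)v)} \circ d(g_x^+)_{\exp_x C(x)v} \circ d(\exp_x)_{C(x)v} \circ C(x).
\]
For $v_1,v_2 \in R[10Q(x)]$, the three middle factors are Hölder-$\beta$ in $v$: the $C^{1+\beta}$ regularity of $g_x^+$ follows from Lemma~\ref{Lemma-map-g} and the assumption $X \in C^{1+\beta}$, while the $d\exp$ terms are controlled by (Exp3). Since $\|C(x)\| \leq 1$ and Lemma~\ref{Lemma-linear-reduction}(3) gives $\|C(f(x))^{-1}\| \leq e^{6\rho}\|C(x)^{-1}\|$, the parallel-transport corrections assemble into a single factor of $\|C(x)^{-1}\|$, yielding
\[
\|d(f_x^+)_{v_1} - d(f_x^+)_{v_2}\| \leq C_0 \, \|C(x)^{-1}\| \, \|v_1 - v_2\|^\beta
\]
with $C_0 = C_0(M,X,\chi,\rho,\mathfrak{K},L)$. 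Splitting $\|v_1-v_2\|^\beta = \|v_1-v_2\|^{\beta/2} \cdot \|v_1-v_2\|^{\beta/2}$ and bounding one factor by $(20Q(x))^{\beta/2}$ gives
\[
\Hol{\beta/2}(dH) \leq C_0 \cdot 20^{\beta/2} \|C(x)^{-1}\| \, Q(x)^{\beta/2} = C_0 \cdot 20^{\beta/2} \, \ve^{3} \|C(x)^{-1}\|^{-23} \leq C_0 \cdot 20^{\beta/2} \, \ve^3,
\]
where I used $Q(x) = \ve^{6/\beta}\|C(x)^{-1}\|^{-48/\beta}$ and $\|C(x)^{-1}\| \geq 1$ (itself a consequence of $\vertiii{v}^2 \geq 2\|v^s\|^2 + 2\|v^u\|^2 \geq \|v\|^2$). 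Integrating from $0$, where $dH$ and $H$ both vanish, yields $\|dH\|_{C^0} \leq C_1 \ve^3 Q(x)^{\beta/2}$ and $\|H\|_{C^0} \leq C_1 \ve^3 Q(x)^{1+\beta/2}$. All three contributions are bounded by $\ve$ for $\ve$ small enough, proving (2)(c). The case of $f_x^-$ is identical by symmetry.

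The main obstacle is the last estimate: the bookkeeping forces the exact exponent in the definition of $Q(x)$. The chain-rule expansion produces exactly one power of $\|C(x)^{-1}\|$, which must be absorbed by $Q(x)^{\beta/2}$; the exponent $48/\beta$ in $Q(x) = \ve^{6/\beta}\|C(x)^{-1}\|^{-48/\beta}$ is calibrated precisely so that this absorption leaves three powers of $\ve$ to spare. The deliberate loss of $\beta/2$ regularity (working in $C^{1+\beta/2}$ rather than $C^{1+\beta}$) provides exactly the slack needed to perform the Hölder-to-Lipschitz trick on $(20Q(x))^{\beta/2}$ without disturbing the power of $\ve$.
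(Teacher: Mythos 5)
Your proof is correct and follows essentially the same route as the paper's: well-definedness via the Lipschitz bounds on $\Psi_x$, $g_x^+$ and the exponential maps; part (1) by the chain rule together with Lemma \ref{Lemma-linear-reduction}(2); and part (2)(c) by a H\"older-$\beta$ estimate on $d(f_x^+)$ followed by the splitting $\|v_1-v_2\|^{\beta}=\|v_1-v_2\|^{\beta/2}\|v_1-v_2\|^{\beta/2}$ and absorption of the single factor of $\|C^{-1}\|$ into $Q(x)^{\beta/2}$. The only (immaterial) difference is that you perform the final absorption entirely at $x$ via $\|C(x)^{-1}\|Q(x)^{\beta/2}=\ve^{3}\|C(x)^{-1}\|^{-23}\leq\ve^{3}$, whereas the paper passes to $f(x)$ using \eqref{Relation-Q(x)} and \eqref{estimates-Q}; both give the same conclusion.
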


Above, $m(T)$ denotes the co-norm of linear transformation $T$. The proof of Theorem \ref{Thm-non-linear-Pesin} is the same of \cite[Theorem 3.8]{BCL23}. The details are in Appendix \ref{Appendix-proofs}.

\subsection{The overlap condition}\label{section-overlap}

In this section, we consider a notion, called {\em $\ve$--overlap}, that allows to control the change of coordinates from $\Psi_x$ to $\Psi_y$ when $x,y$
are ``sufficiently close''. This notion was introduced in \cite{Sarig-JAMS} for surface diffeomorphisms.  
We will make extensive use of Pesin
charts with different domains.

\medskip
\noindent
{\sc Pesin chart $\Psi_x^\eta$:} It is restriction of $\Psi_x$ to $R[\eta]$, where $0<\eta\leq Q(x)$.
\medskip

Recall that $d$ is the distance on $\widehat \Lambda$
associated to the induced Riemannian metric.

\medskip

\noindent
{\sc $\ve$--overlap:} We say that two Pesin charts $\Psi_{x_1}^{\eta_1},\Psi_{x_2}^{\eta_2}$
{\em $\ve$--overlap} if $\tfrac{\eta_1}{\eta_2}=e^{\pm\ve}$, $\operatorname{dim}(N^s_{x_1})=\operatorname{dim}(N^s_{x_2})$, and
$d(x_1,x_2)+\|\widetilde{C(x_1)}-\widetilde{C(x_2)}\|<(\eta_1\eta_2)^4$.
In particular,  $x_1,x_2$ belong to the same local connected component of $\wh\Lambda$.
When this happens, we write $\Psi_{x_1}^{\eta_1}\overset{\ve}{\approx}\Psi_{x_2}^{\eta_2}$.

\medskip
In other words, for us ``sufficiently close'' means that both $x_1,x_2$ and
$C(x_1),C(x_2)$ are very close, the invariant splittings have the same dimension and the domains considered for Pesin charts are almost the same. The lemma below is an auxiliary technical result.

\begin{lemma}
The following holds for $\ve>0$ small. If
$\Psi_{x_1}^{\eta_1}\overset{\ve}{\approx}\Psi_{x_2}^{\eta_2}$, then
$$\Psi_{x_i}(R[10Q(x_i)])\subset B_{x_1}\cap B_{x_2}\ \text{ for } i=1,2.$$
\end{lemma}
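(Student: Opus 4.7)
The plan is to use the Lipschitz estimate on Pesin charts from Lemma~\ref{lemma3.7}(1) together with the smallness of $Q(x)$ guaranteed by \eqref{estimates-Q}. Since $\Psi_{x_i}$ is $2$--Lipschitz and every $v\in R[10Q(x_i)]$ satisfies $\|v\|\leq 10Q(x_i)\sqrt{d}$, for every such $v$ we have
\[
d(x_i,\Psi_{x_i}(v))\leq 2\|C(x_i)v\|\leq 2\|v\|\leq 20\sqrt{d}\,Q(x_i).
\]
By \eqref{estimates-Q} we have $Q(x_i)\leq \ve^{6/\beta}$, so taking $\ve>0$ small enough (depending only on $d$, $\beta$, and $\mathfrak{r}$) gives $20\sqrt{d}\,Q(x_i)<\mathfrak{r}$. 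This yields the inclusion $\Psi_{x_i}(R[10Q(x_i)])\subset B(x_i,\mathfrak{r})\subset B_{x_i}$, proving half of the claim.

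For the cross-inclusions $\Psi_{x_1}(R[10Q(x_1)])\subset B_{x_2}$ and $\Psi_{x_2}(R[10Q(x_2)])\subset B_{x_1}$, I invoke the overlap hypothesis, which in particular implies $d(x_1,x_2)<(\eta_1\eta_2)^4$. Combined with the triangle inequality and the previous bound, for $v\in R[10Q(x_i)]$ and $\{i,j\}=\{1,2\}$,
\[
d(x_j,\Psi_{x_i}(v))\leq d(x_j,x_i)+d(x_i,\Psi_{x_i}(v))<(\eta_1\eta_2)^4+20\sqrt{d}\,Q(x_i).
\]
Since $\eta_1,\eta_2\leq Q(x_1)\vee Q(x_2)\leq \ve^{6/\beta}$, the first summand is negligible, and once again the total is less than $2\mathfrak{r}$ provided $\ve$ is chosen sufficiently small.

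The only subtlety is bookkeeping the relation $\eta_i\leq Q(x_i)$ (built into the definition of the restricted chart $\Psi_x^\eta$) and making the choice of $\ve$ explicit enough so that both estimates hold simultaneously with the inclusions $B(x_i,\mathfrak{r})\subset B_{x_i}$. There is no real obstacle here; the lemma is essentially a ``room to breathe'' statement ensuring that the overlap scale $(\eta_1\eta_2)^4$ and the Pesin chart radius $10Q$ both fit comfortably inside the uniform exponential chart neighborhoods $B_{x_i}$ of radius $2\mathfrak{r}$.
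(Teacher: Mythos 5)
Your proof is correct and is the standard argument (the paper itself omits the proof, citing \cite[Lemma 3.9]{BCL23}, which runs the same way): the $2$--Lipschitz bound on $\Psi_{x_i}$ from Lemma~\ref{lemma3.7}(1) plus $Q(x_i)\leq\ve^{6/\beta}$ handles $\Psi_{x_i}(R[10Q(x_i)])\subset B_{x_i}$, and the overlap bound $d(x_1,x_2)<(\eta_1\eta_2)^4$ with the triangle inequality gives the cross-inclusions. (Only a cosmetic remark: since $R[r]$ is a product of two balls of radius $r$, the sharp constant is $10\sqrt{2}\,Q(x_i)$ rather than $10\sqrt{d}\,Q(x_i)$, but your overestimate is harmless.)
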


In particular, we can apply (Exp3) for either $x_1$ or $x_2$. The proof of this result is the same of \cite[Lemma 3.9]{BCL23}. Next, we compare Pesin charts when an $\ve$--overlap holds.

\begin{proposition}\label{Lemma-overlap}
The following holds for $\ve>0$ small.
If $\Psi_{x_1}^{\eta_1}\overset{\ve}{\approx}\Psi_{x_2}^{\eta_2}$ and $C_i = \widetilde{C(x_i)}$ for $i=1,2$, then:
\begin{enumerate}[{\rm (1)}]
\item {\sc Control of $C^{-1}$:} $\| C_1^{-1} - C_2^{-1} \| < (\eta_1 \eta_2)^3$ and $\frac{\|C_1^{-1} \|}{\| C_2^{-1} \|} = e^{\pm (\eta_1 \eta_2)^3}$. 
\item {\sc Control of $Q$:} $\frac{Q(x_1)}{Q(x_2)} = e^{(\eta_1 \eta_2)^2}$.
\item {\sc Overlap:} $\Psi_{x_i}(R[e^{-2\ve}\eta_i])\subset \Psi_{x_j}(R[\eta_j])$ for $i,j=1,2$.
\item {\sc Change of coordinates:} For $i,j=1,2$, the map $\Psi_{x_i}^{-1}\circ\Psi_{x_j}$
is well-defined in $R[\mathfrak r]$,
and $\|\Psi_{x_i}^{-1}\circ\Psi_{x_j}-{\rm Id}\|_{C^2}<\ve(\eta_1\eta_2)^2$
where the norm is taken in $R[\mathfrak r]$.
\end{enumerate}
\end{proposition}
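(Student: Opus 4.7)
The plan has three parts: prove (1) and (2) by algebraic manipulation, prove (4) as the central $C^2$-estimate, and deduce (3) from (4) by a short inclusion argument.

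For (1), I would start from the definition $Q(x)=\ve^{6/\beta}\|C(x)^{-1}\|^{-48/\beta}$ together with $\eta_i\leq Q(x_i)$, which yields the crucial bound $\|C(x_i)^{-1}\|\leq \ve^{1/8}\eta_i^{-\beta/48}$. The resolvent identity $C_1^{-1}-C_2^{-1}=C_1^{-1}(C_2-C_1)C_2^{-1}$ combined with the overlap hypothesis $\|C_1-C_2\|<(\eta_1\eta_2)^4$ gives
\[
\|C_1^{-1}-C_2^{-1}\|\leq \|C_1^{-1}\|\,\|C_2^{-1}\|\,\|C_1-C_2\|\leq \ve^{1/4}(\eta_1\eta_2)^{4-\beta/48},
\]
which lies below $(\eta_1\eta_2)^3$ once $\ve$ is small. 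Using $\|C_i^{-1}\|\geq\sqrt{2}$ (from the inequality $\vertiii{\cdot}^2\geq 2\|\cdot\|^2$ recorded in Section \ref{Section-reduction}), the reverse triangle inequality yields the ratio bound $\|C_1^{-1}\|/\|C_2^{-1}\|=e^{\pm(\eta_1\eta_2)^3}$. Claim (2) is then immediate from $Q(x_1)/Q(x_2)=(\|C_2^{-1}\|/\|C_1^{-1}\|)^{48/\beta}$ together with $(48/\beta)(\eta_1\eta_2)^3<(\eta_1\eta_2)^2$, valid because $\eta_i\leq Q(x_i)\leq\ve^{6/\beta}$ is tiny.

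The main technical step is (4). I would factor $\Psi_i^{-1}\circ\Psi_j=C_i^{-1}\circ h\circ C_j$, where $h:=\exp{x_i}^{-1}\circ\exp{x_j}$ is well-defined on a ball containing $C_j(R[\mathfrak r])$ by the previous lemma and (Exp1). Under the parallel-transport identifications of Section \ref{s.metric}, (Exp2) gives $\|h(0)\|=\|\exp{x_i}^{-1}(x_j)\|\leq 2 d(x_1,x_2)<2(\eta_1\eta_2)^4$, and (Exp3) gives $\|dh_0-\mathrm{Id}\|\leq \mathfrak{K}\,d(x_1,x_2)$. The delicate point is the second derivative: since $d^2h\equiv 0$ when $x_i=x_j$, condition (Exp4) provides a bound $\|d^2h\|_{C^0}\lesssim d(x_1,x_2)$ on $R[\mathfrak{r}]$. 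A Taylor expansion then delivers $\|h-\mathrm{Id}\|_{C^2}\lesssim d(x_1,x_2)<(\eta_1\eta_2)^4$. Writing
\[
\Psi_i^{-1}\Psi_j-\mathrm{Id}=C_i^{-1}\bigl[(h-\mathrm{Id})\circ C_j\bigr]+C_i^{-1}(C_j-C_i)
\]
and using $\|C_j\|\leq 1$ together with $\|C_i^{-1}\|\leq \ve^{1/8}\eta_i^{-\beta/48}$ and the overlap estimate $\|C_j-C_i\|<(\eta_1\eta_2)^4$, I obtain a $C^2$ bound of order $\ve^{1/8}(\eta_1\eta_2)^{4}\eta_i^{-\beta/48}$, which is smaller than $\ve(\eta_1\eta_2)^2$ for $\ve$ small because $\eta_i\leq \ve^{6/\beta}$.

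Part (3) is a short consequence of (4). To show $\Psi_i(R[e^{-2\ve}\eta_i])\subset \Psi_j(R[\eta_j])$, I check the equivalent inclusion $\Psi_j^{-1}\Psi_i(R[e^{-2\ve}\eta_i])\subset R[\eta_j]$. For $v$ in the left-hand set, the $C^0$ part of (4) gives $\|\Psi_j^{-1}\Psi_i(v)-v\|\leq\ve(\eta_1\eta_2)^2$, and since $\eta_i/\eta_j=e^{\pm\ve}$ the triangle inequality yields $\|\Psi_j^{-1}\Psi_i(v)\|_{\max}\leq e^{-2\ve}\eta_i+\ve(\eta_1\eta_2)^2\leq e^{-\ve}\eta_j+\ve(\eta_1\eta_2)^2<\eta_j$. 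The main obstacle I anticipate is the $C^2$ estimate in (4): controlling $d^2h$ requires the subtler condition (Exp4), and the parallel-transport identifications when comparing objects based at different points must be handled carefully so that the error terms (whose magnitudes scale like areas of geodesic triangles) remain negligible.
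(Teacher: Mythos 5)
Your proposal is correct, and for parts (1), (2) and (4) it follows essentially the same route as the paper: the resolvent identity plus $\|C(x_i)^{-1}\|\le\ve^{1/8}\eta_i^{-\beta/48}$ for (1)--(2), and for (4) the decomposition $\Psi_{x_i}^{-1}\circ\Psi_{x_j}-{\rm Id}=C_i^{-1}\circ\bigl[(\exp{x_i}^{-1}\circ\exp{x_j}-{\rm Id})\circ C_j\bigr]+C_i^{-1}(C_j-C_i)$, with (Exp2)--(Exp4) controlling the exponential-map factor by $O(d(x_1,x_2))<(\eta_1\eta_2)^4$ and the factor $\|C_i^{-1}\|$ absorbed using $\eta_i\le Q(x_i)\le\ve^{6/\beta}$. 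This is exactly the paper's argument up to where the parallel transports are inserted.

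The one genuine organizational difference is part (3). The paper proves (3) directly from (Exp1)--(Exp2) and the overlap hypothesis (showing $d(\Psi_{x_1}(v),\Psi_{x_2}(v))<4(\eta_1\eta_2)^4$ and pulling back by $\Psi_{x_2}$), and then reuses that same computation to establish the well-definedness claim in (4); you instead prove (4) first and deduce (3) from its $C^0$ estimate. Your deduction is sound — the required inequality $\ve(\eta_1\eta_2)^2<(1-e^{-\ve})\eta_j$ holds because $(\eta_1\eta_2)^2\lesssim\eta_j^4\ll\eta_j$ — and it is slightly more economical. The only thing it costs you is that the well-definedness of $\Psi_{x_i}^{-1}\circ\Psi_{x_j}$ on all of $R[\mathfrak r]$ must then be argued independently rather than borrowed from (3); your appeal to (Exp1) for this is a bit terse (note that $\Psi_{x_j}(R[\mathfrak r])\subset B(x_j,2\sqrt2\,\mathfrak r)$, which slightly exceeds the radius $2\mathfrak r$ in the definition of $B_{x_j}$ — the paper has the same harmless imprecision), but the argument goes through once one checks, as the paper does, that the image stays inside a ball on which $\Psi_{x_i}$ is a diffeomorphism.
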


This proposition, first proved by Sarig for surface diffeomorphisms \cite{Sarig-JAMS}, in the above form is motivated by \cite[Prop. 4.1]{ALP} and \cite[Prop. 3.10]{BCL23}. We include its proof in Appendix
\ref{Appendix-proofs}.

\subsection{The maps $f_{x,y}^+,f_{x,y}^-$}

Let $x,y\in\Lambda\cap\nuh$ such that $\Psi_{f(x)}^{\eta}\overset{\ve}{\approx}\Psi_y^{\eta'}$.
In this section, we change $\Psi_{f(x)}$ by $\Psi_y$ in the definition of $f_x^+$ and obtain a result
similar to Theorem \ref{Thm-non-linear-Pesin}.

\medskip
\noindent
{\sc The maps $f_{x,y}^+$ and $f_{x,y}^-$:} If $\Psi_{f(x)}^{\eta}\overset{\ve}{\approx}\Psi_y^{\eta'}$,
define the map $f_{x,y}^+:=\Psi_y^{-1}\circ g_x^+\circ \Psi_x$.
If $\Psi_{x}^{\eta}\overset{\ve}{\approx}\Psi_{f^{-1}(y)}^{\eta'}$, define
$f_{x,y}^-:=\Psi_x^{-1}\circ g_y^-\circ \Psi_y$. 

\medskip
As claimed, the next result is a version of Theorem \ref{Thm-non-linear-Pesin} for the maps $f_{x,y}^{\pm}$.

\begin{theorem}\label{Thm-non-linear-Pesin-2}
The following holds for all $\ve>0$ small enough.
If $x,y\in\Lambda\cap\nuh$ and $\Psi_{f(x)}^{\eta}\overset{\ve}{\approx}\Psi_{y}^{\eta'}$, then
$f_{x,y}^+$ is well-defined on $R[10Q(x)]$ and can be written as
$f_{x,y}^+=\begin{bmatrix} D_s & 0 \\ 0 & D_u \end{bmatrix}+H$ where:
\begin{enumerate}[{\rm (1)}]
\item $e^{-4\rho}< \| D_s \|,\|D_u^{-1}\| < e^{- \chi r_\Lambda (x)}$, cf. Lemma \ref{Lemma-linear-reduction}{\rm (2)}.
\item $\|H(0)\|<\ve\eta$, $\|dH_0\|<\ve\eta^{\beta/3}$,
$\Hol{\beta/3}(dH)<\ve$.
\end{enumerate}
If $\Psi_{x}^{\eta}\overset{\ve}{\approx}\Psi_{f^{-1}(y)}^{\eta'}$
then a similar statement holds for $f_{x,y}^-$.
\end{theorem}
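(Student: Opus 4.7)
The plan is to realize $f_{x,y}^+$ as a composition of the map $f_x^+$ from Theorem~\ref{Thm-non-linear-Pesin} with the change of coordinates $\Theta := \Psi_y^{-1}\circ \Psi_{f(x)}$, and then absorb $\Theta$ into the nonlinear perturbation. This mirrors the strategy behind analogous statements in earlier works such as~\cite{Sarig-JAMS, BCL23}.

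First, from the hypothesis $\Psi_{f(x)}^{\eta}\overset{\ve}{\approx}\Psi_y^{\eta'}$, Proposition~\ref{Lemma-overlap}(4) provides that $\Theta$ is well-defined on $R[\kr]$ and satisfies $\|\Theta-\mathrm{Id}\|_{C^2}<\ve(\eta\eta')^2$. Writing $\Theta=\mathrm{Id}+R$, by Theorem~\ref{Thm-non-linear-Pesin} the map $f_x^+$ is defined on $R[10Q(x)]$ and decomposes as $f_x^+=L+H_0$, where $L=\begin{bmatrix}D_s&0\\0&D_u\end{bmatrix}$ satisfies item (1) and $H_0$ satisfies $H_0(0)=0$, $(dH_0)_0=0$, $\|H_0\|_{C^{1+\beta/2}}<\ve$. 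Since $Q(x)\leq\ve^{6/\beta}\ll\kr$, the image $f_x^+(R[10Q(x)])$ is contained in the domain of $\Theta$, so $f_{x,y}^+=\Theta\circ f_x^+$ will be well-defined on $R[10Q(x)]$, and I set $H:=H_0+R\circ f_x^+$, giving $f_{x,y}^+=L+H$ with item (1) inherited directly from $f_x^+$.

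It then remains to verify the three estimates in (2). For $\|H(0)\|$: since $f_x^+(0)=0$, I get $H(0)=R(0)=\Psi_y^{-1}(f(x))$, and Lemma~\ref{lemma3.7} together with the $\ve$-overlap inequality $d(f(x),y)<(\eta\eta')^4$ yields $\|H(0)\|\leq 2\|C(y)^{-1}\|(\eta\eta')^4$; combined with the bounds~\eqref{estimates-Q} and the comparability $\|C(f(x))^{-1}\|\asymp\|C(y)^{-1}\|$ from Proposition~\ref{Lemma-overlap}(1), this gives $\|H(0)\|<\ve\eta$ for $\ve$ small. For $\|(dH)_0\|$: since $f_x^+(0)=0$ and $(dH_0)_0=0$, the chain rule gives $(dH)_0=(dR)_0\circ L$, hence $\|(dH)_0\|\leq e^{4\rho}\ve(\eta\eta')^2$; since $\eta'=e^{\pm\ve}\eta$ and $\eta\leq Q(x)$ is small, and $4-\beta/3>0$, this is bounded by $\ve\eta^{\beta/3}$.

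The H\"older-$\beta/3$ bound on $dH$ will be the main obstacle, since it requires combining the H\"older-$\beta/2$ control on $dH_0$ (from Theorem~\ref{Thm-non-linear-Pesin}) with the $C^2$ control on $R$ (from Proposition~\ref{Lemma-overlap}). Using the decomposition $dH=dH_0+(dR\circ f_x^+)\cdot df_x^+$, I would bound each term separately: the key observation is that the domain $R[10Q(x)]$ has diameter comparable to $Q(x)\leq\ve^{6/\beta}\ll 1$, which allows one to exchange H\"older-$\beta/2$ for H\"older-$\beta/3$ at the price of a factor $Q(x)^{\beta/6}$ that absorbs all multiplicative constants; for the second term, the Lipschitz bound $\|dR\|_{C^1}<\ve(\eta\eta')^2$ coming from Proposition~\ref{Lemma-overlap}(4), together with the $C^{1+\beta/2}$ control on $f_x^+$, yields the required smallness. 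The symmetric statement for $f_{x,y}^-$ follows by applying the same argument to the backward holonomy, using the analogous version of Theorem~\ref{Thm-non-linear-Pesin} for $f_x^-$.
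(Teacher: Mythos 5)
Your proposal is correct and follows essentially the same route as the paper: write $f_{x,y}^+=(\Psi_y^{-1}\circ\Psi_{f(x)})\circ f_x^+$, keep the linear part of $f_x^+$ from Theorem~\ref{Thm-non-linear-Pesin}, control the change of coordinates via Proposition~\ref{Lemma-overlap}(4), and trade H\"older exponent $\beta/2$ for $\beta/3$ using that the domain $R[10Q(x)]$ has diameter $O(Q(x))$. The only cosmetic difference is that you bound $\|H(0)\|=\|\Psi_y^{-1}(f(x))\|$ directly from the overlap inequality and Lemma~\ref{lemma3.7}, whereas the paper simply invokes the $C^0$ part of Proposition~\ref{Lemma-overlap}(4); both give $\|H(0)\|<\ve\eta$.
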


\begin{proof}
Note that $f_{x,y}^+=(\Psi_y^{-1}\circ\Psi_{f(x)})\circ f_x^+=:g\circ f_x^+$ is a
small perturbation of $f_x^+$. By Theorem \ref{Thm-non-linear-Pesin},
$$
f_x^+(0)=0,\ \|d(f_x^+)\|_{C^0}<2e^{4\rho},\ \|d(f_x^+)_v-d(f_x^+)_w\|\leq \ve\|v-w\|^{\beta/2},\ \forall v,w\in R[10Q(x)],
$$
where the $C^0$ norm is taken in $R[10Q(x)]$
and, by Proposition \ref{Lemma-overlap}(4),
$$
\|g-{\rm Id}\|<\ve(\eta\eta')^2,\ \|d(g-{\rm Id})\|_{C^0}<\ve(\eta\eta')^2,\ \|dg_v-dg_w\|\leq\ve(\eta\eta')^2\|v-w\|^{\beta/2}
$$
for $v,w\in R[\mathfrak r]$, where the $C^0$ norm is taken in $R[\mathfrak r]$.

We begin showing that $f_{x,y}^+$ is well-defined on $R[10Q(x)]$. For small enough $\ve>0$ we have
$f_x^+(R[10Q(x)])\subset B(0,20 \sqrt{2} e^{4\rho} Q(x))\subset R[\mathfrak r]$
since $20 \sqrt{2} e^{4\rho} Q(x)< 40e^{4\rho}\ve^{6/ \beta}<\mathfrak r$.
By Proposition \ref{Lemma-overlap}(4), $f_{x,y}^+$ is well-defined.
 
We define $D_s=D_s(x), D_u=D_u(x)$ as in Theorem \ref{Thm-non-linear-Pesin}. Part (1) is clear, so we focus on part (2).
We have $\|H(0)\|=\|g(0)\|<\ve(\eta\eta')^2<\ve\eta$
and for $\ve>0$ small enough:
$$
\|dH_0\|= \|dg_0\circ d(f_x^+)_0-d(f_x^+)_0\|\leq \|d(g-{\rm Id})_0\|\|d(f_x^+)_0\|
<\ve(\eta\eta')^2 e^{4\rho} <\ve\eta^{\beta/3}.
$$
Now, since $f_x^+(R[10Q(x)])\subset R[\mathfrak r]$, if $\ve>0$ is small then
for $v,w\in R[10Q(x)]$:
\begin{align*}
&\ \|dH_v-dH_w\|=\|dg_{f_x^+(v)}\circ d(f_x^+)_v-dg_{f_x^+(w)}\circ d(f_x^+)_w\|\\
&\leq \|dg_{f_x^+(v)}-dg_{f_x^+(w)}\|\cdot\|d(f_x^+)_v\|+\|dg_{f_x^+(w)}\|\cdot\|d(f_x^+)_v-d(f_x^+)_w\|\\
&\leq \ve(\eta\eta')^2\|f_x^+(v)-f_x^+(w)\|^{\beta/2}\|d(f_x^+)\|_{C^0}+\ve\|dg\|_{C^0}\|v-w\|^{\beta/2}\\
&\leq \left[\ve(\eta\eta')^2\|d(f_x^+)\|_{C^0}^{1+\beta/2}+ 20 \sqrt{2} \ve\|dg\|_{C^0}Q(x)^{\beta/6}\right]\|v-w\|^{\beta/3}\\
&\leq \left[\ve^{24/\beta}(2e^{4\rho})^{1+\beta/2}+ 40\ve\right]\ve\|v-w\|^{\beta/3}<\ve\|v-w\|^{\beta/3}.
\end{align*}
The proof is now complete.
\end{proof}

\section{Invariant manifolds and shadowing}

In the previous sections, we have fixed $\vf,\chi,\rho,\Lambda,\widehat\Lambda,\theta,\ve$, where
$\rho,\ve$ are small parameters.

\subsection{Pseudo-orbits}\label{ss.pseudo.orbits}\mbox{}

\medskip
\noindent
{\sc $\ve$--double chart:} An {\em $\ve$--double chart} is a pair of Pesin charts
$\Psi_x^{p^s,p^u}=(\Psi_x^{p^s},\Psi_x^{p^u})$ where
$0<p^s,p^u\leq \ve Q(x)$.

\medskip
The purpose of the parameters $p^s/p^u$ is to measure the hyperbolicity at $x$ in the stable/unstable directions.

\medskip
\noindent
{\sc Transition time:}
Given two $\ve$--double charts $v=\Psi_x^{p^s,p^u}$ and $w=\Psi_y^{q^s,q^u}$ such that $\Psi_{f(x)}^{q^s\wedge q^u}\overset{\ve}{\approx}\Psi_y^{q^s\wedge q^u}$ and
$\Psi_{f^{-1}(y)}^{p^s\wedge p^u}\overset{\ve}{\approx}\Psi_x^{p^s\wedge p^u}$, we define the {\em transition time} $T(v,w)$ by
$$
\min\left\{\min\{T^+(z):z\in \Psi_x(R[\tfrac{1}{20}(p^s\wedge p^u)])\},
\\ \min\{-T^-(z):z\in \Psi_y(R[\tfrac{1}{20}(q^s\wedge q^u)])\}\right\},
$$
where $T^+:B_x\to\R$ and $T^-:B_y\to\R$ are the $C^{1+\beta}$ functions satisfying
$g_x^+=\vf^{T^+}$, $g_{y}^{-}=\vf^{T^-}$ with $T^+(x)=r_\Lambda(x)$ and $T^-(y)=-r_\Lambda(f^{-1}(y))$. 

\medskip
\noindent
{\sc Edge $v\overset{\ve}{\rightarrow}w$:} Given two $\ve$--double charts $v=\Psi_x^{p^s,p^u}$ and
$w=\Psi_y^{q^s,q^u}$, we draw an {\em edge} from $v$ to $w$ if the following conditions are satisfied:
\medskip
\begin{enumerate}[iii\,]
\item[(GPO1)] $\Psi_{f(x)}^{q^s\wedge q^u}\overset{\ve}{\approx}\Psi_y^{q^s\wedge q^u}$ and
$\Psi_{f^{-1}(y)}^{p^s\wedge p^u}\overset{\ve}{\approx}\Psi_x^{p^s\wedge p^u}$.
\smallskip
\item[(GPO2)] The estimates below hold:
\begin{align}
\label{gpo2-a}&
e^{-\ve p^s}\min\{e^{\ve T(v,w)}q^s,e^{-\ve}\ve Q(x)\}\leq p^s\leq \min\{e^{\ve T(v,w)}q^s,\ve Q(x)\}\\
\label{gpo2-b}&
e^{-\ve q^u}\min\{e^{\ve T(v,w)}p^u,e^{-\ve}\ve Q(y)\}\leq q^u\leq \min\{e^{\ve T(v,w)}p^u,\ve Q(y)\}.
\end{align}
\end{enumerate}
We denote the edge by $v\overset{\ve}{\rightarrow}w$.

\begin{remark}\label{rmk-time}
If $v\overset{\ve}{\rightarrow} w$ then by Theorem \ref{Thm-non-linear-Pesin-2} 
we have
$$
g_y^{-}(\Psi_y(R[\tfrac{1}{20}(q^s\wedge q^u)]))\subset \Psi_x(R[\tfrac{1}{15}(p^s\wedge p^u)])
$$
and so $T(v,w)=T^+(z)$ for some $z\in \Psi_x(R[\tfrac{1}{15}(p^s\wedge p^u)])$.
In particular, $T(v,w)\leq \rho$.
\end{remark}

\medskip
Condition (GPO1) is a ``nearest neighbor condition''. Condition (GPO2) is a greedy recursion which, among other things, implies that the measurement of hyperbolicity at the $\ve$--double charts $v$ and $w$ are ``as large as possible''.
At the moment, these explanations might seem vague, but they will make sense in the proof of Theorem \ref{Thm-coarse-graining} (Coarse graining) and Theorem \ref{Thm-inverse} (Inverse theorem).

\medskip
\noindent
{\sc $\ve$--generalized pseudo-orbit ($\ve$--gpo):} An {\em $\ve$--generalized pseudo-orbit ($\ve$--gpo)}
is a sequence $\un{v}=\{v_n\}_{n\in\Z}$ of $\ve$--double charts
such that $v_n\overset{\ve}{\rightarrow}v_{n+1}$ for all $n\in\Z$. We say that
$\un v$ is {\em regular} if there are $v,w$ such that $v_n=v$ for infinitely many $n>0$ and
$v_n=w$ for infinitely many $n<0$.

\medskip
\noindent
{\sc Positive and negative $\ve$--gpo:}
A {\em positive $\ve$--gpo} is a sequence $\un{v}^+=\{v_n\}_{n\geq 0}$ of $\ve$--double charts
such that $v_n\overset{\ve}{\rightarrow}v_{n+1}$ for all $n\geq 0$. A {\em negative $\ve$--gpo}
is a sequence $\un{v}^-=\{v_n\}_{n\leq 0}$ of $\ve$--double charts
such that $v_n\overset{\ve}{\rightarrow}v_{n+1}$ for all $n\leq -1$.

\begin{lemma}\label{Lemma-minimum}
If $v=\Psi_x^{p^s,p^u},w=\Psi_y^{q^s,q^u}$ are $\ve$--double charts
satisfying {\rm (GPO2)} then $\tfrac{p^s\wedge p^u}{q^s\wedge q^u}=e^{\pm 2\ve}$.
\end{lemma}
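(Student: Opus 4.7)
The plan is to unfold (GPO2) into tight multiplicative sandwich estimates for $p^s$ and $q^u$, reduce $\tfrac{p^s\wedge p^u}{q^s\wedge q^u}$ to the auxiliary ratio $\tfrac{\min(aq^s,p^u)}{\min(q^s,ap^u)}$ where $a:=e^{\ve T(v,w)}$, and conclude by a short case split on which arguments realize the outer minima.

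First I would isolate the only use of the transition time: by Remark~\ref{rmk-time}, $T(v,w)\le \rho\le 1/4$, so $a:=e^{\ve T(v,w)}\in[1,e^{\rho\ve}]\subset[1,e^{\ve/4}]$, strictly below $e^\ve$ with room for the remaining corrections. Using $\min(aq^s,e^{-\ve}\ve Q(x))\ge e^{-\ve}\min(aq^s,\ve Q(x))$ together with $\ve p^s\le \ve^{2+6/\beta}\ll\ve$ from~\eqref{estimates-Q}, the condition (gpo2-a) rewrites as
\[
p^s \;=\; c_s\,\min\bigl(aq^s,\,\ve Q(x)\bigr), \qquad c_s\in[e^{-(1+o(1))\ve},\,1],
\]
and symmetrically (gpo2-b) as $q^u=c_u\min(ap^u,\ve Q(y))$ with $c_u$ in the same range. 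Since $v,w$ are $\ve$-double charts, $p^u\le \ve Q(x)$ and $q^s\le \ve Q(y)$, so the elementary identity $\min(\alpha a,\alpha b,d)=\alpha\min(a,b,d/\alpha)$ for $\alpha\in(0,1]$ combined with $p^u/c_s\ge p^u$ produces
\[
c_s\min(aq^s,p^u)\;\le\;p^s\wedge p^u\;\le\;\min(aq^s,p^u),
\]
and the symmetric estimate $c_u\min(q^s,ap^u)\le q^s\wedge q^u\le \min(q^s,ap^u)$.

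Next, a three-line case split on the ordering of $p^u/q^s$ relative to $a$ and $1/a$ verifies that
\[
\frac{\min(aq^s,p^u)}{\min(q^s,ap^u)}\in[1/a,\,a]
\]
(in the cases $p^u/q^s\ge a$, $1/a\le p^u/q^s<a$, and $p^u/q^s<1/a$, this ratio equals $a$, $p^u/q^s$, and $1/a$ respectively). Combining with the sandwich estimates and $a\le e^{\rho\ve}$, $c_s,c_u\ge e^{-(1+o(1))\ve}$ yields
\[
\frac{p^s\wedge p^u}{q^s\wedge q^u}\in\Bigl[\tfrac{c_s}{a},\,\tfrac{a}{c_u}\Bigr]\subset\bigl[e^{-(1+\rho+o(1))\ve},\,e^{(1+\rho+o(1))\ve}\bigr]\subset[e^{-2\ve},e^{2\ve}]
\]
for $\ve>0$ small enough, since $\rho<1/4$.

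The main delicate point, and the reason the final exponent is $2\ve$ rather than the naive $3\ve$, is the constant bookkeeping in the first step: one must use that $T\le \rho$ is strictly less than $1$ (so $\ve T$ strictly improves on $\ve$), and that the $e^{-\ve p^s}$ correction in (GPO2) is of higher order thanks to $p^s\le \ve Q(x)\le \ve^{1+6/\beta}$. With these two observations made explicit, the combined exponent $(1+\rho)\ve$ sits comfortably below $2\ve$.
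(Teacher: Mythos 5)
Your proof is correct, and it follows essentially the same route as the argument the paper defers to (\cite[Lemma 4.2]{BCL23}): unpack (GPO2) into two-sided multiplicative bounds, absorb the $e^{-\ve p^s}$ and $e^{-\ve}$ corrections using $p^s\le \ve Q(x)\le \ve^{1+6/\beta}$, and finish with a case analysis on which term realizes each minimum, with $T(v,w)\le\rho<1$ supplying the needed slack below the exponent $2\ve$. The sandwich via the auxiliary ratio $\min(aq^s,p^u)/\min(q^s,ap^u)\in[1/a,a]$ is a clean and valid way to organize that case split.
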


The statement and proof are the same of \cite[Lemma 4.2]{BCL23}.

\subsection{Graph transforms and invariant manifolds}\label{ss.graph.transform}

Let $v=\Psi_x^{p^s,p^u}$ be an $\ve$--double chart. Recall
that $B^{d_{s/u}(x)}(r)$ denotes the open ball of center $0\in \R^{d_{s/u}(x)}$ and radius $r$. Let $B^{d_{s/u}(x)}[r]$ denote the respective closed ball.

\medskip
\noindent
{\sc Admissible manifolds:} An {\em $s$--admissible manifold at $v$} is a set
of the form 
$$
V=\Psi_x\{(t,F(t)): t \in B^{d_s(x)} [p^s]\}
$$ where $F:B^{d_s(x)} [p^s] \to\R^{d_u(x)}$ is a $C^{1+\beta/3}$ function
such that:
\begin{enumerate}
\item[(AM1)] $\|F(0)\|\leq 10^{-3}(p^s\wedge p^u)$.
\item[(AM2)] $\|dF_0\|\leq \tfrac{1}{2}(p^s\wedge p^u)^{\beta/3}$.
\item[(AM3)] $\|dF\|_{C^0}+\Hol{\beta/3}(dF)\leq\tfrac{1}{2}$ where the norms are taken in $B^{d_s(x)}[p^s]$.
\end{enumerate}
The function $F$ is called the \emph{representing function} of $V$.
Similarly, a {\em $u$--admissible manifold at $v$} is a set
of the form $\Psi_x\{(G(t),t): t \in B^{d_u (x)} [p^u] \}$ where $G:B^{d_u (x)} [p^u] \to\R^{d_s(x)}$ is a $C^{1+\beta/3}$ function
satisfying (AM1)--(AM3), with norms taken in $B^{d_u (x)} [p^u]$.

\medskip
If $V_1,V_2$ are two $s$--admissible manifolds at $v$, with representing functions
$F_1,F_2$, for $i\geq 0$ define $ d_{C^i}(V_1,V_2):=\|F_1-F_2\|_{C^i}$ where the
norm is taken in $B^{d_s (x)} [p^s]$. The same applies to $u$--admissible manifolds.

In the sequel, we introduce {\em graph transforms}, which is the tool we will use to construct invariant 
manifolds. The proofs are adaptations of \cite{Sarig-JAMS, ALP,BCL23}, hence we will discuss them when necessary. The main result of this section, Theorem \ref{Thm-stable-manifolds},
lists the basic properties of invariant manifolds. 
Given an $\ve$--double chart $v=\Psi_x^{p^s,p^u}$, we denote
by $\mathfs M^s(v)$ the set of its $s$--admissible manifolds.

\medskip
\noindent
{\sc The graph transform $\mathfs F_{v,w}^s$:} To any edge
$v\overset{\ve}{\rightarrow}w$ between $\ve$--double charts $v=\Psi_x^{p^s,p^u}$ and $w=\Psi_y^{q^s,q^u}$,
we associate the {\em graph transform} $\mathfs F_{v,w}^s:\mathfs M^s(w)\to\mathfs M^s(v)$
as being the map that sends an $s$--admissible manifold at $w$ with representing function $F:B^{d_s (x)} [q^s]\to\R^{d_u(x)}$ to the unique
$s$--admissible  manifold at $v$ with representing function $G:B^{d_s (x)} [p^s]\to\R^{d_u(x)}$ such that
$\{(t,G(t)): t \in B^{d_s (x)} [p^s]\}\subset f_{x,y}^-\{(t,F(t)): t \in B^{d_s (x)} [q^s]\}$.

\begin{lemma}\label{Prop-graph-transform}
If $\ve>0$ is small enough, then $\mathfs F_{v,w}^s$ is well-defined for any edge $v\overset{\ve}{\rightarrow}w$.
Furthermore, if
$V_1,V_2\in \mathfs M^s(w)$ then:
\begin{enumerate}[{\rm (1)}]
\item $ d_{C^0}(\mathfs F_{v,w}^s(V_1),\mathfs F_{v,w}^s(V_2))\leq e^{-\chi\inf(r_\Lambda)/2} d_{C^0}(V_1,V_2)$.
\item $ d_{C^1}(\mathfs F_{v,w}^s(V_1),\mathfs F_{v,w}^s(V_2))\leq
e^{-\chi\inf(r_\Lambda)/2}( d_{C^1}(V_1,V_2)+ d_{C^0}(V_1,V_2)^{\beta/3})$.
\end{enumerate}
\end{lemma}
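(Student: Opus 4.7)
My plan is to use Theorem \ref{Thm-non-linear-Pesin-2} applied to $f_{x,y}^-$, which provides a decomposition $f_{x,y}^- = (\text{hyperbolic linear part}) + H$ where the linear part acts as expansion in the stable direction and contraction in the unstable direction at rates $e^{\pm\chi r_\Lambda(x)}$, and $H$ satisfies $\|H(0)\| < \ve\eta$, $\|dH_0\| < \ve\eta^{\beta/3}$, and $\Hol{\beta/3}(dH) < \ve$. Given $V \in \mathfs M^s(w)$ with representing function $F: B^{d_s(y)}[q^s] \to \R^{d_u(y)}$, the goal is to construct the representing function $G: B^{d_s(x)}[p^s] \to \R^{d_u(x)}$ whose graph is contained in $f_{x,y}^-(V)$. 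Writing the inclusion componentwise, the stable-component equation $t = \pi_s \circ f_{x,y}^-(s, F(s))$ must be inverted to express $s = s(t)$. The derivative of the map $s \mapsto \pi_s \circ f_{x,y}^-(s, F(s))$ is a small perturbation of the expanding linear part, controlled by $\|dH_0\|$, the Hölder norm of $dH$, and $\|dF\|_{C^0} \leq 1/2$ from (AM3); the inverse function theorem then yields a unique $s(t)$. The bound $p^s \leq e^{\ve T(v,w)} q^s \ll e^{\chi r_\Lambda} q^s$ implied by (GPO2) ensures that the target ball $B^{d_s(x)}[p^s]$ sits comfortably inside the image. One then defines $G(t) := \pi_u \circ f_{x,y}^-(s(t), F(s(t)))$.

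Verifying (AM1)--(AM3) for $G$ reduces to plugging the bounds from Theorem \ref{Thm-non-linear-Pesin-2} into these formulas. For (AM1), $\|G(0)\|$ is bounded by the unstable contraction factor times $\|F(0)\| \leq 10^{-3}(q^s \wedge q^u)$, plus $\|H(0)\| < \ve\eta$; Lemma \ref{Lemma-minimum} relates $p^s \wedge p^u$ to $q^s \wedge q^u$ up to $e^{2\ve}$, so the result stays below $10^{-3}(p^s \wedge p^u)$. For (AM2)--(AM3), differentiating the defining relations and combining the bounds on $dH$ with the contraction on the unstable direction keeps $\|dG\|_{C^0}$ and $\Hol{\beta/3}(dG)$ safely below $1/2$.

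For the $C^0$ contraction, given $V_1, V_2$ with representing functions $F_i$ and implicit solutions $s_i(t)$, I would expand
\[
G_1(t) - G_2(t) = \pi_u\bigl[f_{x,y}^-(s_1(t), F_1(s_1(t))) - f_{x,y}^-(s_2(t), F_2(s_2(t)))\bigr]
\]
and bound each piece by the unstable contraction times $\|F_1 - F_2\|_{C^0}$, plus $\ve$-small corrections from the $H$-term and from the difference $s_1 - s_2$ (itself controlled by a small multiple of $\|F_1 - F_2\|_{C^0}$ via the implicit function theorem estimate). Since the contraction is at most $e^{-\chi r_\Lambda(x)} \leq e^{-\chi \inf(r_\Lambda)}$, choosing $\ve$ small absorbs the extra terms into the factor $e^{-\chi \inf(r_\Lambda)/2}$. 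The $C^1$ bound is obtained by differentiating the defining equations; the Hölder continuity of $dH$ with exponent $\beta/3$ makes the corresponding error proportional to $d_{C^0}(V_1, V_2)^{\beta/3}$, since the points at which $dH$ is compared differ by the $C^0$-distance of the $F_i$.

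The hard part will be the careful bookkeeping of errors at multiple scales: the parameters $\ve$, $\eta$, $p^{s/u}$, $q^{s/u}$, and $\|C(x)^{-1}\|$ must each be tracked, and their cumulative contribution must fit inside the uniform contraction factor $e^{-\chi \inf(r_\Lambda)/2}$. This relies crucially on the uniform positivity $\inf(r_\Lambda) > 0$, which provides a definite hyperbolic gap absorbing the perturbations, and on choosing $\ve$ sufficiently small relative to $\chi \cdot \inf(r_\Lambda)$ and $\beta$.
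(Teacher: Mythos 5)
Your outline is the standard graph-transform argument — invert the expanded stable component of $f_{x,y}^-$ via the implicit function theorem, define $G$ from the contracted unstable component, and push the $C^0$/$C^1$ contraction estimates through the hyperbolic-plus-H\"older-perturbation decomposition of Theorem \ref{Thm-non-linear-Pesin-2} — which is exactly the proof the paper invokes by deferring to \cite[Appendix]{ALP} (and ultimately to Sarig's Propositions 4.12 and 4.14). All the key points match: (GPO2) giving $p^s\leq e^{\ve T(v,w)}q^s$ so the target stable ball is covered, the unstable contraction of $f_{x,y}^-$ producing part (1), and $\Hol{\beta/3}(dH)$ producing the $d_{C^0}(V_1,V_2)^{\beta/3}$ term in part (2).
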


\medskip
\noindent
{\sc The stable manifold of a positive $\ve$--gpo:} The {\em stable manifold} of a positive 
$\ve$--gpo $\un v^+=\{v_n\}_{n\geq 0}$ is 
$$
V^s[\un v^+]:=\lim_{n\to\infty}(\mathfs F_{v_0,v_1}^s\circ\cdots\circ\mathfs F_{v_{n-2},v_{n-1}}^s\circ\mathfs F_{v_{n-1},v_n}^s)(V_n)
$$
for some (any) choice  $(V_n)_{n\geq 0}$ with $V_n\in \mathfs M^s(v_n)$. The convergence occurs
in the $C^1$ topology.

\medskip

Similarly, we introduce the {\em unstable manifold} $V^u[\un v^-]$ of a negative $\ve$--gpo. The proof of the good definition of $V^{s/u}[\un v^{\pm}]$ as well as of Lemma \ref{Prop-graph-transform} follows from \cite[Appendix]{ALP}, taking the parameters $p = q^s,  \widetilde{p} = p^s, \wt \eta = p^s \wedge p^u,  \eta = q^s \wedge q^u$.

We now state the basic properties of $V^{s/u}[\un v^\pm]$.
We need to introduce some notation. 

\medskip
\noindent
{\sc Maps $G_n$ and $\tau_{n}$:}  
Given ${\un v}^+=\{\Psi_{x_n}^{p^s_n,p^u_n}\}_{n\geq 0}$
a positive $\ve$--gpo, write $g_{x_n}^+=\vf^{T_n}$ where $T_n:B_{x_n}\to\R$
is a $C^{1+\beta}$ function with $T_n(x_n)=r_\Lambda(x_n)$. Let $G_0={\rm Id}$ and
$G_n:=g_{x_{n-1}}^+\circ\cdots\circ g_{x_0}^+$, $n\geq 1$. For $n\geq 0$, define $\tau_n:V^s[\un v^+]\to\R$ by
$$
\tau_n(x):=\sum_{k=0}^{n-1}T_k(G_k(x)),
$$
equal to the flow displacement of $x$
under the maps $g_{x_0}^+,g_{x_1}^+,\ldots, g_{x_{n-1}}^+$.

\medskip
Although $G_n$ and $\tau_n$ depend on $\un v^+$, we will only mention this dependence when more than one positive $\ve$--gpo is considered, in which case we will write $G_{\un v^+,n}$ and $\tau_{\un v^+,n}$.

\begin{theorem}[Stable manifold theorem]\label{Thm-stable-manifolds}
The following holds for all $\ve>0$ small enough.
Let ${\un v}^+=\{v_n\}_{n\geq 0}=\{\Psi_{x_n}^{p^s_n,p^u_n}\}_{n\geq 0}$
be a positive $\ve$--gpo.

\begin{enumerate}[{\rm (1)}]
\item {\sc Admissibility:} The set $V^s[{\un v}^+]$ is an $s$--admissible manifold at $v_0$, equal to
$$
V^s[{\un v}^+]=\{x\in \Psi_{x_0}(R[p^s_0]):(g^+_{x_{n-1}}\circ\dots\circ g^+_{x_0})(x)\in \Psi_{x_n}(R[10Q(x_n)]),\,\forall n\geq 0\}.
$$

\item {\sc Invariance:} $g_{x_0}^+(V^s[\{v_n\}_{n\geq 0}])\subset V^s[\{v_n\}_{n\geq 1}]$.
\smallskip
\item {\sc Hyperbolicity:} For all $y,y'$ in $V^s[\un v^+]$ and all $n\geq 0$: 
$$d(G_n(y),G_n(y'))
\leq 2 d(\Psi^{-1}_{x_0}(y),\Psi^{-1}_{x_0}(y'))\;e^{-\frac{2\chi}{3}\tau_n(y)}.$$
For any unit vector $w$ tangent to $V^s[{\un v}^+]$ at a point $y$ and all $n\geq 0$:
\begin{align*}
\|d(G_n)_yw\|&\leq 8 \| C(x_0)^{-1} \|\; e^{-\frac{2\chi}{3} \tau_n(y)}\ \text{ and}\\
\|d(G_{-n})_yw\|&\geq 
\tfrac{1}{8}(p^s_0\wedge p^u_0)^{\frac{\beta}{12}}\; e^{\frac{2\chi}{3}(-\tau_{-n}(y))-\frac{\beta\ve}{6}n}.
\end{align*}
\smallskip
\item {\sc H\"older property:} The map $\un v^+\mapsto V^s[\un v^+]$ is H\"older continuous:\\
There are $K>0$ and $\theta\in(0,1)$ such that for all $N\geq 0$, if $\un v^+,\un w^+$ are positive $\ve$--gpo's
with $v_n=w_n$ for $n=0,\ldots,N$
then $ d_{C^1}(V^s[\un v^+],V^s[\un w^+])\leq K\theta^N$.
\end{enumerate}
The submanifold $V^s[\un v^+]$ is called \emph{local stable manifold} of $\un v^+$.
A similar statement holds for the unstable manifold $V^u[\un v^-]$ of
a negative $\varepsilon$-gpo $\un v^-$.
\end{theorem}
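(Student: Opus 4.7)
The plan is to build $V^s[\un v^+]$ as the limit of iterated graph transforms from Lemma~\ref{Prop-graph-transform}, and then read off properties (1)--(4) using Theorem~\ref{Thm-non-linear-Pesin-2}. For existence, fix any $V_n \in \mathfs M^s(v_n)$ and set $W_n := (\mathfs F^s_{v_0,v_1}\circ \cdots\circ \mathfs F^s_{v_{n-1},v_n})(V_n)$. Two such choices give sequences with $d_{C^0}(W_n, W'_n) \leq e^{-n\chi\inf(r_\Lambda)/2}(p^s_0\wedge p^u_0)$, upgraded to $C^1$-Cauchy by part (2) of the lemma through a telescoping argument; the uniform $C^{1+\beta/3}$ bounds (AM1)--(AM3) pass to the limit by Arzel\`a--Ascoli, so the limit $V^s[\un v^+] \in \mathfs M^s(v_0)$, giving admissibility. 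Invariance is immediate from the definition of $\mathfs F^s_{v_0,v_1}$ in terms of the preimage under $f^-_{x_0,x_1}$, passed to the limit. For the orbit characterization, the inclusion $\subset$ follows by iterating invariance together with $V^s[\{v_k\}_{k\geq n}] \subset \Psi_{x_n}(R[p^s_n]) \subset \Psi_{x_n}(R[10Q(x_n)])$; conversely, if the orbit of a point stays in the prescribed Pesin regions one constructs through each iterate an $s$-admissible manifold, which by uniqueness of the fixed point of the graph transforms must coincide with $V^s[\un v^+]$.

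For the hyperbolicity in (3), I would work in Pesin charts and apply Theorem~\ref{Thm-non-linear-Pesin-2} at each step to decompose $f^+_{x_k,x_{k+1}} = \mathrm{diag}(D_s,D_u) + H_k$ with $\|D_s\| < e^{-\chi r_\Lambda(x_k)}$ and $\|H_k\|_{C^{1+\beta/3}}$ small. Since tangent vectors to $V^s[\un v^+]$ are almost horizontal by (AM2)--(AM3), the chain rule along the orbit yields cumulative stable contraction of order $\prod_k e^{-\chi r_\Lambda(x_k)} = e^{-\chi\tau_n(y)}$, degraded by the non-linear terms to the advertised $e^{-\frac{2\chi}{3}\tau_n(y)}$; the distance bound in (3) follows by integrating this derivative estimate along paths in $V^s[\un v^+]$. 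Converting from chart to ambient metric contributes $\|C(x_0)^{-1}\|$ via Lemma~\ref{lemma3.7}(1), producing the upper bound on $\|d(G_n)_y w\|$. The lower bound on $\|d(G_{-n})_y w\|$ is obtained by inverting the forward contraction on the stable subbundle, where the prefactor $(p^s_0 \wedge p^u_0)^{\beta/12}$ absorbs the comparison $\|C(x_0)^{-1}\|^{-1} \geq c\, Q(x_0)^{\beta/48}$ coming from the definition of $Q$ together with the $\ve$-losses accumulated over $n$ iterates.

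For the H\"older property (4), if $\un v^+$ and $\un w^+$ agree for $n = 0, \ldots, N$, then $V^s[\un v^+]$ and $V^s[\un w^+]$ are each obtained by $N$ applications of the common graph transforms $\mathfs F^s_{v_0,v_1}, \ldots, \mathfs F^s_{v_{N-1},v_N}$ applied to possibly different tails in $\mathfs M^s(v_N)$, whose $C^1$ distance is uniformly bounded. The contraction by $e^{-\chi\inf(r_\Lambda)/2}$ per step gives $d_{C^0} \leq C\theta^N$, and a telescoping use of Lemma~\ref{Prop-graph-transform}(2) upgrades this to $d_{C^1} \leq K\theta'{}^N$ for some $\theta' \in (0,1)$. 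The main obstacle is the precise bookkeeping of constants in the hyperbolicity step: the degradation $\chi \to \tfrac{2\chi}{3}$ in the exponent and the power $\beta/12$ in the backward derivative prefactor reflect the accumulation of H\"older errors from $H_k$ over $n$ iterates combined with the tradeoff between $Q(x_n)$ (which controls both chart size and $\|C(x_n)^{-1}\|$) and the admissibility scales $p^s_n \wedge p^u_n$. Making these estimates uniform in $n$ by invoking only the slow-variation~\eqref{Relation-Q(x)} and the greediness in (GPO2) is the technically delicate part.
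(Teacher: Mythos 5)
Your construction follows the same route as the paper (which itself defers to the proofs of Ben Ovadia and of Sarig/ALP): iterate the graph transforms of Lemma~\ref{Prop-graph-transform}, pass the uniform (AM1)--(AM3) bounds to the limit, and read off invariance, the orbit characterization, hyperbolicity via Theorem~\ref{Thm-non-linear-Pesin-2}, and the H\"older property from the per-step contraction. Parts (1), (2) and (4) are fine as sketched.

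One point in your hyperbolicity step is stated incorrectly and hides the genuinely flow-specific issue. You write that the chain rule yields contraction of order $\prod_k e^{-\chi r_\Lambda(x_k)} = e^{-\chi\tau_n(y)}$, but this equality is false: $\sum_k r_\Lambda(x_k)$ is the total return time of the \emph{chart centers} $x_k$, whereas $\tau_n(y)=\sum_{k=0}^{n-1}T_k(G_k(y))$ is the flow displacement of the \emph{point} $y$ itself, and these differ. The degradation from $\chi$ to $\tfrac{2\chi}{3}$ is therefore not due only to the nonlinear terms $H_k$, as you assert; it also absorbs the reparametrization between these two time scales. The paper isolates exactly this in a remark: one first gets a per-step factor $e^{-\chi r_\Lambda(x_k)}+O(\ve)<e^{-\frac{3\chi}{4}r_\Lambda(x_k)}$, and then uses that $T_k$ is $1$--Lipschitz (Lemma~\ref{lemma-local-coord}(3)) together with $d(x_k,G_k(y))\ll\ve$ to conclude $\bigl|\tfrac{r_\Lambda(x_k)}{T_k(G_k(y))}-1\bigr|<\tfrac{\ve}{\inf(r_\Lambda)}<\tfrac19$, whence $e^{-\frac{3\chi}{4}r_\Lambda(x_k)}<e^{-\frac{2\chi}{3}T_k(G_k(y))}$. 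Without this comparison the exponent $\tau_n(y)$ in the statement cannot be obtained; with it, your argument goes through as in the paper.
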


\medskip

The above theorem is a strengthening of the Pesin stable manifold theorem \cite{Pesin-Izvestia-1976}.
Its statement is similar to \cite{Sarig-JAMS} and \cite{O18}, and its proof is performed exactly as in
\cite[Prop. 3.12 and 4.4]{O18}, noting that in Pesin charts the composition
$g^+_{x_{n-1}}\circ\dots\circ g^+_{x_0}$ is represented by $f_{x_{n-1},x_n}^+\circ\cdots\circ f_{x_0,x_1}^+$.
Since each $f_{x_i,x_{i+1}}^+$ is hyperbolic (Theorem \ref{Thm-non-linear-Pesin-2})
and each $\mathfs F_{v_i,v_{i+1}}^s$ is contracting (Lemma \ref{Prop-graph-transform}), the proof follows.
We note that the second estimate of part (3) is proved as in \cite[Prop. 6.5]{Sarig-JAMS},
see also the proof of \cite[Prop. 4.11]{ALP}.

\begin{remark}
Let us be more precise on how to obtain part (3). Proceeding as in \cite[Prop. 4.4]{O18} and applying Theorem \ref{Thm-non-linear-Pesin-2}, we get that, in Pesin charts, the distance of the images of $y,y'$ under $G_1$ contracts at least by
$e^{-\chi r_\Lambda(x_0)}+O(\ve)<e^{-\frac{3\chi}{4}r_\Lambda(x_0)}$. Since $r_\Lambda(x_0)=\tau_1(x_0)$ and $\tau_1$ is 1--Lipschitz (Lemma \ref{lemma-local-coord}), we have $|r_\Lambda(x_0)-\tau_1(y)|\leq d(x_0,y)\ll \ve$ and so $\left|\frac{r_\Lambda(x_0)}{\tau_1(y)}-1\right|<\tfrac{\ve}{\inf(r_\Lambda)}$ which is smaller than $1/9$ for small $\ve$. Therefore $e^{-\frac{3\chi}{4}r_\Lambda(x_0)}<e^{-\frac{2\chi}{3}\tau_1(y)}$.
\end{remark}

\subsection{Shadowing}
We say that an $\ve$--gpo $\{\Psi_{x_n}^{p^s_n,p^u_n}\}_{n\in\Z}$ {\em shadows}
a point $x\in \widehat \Lambda$ if:
$$
(g^+_{x_{n-1}}\circ\dots\circ g^+_{x_0})(x)\in \Psi_{x_n}(R[p^s_n\wedge p^u_n])\text{ for all $n\geq 0$},$$
$$(g^-_{x_{n+1}}\circ\dots\circ g^-_{x_0})(x)\in \Psi_{x_n}(R[p^s_n\wedge p^u_n])\text{ for all $n\leq 0$}.
$$

\medskip
An important property is the following result.

\begin{proposition}\label{Prop-shadowing}
If $\ve$ is small enough, then every $\ve$--gpo $\un v$ shadows a unique point
$\{x\}=V^s[\un v]\cap V^u[\un v]$.
\end{proposition}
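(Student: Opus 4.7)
My plan is to intersect the local stable manifold coming from the positive part of $\un v$ with the local unstable manifold coming from the negative part of $\un v$, and then use the invariance properties to check that the intersection point shadows the whole sequence.

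First, I would split $\un v$ into its positive tail $\un v^+ = \{v_n\}_{n\geq 0}$ and negative tail $\un v^- = \{v_n\}_{n\leq 0}$, and use Theorem~\ref{Thm-stable-manifolds} (together with its negative-time analogue) to form $V^s := V^s[\un v^+]$ and $V^u := V^u[\un v^-]$, both admissible at $v_0 = \Psi_{x_0}^{p^s_0,p^u_0}$. In the Pesin chart $\Psi_{x_0}$ they appear as graphs of representing functions $F : B^{d_s(x_0)}[p^s_0] \to \R^{d_u(x_0)}$ and $G : B^{d_u(x_0)}[p^u_0] \to \R^{d_s(x_0)}$ satisfying (AM1)--(AM3). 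A point $(a,b)$ in the intersection satisfies $b = F(a)$ and $a = G(b)$, hence $a = G(F(a))$; since $\|dG\|_{C^0},\|dF\|_{C^0}\leq \tfrac{1}{2}$, the map $a \mapsto G(F(a))$ is a $\tfrac14$-contraction, yielding a unique fixed point. Combining (AM1) for $F,G$ with these Lipschitz bounds, the coordinates of this fixed point have norm at most $2\cdot 10^{-3}(p^s_0\wedge p^u_0)$, so the intersection $\{x\} := V^s\cap V^u$ is a single point, comfortably inside $\Psi_{x_0}(R[p^s_0 \wedge p^u_0])$.

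Next I would propagate this intersection along the orbit. By invariance (Theorem~\ref{Thm-stable-manifolds}(2)), $g^+_{x_0}(x)\in V^s[\{v_k\}_{k\geq 1}]$. By the symmetric invariance for unstable manifolds, applying $g^+_{x_0}$ to $V^u[\{v_k\}_{k\leq 0}]$ lands in $V^u[\{v_k\}_{k\leq 1}]$ (this is the forward-time analogue of $g^-_{x_0}(V^u[\{v_n\}_{n\leq 0}])\subset V^u[\{v_n\}_{n\leq -1}]$, which itself follows from $g^-_{f(x)} = (g^+_x)^{-1}$ and the definition via graph transforms). Therefore $g^+_{x_0}(x)$ lies in $V^s[\{v_k\}_{k\geq 1}]\cap V^u[\{v_k\}_{k\leq 1}]$, which by the same contraction argument at level $1$ consists of a single point, contained in $\Psi_{x_1}(R[p^s_1\wedge p^u_1])$. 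Iterating in both directions produces the required shadowing: $(g^+_{x_{n-1}}\circ\dots\circ g^+_{x_0})(x)\in \Psi_{x_n}(R[p^s_n\wedge p^u_n])$ for $n\geq 0$, and the symmetric statement for $n\leq 0$.

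Finally, uniqueness: if $y\in\wh\Lambda$ is shadowed by $\un v$, then in particular $(g^+_{x_{n-1}}\circ\dots\circ g^+_{x_0})(y) \in \Psi_{x_n}(R[p^s_n\wedge p^u_n])\subset \Psi_{x_n}(R[10Q(x_n)])$ for all $n\geq 0$, so the characterization of $V^s[\un v^+]$ in Theorem~\ref{Thm-stable-manifolds}(1) forces $y\in V^s[\un v^+]$; the analogous argument with $g^-$ gives $y\in V^u[\un v^-]$, so $y=x$. The main technical point, and really the only place that requires a short verification, is the forward-invariance of the unstable manifold stated above; everything else is a direct application of Theorem~\ref{Thm-stable-manifolds} and the elementary contraction lemma for intersecting transverse Lipschitz graphs.
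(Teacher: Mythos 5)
Your overall strategy --- intersect $V^s[\un v^+]$ with $V^u[\un v^-]$, propagate by invariance, deduce uniqueness from the characterization in Theorem~\ref{Thm-stable-manifolds}(1) --- is the same as the paper's, and your existence-of-intersection and uniqueness paragraphs are fine (the former is exactly Lemma~\ref{Lemma-admissible-manifolds}). But the step you yourself single out as ``the only place that requires a short verification'' is where the argument breaks: the inclusion $g^+_{x_0}\bigl(V^u[\{v_k\}_{k\leq 0}]\bigr)\subset V^u[\{v_k\}_{k\leq 1}]$ is false in general, and it is not the forward-time analogue of $g^-_{x_0}(V^u[\{v_n\}_{n\leq 0}])\subset V^u[\{v_n\}_{n\leq -1}]$. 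The unstable graph transform produces the $u$--admissible manifold at $v_1$ \emph{inside} the forward image of the one at $v_0$, so the correct statement is $V^u[\{v_k\}_{k\leq 1}]\subset g^+_{x_0}\bigl(V^u[\{v_k\}_{k\leq 0}]\bigr)$, equivalently $g^-_{x_1}\bigl(V^u[\{v_k\}_{k\leq 1}]\bigr)\subset V^u[\{v_k\}_{k\leq 0}]$. Indeed $g^+_{x_0}$ expands the unstable direction by roughly $e^{\chi r_\Lambda(x_0)}$ while (GPO2) only allows $p^u_1\leq e^{\ve T}p^u_0$, so the forward image strictly overflows the next unstable manifold; membership of $x$ in the overflowing image does not place $g^+_{x_0}(x)$ in $V^u[\{v_k\}_{k\leq 1}]$. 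Consequently you have not shown that $g^+_{x_0}(x)$ is the level-$1$ intersection point, and the induction giving $(g^+_{x_{n-1}}\circ\dots\circ g^+_{x_0})(x)\in\Psi_{x_n}(R[p^s_n\wedge p^u_n])$ does not go through as written.

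The paper sidesteps this entirely by verifying only a weaker form of shadowing and invoking its equivalence with the definition (as in Sarig's shadowing theorem): for $n\geq 0$ the stable invariance alone gives $(g^+_{x_{n-1}}\circ\dots\circ g^+_{x_0})(x)\in V^s[\{v_{n+k}\}_{k\geq 0}]\subset\Psi_{x_n}(R[10Q(x_n)])$, and for $n\leq 0$ the unstable invariance, used in its correct backward direction, gives $(g^-_{x_{n+1}}\circ\dots\circ g^-_{x_0})(x)\in V^u[\{v_{n+k}\}_{k\leq 0}]\subset\Psi_{x_n}(R[10Q(x_n)])$; no claim about forward iterates lying in unstable manifolds is ever needed. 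If you insist on the stronger level-by-level statement, you must separately prove that $g^+_{x_0}(x)$ equals the intersection point of the shifted manifolds, which is essentially the content of the weak-versus-strong equivalence and requires an additional argument rather than a one-line invariance.
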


\medskip
The proof uses the following basic property of admissible manifolds.

\begin{lemma}\label{Lemma-admissible-manifolds}
The following holds for all $\ve>0$ small enough. If $v=\Psi_x^{p^s,p^u}$ is an $\ve$--double chart,
then for every $V^{s/u}\in\mathfs M^{s/u}(v)$ it holds that $V^s$ and $V^u$ intersect at a single point
$P\in \Psi_x\left(R\left[\tfrac{1}{500}(p^s\wedge p^u)\right]\right)$.
\end{lemma}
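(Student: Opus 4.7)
My plan is to read off the intersection in Pesin coordinates and apply the Banach fixed point theorem to the composition of the two representing functions. Write $V^s=\Psi_x\{(t,F(t)):t\in B^{d_s(x)}[p^s]\}$ and $V^u=\Psi_x\{(G(t),t):t\in B^{d_u(x)}[p^u]\}$, where by (AM1)--(AM3) both $F$ and $G$ satisfy $\|F(0)\|,\|G(0)\|\le 10^{-3}(p^s\wedge p^u)$ and are $\tfrac12$--Lipschitz on their domains. An intersection of $V^s$ and $V^u$ corresponds exactly to a pair $(s,u)$ with $u=F(s)$, $s=G(u)$; substituting gives the fixed-point equation $s=\Phi(s):=G(F(s))$.

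The key step is to set up the contraction on the right ball. Take the closed ball $X:=B^{d_s(x)}[\tfrac{1}{500}(p^s\wedge p^u)]$. For $s\in X$,
\[
\|F(s)\|\le \|F(0)\|+\tfrac12\|s\|\le 10^{-3}(p^s\wedge p^u)+\tfrac{1}{1000}(p^s\wedge p^u)=\tfrac{1}{500}(p^s\wedge p^u)\le p^u,
\]
so $F$ sends $X$ into $B^{d_u(x)}[p^u]$, and hence $G\circ F$ is defined on $X$. The same computation applied once more gives $\|\Phi(s)\|\le \tfrac{1}{500}(p^s\wedge p^u)$, so $\Phi:X\to X$; and since $F,G$ are $\tfrac12$--Lipschitz, $\Phi$ is $\tfrac14$--Lipschitz. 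The Banach fixed point theorem produces a unique $s^\ast\in X$ with $s^\ast=\Phi(s^\ast)$, giving an intersection point $P=\Psi_x(s^\ast,F(s^\ast))\in \Psi_x(R[\tfrac{1}{500}(p^s\wedge p^u)])$.

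It remains to upgrade uniqueness from $X$ to the full admissible manifolds. For any intersection point $P=\Psi_x(s,u)$ with $s\in B^{d_s(x)}[p^s]$, $u\in B^{d_u(x)}[p^u]$, $u=F(s)$ and $s=G(u)$, one has $s=\Phi(s)$ and therefore
\[
\|s\|\le \|\Phi(0)\|+\tfrac14\|s\|\le \tfrac{3}{2000}(p^s\wedge p^u)+\tfrac14\|s\|,
\]
which forces $\|s\|\le \tfrac{4}{3}\cdot\tfrac{3}{2000}(p^s\wedge p^u)=\tfrac{1}{500}(p^s\wedge p^u)$, so $s\in X$. Hence every intersection point lies in $X$, and uniqueness of the fixed point of $\Phi$ on $X$ yields uniqueness of the intersection.

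I expect no serious obstacle here; the only place requiring care is the check that $F$ actually lands in the domain of $G$ when we restrict to $X$, because the manifolds have unequal stable/unstable sizes $p^s,p^u$. Using the minimum $p^s\wedge p^u$ in the definition of $X$ is precisely what makes the composition legal and the constants work out to give the prefactor $\tfrac{1}{500}$ in the statement. The smallness of $\varepsilon$ is used only indirectly through (AM1)--(AM3) and to ensure $R[\tfrac{1}{500}(p^s\wedge p^u)]\subset R[\kr]$, so $\Psi_x$ is well defined there.
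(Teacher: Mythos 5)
Your proof is correct and is essentially the same argument as the one the paper defers to (\cite[Lemma 4.7]{BCL23}, going back to Sarig): read the intersection in the chart as a fixed point of the composition $G\circ F$ of the representing functions and apply the contraction mapping principle, with (AM1) and (AM3) supplying the constants. The only cosmetic caveat is that your bounds give $\|s^\ast\|,\|F(s^\ast)\|\le\tfrac{1}{500}(p^s\wedge p^u)$ with possible equality while $R[r]$ is a product of open balls, but since (AM1)--(AM3) are never all tight simultaneously this does not affect the conclusion.
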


This statement and its proof are similar to \cite[Lemma 4.7]{BCL23}.

\begin{proof}[Proof of Proposition~\ref{Prop-shadowing}]
We give a sketch of proof, since it is the same of \cite[Theorem 4.2]{Sarig-JAMS}. Let $\un v=\{v_n\}_{n\in\Z}=\{\Psi_{x_n}^{p^s_n,p^u_n}\}_{n\in\Z}$ be an $\ve$--gpo.
\begin{enumerate}[$\circ$]
\item By Theorem \ref{Thm-stable-manifolds}(1),
any point shadowed by $\un v$ must lie in $V^s[\{v_n\}_{n\geq 0}]\cap V^u[\{v_n\}_{n\leq 0}]$.
By Lemma \ref{Lemma-admissible-manifolds}, this intersection is a single point $\{x\}$.
We claim that $\un v$ shadows $x$. 
\item The definition of shadowing is equivalent to the following weaker definition: 
$\un v$ shadows $x$ if and only if
$$(g^+_{x_{n-1}}\circ\dots\circ g^+_{x_0})(x)\in \Psi_{x_n}(R[10Q(x_n)])\text{ for all $n\geq 0$},$$
$$(g^-_{x_{n+1}}\circ\dots\circ g^-_{x_0})(x)\in \Psi_{x_n}(R[10Q(x_n)])\text{ for all $n\leq 0$}.$$
\item By Theorem \ref{Thm-stable-manifolds}(2), if $n\geq 0$ then 
$(g^+_{x_{n-1}}\circ\dots\circ g^+_{x_0})(x)\in V^s[\{v_{n+k}\}_{k\geq 0}]\subset \Psi_{x_n}(R[10Q(x_n)])$, 
and $(g^-_{x_{n+1}}\circ\dots\circ g^-_{x_0})(x)\in V^u[\{v_{n+k}\}_{k\leq 0}]\subset \Psi_{x_n}(R[10Q(x_n)])$, and so the weaker definition of shadowing holds.
\end{enumerate}
This concludes the proof.
\end{proof}

\subsection{Additional properties}

So far, we have defined stable/unstable manifolds using holonomy maps. It is important to give them an intrinsic characterization in terms of the flow. This is the content of the next result.

\begin{proposition}\label{Prop-center-stable}
The following holds for all $\ve>0$ small enough. Let $\un v=\{v_n\}_{n\geq 0}$ be a positive $\ve$--gpo
with $v_0=\Psi_x^{p^s,p^u}$, and let $F:B^{d_s (x)}[p^s] \to\R^{d_u(x)}$ be the representing function of $V^s=V^s[\un v^+]$.
Then there exists a function $\Delta:B^{d_s (x)}[p^s] \to\R$ with $\Delta(0)=0$ such that
the set $\wt V^s:=\{\vf^{\Delta(w)}[\Psi_x(w,F(w))]: w \in B^{d_s (x)}[p^s]\}$ satisfies
$d(\vf^t(\wt y),\vf^t(\wt z))\leq e^{-\frac{\chi\inf(r_\Lambda)}{4\sup(r_\Lambda)}t}$
for all $\wt y,\wt z\in \wt V^s$ and all $t\geq 0$.
An analogous statement holds for negative $\ve$--gpo's. 
\end{proposition}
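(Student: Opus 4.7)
For $w \in B^{d_s(x)}[p^s]$, write $y = y(w) := \Psi_x(w,F(w)) \in V^s$. The plan is to define $\Delta$ as a synchronization term that undoes the desynchronization of flow orbits caused by the fact that different points of $V^s$ have different Poincar\'e return times. Concretely, in the notation of Section~\ref{ss.graph.transform} I would set
\[
\Delta(w) := \sum_{k=0}^{\infty} \bigl[ T_k(G_k(y)) - T_k(G_k(x)) \bigr],
\]
and prove convergence by combining Lemma~\ref{lemma-local-coord}(3) (which gives $\Lip(T_k)\le 1$) with the exponential contraction in Theorem~\ref{Thm-stable-manifolds}(3). More precisely, $|T_k(G_k(y)) - T_k(G_k(x))| \le 2 d(\Psi_x^{-1}(y),0)\,e^{-\frac{2\chi}{3}\tau_k(y)}$, and for $\ve$ small enough each $T_j(G_j(y))$ is within $\ve$ of $r_\Lambda(x_j)$, so that $\tau_k(y) \ge \tfrac{1}{2} k \inf(r_\Lambda)$. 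This gives geometric convergence, and $\Delta(0)=0$ trivially since $y(0)=x$.

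\textbf{Synchronization identity.} The defining property of $\Delta$ is the telescoping identity
\[
\Delta(w) - (\tau_n(y)-\tau_n(x)) = -\sum_{k\ge n}[T_k(G_k(y))-T_k(G_k(x))],
\]
whose right-hand side has absolute value $\le C\,e^{-\frac{2\chi}{3}\tau_n(y)}$ for a constant $C$ depending only on the ambient data. Setting $\wt y := \vf^{\Delta(w)}(y)$ and using the basic identity $G_n(y)=\vf^{\tau_n(y)}(y)$, I get
\[
\vf^{\tau_n(x)}(\wt y) = \vf^{s_n(y)}(G_n(y)) \quad\text{where } s_n(y) := \tau_n(x)+\Delta(w)-\tau_n(y),\ |s_n(y)|\le C\,e^{-\frac{2\chi}{3}\tau_n(y)}.
\]

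\textbf{Main estimate.} For $\wt y,\wt z \in \wt V^s$ arising from $w,w'$ and for $t\ge 0$, pick $n\ge 0$ with $\tau_n(x)\le t<\tau_{n+1}(x)$, and write $t=\tau_n(x)+r$ with $r\in[0,\sup(r_\Lambda)]$. Then $\vf^t(\wt y) = \vf^{\alpha}(G_n(y))$ and $\vf^t(\wt z) = \vf^{\beta}(G_n(z))$ with $\alpha:=r+s_n(y)$, $\beta:=r+s_n(z)$. Note $|\alpha|,|\beta|\le \sup(r_\Lambda)+O(1)$ and
\[
|\alpha-\beta| \le |s_n(y)-s_n(z)| \le \sum_{k\ge n}\Lip(T_k)\,d(G_k(y),G_k(z)) \le C'\,e^{-\frac{2\chi}{3}\tau_n(y)},
\]
using Theorem~\ref{Thm-stable-manifolds}(3) applied to the pair $y,z \in V^s$. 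The triangle inequality then gives
\[
d(\vf^t(\wt y),\vf^t(\wt z)) \le e^{|\alpha|}\,d(G_n(y),G_n(z)) + |\alpha-\beta|\cdot\|X\|_{C^0} \le C''\,e^{-\frac{2\chi}{3}\tau_n(y)}.
\]
Since $t \le \tau_{n+1}(x) \le (n+1)\sup(r_\Lambda)$ and $\tau_n(y)\ge n\inf(r_\Lambda)/2$, we have $\tau_n(y)\ge \tfrac{\inf(r_\Lambda)}{\sup(r_\Lambda)}\,t - O(1)$. The slack between $\tfrac{2\chi}{3}$ and $\tfrac{\chi}{4}$ absorbs the multiplicative constant $C''$ for large $t$, while for small $t$ the inequality is trivial from boundedness of $\wt V^s$. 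This yields the claimed bound. The symmetric argument for $\vf^{-t}$ with negative $\ve$-gpo's is obtained by reversing time.

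\textbf{Main obstacle.} The substantive difficulty is purely bookkeeping: every quantity in the proof lives simultaneously in two time scales (discrete Poincar\'e iterations vs.\ flow time), and the whole point of $\Delta$ is to translate between them. Verifying that the series defining $\Delta$ converges absolutely, that $s_n(y)-s_n(z)$ can be controlled by the same exponential rate as $d(G_n(y),G_n(z))$ (for which one needs the $y,z$-version of Theorem~\ref{Thm-stable-manifolds}(3), not just the $y,x$-version), and finally that the final conversion factor $\inf(r_\Lambda)/\sup(r_\Lambda)$ in the exponent leaves enough room for the constants, is where the care is needed.
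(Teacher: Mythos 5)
Your proposal is correct and follows essentially the same route as the paper: define $\Delta$ as the limit of the cumulative shear $\tau_n(w)-\tau_n(\cdot)$, get geometric convergence of the tail from $\Lip(T_k)\le 1$ together with the exponential contraction of Theorem~\ref{Thm-stable-manifolds}(3), then split $d(\vf^t(\wt y),\vf^t(\wt z))$ into a contracted distance between $G_n(y),G_n(z)$ plus a time-shift error, and convert $n$ to $t$ via $\inf(r_\Lambda)/\sup(r_\Lambda)$. The only slip is your choice of reference point: you synchronize against the chart center $x$, but $x$ need not lie on $V^s$ (only $\|F(0)\|\le 10^{-3}(p^s\wedge p^u)$), so Theorem~\ref{Thm-stable-manifolds}(3) does not apply to the pair $(y,x)$ and $\Delta(0)=0$ is not automatic; replacing $x$ by $\Psi_x(0,F(0))\in V^s$, as the paper does, repairs both points without changing anything else.
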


In other words, $V^s$ ``lifts'' to a set $\wt V^s$ that contracts in the future {\em under the flow}. The statement above and its proof are similar to \cite[Proposition 4.8]{BCL23}. We include the proof in Appendix \ref{Appendix-proofs}. The choice of $\Delta(0)=0$ above is arbitrary: given $y=\Psi_x(t,F(t))\in V^s$,
we can choose $\Delta$ so that $\Delta(y)=0$. The resulting set
$\widetilde V^s\ni y$ also satisfies Proposition \ref{Prop-center-stable}. 

We finish this section proving another property about invariant manifolds, whose proof can also be found in Appendix \ref{Appendix-proofs}.

\begin{proposition}\label{Prop-disjointness}
The following holds for $\ve>0$ small enough.
Let $\un v^+=\{v_n\}_{n\geq 0}$ and $\un w=\{w_n\}_{n\geq 0}$ be positive $\ve$--gpo's,
with $v_0=\Psi_x^{p^s,p^u}$ and $w_0=\Psi_x^{q^s,q^u}$. Then 
either $V^s[\un v^+],V^s[\un w^+]$ are disjoint or one contains the other.
\end{proposition}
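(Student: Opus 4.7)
The plan is to reduce the claim to a uniqueness-of-graph-representation statement for a single local stable leaf through the intersection point. Since $v_0=\Psi_x^{p^s,p^u}$ and $w_0=\Psi_x^{q^s,q^u}$ share the basepoint $x$, both admissible manifolds live inside the same chart $\Psi_x$, namely $V^s[\un v^+]$ is the graph of $F_v:B^{d_s(x)}[p^s]\to\R^{d_u(x)}$ and $V^s[\un w^+]$ is the graph of $F_w:B^{d_s(x)}[q^s]\to\R^{d_u(x)}$. Without loss of generality I assume $p^s\leq q^s$. If the intersection is empty we are done, so suppose $z\in V^s[\un v^+]\cap V^s[\un w^+]$; injectivity of $\Psi_x$ yields a common parameter $t_0\in B^{d_s(x)}[p^s]$ with $F_v(t_0)=F_w(t_0)$. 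The goal is to conclude that $F_v\equiv F_w$ on $B^{d_s(x)}[p^s]$.

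The first step is to identify both manifolds as subsets of a common smooth leaf. I would apply Proposition~\ref{Prop-center-stable} to each of $\un v^+,\un w^+$ to obtain reparameterized lifts $\wt V_v^s,\wt V_w^s\subset M$, on which flow orbits of any two points contract exponentially in forward time. The respective lifts $\wt z_v\in\wt V_v^s$ and $\wt z_w\in\wt V_w^s$ of $z$ lie on the same $\vf$-orbit, differing by $\sigma:=\Delta_w(t_0)-\Delta_v(t_0)$. Using $\vf^t W^{ss}(\wt y)=W^{ss}(\vf^t\wt y)$ for the strong stable manifolds in $M$, both $\wt V_v^s$ and $\vf^{-\sigma}\wt V_w^s$ lie inside the single manifold $W^{ss}(\wt z_v)$, whose flow-saturate intersects $\wh\Lambda$ locally in a single smooth $d_s(x)$-dimensional submanifold $\mathcal W\subset \wh\Lambda$ containing both $V^s[\un v^+]$ and $V^s[\un w^+]$.

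The second step is a graph-uniqueness argument in the chart $\Psi_x$. Since $z$ is close to $x$ (within the Pesin chart scale), $T_z\mathcal W$ is close to $N^s_x$ by continuity of the Oseledets splitting, and this closeness persists uniformly across the chart domain. Combined with the admissibility bounds $\|dF_v\|_{C^0},\|dF_w\|_{C^0}\leq 1/2$ from (AM3), this ensures that the portion of $\mathcal W$ sitting over $B^{d_s(x)}[q^s]$ in $\Psi_x$-coordinates is uniquely the graph of a single $C^{1+\beta/3}$ function $F$. Both $F_v$ and $F_w$ must then coincide with $F$ on their respective domains, so $F_v=F_w$ on $B^{d_s(x)}[p^s]$, giving $V^s[\un v^+]\subset V^s[\un w^+]$.

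The main obstacle I expect is formalizing the first step cleanly: although $\wt V_v^s$ and $\wt V_w^s$ are naturally characterized by exponential contraction of flow orbits, verifying that their appropriate flow-translates lie on a common strong stable leaf requires tracking the time shift $\sigma$ and checking that the noncanonical reparameterizations $\Delta_v,\Delta_w$ do not obstruct the identification. Once this is in place, the graph-uniqueness in the second step is routine, provided the Pesin chart is small enough to guarantee uniform transversality of $\mathcal W$ to the $N_x^u$ direction throughout the relevant domain; this is guaranteed by taking $\ve$ small in comparison with $\rho,\chi$ and by the estimates \eqref{estimates-Q} on $Q(x)$.
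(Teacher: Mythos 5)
There is a genuine gap in your second step. You assert that the two lifted discs $\wt V^s_v$ and $\vf^{-\sigma}\wt V^s_w$ both lie inside a single strong stable leaf $W^{ss}(\wt z_v)$, and then deduce that $V^s[\un v^+]$ and $V^s[\un w^+]$ sit on a common smooth submanifold $\mathcal W\subset\wh\Lambda$. No such leaf is available at this stage: the intersection point $z$ is merely a point where two admissible manifolds of two arbitrary positive $\ve$--gpo's meet; it is not known to be Lyapunov regular (that information only appears later, in Proposition~\ref{Prop-finite-norm-2}, and only for points coded by bi-infinite regular gpo's), and Pesin theory does not supply a canonical $d_s(x)$--dimensional manifold through an arbitrary point that contains \emph{every} exponentially contracting disc through it. Proposition~\ref{Prop-center-stable} only tells you that each lift contracts internally under the flow, i.e.\ each lies in the stable \emph{set} of $\wt z_v$; the assertion that any two such discs through the same point must lie on one $d_s(x)$--dimensional leaf is essentially the uniqueness statement you are trying to prove, so the argument is circular at this point. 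Even granting a common leaf, the graph-uniqueness step would still require $\mathcal W$ to be embedded and globally a single graph over the chart domain, which is not automatic for an immersed stable leaf.

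The paper's proof avoids this by working with the dynamical characterization of Theorem~\ref{Thm-stable-manifolds}(1): $V^s[\un v^+]$ is exactly the set of points of $\Psi_{x_0}(R[p^s_0])$ whose images under $G_n=g^+_{x_{n-1}}\circ\cdots\circ g^+_{x_0}$ remain in $\Psi_{x_n}(R[10Q(x_n)])$ for all $n$. One first checks that $G_n(V^s[\un v^+])\subset\Psi_{x_n}(R[\tfrac12 Q(x_n)])$ for large $n$; then Proposition~\ref{Prop-center-stable} is used only to bound $\operatorname{diam}(G_n(U^s))\leq 2e^{-\chi c n}$ (via the $2$--Lipschitz holonomy $\mathfrak q_D$ applied to the flowed lift $\wt U^s$), which beats the subexponential lower bound $Q(x_n)\geq e^{-2\ve n}(p^s_0\wedge p^u_0)$ coming from Lemma~\ref{Lemma-minimum} and forces $G_n(U^s)\subset\Psi_{x_n}(R[Q(x_n)])$ for large $n$. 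Hence $G_n(U^s)\subset V^s[\{v_k\}_{k\geq n}]$, and one pulls back with the contracting graph transforms to get $U^s\subset V^s[\un v^+]$. To repair your proof, replace the ``common leaf'' step by this window-plus-pull-back argument.
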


\section{First coding}\label{section-coarse-graining}

In the previous sections, we have fixed $\vf,\chi,\rho,\Lambda,\widehat\Lambda,\theta,\ve$ such that
$\ve\ll \rho\ll 1$, and we have constructed invariant manifolds for $\ve$--gpo's. 
We also defined shadowing. In this section, we:
\begin{enumerate}[$\circ$]
\item Choose a countable family of $\ve$--double 
charts whose $\ve$--gpo's they define shadow the whole set $\Lambda\cap\nuh^\#$.
\item Introduce a first coding, that is usually infinite-to-one (and so {\em does not} satisfy the Main Theorem). 
\end{enumerate}
Once this is performed, the latter sections will show how to pass from this first coding to a coding satisfying the Main Theorem.

\subsection{Coarse graining}

This section constitutes an important contribution first developed in \cite{BCL23}, that cannot be obtained
using the methods of \cite{Sarig-JAMS,Lima-Sarig,Lima-Matheus}. Broadly speaking, conditions (GPO1)--(GPO2) considered in these latter works are not flexible enough to shadow all orbits of $\Lambda\cap\nuh^\#$ in an ``efficient'' way. As introduced in \cite{BCL23}, one approach to bypass this difficulty is to consider more flexible versions for (GPO1)--(GPO2). Gladly, the methods of \cite{BCL23} in this regard can be reproduced ipsis literis in higher dimension. Since this discussion is a main step in the proof, we have decided to include it in full details. The final result we will obtain in this section is the following.

\begin{theorem}[Coarse graining]\label{Thm-coarse-graining}
For all $0<\ve\ll \rho\ll 1$, there exists a countable family $\mathfs A$ of $\ve$--double charts
with the following properties:
\begin{enumerate}[{\rm (1)}]
\item {\sc Discreteness:} For all $t>0$, the set $\{\Psi_x^{p^s,p^u}\in\mathfs A:p^s,p^u>t\}$ is finite.
\item {\sc Sufficiency:} If $x\in\Lambda\cap\nuh^\#$ then there is a regular $\ve$--gpo
$\un v\in{\mathfs A}^{\Z}$ that shadows $x$.
\item {\sc Relevance:} For each $v\in \mathfs A$, $\exists\un{v}\in\mathfs A^\Z$ an $\ve$--gpo
with $v_0=v$ that shadows a point in $\Lambda\cap\nuh^\#$.
\end{enumerate}
\end{theorem}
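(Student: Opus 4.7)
The plan is a Sarig-style coarse graining adapted to the flexible conditions (GPO1)--(GPO2), following the template of \cite{BCL23}, with the $\Z$--indexed parameters of Proposition \ref{Prop-Z-par} as the main input.

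First I discretize the parameter space of $\ve$--double charts. A chart $\Psi_x^{p^s,p^u}$ is encoded by $(x, \widetilde{C(x)}, d_s(x), d_u(x), p^s, p^u)$. I restrict $(p^s,p^u)$ to the countable grid $\{e^{-k\ve/3} : k\in\N\}^2$, and for each such pair with $p^s\vee p^u \leq \ve$, the set of admissible data with $\ve Q(x)\geq p^s\vee p^u$ lies in a relatively compact subset of the frame bundle over $\Lambda$ (there are only finitely many dimension pairs $(d_s,d_u)$ with $d_s+d_u=d$, and for each the relevant fiber of isometries over $\Lambda$ is compact); I cover this subset by finitely many balls of radius $(p^s\wedge p^u)^{10}$ and pick one representative per ball. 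The resulting countable family $\mathfs{A}_0$ satisfies Discreteness, since $p^s,p^u > t$ forces finitely many grid values of $k$, each contributing finitely many representatives.

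For Sufficiency, fix $x\in\Lambda\cap\nuh^\#$, let $\{t_n\}_{n\in\Z}$ be the return times so that $f^n(x)=\vf^{t_n}(x)$, and consider the canonical sequence $u_n := \Psi_{f^n(x)}^{p^s_n, p^u_n}$ built from the $\Z$--indexed parameters of Proposition \ref{Prop-Z-par}. The Greedy algorithm of Proposition \ref{Prop-Z-par}(2) is precisely the recursion (GPO2) with transition time $T=t_{n+1}-t_n$, and (GPO1) is immediate for consecutive iterates; hence $\{u_n\}$ is a genuine $\ve$--gpo. I then approximate each $u_n$ by some $v_n\in\mathfs{A}_0$ whose $(p^s,p^u)$ coincide and whose $(x,\widetilde{C(x)})$ lies within $(p^s_n\wedge p^u_n)^5$ of $u_n$'s data; the quantitative comparisons of Proposition \ref{Lemma-overlap} (control of $C^{-1}$, of $Q$, and of $\|\Psi^{-1}_{x_i}\circ\Psi_{x_j}-{\rm Id}\|$) are strong enough to transfer (GPO1)--(GPO2) from $\{u_n\}$ to $\{v_n\}$. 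To enforce regularity I use (NUH5): $\limsup_{t\to+\infty} q(\vf^t(x)) > c(x) > 0$, so by the robustness of Proposition \ref{Prop-Z-par}(1), for infinitely many $n>0$ both $p^s_n$ and $p^u_n$ are at least $e^{-\kH}c(x)$. Discreteness then confines the corresponding $v_n$ to a finite subfamily of $\mathfs{A}_0$, and pigeonhole produces a chart $v$ repeating infinitely often in the future; the symmetric argument for $t\to-\infty$ gives a chart $w$ repeating infinitely often in the past. By Proposition \ref{Prop-shadowing}, the resulting regular $\ve$--gpo shadows $x$.

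Finally, for Relevance I let $\mathfs{A}$ be the subfamily of $\mathfs{A}_0$ consisting of charts that appear in some $\ve$--gpo in $\mathfs{A}_0^\Z$ shadowing a point of $\Lambda\cap\nuh^\#$; Sufficiency and Discreteness are inherited. The main obstacle I expect is calibrating the discretization tolerance against (GPO2): the greedy estimates couple the parameters through factors $e^{\pm\ve}$ and the (non-discretized) transition time $T(v,w)$, so the rounded parameters must remain compatible, which is precisely what the quantitative estimates of Proposition \ref{Lemma-overlap}(1)--(2) and the robustness bound of Proposition \ref{Prop-Z-par}(1) are designed to secure.
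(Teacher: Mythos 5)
There is a genuine gap at the heart of your Sufficiency argument. You assert that the Greedy algorithm of Proposition \ref{Prop-Z-par}(2) ``is precisely the recursion (GPO2) with transition time $T=t_{n+1}-t_n$'', and conclude that the canonical sequence $\{u_n\}$ is already an $\ve$--gpo. But the transition time $T(v,w)$ appearing in (GPO2) is \emph{not} $t_{n+1}-t_n$: it is the minimum of $T^+$ (resp.\ $-T^-$) over the images of small rectangles, so $T(v_n,v_{n+1})\leq t_{n+1}-t_n$ with strict inequality in general. Whenever the minimum in the greedy recursion is attained at the first term, one gets $P^s_n=e^{\ve(t_{n+1}-t_n)}P^s_{n+1}>e^{\ve T(v_n,v_{n+1})}P^s_{n+1}$, which violates the \emph{upper} bound of \eqref{gpo2-a}. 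This is exactly the obstruction the paper flags (``one of the inequalities in (GPO2) holds in the reverse direction!''), and overcoming it is the main novelty of the coarse-graining step: one must split the indices into growing and maximal ones and run a backwards induction that multiplies each $P^s_n$ by a correction factor $a_n\in(e^{-\ve},1]$, chosen in the grid $I_{\ve,q(x_n)}$ so that the corrected parameters $p^s_n=a_nP^s_n$ satisfy both inequalities of (GPO2) with the true transition times. Your proposal contains no substitute for this surgery, and the ``quantitative comparisons of Proposition \ref{Lemma-overlap}'' cannot supply it, since the discrepancy $e^{\ve\Delta_n}$ with $\Delta_n=(t_{n+1}-t_n)-T(v_n,v_{n+1})$ is a dynamical quantity unrelated to the overlap estimates.

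A second, related calibration error: your grid $\{e^{-k\ve/3}\}$ for $(p^s,p^u)$ is far too coarse. The lower bound in (GPO2) only tolerates a multiplicative perturbation of order $e^{\ve p^s}$, and $p^s\leq\ve Q(x)$, so the admissible rounding error in the exponent is of order $\ve^2 Q(x)$ --- which is why the paper uses the $q(x)$--dependent grid $I_{\ve,q(x)}=\{e^{-\ve^2 q(x)i}\}$. Rounding to within $e^{\pm\ve/3}$ would destroy (GPO2) outright. Similarly, your tolerance $(p^s_n\wedge p^u_n)^5$ for the centers and matrices does not even guarantee (GPO1), since the $\ve$--overlap condition demands an error below $(\eta_1\eta_2)^4\approx\eta^8$; the paper's choice $\tfrac14 q(x)^8$ is dictated by this. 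The skeleton of your argument (precompact parameter decomposition, finite nets, pigeonhole for regularity, pruning to relevant symbols) matches the paper, but without the maximal/growing-index surgery and the correctly scaled grids, the sequence you produce is not an $\ve$--gpo.
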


Recall that $\un v=\{v_n\}_{n\in\Z}$ is regular if there are $v,w$ such that
$v_n=v$ for infinitely many $n>0$ and $v_n=w$ for infinitely many $n<0$.
According to Proposition \ref{Prop-adaptedness} and part (2) above,
the regular $\ve$--gpo's in $\mathfs A$ shadow almost every point with respect to every
$\chi$--hyperbolic measure.

\begin{proof}
The proof, which is essentially the same of \cite[Theorem 5.1]{BCL23}, follows a similar strategy of \cite{Sarig-JAMS,Lima-Sarig}
but the implementation is significantly harder since, as already mentioned, the definition of edge considered here is more complicated.

\medskip
Let $\N_0=\N\cup\{0\}$, and let $X:=\Lambda^3\times {\rm GL}(d,\R)^3\times (0,1] \times \{0,1,\ldots,d\}$.
For each $x\in\Lambda\cap\nuh^\#$, let
$\Gamma(x)=(\un x,\un C,\un Q, d)\in X$ with
\begin{align*}
\un x=(f^{-1}(x),x,f(x)),\ \un C=(C(f^{-1}(x)),C(x),C(f(x))),\ \un Q=(Q(x),q(x)), \ d_s = d_s(x).
\end{align*}
Let $Y=\{\Gamma(x):x\in\Lambda\cap\nuh^\#\}$. We want to find a countable dense subset
of $Y$. Since the maps $x\mapsto C(x),Q(x),q(x),d_s(x)$ are usually just measurable,
we apply a precompactness argument.
For each $\un{\ell}=(\ell_{-1},\ell_0,\ell_1)\in\N_0^3$, $m,j\in\N_0$ and $k \in \{0,1,\ldots,d\}$, define
$$
Y_{\un \ell,m,j,k}:=\left\{\Gamma(x)\in Y:
\begin{array}{cl}
e^{\ell_i}\leq\|C(f^i(x))^{-1}\|<e^{\ell_i+1},&-1\leq i\leq 1\\
e^{-m-1}\leq Q(x)< e^{-m}&\\
e^{-j-1}\leq q(x)< e^{-j}&\\
d_s(x) = k
\end{array}
\right\}.
$$

\medskip
\noindent
{\sc Claim 1:} $Y=\bigcup\limits_{\un\ell\in\N_0^3,m,j\in\N_0\atop{0\leq k\leq d}}Y_{\un\ell,m,j,k}$, and each
$Y_{\un\ell,m,j,k}$ is precompact in $X$.

\begin{proof}[Proof of Claim $1$.] 
The first statement is clear, so we show the precompactness.
Fix $\un\ell\in \N_0^3$, $m,j\in\N_0$, $0\leq k\leq d$, and take $\Gamma(x)\in Y_{\un\ell,m,j,k}$. We have
$\un x\in \Lambda^3$, a precompact subset of $M^3$.
For $|i|\leq 1$, $C(f^i(x))$ is an element of ${\rm GL}(d,\R)$ with norm $\leq 1$ and
inverse norm $<e^{\ell_i+1}$, hence it belongs to a compact subset of ${\rm GL}(d,\R)$.
This guarantees that $\un C$ belongs to a compact subset of ${\rm GL}(d,\R)^3$. Also,
$\un Q\in [e^{-m-1},1]\times [e^{-j-1},1]$, the product of compact subintervals of $(0,1]$. Using that the product of precompact sets is precompact, the claim is proved.
\end{proof}

By Claim 1, there exists a finite set
$Z_{\un\ell,m,j,k}\subset Y_{\un\ell,m,j,k}$ such that for every $\Gamma(x)\in Y_{\un\ell,m,j,k}$
there exists $\Gamma(y)\in Z_{\un\ell,m,j,k}$ with:
\begin{enumerate}[{\rm (a)}]
\item $ d(f^i(x),f^i(y))+\|\widetilde{C(f^i(x))}-\widetilde{C(f^i(y))}\|<\tfrac{1}{4}q(x)^8$,
$|i|\leq 1$.
\item $\tfrac{Q(x)}{Q(y)}=e^{\pm \ve/3}$ and $\tfrac{q(x)}{q(y)}=e^{\pm \ve/3}$.
\item $d_s(x) = d_s(y).$
\end{enumerate}
Condition (a) implies that $f^i(x),f^i(y)$ belong to the same disc of $\Lambda$, for $|i|\leq 1$.
For $\eta>0$, let $I_{\ve,\eta}:=\{e^{-\ve^2\eta i}:i\geq 0\}$, a countable discrete set whose ``thickness''
depends on $\eta$.

\medskip
\noindent
{\sc The alphabet $\mathfs A$:} Let $\mathfs A$ be the countable family of $\Psi_x^{p^s,p^u}$ such that:
\begin{enumerate}[i i)]
\item[(CG1)] $\Gamma(x)\in Z_{\un\ell,m,j,k}$ for some
$(\un\ell,m,j,k)\in\N_0^3\times \N_0\times \N_0 \times \{0,1,\ldots,d\}$.
\item[(CG2)] $0<p^s,p^u\leq \ve Q(x)$ and $p^s,p^u\in I_{\ve,q(x)}$.
\item[(CG3)] $e^{-\mathfrak H-1}\leq \tfrac{p^s\wedge p^u}{q(x)}\leq e^{\mathfrak H+1}$, where $\mathfrak H$
is given by Proposition \ref{Prop-Z-par}(1).
\end{enumerate}

\medskip
\noindent
{\em Proof of discreteness.}
Fix $t>0$, and let $\Psi_x^{p^s,p^u}\in\mathfs A$ with $p^s,p^u>t$.
If $\Gamma(x)\in Z_{\un\ell,m,j,k}$ then:
\begin{enumerate}[$\circ$]
\item Finiteness of $\un\ell$: since $e^{\ell_0}\leq \|C(x)^{-1}\|<Q(x)^{-1}<t^{-1}$,
we have $\ell_0<|\log t|$. By Lemma \ref{Lemma-linear-reduction}(3), for $i=\pm 1$ we also have
$$
e^{\ell_i}\leq \|C(f^i(x))^{-1}\|\leq e^{6 \rho} \|C(x)^{-1}\|
<e^{6\rho}t^{-1},
$$
hence $\ell_{-1},\ell_1<6\rho+|\log t|=:T_t$, which is bigger than $|\log t|$.
\item Finiteness of $m$: $e^{-m}>Q(x)>t$, hence $m<|\log t|$.
\item Finiteness of $j$: $e^{-j}>q(x)\geq e^{-\mathfrak H-1}(p^s\wedge p^u) >e^{-\mathfrak H-1} t$,
hence $j\leq |\log t|+\mathfrak H+1$.
\end{enumerate}
The finiteness of $k$ is obvious. Therefore
\begin{align*}
\#\left\{\Gamma(x):
\Psi_x^{p^s,p^u}\in\mathfs A\text{ s.t. }p^s,p^u>t\right\}\leq
\sum_{j=0}^{\lceil |\log t|+\mathfrak H\rceil+1}\sum_{m=0}^{\lceil |\log t|\rceil}
\sum_{-1\leq i\leq 1\atop{\ell_i=0}}^{T_t} \sum_{k=0}^{d} \# Z_{\un\ell,m,j,k}
\end{align*}
is the finite sum of finite terms, hence finite. For each such $\Gamma(x)$,
$$
\#\{(p^s,p^u):\Psi_x^{p^s,p^u}\in\mathfs A\text{ s.t. }p^s,p^u>t\}\leq (\# I_{\ve,q(x)}\cap (t,1))^2$$
is finite, hence 
\begin{align*}
\#\left\{\Psi_x^{p^s,p^u}\in\mathfs A:p^s,p^u>t\right\}\leq 
\sum_{j=0}^{\lceil |\log t|+\mathfrak H\rceil+1}\sum_{m=0}^{\lceil |\log t|\rceil} \sum_{-1\leq i\leq 1\atop{\ell_i=0}}^{T_t}
\sum_{k=0}^{d} \sum_{\Gamma(x)\in Z_{\un\ell,m,j,k}}(\# I_{\ve,q(x)}\cap (t,1))^2
\end{align*}
is the finite sum of finite terms, hence finite. This proves the discreteness of $\mathfs A$.

\medskip
\noindent
{\em Proof of sufficiency.}
Let $x\in\Lambda\cap\nuh^\#$. Take $(\ell_i)_{i\in\Z},(m_i)_{i\in\Z},(j_i)_{i\in\Z}$ and $k$ such that:
\begin{align*}
& \|C(f^i(x))^{-1}\|\in [e^{\ell_i},e^{\ell_i+1}),\,Q(f^i(x))\in [e^{-m_i-1},e^{-m_i}),\\
&q(f^i(x))\in[e^{-j_i-1},e^{-j_i}),\, k=d_s(x).
\end{align*}
For $n\in\Z$, let $\un\ell^{(n)}=(\ell_{n-1},\ell_n,\ell_{n+1})$. Then $\Gamma(f^n(x))\in Y_{\un\ell^{(n)},m_n,j_n,k}$.
Take $\Gamma(x_n)\in Z_{\un\ell^{(n)},m_n,j_n,k}$ such that:
\begin{enumerate}[aaa)]
\item[(${\rm a}_n$)] $ d(f^i(f^n(x)),f^i(x_n))+
\|\widetilde{C(f^i(f^n(x)))}-\widetilde{C(f^i(x_n))}\|<\tfrac{1}{4}q(f^n(x))^8$, $|i|\leq 1$.
\item[(${\rm b}_n$)] $\tfrac{Q(f^n(x))}{Q(x_n)}=e^{\pm\ve/3}$ and $\tfrac{q(f^n(x))}{q(x_n)}=e^{\pm\ve/3}$.
\end{enumerate}
From now on the proof differs from \cite{Sarig-JAMS,Lima-Sarig,Lima-Matheus}, which constitutes the development made in \cite{BCL23}, and that does not rely on the dimension of $M$.
Take $\{t_n\}_{n\in\Z}$ such that $f^n(x)=\vf^{t_n}(x)$, with $t_0=0$ and $g_{x_n}^+[f^n(x)]=\vf^{t_{n+1}-t_n}[f^n(x)]$.
Define
\begin{align*}
P_n^s&:=\ve\inf\{e^{\ve|t_{n+k}-t_n|}Q(x_{n+k}):k\geq 0\},\\
P_n^u&:=\ve\inf\{e^{\ve|t_{n+k}-t_{n}|}Q(x_{n+k}):k\leq 0\}.
\end{align*}
There is no reason for $\Psi_{x_n}^{P^s_n,P^u_n}$ belonging to $\mathfs A$ nor
for $\{\Psi_{x_n}^{P^s_n,P^u_n}\}_{n\in\Z}$ being an $\ve$--gpo. Indeed,
with the above definitions 
one of the inequalities in (GPO2) holds in the reverse direction!
To satisfy (GPO2), we will slightly decrease each $P^s_n,P^u_n$.
Below we show how to make this ``surgery'' for $P^s_n$ (the method for
$P^u_n$ is symmetric).

Start noting the greedy recursion $P^s_n=\min\{e^{\ve(t_{n+1}-t_n)}P^s_{n+1},\ve Q(x_n)\}$
and that
\begin{align*}
&P^s_n=e^{\pm \ve / 3}\ve\inf\{e^{\ve|t_{n+k}-t_n|}Q(f^{n+k}(x)):k\geq 0\}
=e^{\pm \ve /3}p^s(x,\mathcal T,n)=e^{\pm\left(\mathfrak H+\frac{\ve}{3}\right)}q^s(f^n(x)),
\end{align*}
by (${\rm b}_n$) above and Proposition \ref{Prop-Z-par}(1), where $\mathcal T=\{t_n\}_{n\in\Z}$.
We fix $\lambda:=\exp{}[\ve^{1.5}]$
and divide the indices $n\in\Z$ into two groups:
\begin{center}
$n$ is {\em growing} if $P^s_n\geq \lambda P^s_{n+1}$
and it is {\em maximal} otherwise.
\end{center}
Note that $\lambda$ has an exponent 
with order smaller than $\ve$. The definition of growing/maximal indices is motivated by the following: the parameter
$P^s_n$ gives a choice on the size of the stable manifold at $x_n$,
therefore we expect $P^s_n$ to be larger than $P^s_{n+1}$ at least by a multiplicative
factor bigger than $\lambda$, unless it reaches the maximal size $\ve Q(x_n)$.
In the first case the index is growing, and in the second it is maximal.
Assuming that $\ve>0$ is sufficiently small, we note two properties of this notion:
\begin{enumerate}[$\circ$]
\item If $n$ is maximal then $P^s_n=\ve Q(x_n)$: otherwise, the greedy recursion gives
$$
P^s_n=e^{\ve(t_{n+1}-t_n)}P^s_{n+1}\geq e^{\ve\inf(r_\Lambda)}P^s_{n+1}>\lambda P^s_{n+1},$$
which contradicts the assumption that $n$ is maximal.
\item There are infinitely many maximal indices $n>0$,
and infinitely many maximal indices $n<0$: the first claim follows exactly as in the proof of
Proposition \ref{Prop-Z-par}(3)  (remember we are assuming that $x\in\nuh^\#$
and so $\limsup\limits_{n\to+\infty} P^s_n>0$).
The second claim follows from direct computation: if there is $n_0$ such that every $n<n_0$ is growing then
$P^s_n\geq \lambda^{n_0-n}P^s_{n_0}$ for all $n<n_0$, which cannot hold since $\lambda^{n_0-n}\to\infty$
as $n\to-\infty$.
\end{enumerate}

We define $p^s_n=a_n P^s_n$ where $e^{-\ve}<a_n\leq 1$ are appropriately chosen.
We first define $a_n$ for the maximal indices $n\in\Z$ as the largest value in $(0,1]$ with
$a_nP^s_n\in I_{\ve,q(x_n)}$.
In particular, $e^{-\ve^2 q(x_n)}\leq a_n\leq 1$. Then we define $a_n$ for the growing indices.
Fix two consecutive maximal indices $n<m$ and define $a_{n+1},\ldots,a_{m-1}$ with a backwards 
induction as follows.
If $n<k<m$ and $a_{k+1}$ is well-defined then we choose $a_k$ largest as possible satisfying:
\begin{enumerate}[aa)]
\item[(i)] $e^{-\frac{\ve}{4}P^s_k}a_{k+1}\leq e^{\frac{\ve}{4}P^s_k}a_k\leq a_{k+1}$;
\item[(ii)] $a_k P^s_k\in I_{\ve,q(x_k)}$.
\end{enumerate}
This choice is possible because the interval $(e^{-\frac{\ve}{4}P^s_k}a_{k+1},a_{k+1}]$
intersects $I_{\ve,q(x_k)}$,
since $\tfrac{\ve}{4}P^s_k\geq \tfrac{\ve}{4}e^{-\left(\mathfrak H+\frac{\ve}{3}\right)}q^s(f^k(x))\geq
\tfrac{\ve}{4}e^{-\left(\mathfrak H+\frac{\ve}{3}\right)}q(f^k(x))
\geq \tfrac{\ve}{4}e^{-\left(\mathfrak H+\frac{2\ve}{3}\right)}q(x_k)>\ve^2 q(x_k)$.
The first condition implies that $0<a_{n+1}\leq\cdots\leq a_{m-1}\leq a_m\leq 1$.
The maximality on the choice of $a_k$ indeed implies the inequality
$e^{-\ve^2 q(x_k)}a_{k+1}\leq e^{\frac{\ve}{4}P^s_k}a_k\leq a_{k+1}$
for every growing $k$ (this is stronger than (i)).

Before continuing, we collect some estimates relating $q(x_k),P^s_k,p^s_k$.
Fix two consecutive maximal indices $n<m$. Then the following holds for all $\ve>0$ small enough:
\begin{enumerate}[$\circ$]
\item $\displaystyle\sum_{k=n+1}^m P^s_k<\ve^{\frac{6}{\beta}-1}$:
every $k=n+1,\ldots,m-1$ is growing,
thus $P^s_k\leq \lambda^{n+1-k}P^s_{n+1}$ for $k=n+1,\ldots,m$. Therefore
$$
\sum_{k=n+1}^m P^s_k\leq P^s_{n+1}\sum_{i=0}^{m-n-1}\lambda^{-i}<\ve^{\frac{6}{\beta}+1}\frac{1}{1-\lambda^{-1}}<
2\ve^{\frac{6}{\beta}-0.5}<\ve^{\frac{6}{\beta}-1},
$$
since $\lim\limits_{\ve\to 0}\tfrac{\ve^{1.5}}{1-\lambda^{-1}}=1$.
\item $\displaystyle\sum_{k=n+1}^m q(x_k)<\ve^{\frac{6}{\beta}-1}$: by the previous item,
$$
\sum_{k=n+1}^m q(x_k)\leq e^{\mathfrak H+\frac{2\ve}{3}}\sum_{k=n+1}^m P^s_k
<2e^{\mathfrak H+\frac{2\ve}{3}}\ve^{\frac{6}{\beta}-0.5}<\ve^{\frac{6}{\beta}-1}.
$$
\item $a_{n+1}>\lambda^{-1}$: using that $a_m\geq e^{-\ve^2 q(x_m)}>e^{-\ve P^s_m}$ and that
$e^{-\ve P^s_k}a_{k+1}\leq a_k$ for every growing $k$, we have
$$
a_{n+1}\geq \exp{}\left[-\ve\sum_{k=n+1}^{m-1}P^s_k\right]a_m\geq \exp{}\left[-\ve\sum_{k=n+1}^{m}P^s_k\right]
> \exp{}\left[-\ve^{\frac{6}{\beta}}\right]>\lambda^{-1},
$$
since $\ve^{\frac{6}{\beta}}<\ve^{1.5}$. 
\end{enumerate}
In particular, $a_k>\lambda^{-1}>e^{-\ve}$ for all $k\in\Z$.

\medskip
\noindent
{\sc Claim 2:} $\Psi_{x_n}^{p^s_n,p^u_n}\in\mathfs A$ for all $n\in\Z$.

\begin{proof}[Proof of Claim $2$.] We have to check (CG1)--(CG3).

\medskip
\noindent
(CG1) By definition, $\Gamma(x_n)\in Z_{\un\ell^{(n)},m_n,j_n,k}$.

\medskip
\noindent
(CG2) We have $p^s_n\leq P^s_n\leq \ve Q(x_n)$, and the same holds for $p^u_n$.
By definition, $p^s_n,p^u_n\in I_{\ve,q(x_n)}$.

\medskip
\noindent
(CG3) The proof of this in \cite{BCL23} had a mistake, so we take the chance to correct it. By definition, $P^s_n=e^{\pm\left(\mathfrak H+\frac{\ve}{3}\right)}q^s(f^n(x))$ and $p^{s/u}_n=e^{\pm\ve}P^{s/u}_n$, hence
$\tfrac{p^{s/u}_n}{q^{s/u}(f^n(x))}=e^{\pm\left(\mathfrak H+\frac{4\ve}{3}\right)}$. This implies that
$\tfrac{p^s_n\wedge p^u_n}{q(f^n(x))}=e^{\pm\left(\mathfrak H+\frac{4\ve}{3}\right)}$. Since by (${\rm b}_n$) we have
$\tfrac{q(f^n(x))}{q(x_n)}=e^{\pm\ve/3}$, it follows that 
$\tfrac{p^s_n\wedge p^u_n}{q(x_n)}=e^{\pm\left(\mathfrak H+2\ve\right)}$.
\end{proof}

\medskip
\noindent
{\sc Claim 3:} $\Psi_{x_n}^{p^s_n,p^u_n}\overset{\ve}{\rightarrow}\Psi_{x_{n+1}}^{p^s_{n+1},p^u_{n+1}}$
for all $n\in\Z$.

\begin{proof}[Proof of Claim $3$.] We have to check (GPO1)--(GPO2).

\medskip
\noindent
(GPO1) By (${\rm a}_n$) with $i=1$ and (${\rm a}_{n+1}$) with $i=0$, we have
\begin{align*}
&\ d(f(x_n),x_{n+1})+\|\widetilde{C(f(x_n))}-\widetilde{C(x_{n+1})}\|\\
&\leq  d(f^{n+1}(x),f(x_n))+
\|\widetilde{C(f^{n+1}(x))}-\widetilde{C(f(x_n))}\|\\
&\ \ \ \,+ d(f^{n+1}(x),x_{n+1})+
\|\widetilde{C(f^{n+1}(x))}-\widetilde{C(x_{n+1})}\|\\
&<\tfrac{1}{4}q(f^n(x))^8+\tfrac{1}{4}q(f^{n+1}(x))^8
\overset{!}{\leq} \tfrac{1}{4}(1+e^{8\ve})q(f^{n+1}(x))^8\\
&\overset{!!}{\leq} \tfrac{1}{4}e^{8\mathfrak H+\frac{32\ve}{3}}(1+e^{8\ve})(p^s_{n+1}\wedge p^u_{n+1})^8
\overset{!!!}{<}(p^s_{n+1}\wedge p^u_{n+1})^8,
\end{align*}
where in $\overset{!}{\leq}$ we used Lemma \ref{Lemma-q}, in $\overset{!!}{\leq}$ we used
(${\rm b}_n$) and the estimate used to prove (CG3) in the previous paragraph, and in $\overset{!!!}{<}$
we used that $\tfrac{1}{4}e^{8\mathfrak H+\frac{32\ve}{3}}(1+e^{8\ve})<1$ when $\ve,\rho>0$ are sufficiently small.
This proves that
$\Psi_{f(x_n)}^{p^s_{n+1}\wedge p^u_{n+1}}\overset{\ve}{\approx}\Psi_{x_{n+1}}^{p^s_{n+1}\wedge p^u_{n+1}}$.
Similarly, we prove that
$\Psi_{f^{-1}(x_{n+1})}^{p^s_n\wedge p^u_n}\overset{\ve}{\approx}\Psi_{x_n}^{p^s_n\wedge p^u_n}$.

\medskip
\noindent
(GPO2) We show that relation (\ref{gpo2-a}) holds for all $k\in\Z$:
$$
e^{-\ve p^s_k}\min\{e^{\ve T(v_k,v_{k+1})}p^s_{k+1},e^{-\ve}\ve Q(x_k)\}\leq p^s_k\leq
\min\{e^{\ve T(v_k,v_{k+1})}p^s_{k+1},\ve Q(x_k)\}.
$$
Relation (\ref{gpo2-b}) is proved similarly. For ease of notation, write $T_k=T(v_k,v_{k+1})$
and $\Delta_k=(t_{k+1}-t_k)-T_k$. Since $T_k$ is the minimal time, we have $\Delta_k\geq 0$.
Using Lemma \ref{lemma-local-coord}(3), condition (${\rm a}_n$) and Remark
\ref{rmk-time}, we also have the following upper bound for $\Delta_k$:
$$
\Delta_k\leq {\rm diam}(R[\tfrac{1}{15}(p^s_k\wedge p^u_k)])=\tfrac{\sqrt{2}}{15}(p^s_k\wedge p^u_k)
< \tfrac{p^s_k}{4}\cdot 
$$
We fix two consecutive maximal indices $n<m$ and prove the above inequality for $k=n,\ldots,m-1$.
We divide the proof into two cases: $k=n$ and $k\neq n$.
Assume first that $k=n$. For $\ve>0$ small enough (remember $a_{n+1}>\lambda^{-1}$),
$$
e^{\ve T_n}p^s_{n+1}=e^{\ve T_n} a_{n+1}P^s_{n+1}>\exp{}\left[\inf(r_\Lambda)\ve-\ve^{1.5}\right]P^s_{n+1}
>\lambda P^s_{n+1}>P^s_n=\ve Q(x_n).
$$
Therefore
$$
e^{-\ve p^s_n}\min\{e^{\ve T_n}p^s_{n+1},e^{-\ve}\ve Q(x_n)\}=e^{-\ve p^s_n}e^{-\ve}\ve Q(x_n)
<e^{-\ve}\ve Q(x_n)<a_nP^s_n=p^s_n
$$
and
$$
\min\{e^{\ve T_n}p^s_{n+1},\ve Q(x_n)\}=\ve Q(x_n)=P^s_n\geq p^s_n.
$$
This proves (\ref{gpo2-a}) for $k=n$.

Now let $k\neq n$, and call
${\rm I}=\min\{e^{\ve T_k}p^s_{k+1},e^{-\ve}\ve Q(x_k)\}$,
${\rm II}=\min\{e^{\ve T_k}p^s_{k+1},\ve Q(x_k)\}$.
We wish to show that $e^{-\ve p^s_k}{\rm I}\leq p^s_k\leq {\rm II}$.
Since $a_{k+1}\geq e^{-\ve\Delta_k}a_{k+1}>\exp{}\left[-\ve \tfrac{p^s_k}{4}-\ve^{1.5}\right]>\exp{}[-\ve]$,
we have
\begin{align*}
&\ {\rm I}=\min\{e^{-\ve\Delta_k}a_{k+1}e^{\ve (t_{k+1}-t_k)}P^s_{k+1},e^{-\ve}\ve Q(x_k)\}\\
&\leq a_{k+1}\min\{e^{\ve (t_{k+1}-t_k)}P^s_{k+1},\ve Q(x_k)\}=a_{k+1}P^s_k.
\end{align*}
Therefore $e^{-\ve p^s_k}{\rm I}\leq e^{-\frac{\ve}{2}P^s_k}a_{k+1}P^s_k\leq a_kP^s_k=p^s_k$,
where in the second inequality we used property (i) in the definition of $a_k$.

For the other inequality, start observing that
$$
p^s_k=a_kP^s_k = a_k\min\{e^{\ve(t_{k+1}-t_k)}P^s_{k+1},\ve Q(x_k)\}=
\min\{e^{\ve(t_{k+1}-t_k)}a_kP^s_{k+1},a_k \ve Q(x_k)\}.
$$
Clearly $a_k \ve Q(x_k)\leq \ve Q(x_k)$. Using that $\Delta_k\leq \tfrac{P^s_k}{4}$,
we have $e^{\ve\Delta_k}a_k\leq e^{\frac{\ve}{4}P^s_k}a_k\leq a_{k+1}$, where in the last
passage we used property (i) in the definition of $a_k$. Hence
$$
e^{\ve(t_{k+1}-t_k)}a_kP^s_{k+1}=e^{\ve T_k}e^{\ve \Delta_k}a_k P^s_{k+1}\leq e^{\ve T_k}a_{k+1}P^s_{k+1}
=e^{\ve T_k}p^s_{k+1}.
$$
The conclusion is that
$p^s_k\leq {\rm II}$.
The proof of Claim 3 is now complete.
\end{proof}

\medskip
\noindent
{\sc Claim 4:} $\{\Psi_{x_n}^{p^s_n,p^u_n}\}_{n\in\Z}$ is regular.

\begin{proof}[Proof of Claim $4$.]
Since $x\in\nuh^\#$ and $\tfrac{p^s_n\wedge p^u_n}{q(f^n(x))}=e^{\pm(\mathfrak H+1)}$,
we have
$\limsup\limits_{n\to+\infty}(p^s_n\wedge p^u_n)>0$ and $\limsup\limits_{n\to-\infty}(p^s_n\wedge p^u_n)>0$.
By the discreteness of $\mathfs A$, it follows that $\Psi_{x_n}^{p^s_n,p^u_n}$ repeats infinitely
often in the future and infinitely often in the past.
\end{proof}

\medskip
\noindent
{\sc Claim 5:} $\{\Psi_{x_n}^{p^s_n,p^u_n}\}_{n\in\Z}$ shadows $x$.

\begin{proof}[Proof of Claim $5$.]
By (${\rm a}_n$) with $i=0$, we have
$\Psi_{f^n(x)}^{p^s_n\wedge p^u_n}\overset{\ve}{\approx}\Psi_{x_n}^{p^s_n\wedge p^u_n}$, hence 
by Proposition \ref{Lemma-overlap}(3) we have $f^n(x)=\Psi_{f^n(x)}(0)\in \Psi_{x_n}(R[p^s_n\wedge p^u_n])$,
thus $\{\Psi_{x_n}^{p^s_n,p^u_n}\}_{n\in\Z}$ shadows $x$.
This concludes the proof of sufficiency.
\end{proof}

\medskip
\noindent
{\em Proof of relevance.} The alphabet $\mathfs A$ might not a priori satisfy
the relevance condition, but we can easily reduce it to a sub-alphabet $\mathfs A'$ satisfying (1)--(3).
Call $v\in\mathfs A$ relevant if there is $\un v\in\mathfs A^\Z$ with $v_0=v$ such that $\un{v}$ shadows
a point in $\Lambda\cap\nuh^\#$. Since $\nuh^\#$ is $\vf$--invariant, every $v_i$ is relevant.
Hence $\mathfs A'=\{v\in\mathfs A:v\text{ is relevant}\}$ is discrete
because $\mathfs A'\subset\mathfs A$, it is sufficient and relevant by definition.
\end{proof}

\subsection{First coding}\label{ss.first.coding}

Let $\Sigma$ be the TMS associated to the graph with vertex set $\mathfs A$ given by
Theorem \ref{Thm-coarse-graining} and
edges $v\overset{\ve}{\to}w$. An element $\un v\in\Sigma$ is an $\ve$--gpo,
so let $\pi:\Sigma\to \widehat \Lambda$ by
$$
\{\pi(\un v)\}:=V^s[\un v]\cap V^u[\un v].
$$
The main properties of the triple $(\Sigma,\sigma,\pi)$ are listed below.

\begin{proposition}\label{Prop-pi}
The following holds for all $0<\ve\ll \rho\ll 1$.
\begin{enumerate}[{\rm (1)}]
\item Each $v\in\mathfs A$ has finite ingoing and outgoing degree, hence $\Sigma$ is locally compact.
\item $\pi:\Sigma\to \widehat \Lambda$ is H\"older continuous.
\item $\pi[\Sigma^\#]\supset\Lambda\cap\nuh^\#$.
\end{enumerate} 
\end{proposition}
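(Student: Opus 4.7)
The plan is to deduce the three items from ingredients already assembled: the discreteness of $\mathfs A$ (Theorem~\ref{Thm-coarse-graining}(1)), the Hölder dependence of admissible manifolds on the gpo (Theorem~\ref{Thm-stable-manifolds}(4)), the shadowing statement (Proposition~\ref{Prop-shadowing}), and the sufficiency of $\mathfs A$ (Theorem~\ref{Thm-coarse-graining}(2)).

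\textbf{Part (1).} Fix $v=\Psi_x^{p^s,p^u}\in\mathfs A$. The main point is that for any $w=\Psi_y^{q^s,q^u}\in\mathfs A$ with either $v\overset{\ve}{\to}w$ or $w\overset{\ve}{\to}v$, both parameters $q^s$ and $q^u$ are bounded below by constants depending only on~$v$; then discreteness finishes the job. Suppose first $v\overset{\ve}{\to}w$. By Remark~\ref{rmk-time} we have $0\leq T(v,w)\leq\rho$. The right-hand inequality of \eqref{gpo2-a} gives $p^s\leq e^{\ve T(v,w)}q^s$, so $q^s\geq e^{-\ve\rho}p^s$. The left-hand inequality of \eqref{gpo2-b} implies either $q^u\geq e^{-2\ve}p^u$ or $q^u\geq e^{-2\ve}\ve Q(y)$; in the latter case Lemma~\ref{Lemma-minimum} gives $q^u\geq q^s\wedge q^u\geq e^{-2\ve}(p^s\wedge p^u)$. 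So $q^s,q^u\geq e^{-2\ve}(p^s\wedge p^u)$. The case $w\overset{\ve}{\to}v$ is entirely symmetric, using the reverse inequalities of (GPO2) together with Lemma~\ref{Lemma-minimum} to bound whichever of $q^s,q^u$ does not get its lower bound directly. In both cases, Theorem~\ref{Thm-coarse-graining}(1) implies that only finitely many $w$ can occur, so $\Sigma$ is locally compact.

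\textbf{Part (2).} Let $\un v,\un w\in\Sigma$ agree on the window $|n|\leq N$, so $d(\un v,\un w)\leq e^{-N-1}$. Applying Theorem~\ref{Thm-stable-manifolds}(4) separately to the positive halves and (symmetrically) to the negative halves, we obtain $K>0$ and $\theta\in(0,1)$ with
\[
d_{C^1}(V^s[\un v],V^s[\un w])\leq K\theta^N,\qquad d_{C^1}(V^u[\un v],V^u[\un w])\leq K\theta^N.
\]
By Lemma~\ref{Lemma-admissible-manifolds}, the intersection $V^s\cap V^u$ in an $\ve$--double chart at $v_0$ is a single transverse point, and the angle of intersection is bounded away from zero uniformly in~$v_0$ (condition (AM3) forces each representing function to have slope at most $1/2$). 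Consequently, a routine implicit-function estimate in the Pesin chart at $v_0$ gives $d(\pi(\un v),\pi(\un w))\leq C\theta^N$ for some $C=C(\ve)$. Writing $\theta^N\leq \theta^{-1}d(\un v,\un w)^{|\log\theta|}$ yields the desired Hölder bound with exponent $|\log\theta|$ (up to an inoffensive constant).

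\textbf{Part (3).} Let $x\in\Lambda\cap\nuh^\#$. By Theorem~\ref{Thm-coarse-graining}(2) there is a regular $\ve$--gpo $\un v\in\mathfs A^\Z$ that shadows $x$. By Proposition~\ref{Prop-shadowing}, $\un v$ shadows a unique point, which is $V^s[\un v]\cap V^u[\un v]=\{\pi(\un v)\}$; hence $\pi(\un v)=x$. Regularity of $\un v$ is exactly the condition that $\un v\in\Sigma^\#$, so $x\in\pi[\Sigma^\#]$.

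\textbf{Main obstacle.} The technically delicate step is part~(1), where one must combine the two-sided greedy inequalities of (GPO2) with Lemma~\ref{Lemma-minimum} to guarantee that \emph{both} $q^s$ and $q^u$ are bounded below; it is here that the careful design of the edge relation pays off. Part~(2) is then a standard Hölder-continuity-of-intersection argument, and part~(3) is immediate from the sufficiency of $\mathfs A$.
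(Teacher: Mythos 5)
Your proposal is correct and follows exactly the route the paper indicates (its proof is a one-line citation of Lemma~\ref{Lemma-minimum} with Theorem~\ref{Thm-coarse-graining}(1) for part (1), Theorem~\ref{Thm-stable-manifolds}(4) for part (2), and Theorem~\ref{Thm-coarse-graining}(2) with Proposition~\ref{Prop-shadowing} for part (3)); you have merely written out the details. The only stylistic remark is that your case analysis in part (1) is redundant: Lemma~\ref{Lemma-minimum} already gives $q^s,q^u\geq q^s\wedge q^u\geq e^{-2\ve}(p^s\wedge p^u)$ for any edge in either direction, which is all that discreteness requires.
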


Part (1) follows from Lemma \ref{Lemma-minimum} and Theorem \ref{Thm-coarse-graining}(1),
part (2) follows from Theorem \ref{Thm-stable-manifolds}(4),
and part (3) follows from Theorem \ref{Thm-coarse-graining}(2).
It is important noticing that $(\Sigma,\sigma,\pi)$ is {\em not} the TMS that satisfies the Main Theorem,
since $\pi$ might be (and usually is) infinite-to-one.
We use $\pi$ to induce a locally finite cover of $\Lambda\cap\nuh^\#$, which will then be
refined to generate a new TMS whose TMF is the one satisfying the Main Theorem.

We end this section introducing the TMF defined by $(\Sigma,\sigma,\pi)$.
Remember that $r_\Lambda:\Lambda\to(0,\rho/2)$ is the first return time to $\Lambda$.

\medskip
\noindent
{\sc The roof function $r:\Sigma\to (0,\rho)$:} Given $\un v=\{\Psi_{x_n}^{p^s_n,p^u_n}\}_{n\in\Z}\in\Sigma$,
let $x=\pi(\un v)$ and assume that $x_1$ belongs to the disc $D\subset{\widehat \Lambda}$.
Define $r(\un v):=-\mathfrak t_D(x)=r_\Lambda(x_0)-\mathfrak t_D[\vf^{r_{\Lambda}(x_0)}(x)]$.

\medskip
Since $g_{x_0}^+=\mathfrak q_D$,
$r(\un v)$ is the time increment for $\vf$ between the points $\pi(\un v)$ and $g_{x_0}^+[\pi(\un v)]$.
In particular, $\vf^{r(\un v)}[\pi(\un v)]=\pi[\sigma(\un v)]$ belongs to $\widehat\Lambda$ but not necessarily to
$\Lambda$. (Note: even if $\pi(\un v),\vf^{r(\un v)}[\pi(\un v)]\in\Lambda$, the values of 
$r(\un v)$ and $r_\Lambda[\pi(\un v)]$ may differ.)

\medskip
\noindent
{\sc The triple $(\Sigma_r,\sigma_r,\pi_r)$:} We take $(\Sigma_r,\sigma_r)$
to be the TMF associated to the TMS $(\Sigma,\sigma)$ and roof function $r$,
and $\pi_r:\Sigma_r\to M$ to be the map defined by $\pi_r[(\un v,t)]=\vf^t[\pi(\un v)]$.

\medskip
The next proposition lists the main properties of $(\Sigma_r,\sigma_r,\pi_r)$.

\begin{proposition}\label{Prop-pi_R}
The following holds for all $0<\ve\ll \rho\ll 1$.
\begin{enumerate}[{\rm (1)}]
\item $\pi_r\circ\sigma_r^t=\vf^t\circ\pi_r$, for all $t\in\R$.
\item $\pi_r$ is H\"older continuous with respect to the Bowen-Walters distance.
\item $\pi_r[\Sigma_r^\#]\supset\nuh^\#$.
\end{enumerate}
\end{proposition}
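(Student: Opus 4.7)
The plan is to prove the three conclusions in order. The semiconjugacy in (1) is the algebraic core; parts (2) and (3) then follow by combining (1) with the H\"older continuity of $\pi$ from Proposition~\ref{Prop-pi}(2) and the geometry of proper sections.

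For (1), I would first establish the factor identity $\pi\circ\sigma=\varphi^{r}\circ\pi$. Given $\un v=\{\Psi_{x_n}^{p_n^s,p_n^u}\}_{n\in\Z}\in\Sigma$, Theorem~\ref{Thm-stable-manifolds}(2) and its unstable analogue give $g_{x_0}^{+}(V^{s}[\un v])\subset V^{s}[\sigma\un v]$ and the dual inclusion for $V^u$. Applying $g_{x_0}^{+}$ to the unique point $\{\pi(\un v)\}=V^{s}[\un v]\cap V^{u}[\un v]$ therefore lands in $V^{s}[\sigma\un v]\cap V^{u}[\sigma\un v]=\{\pi(\sigma\un v)\}$ by Lemma~\ref{Lemma-admissible-manifolds}, so $g_{x_0}^{+}[\pi(\un v)]=\pi(\sigma\un v)$; since by definition the flow displacement realizing $g_{x_0}^{+}$ at the point $\pi(\un v)$ is $r(\un v)$, we obtain $\varphi^{r(\un v)}[\pi(\un v)]=\pi(\sigma\un v)$. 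Iterating and invoking the cocycle identity for $r_n$ yields $\varphi^{r_n(\un v)}[\pi(\un v)]=\pi(\sigma^n\un v)$ for every $n\in\Z$. Plugging $\sigma_r^{t}(\un v,t')=(\sigma^n\un v,t'+t-r_n(\un v))$ into $\pi_r$ and applying this identity telescopes to $\pi_r[\sigma_r^{t}(\un v,t')]=\varphi^{t}[\pi_r(\un v,t')]$.

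For (2), the key intermediate step is that the roof $r$ is H\"older continuous on $\Sigma$: two sequences $\un v,\un w$ that agree on a sufficiently long window around $0$ share $x_0$ and $x_1$, and hence the same destination disc $D$ in the definition of $r$, so $r(\un v)-r(\un w)=\mathfrak t_D(\pi(\un w))-\mathfrak t_D(\pi(\un v))$; the Lipschitz bound on $\mathfrak t_D$ from Lemma~\ref{lemma-local-coord}(3) together with H\"older continuity of $\pi$ (Proposition~\ref{Prop-pi}(2)) yields the claim. With both $\pi$ and $r$ H\"older, H\"older continuity of $\pi_r$ in the Bowen--Walters metric follows from the standard comparison of that metric to the product of the symbolic distance on $\Sigma$ and the flow time along the fiber, combined with the uniform bound $\|d\varphi^t\|\leq e^{|t|}$ for $|t|\leq\rho$; this is exactly the argument used in \cite[Sect.~5]{Lima-Sarig}.

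For (3), fix $x\in\nuh^\#$. The Cover property of the proper section $\Lambda$ provides a minimal $\tau\in[0,\rho)$ with $y:=\varphi^{-\tau}(x)\in\Lambda$ (set $\tau=0$ and $y=x$ when $x\in\Lambda$); by minimality $\tau<r_\Lambda(y)$. Since $\nuh^\#$ is $\varphi$-invariant, $y\in\Lambda\cap\nuh^\#$, so Proposition~\ref{Prop-pi}(3) produces $\un v\in\Sigma^\#$ with $\pi(\un v)=y$. Because $g_{x_0}^{+}(y)\in\Lambda$ is reached at flow time $r(\un v)$, the orbit of $y$ returns to $\Lambda$ no later than time $r(\un v)$, so $r_\Lambda(y)\leq r(\un v)$. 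Combining, $\tau<r(\un v)$, hence $(\un v,\tau)\in\Sigma_r^{\#}$ and $\pi_r[(\un v,\tau)]=\varphi^{\tau}(y)=x$. The only genuinely subtle point is the factor identity $\pi(\sigma\un v)=g_{x_0}^{+}[\pi(\un v)]$ in part (1), which cannot be read off the definitions but requires invariance of the admissible manifolds under the holonomy together with uniqueness of $V^s\cap V^u$; everything else is bookkeeping with the cocycle identity or direct quotation of earlier results.
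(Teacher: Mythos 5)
Your treatments of (1) and (2) are correct and essentially the paper's: (1) reduces to $g_{x_0}^+[\pi(\un v)]=\pi[\sigma(\un v)]$ plus the cocycle identity (the paper dismisses this as ``direct from the definition'', but your appeal to the invariance of $V^{s/u}$ and uniqueness of the intersection is exactly what is needed), and (2) is the argument of \cite[Lemma 5.9]{Lima-Sarig} that the paper cites.

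Part (3), however, contains a genuine error. You claim that $g_{x_0}^+(y)\in\Lambda$ and deduce $r_\Lambda(y)\le r(\un v)$, hence $\tau<r_\Lambda(y)\le r(\un v)$ and $(\un v,\tau)\in\Sigma_r^\#$. But $g_{x_0}^+[\pi(\un v)]=\pi[\sigma(\un v)]$ only lies in the \emph{security} section $\wh\Lambda$, not necessarily in $\Lambda$: the holonomy $g_{x_0}^+=\mathfrak q_D$ projects onto a disc $D$ of $\wh\Lambda$, and the paper states explicitly, right after defining $r$, that $\vf^{r(\un v)}[\pi(\un v)]$ ``belongs to $\wh\Lambda$ but not necessarily to $\Lambda$'', and moreover that even when both points lie in $\Lambda$ the values $r(\un v)$ and $r_\Lambda[\pi(\un v)]$ may differ (since $g_{x_0}^+$ need not be a first-return map). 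So the inequality $r_\Lambda(y)\le r(\un v)$ is unjustified, and without it nothing forces $\tau<r(\un v)$, i.e.\ nothing guarantees that the pair $(\un v,\tau)$ is an element of $\Sigma_r$ at all.

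The repair is short and is what the paper does: you do not need $(\un v,\tau)$ itself to be admissible. Put $S:=\Sigma^\#\times\{0\}$; by Proposition \ref{Prop-pi}(3), $\pi_r(S)\supset\Lambda\cap\nuh^\#$, so with your $y=\vf^{-\tau}(x)\in\Lambda\cap\nuh^\#$ and $\un v\in\Sigma^\#$ with $\pi(\un v)=y$, part (1) gives $x=\vf^\tau[\pi_r(\un v,0)]=\pi_r[\sigma_r^\tau(\un v,0)]$. The point $\sigma_r^\tau(\un v,0)$ lies over some $\sigma^n(\un v)\in\Sigma^\#$, hence belongs to $\Sigma_r^\#$, regardless of how $\tau$ compares with $r(\un v)$. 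Equivalently: $\pi_r[\Sigma_r^\#]=\bigcup_{t\in\R}\vf^t[\pi_r(S)]$ and $\nuh^\#=\bigcup_{t\in\R}\vf^t[\Lambda\cap\nuh^\#]$, which yields (3) with no comparison of return times.
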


\begin{proof}
Part (1) is direct from the definition of $\pi_r$. The proof of part (2) uses Proposition \ref{Prop-pi}(2),
and follows by the same methods used in the proof of \cite[Lemma 5.9]{Lima-Sarig}.
To prove part (3), let $S:=\Sigma^\#\times\{0\}\subset\Sigma_r^\#$.
By Proposition \ref{Prop-pi}(3), $\pi_r(S)\supset\Lambda\cap\nuh^\#$.
Since $\pi_r[\Sigma_r^\#]=\bigcup\limits_{t\in\R}\vf^t[\pi_r(S)]$ and
$\nuh^\#=\bigcup\limits_{t\in\R}\vf^t[\Lambda\cap\nuh^\#]$,
we get that $\pi_r[\Sigma_r^\#]\supset\nuh^\#$. 
\end{proof}

\section{Inverse theorem}\label{Section-inverse}

Up to now, we have constructed a first coding $\pi:\Sigma\to \widehat\Lambda$, but it is usually infinite-to-one. Our next goal is to understand how $\pi$ loses injectivity:
if $\un v\in\Sigma$ and $x=\pi(\un v)$, what is the relation between the parameters defining $\un v$ and
those associated to the orbit of $x$? We analyze this question as an {\em inverse problem}: 
fixed $x\in \widehat\Lambda$,
the parameters of $\un v$ are defined ``up to bounded error''. The answer to this inverse problem is what
we call an {\em inverse theorem}. From now on, we require that
$\un v\in\Sigma^\#$, where $\Sigma^\#$ is the {\em regular set} of $\Sigma$:
$$
\Sigma^\#:=\left\{\un v\in\Sigma:\exists v,w\in V\text{ s.t. }\begin{array}{l}v_n=v\text{ for infinitely many }n>0\\
v_n=w\text{ for infinitely many }n<0
\end{array}\right\}.
$$

\medskip
Recall that $r:\Sigma\to (0,\rho)$ is the roof function, defined before
Proposition \ref{Prop-pi_R}. Let $r_n=\sum_{i=0}^{n-1}r\circ\sigma^i$ denote its $n$--th Birkhoff sum with respect to the shift map
$\sigma:\Sigma\to\Sigma$. Let $\un v=\{\Psi_{x_n}^{p^s_n,p^u_n}\}_{n\in\Z}\in\Sigma$,
and let $x=\pi(\un v)$. Then:
\begin{enumerate}[$\circ$]
\item $\vf^{r_n(\un v)}(x)=\pi[\sigma^n(\un v)]$, a point in $\widehat\Lambda$ that is close to $x_n$.
\item $g_{x_n}^+[\vf^{r_n(\un v)}(x)]=\vf^{r_{n+1}(\un v)}(x)$.
\end{enumerate}
Let $p^{s/u}(\vf^{r_n(\un v)}(x))$ be the $\Z$--indexed version of the parameter $q^{s/u}$
with respect to the sequence of times $\{r_n(\un v)\}_{n\in\Z}$ (see Section \ref{section-Z-indexed} for the definition).

\begin{theorem}[Inverse theorem]\label{Thm-inverse}
The following holds for all $0<\ve\ll \rho\ll 1$.
If $\un v=\{\Psi_{x_n}^{p^s_n,p^u_n}\}_{n\in\Z}\in\Sigma^\#$
and $x=\pi(\un v)$, then $x\in\nuh^\#$ and the following are true.
\begin{enumerate}[{\rm (1)}]
\item $d(\vf^{r_n(\un v)}(x),x_n)<50^{-1}(p^s_n\wedge p^u_n)$.
\item $\tfrac{\|C(x_n)^{-1}\|}{\|C(\vf^{r_n(\un v)}(x))^{-1}\|}=e^{\pm 2\sqrt{\ve}}$.
\item $\tfrac{Q(x_n)}{Q(\vf^{r_n(\un v)}(x))}=e^{\pm \sqrt[3]{\ve}}$.
\item $\tfrac{p^s_n}{p^s(\vf^{r_n(\un v)}(x))}=e^{\pm\sqrt[3]{\ve}}$ and
$\tfrac{p^u_n}{p^u(\vf^{r_n(\un v)}(x))}=e^{\pm\sqrt[3]{\ve}}$.
\item $\Psi_{\vf^{r_n(\un v)}(x)}^{-1}\circ\Psi_{x_n}$
can be written in the form $\delta+Ov+\Delta(v)$ for $v\in R[10Q(x_n)]$,
where $\delta\in\R^d$ satisfies $\|{\delta}\|<{50}^{-1}(p^s_n\wedge p^u_n)$,
$O$ is an orthogonal linear map preserving the splitting $\R^{d_s(x)}\times\R^{d_u(x)}$, and $\Delta:R[10Q( x_n)]\to \R^d$ satisfies
$\Delta(0)=0$ and $\|{d\Delta}\|_{C^0}<5\sqrt{\ve}$ on $R[10Q( x_n)]$. The same statement applies to representing $\Psi_{x_n}^{-1}\circ\Psi_{\vf^{r_n(\un v)}(x)}$ in $R[10Q(\vf^{r_n(\un v)}(x))]$.
\end{enumerate}
\end{theorem}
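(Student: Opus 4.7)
The plan is to adapt the inverse theorem strategy of \cite{Sarig-JAMS, BCL23, O18} to the high-dimensional flow setting. Writing $y_n := \vf^{r_n(\un v)}(x)$ for brevity, I would first establish $x \in \nuh^\#$. The exponential bounds of Theorem~\ref{Thm-stable-manifolds}(3) for $d(G_n)$ acting on tangent vectors to $V^{s/u}[\sigma^n\un v]$, pushed through the linear Poincaré flow, produce a splitting $N^s_x\oplus N^u_x$ satisfying (NUH1)--(NUH3); conditions (NUH4)--(NUH5) then follow from part~(3) below (proved in tandem) combined with Lemma~\ref{Lemma-q} and the regularity of $\un v$. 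Part~(1) is a direct consequence of shadowing and Lemma~\ref{Lemma-admissible-manifolds}: $y_n=V^s[\sigma^n\un v]\cap V^u[\sigma^n\un v]$ lies in $\Psi_{x_n}(R[\tfrac{1}{500}(p^s_n\wedge p^u_n)])$, so the 2-Lipschitz bound on $\Psi_{x_n}$ yields the stated distance estimate.

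Part~(2) is the heart of the argument. For the upper bound on $\|C(y_n)^{-1}\|$, I would use the characterization in Lemma~\ref{Lemma-linear-reduction}(1) as the supremum of $\vertiii{w}/\|w\|$ over unit $w\in N_{y_n}$. For unit $w\in N^s_{y_n}$, approximate $w$ by the parallel transport of a unit vector of the form $C(x_n)v$ with $v \in \R^{d_s}\times\{0\}$---the approximation is controlled because Theorem~\ref{Thm-stable-manifolds}(3) forces $T_{y_n}V^s[\sigma^n\un v]$ to lie close to $C(x_n)(\R^{d_s}\times\{0\})$---and substitute the hyperbolicity bound of Theorem~\ref{Thm-stable-manifolds}(3) into the defining integral of $\vertiii{w}^2$, splitting the integration domain along the orbit segments $[r_k(\un v),r_{k+1}(\un v)]$. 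The lower bound on $\|C(y_n)^{-1}\|$ is obtained symmetrically using $f_{x_n,y_n}^-$ from Theorem~\ref{Thm-non-linear-Pesin-2} on the unstable side. Part~(3) is then immediate from (2) and the definition $Q=\ve^{6/\beta}\|C^{-1}\|^{-48/\beta}$, with the exponent $48/\beta$ chosen precisely so that $\tfrac{48}{\beta}\cdot 2\sqrt{\ve}<\sqrt[3]{\ve}$ for small $\ve$. For~(4), I would compare the greedy recursion of Proposition~\ref{Prop-Z-par}(2) for $p^s(y_n)$ with the recursion~(\ref{gpo2-a}) for $p^s_n$: part~(3) matches the caps $\ve Q(x_n)\asymp\ve Q(y_n)$, while Remark~\ref{rmk-time} bounds the discrepancy between the transition times $T(v_n,v_{n+1})$ and the true flow-times $r_{n+1}(\un v)-r_n(\un v)$ by $\tfrac{1}{4}(p^s_n\wedge p^u_n)$, so the two recursions produce the same output up to factor $e^{\pm\sqrt[3]{\ve}}$. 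For~(5), parts~(1) and~(2) together verify the $\ve$-overlap $\Psi_{y_n}^{p^s_n\wedge p^u_n}\overset{\ve}{\approx}\Psi_{x_n}^{p^s_n\wedge p^u_n}$, and Proposition~\ref{Lemma-overlap}(4) delivers the change-of-coordinates decomposition; the orthogonal factor $O$ emerges as the isometric part of $C(x_n)^{-1}\circ P_{y_n,x_n}\circ C(y_n)$, and it preserves the splitting $\R^{d_s}\times\R^{d_u}$ because each $C$ respects the stable/unstable decomposition by construction.

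The hard part is (2). The subtlety is that Theorem~\ref{Thm-stable-manifolds}(3) only delivers contraction at the weaker rate $\tfrac{2\chi}{3}$ and carries a $\|C(x_n)^{-1}\|$ prefactor, while $\vertiii{\cdot}$ is defined using the full weight $e^{2\chi t}$---a naive substitution $\|\Phi^tw\|^2 \lesssim \|C(x_n)^{-1}\|^2 e^{-\frac{4\chi}{3}t}$ produces an integrand that still grows like $e^{\frac{2\chi}{3}t}$. To circumvent this, I would split the integral defining $\vertiii{w}^2$ into a ``burn-in'' interval of length $\sim \tfrac{1}{\chi}\log\|C(x_n)^{-1}\|$ (where the prefactor dominates and the uniform Riemannian estimate $\|\Phi^t\|\le e^{\rho+|t|}$ suffices) and a convergent tail (where the intrinsic hyperbolicity at $y_n$, now known because $y_n\in\nuh$, takes over), absorbing the polynomial growth of the burn-in piece into the $Q(x_n)^{\beta/48}$ factor built into $Q$---this is exactly the motivation for the $48/\beta$ exponent. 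The final calibration of constants produces the $e^{\pm 2\sqrt{\ve}}$ slack, mirroring the surface case in \cite[\S6]{Sarig-JAMS} and the three-dimensional flow case in \cite{BCL23}.
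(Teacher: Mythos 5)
Your treatment of parts (1), (3) and the recursion comparison in (4) matches the paper, but the core of your argument for part (2) --- and hence for $x\in\nuh$ itself --- has a genuine gap. You correctly identify that substituting the contraction rate $\tfrac{2\chi}{3}$ of Theorem~\ref{Thm-stable-manifolds}(3) into $\int_0^\infty e^{2\chi t}\|\Phi^t w\|^2\,dt$ leaves a divergent integrand $e^{\frac{2\chi}{3}t}$, but your proposed repair does not close the gap: on the ``burn-in'' interval of length $\sim\tfrac{1}{\chi}\log\|C(x_n)^{-1}\|$ the crude bound $\|\Phi^t\|\le e^{\rho+t}$ produces a contribution of order $\|C(x_n)^{-1}\|^{2(1+\chi)/\chi}$, a polynomial of degree far exceeding $2$ that cannot be absorbed into any factor and destroys the multiplicative bound $\|C(y_n)^{-1}\|\le e^{2\sqrt\ve}\|C(x_n)^{-1}\|$; and on the tail, ``the intrinsic hyperbolicity at $y_n$'' gives only the qualitative statement $s(y_n)<\infty$ with no quantitative comparison to $\|C(x_n)^{-1}\|$ --- which is circular, since that comparison is exactly what part~(2) asserts. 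The same defect undermines your claim that (NUH1)--(NUH3) follow from Theorem~\ref{Thm-stable-manifolds}(3): (NUH1) demands the rate $-\chi$, not $-\tfrac{2\chi}{3}$, and (NUH3) demands finiteness of the very integral above. The paper's mechanism is entirely different and cannot be bypassed: one first uses the \emph{relevance} of each symbol (Theorem~\ref{Thm-coarse-graining}(3)) together with Lemma~\ref{Lemma-finite-norm-1} to get \emph{some} finite bound $L(v_0)$ on the Lyapunov norms along the stable manifold of a relevant gpo through $v_0$; then the Improvement Lemma (Lemma~\ref{improvement-lemma}) shows that pulling the ratio $\vertiii{v}/\vertiii{\Theta(v)}$ back along each edge improves it by a definite amount $Q(x_0)^{\beta/4}$ whenever it lies outside $[e^{-\sqrt\ve},e^{\sqrt\ve}]$; finally the regularity $\un v\in\Sigma^\#$ (infinitely many returns of $v_0$) lets one iterate the improvement indefinitely, forcing the ratio into $e^{\pm\sqrt\ve}$. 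The threshold $\sqrt\ve$ in the Improvement Lemma is precisely where the constant $e^{\pm2\sqrt\ve}$ in (2) comes from; no calibration of a direct integral estimate produces it.

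A secondary problem is your route to part (5). The $\ve$--overlap condition requires $\|\widetilde{C(x_n)}-\widetilde{C(y_n)}\|<(\eta_1\eta_2)^4$, an extremely tight bound on the \emph{difference} of the matrices; parts (1) and (2) give only a distance bound and comparability of the \emph{norms} $\|C(\cdot)^{-1}\|$, which is nowhere near sufficient, so Proposition~\ref{Lemma-overlap}(4) is not applicable and indeed its conclusion (closeness to the identity) is not even the right shape --- the correct conclusion is closeness to an orthogonal map. Your parenthetical remark about the polar decomposition of $C(x_n)^{-1}\circ\Theta\circ C(y_n)$ is the right idea, but showing that its positive symmetric factor $R$ satisfies $\|R-\mathrm{Id}\|\le 4\sqrt\ve$ again requires the directionwise estimates $\vertiii{v}/\vertiii{\Theta(v)}=e^{\pm\sqrt\ve}$ on $N^s$ and $N^u$ separately, i.e.\ the output of the Improvement Lemma bootstrap, plus the (Exp3)--(Exp4) control of $d\exp{}$ to compare $\Theta$ with the actual derivative of $\exp{y_n}^{-1}\circ\exp{x_n}$. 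Without the Improvement Lemma, neither (2) nor (5) can be completed.
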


The above theorem compares the parameters of an $\ve$--gpo with the parameters of its shadowed point.

\subsection{Identification of invariant subspaces}

To analyze the inverse problem, we need to compare the parameters of the charts. One key aspect of this comparison regards the hyperbolicity parameters, which are linked to the Lyapunov inner product (see Section \ref{Section-reduction}). In general, given $x, y \in {\rm NUH}$, it is not immediately clear how to compare the Lyapunov inner product norm between vectors in $N^{s/u}_{x}$ and those in $N^{s/u}_{y}$. However, when $x$ defines an $\ve$–double chart $\Psi_{x}^{p^s,p^u}$ and $y$ lies in a stable or unstable set at $\Psi_{x}^{p^s,p^u}$, Ben Ovadia introduced a canonical method for making this comparison, using the representing function in the chart \cite{O18}. This method was also used in \cite{ALP}, which we will follow. The idea is to first identify subspaces within the chart and then transfer this identification to the manifold.

Let $\Psi_{ x}^{p^s,p^u}$ be an $\ve$--double chart, let $V$ be an $s$--admissible
manifold at $\Psi_{ x}^{p^s,p^u}.$
In the sequel, we use the notation $T V$ to represent the tangent bundle
of $V$ as a subset of $TM$. Writing $d_{s/u}=d_{s/u}(x)$, recall that
$$
V=\Psi_{ x}\{(v_1,G(v_1)):v_1\in B^{d_s}[p^s]\}
$$
where $G:B^{d_s}[p^s]\to \R^{d_u}$ is a $C^{1+\beta/3}$ function satisfying (AM1)--(AM3).
Fix $ y\in  V$, and write $y=\Psi_{ x}(z)$ where $z=(v_1,G(v_1))$.
Denote the tangent space to the graph of $G$ at $z$ by
$T_{z}{\rm Graph}(G)$. We have
$$
T_{z}{\rm Graph}(G)=\left\{\begin{bmatrix} w \\ (dG)_{v_1}w\end{bmatrix}:w\in \R^{d_s}\right\},
$$
which is canonically isomorphic to $\R^{d_s}\times\{0\}$ via the map 
$$
\begin{array}{rcl}
\iota_s\ :\  \R^{d_s}\times\{0\}& \xrightarrow{\hspace*{8mm}}& T_{z}{\rm Graph}(G)\\
&&\\
\begin{bmatrix} w \\ 0\end{bmatrix}& \xmapsto{\hspace*{8mm}} & \begin{bmatrix} w \\ (dG)_{v_1}w\end{bmatrix}.\\
\end{array}
$$
Recalling that $N^s_{ x}=(d\Psi_{ x})_0[\R^{d_s}\times\{0\}]$ and 
$T_{ y} V=(d\Psi_{ x})_z[T_{z}{\rm Graph}(G)]$, we have the following definition.

\medskip
\noindent
{\sc The map $\Theta^s_{ x, y}$:} We define $\Theta^s_{ x, y}:N^s_{ x}\to T_{ y} V$ 
as the composition of the linear maps
$$
\Theta^s_{ x, y}:= (d\Psi_{ x})_z\circ \iota_s \circ [(d\Psi_{ x})_0]^{-1}.
$$

\medskip
In other words, $\Theta^s_{ x, y}$ is defined to make the diagram below commute
$$
\begin{tikzcd}[row sep = huge, column sep = huge]
\mathbb{R}^{d_s} \times \{ 0 \} \arrow{r}{\iota_s} \arrow{d}[swap]{d(\Psi_x)_0} & T_z{\rm Graph}(G) \arrow{d}{d(\Psi_x)_z} \\ N^s_x \arrow{r}[swap]{\Theta_{x,y}^s} & T_yV
\end{tikzcd}
$$
and so it has the explicit formula
$$
\Theta^s_{ x, y}\left((d\Psi_{ x})_{0}\begin{bmatrix} w \\ 0\end{bmatrix}\right)=
(d\Psi_{ x})_{z}\begin{bmatrix} w \\ (dG)_{v_1}w\end{bmatrix}.
$$

A similar definition holds when $ y$ belongs to a $u$--admissible manifold at
$\Psi_{ x}^{p^s,p^u}$. Let
$$
V=\Psi_{ x}\{(G(v_2),v_2):v_2\in B^{d_u}[p^u]\}
$$
where $G:B^{d_u}[p^u]\to \R^{d_s}$ is a $C^{1+\beta/3}$ function satisfying (AM1)--(AM3). For $ y\in V$,
write $y=\Psi_{ x}(z)$ where $z=(G(v_2),v_2)$. We have
$$
T_{z}{\rm Graph}(G)=\left\{\begin{bmatrix} (dG)_{v_2}w \\ w\end{bmatrix}:w\in \R^{d_u}\right\},
$$
which is canonically isomorphic to $\{0\}\times\R^{d_u}$ via the map
$$
\begin{array}{rcl}
\iota_u\ :\  \{0\}\times\R^{d_u}& \xrightarrow{\hspace*{8mm}}& T_{z}{\rm Graph}(G)\\
&&\\
\begin{bmatrix} 0 \\ w\end{bmatrix}& \xmapsto{\hspace*{8mm}} & \begin{bmatrix} (dG)_{v_2}w \\ w\end{bmatrix}.\\
\end{array}
$$

\medskip
\noindent
{\sc The map $\Theta^u_{ x, y}$:} We define $\Theta^u_{ x, y}:N^u_{ x}\to T_{ y} V$ 
as the composition of the linear maps
$$
\Theta^u_{ x, y}:= (d\Psi_{ x})_z\circ \iota_u \circ [(d\Psi_{ x})_0]^{-1}.
$$

\medskip
Similarly, $\Theta^u_{ x, y}$ is defined to make the following diagram to commute
$$
\begin{tikzcd}[row sep = huge, column sep = huge]
\{ 0 \} \times \mathbb{R}^{d_u} \arrow{r}{\iota_u} \arrow{d}[swap]{d(\Psi_x)_0} & T_z{\rm Graph}(G) \arrow{d}{d(\Psi_x)_z} \\ N^u_x \arrow{r}[swap]{\Theta_{x,y}^u} & T_yV
\end{tikzcd}
$$
and it has the formula
$$
\Theta^u_{ x, y}\left((d\Psi_{ x})_{0}\begin{bmatrix} 0 \\ w\end{bmatrix}\right)=
(d\Psi_{ x})_{z}\begin{bmatrix} (dG)_{v_2}w \\ w\end{bmatrix}.
$$

Now assume that $\{ y\}= V^s\cap  V^u$, where $ V^{s/u}$ is a $s/u$--adimissible manifold
at $\Psi_{ x}^{p^s,p^u}$. By the above discussion, we have two linear maps
$\Theta^{s/u}_{ x, y}:N^{s/u}_{ x}\to T_{ y} V^{s/u}$.
Since $N^s_{ x}\oplus N^u_{ x}={N}_{ x}$
and $T_{ y} V^s\oplus T_{ y} V^u={N}_{ y}$, the following definition makes sense.

\medskip
\noindent
{\sc The map $\Theta_{ x, y}$:} We define $\Theta_{ x, y}:{N}_{ x}\to {N}_{ y}$
as the unique linear map s.t. $\Theta_{ x, y}\restriction_{N^s_{ x}}=\Theta^s_{ x, y}$ and 
$\Theta_{ x, y}\restriction_{N^u_{ x}}=\Theta^u_{ x, y}$. More specifically, if $v=v^s+v^u$ with
$v^{s/u}\in N^{s/u}_{ x}$ then 
$$
\Theta_{ x, y}(v):=\Theta^s_{ x, y}(v^s)+\Theta^u_{ x, y}(v^u).
$$

\medskip
We can similarly see $\Theta_{ x, y}$ as a map defined in terms of a commuting diagram.
Let $G,H$ be the representing functions of $V^s,V^u$, let $y=\Psi_{ x}(z)$
with $z=(v_1,v_2)$, and let 
$$
\begin{array}{rcl}
\iota\ \ \ :\ \ \R^d& \xrightarrow{\hspace*{8mm}}& \R^d\\
&&\\
\begin{bmatrix} w_1 \\ w_2\end{bmatrix}& \xmapsto{\hspace*{8mm}} &
\begin{bmatrix} w_1+(dH)_{v_2}w_2 \\ w_2+(dG)_{v_1}w_1\end{bmatrix}.\\
\end{array}
$$
Then we obtain a commuting diagram
$$
\begin{tikzcd}[row sep = huge, column sep = huge]
\mathbb{R}^d \arrow{r}{\iota} \arrow{d}[swap]{d(\Psi_x)_0} & \mathbb{R}^d \arrow{d}{d(\Psi_x)_z} \\ N_x \arrow{r}[swap]{\Theta_{x,y}} & N_y
\end{tikzcd}
$$
and
$$
\Theta_{ x, y}\left((d\Psi_{ x})_{0}\begin{bmatrix} w_1 \\ w_2\end{bmatrix}\right)=
(d\Psi_{ x})_{z}\begin{bmatrix} w_1+(dH)_{v_2}w_2 \\ w_2+(dG)_{v_1}w_1\end{bmatrix}.
$$

The next result proves that the maps defined above are close to parallel transports.

\begin{lemma}\label{ALP-Lemma 6.1}
    Let $\Psi_{x}^{p^s,p^u}$ be an $\varepsilon$-double chart with $\eta = p^s \wedge p^u$. 
    \begin{enumerate}[{\rm (1)}]
        \item Let $V$ be an $s$--admissible manifold at $\Psi_{x}^{p^s,p^u}$, and let $y\in V$. If $y \in \Psi_{x}(B^{d_s(x)}[\eta] \times B^{d_u(x)}[\eta])$, then
        $$
        \|\Theta_{x,y}^{s} - P_{x,y} \|\, , \, \|(\Theta_{x,y}^{s})^{-1} - P_{x,y}^{-1} \| \leq \tfrac{1}{2} \eta^{15\beta/48} 
        $$
        where $P_{x,y}$ is the restriction of the parallel transport from $x$ to $y$ to the subspace $N^s_{x}$. In particular,
        $\frac{\|\Theta_{x,y}^{s}(v)\|}{\|v\|}= \operatorname{exp} \left[ \pm \eta^{15\beta/48} \right]$ for all $v \in N^s_x \backslash \{ 0 \}$.
       An analogous statement holds for $u$--admissible manifolds.
        
        \item If $y \in V^s \cap V^u$ where $V^{s/u}$ is a $s/u$--admissible manifold at $\Psi_x^{p^s,p^u}$, then
        $$
        \|\Theta_{x,y}- P_{x,y} \|\, , \, \|\Theta_{x,y}^{-1}- P_{x,y}^{-1} \| \leq \tfrac{1}{2} \eta^{15\beta/48}. 
        $$
        In particular,
        $\frac{\|\Theta_{x,y}(v)\|}{\|v\|} = \operatorname{exp} \left[ \pm \eta^{15\beta/48} \right]$ for all $v \in N_x \backslash \{0 \}$.
    \end{enumerate}
\end{lemma}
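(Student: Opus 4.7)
My plan is to unfold the commuting diagram defining $\Theta^s_{x,y}$ and compare each constituent map with parallel transport. Using $(d\Psi_x)_0=C(x)$ and $(d\Psi_x)_z=d(\exp{x})_{C(x)z}\circ C(x)$, for any $v\in N^s_x$ with $[w,0]^T:=C(x)^{-1}v$ one checks that
\[
\Theta^s_{x,y}(v) \;=\; d(\exp{x})_{C(x)z}\!\bigl(v + C(x)[0,(dG)_{v_1}w]^T\bigr),
\]
so $\Theta^s_{x,y}-P_{x,y}|_{N^s_x}$ splits into (i) the deviation of $d(\exp{x})_{C(x)z}$ from parallel transport along the geodesic $x\to y$, plus (ii) the off-diagonal contribution produced by the slope $(dG)_{v_1}$ of the representing function.

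For (i), I apply (Exp3) with $y_1=y_2=x$, $v_1=C(x)z$, $v_2=0$, using $d(\exp{x})_0=\mathrm{Id}$ and $\Sas(C(x)z,0)\le 2\|C(x)z\|\le 2\sqrt{2}\,\eta$ (since $\|C(x)\|\le 1$ and $\|z\|\le\sqrt{2}\,\eta$); this gives $\|d(\exp{x})_{C(x)z}-P_{x,y}\|\le 4\mathfrak K\eta$. For (ii), I combine $\|(dG)_{v_1}\|\le\|dG_0\|+\Hol{\beta/3}(dG)\|v_1\|^{\beta/3}\le\eta^{\beta/3}$ (from (AM2)--(AM3)), $\|C(x)\|\le 1$, $\|d(\exp{x})_{C(x)z}\|\le 2$ (from (Exp2)), together with the key bound
\[
\|w\| \;=\; \vertiii{v} \;\le\; \|C(x)^{-1}\|\,\|v\| \;\le\; \ve^{1/8}\eta^{-\beta/48}\|v\|,
\]
where we used $\eta\le\ve Q(x)\le Q(x)$ and \eqref{estimates-Q}; this yields an error of size at most $2\ve^{1/8}\eta^{\beta/3-\beta/48}\|v\|=2\ve^{1/8}\eta^{15\beta/48}\|v\|$. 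Summing (i) and (ii) and using $\eta\le \eta^{15\beta/48}$ (since $\beta\le 1$), the smallness of $\ve$ absorbs all remaining multiplicative constants and gives $\|\Theta^s_{x,y}-P_{x,y}\|\le\tfrac12\eta^{15\beta/48}$.

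The inverse bound follows from the identity $(\Theta^s_{x,y})^{-1}-P_{x,y}^{-1}=(\Theta^s_{x,y})^{-1}(P_{x,y}-\Theta^s_{x,y})P_{x,y}^{-1}$, together with $\|P_{x,y}\|=1$ and $\|(\Theta^s_{x,y})^{-1}\|\le 2$. The multiplicative estimate $\|\Theta^s_{x,y}(v)\|/\|v\|=\operatorname{exp}[\pm\eta^{15\beta/48}]$ is then immediate since $P_{x,y}$ is an isometry. The $u$--admissible case is identical after transposing the two coordinate blocks. For part (2), the same scheme applies to $\Theta_{x,y}=d(\exp{x})_{C(x)z}\circ C(x)\circ \iota\circ C(x)^{-1}$: here $\iota-\mathrm{Id}$ carries two off-diagonal blocks $(dG)_{v_1}$ and $(dH)_{v_2}$, each controlled by the same bounds, and Lemma \ref{Lemma-admissible-manifolds} ensures $y\in\Psi_x(R[\eta/500])$ so $\|z\|<\eta$ still holds. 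The main technical obstacle throughout is the bookkeeping of $\|C(x)^{-1}\|$: since $C(x)$ is an isometry only with respect to the Lyapunov inner product, input-side norms get magnified by $\eta^{-\beta/48}$, and this is precisely what forces the H\"older exponent to drop from the natural $\beta/3$ of (AM3) to the final $15\beta/48=\beta/3-\beta/48$.
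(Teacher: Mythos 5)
Your argument is correct and, as far as one can reconstruct it, follows the same route as \cite[Lemma 6.3]{ALP}, to which the paper delegates this proof: unfold $\Theta^s_{x,y}=(d\Psi_x)_z\circ\iota_s\circ C(x)^{-1}$, control $d(\exp{x})_{C(x)z}-P_{x,y}$ via (Exp3) applied at the basepoint $x$, and control the off-diagonal slope term via (AM2)--(AM3) together with $\|C(x)^{-1}\|\le\ve^{1/8}\eta^{-\beta/48}$ from \eqref{estimates-Q}, which is exactly the source of the exponent $15\beta/48=\beta/3-\beta/48$. One cosmetic point: the identity $(\Theta^s_{x,y})^{-1}-P_{x,y}^{-1}=(\Theta^s_{x,y})^{-1}(P_{x,y}-\Theta^s_{x,y})P_{x,y}^{-1}$ combined with $\|(\Theta^s_{x,y})^{-1}\|\le 2$ (which itself deserves a one-line Neumann-series justification from the forward bound) literally yields $\eta^{15\beta/48}$ rather than $\tfrac12\eta^{15\beta/48}$, so you should record the stronger intermediate estimate $\|\Theta^s_{x,y}-P_{x,y}\|\le 4\mathfrak{K}\eta+2\ve^{1/8}\eta^{15\beta/48}\ll\tfrac14\eta^{15\beta/48}$ before passing to the inverse; with that, everything closes.
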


The proof is the same as \cite[Lemma 6.3]{ALP}.

\subsection{Improvement lemma}\label{Section-improvement}

Recall the norm $\vertiii{\cdot}$ induced by the Lyapunov inner product, introduced in Section \ref{Section-reduction}, and the graph transforms $\mathfs{F}^{s/u}$ introduced in Section \ref{ss.graph.transform}.
In the next result, we write $d_{s/u}=d_{s/u}(x)$.

\begin{lemma}[Improvement Lemma]\label{improvement-lemma} 
The following holds for $\ve > 0$ small enough. Let $v \overset{\ve}{\rightarrow} w$ with $v=\Psi_{x_0}^{p^s_0, p^u_0}, w=\Psi_{x_1}^{p^s_1,p^u_1}$, and write $\eta_0 = p^s_0 \wedge p^u_0, \eta_1 = p^s_1 \wedge p^u_1$. Fix $V_1 \in \mathfs{M}^s(w)$ and let $V_0 = \mathfs{F}_{v,w}^s(V_1)$. Let $y_0 \in V_0, \ y_1 = g_{x_0}^+ (y_0) \in V_1$, and assume that
$$
y_0\in B^{d_s}[\eta_0]\times B^{d_u}[\eta_0]
\ \text{ and }\ 
y_1\in B^{d_s}[\eta_1]\times B^{d_u}[\eta_1].
$$
Consider $v_1 \in T_{y_1} V_1$ and $v_0 = d(g_{x_0}^+)^{-1} v_1 \in T_{y_0} V_0$. Finally, let $w_0 \in T_{x_0}M$ such that $v_0 = \Theta_{x_0, y_0}^s (w_0)$ and $w_1 \in T_{x_1}M$ such that $v_1 = \Theta_{x_1, y_1}^s(w_1)$. If $\frac{\vertiii{v_1}}{\vertiii{w_1}} =\operatorname{exp} [ \pm \xi]$ for $\xi \geq \sqrt{\ve}$, then
$$\frac{\vertiii{v_0}}{\vertiii{w_0}} = \operatorname{exp} \left[\pm\left(\xi - Q(x_0)^{\beta/4}\right)\right].$$
Note that the ratio improves. An analogous statement holds for vectors tangent to $u$--admissible manifolds.
\end{lemma}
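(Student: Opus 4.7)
The plan is to work in the Pesin chart at $x_0$, reduce the problem to a computation on $\R^d$, and exploit the hyperbolic contraction provided by Theorem \ref{Thm-non-linear-Pesin-2} to extract the improvement. The mechanism is analogous to the graph transform's contraction of $C^1$ distance in Lemma \ref{Prop-graph-transform} and parallels the scheme of \cite{Sarig-JAMS, O18, ALP}.

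First I will express both ratios in chart coordinates. Write $\tilde y_i := \Psi_{x_i}^{-1}(y_i)$, and let $u_i \in \R^{d_s(x_i)}$ satisfy $w_i = (d\Psi_{x_i})_0(u_i, 0)$, so that $\|u_i\| = \vertiii{w_i}$. Then $v_i = \Theta^s_{x_i, y_i}(w_i)$ pulls back to the graph vector $(u_i, (dG_i)_{s(\tilde y_i)} u_i)$, where $G_i$ is the representing function of $V_i$ and $s(\tilde y_i)$ denotes its stable-factor projection. Using Lemma \ref{ALP-Lemma 6.1} to compare $\Theta^s$ with parallel transport, together with the fact that $C(x_i)$ is an isometry from the Euclidean inner product on $\R^d$ to the Lyapunov inner product on $N_{x_i}$, a direct calculation yields
$$\frac{\vertiii{v_i}^2}{\vertiii{w_i}^2} = 1 + \frac{\|(dG_i)_{s(\tilde y_i)} u_i\|^2}{\|u_i\|^2} + O\bigl(\eta_i^{15\beta/48}\bigr).$$
Both sides exceed $1$, so the lower bound in the conclusion is automatic; the task reduces to showing that $\delta_0 := \|(dG_0)u_0\|/\|u_0\|$ is smaller than $\delta_1 := \|(dG_1)u_1\|/\|u_1\|$ by a definite multiplicative factor.

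Next I will extract this contraction from the graph-transform functional relation. By Theorem \ref{Thm-non-linear-Pesin-2}, write $f := f_{x_0, x_1}^+ = D + H$ with $D$ block-diagonal hyperbolic ($\|D_s\|, \|D_u^{-1}\| \leq e^{-\chi r_\Lambda(x_0)}$) and $\|dH\|_{C^0(R[10Q(x_0)])} \leq 2\ve \eta_0^{\beta/3}$. Differentiating the inclusion $f(\mathrm{graph}(G_0)) \subset \mathrm{graph}(G_1)$ at $s(\tilde y_0)$ yields the standard implicit formula
$$(dG_0)_{s(\tilde y_0)} = \bigl(A_u - (dG_1) B_s\bigr)^{-1} \bigl((dG_1) A_s - B_u\bigr),$$
where $A_s, A_u, B_s, B_u$ denote the blocks of $df|_{\tilde y_0}$. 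The chart image of $v_1 = df|_{\tilde y_0}(u_0, (dG_0)u_0)$ has stable component $u_1 = (A_s + B_s (dG_0)) u_0 = D_s u_0 + O(\ve \eta_0^{\beta/3})\|u_0\|$. Substituting the implicit formula for $(dG_0)$, expanding using the bounds on $H$, and combining with $\|u_1\|/\|u_0\| \leq e^{-\chi r_\Lambda(x_0)} + O(\ve \eta_0^{\beta/3})$ produces
$$\delta_0 \leq e^{-2\chi r_\Lambda(x_0)} \delta_1 + C \ve \eta_0^{\beta/3}$$
for a universal constant $C$, which is the key multiplicative contraction of the ratio.

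The hard part will be verifying that all additive errors are dominated by the claimed improvement $Q(x_0)^{\beta/4}$. Since $\eta_i \leq \ve Q(x_i)$ and $Q(x_i)$ is comparable to $Q(x_0)$ along an edge (by (\ref{Relation-Q(x)})), both $\eta_i^{15\beta/48}$ and $\ve \eta_0^{\beta/3}$ are far smaller than $Q(x_0)^{\beta/4}$ for $\ve$ small (using $Q(x_0) \leq \ve^{6/\beta}$, so $\ve \eta_0^{\beta/3} \leq \ve^{1+\beta/3} Q(x_0)^{\beta/3}$). Combined with the uniform gap $1 - e^{-4\chi \inf(r_\Lambda)} > 0$, the factor $e^{-2\chi r_\Lambda(x_0)}$ produces a true multiplicative gain in $1 + \delta_i^2$ that, transferred back via $\vertiii{v_i}/\vertiii{w_i} = \sqrt{1 + \delta_i^2 + O(\eta_i^{15\beta/48})}$, exceeds the additive errors by at least $Q(x_0)^{\beta/4}$ precisely under the hypothesis $\xi \geq \sqrt{\ve}$. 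This gives the desired $\vertiii{v_0}/\vertiii{w_0} \leq e^{\xi - Q(x_0)^{\beta/4}}$; the unstable analogue follows by a symmetric argument applied to backward iterates of a $u$--admissible manifold.
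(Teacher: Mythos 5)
There is a genuine gap, and it is at the very first step. You assert that
$\tfrac{\vertiii{v_i}^2}{\vertiii{w_i}^2}=1+\delta_i^2+O(\eta_i^{15\beta/48})$ with $\delta_i=\|(dG_i)u_i\|/\|u_i\|$, i.e.\ that the Lyapunov norm of $v_i$ is computable from chart-local data at $y_i$. It is not. For a vector $v_i$ tangent to the admissible manifold at $y_i$, the quantity $\vertiii{v_i}$ in this lemma is $S(y_i,v_i)=\bigl(4e^{2\rho}\int_0^\infty e^{2\chi t}\|\Phi^t v_i\|^2\,dt\bigr)^{1/2}$, an integral over the \emph{entire forward flow orbit} of $y_i$ (the point $y_i$ need not even lie in $\nuh$, and the admissible manifold need not be tangent to any invariant splitting there). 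Lemma \ref{ALP-Lemma 6.1} compares $\Theta^s_{x_i,y_i}$ with parallel transport only in the \emph{Riemannian} norm; it lets you compare $\|v_i\|$ with $\|w_i\|$, but gives no information about $\vertiii{v_i}/\vertiii{w_i}$ — that ratio is precisely the unknown the lemma is designed to control. Consequently your "key contraction" $\delta_0\le e^{-2\chi r_\Lambda(x_0)}\delta_1+C\ve\eta_0^{\beta/3}$, which is a statement about the slopes of the representing functions (essentially Lemma \ref{Prop-graph-transform}), has no bearing on the ratio of Lyapunov norms, and the claim that "the lower bound is automatic" collapses with the false identity (there is no reason for $\vertiii{v_i}/\vertiii{w_i}\ge 1$; the hypothesis allows it to be as small as $e^{-\xi}$, and the lower bound requires its own symmetric argument).

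The correct mechanism, which your proposal never touches, exploits the integral (cocycle) structure of $\vertiii{\cdot}$: pulling $v_1$ back by one transition gives $\vertiii{v_0}^2=I_1+e^{2\chi t_0}\vertiii{v_1}^2$ and, for the reference vector $\varrho=\Phi^{-t_0}w_1$, $\vertiii{\varrho}^2=I_2+e^{2\chi t_1}\vertiii{w_1}^2$, where $I_1,I_2$ are the integrals over the initial time segments $[0,t_0]$, $[0,t_1]$. The overlap condition shows $\vertiii{\varrho}/\vertiii{w_0}=e^{\pm\eta_0^{\beta/4}}$ and $|I_2-I_1|\ll\ve$, while $I_1\ge C(\chi,\Lambda)>0$ uniformly. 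The improvement then comes from the fact that the prepended segments are in ratio $\approx 1$ while the tails are in ratio $e^{\pm2\xi}$: one needs $I_2-I_1e^{-2\xi-2\chi(t_0-t_1)}\ge 8Q(x_0)^{\beta/4}\vertiii{\varrho}^2$, and this is where the hypothesis $\xi\ge\sqrt{\ve}$ is genuinely used, via $1-e^{-2\xi-2\chi(t_0-t_1)}\ge\tfrac12\sqrt{\ve}$, so that $I_1\cdot\tfrac12\sqrt{\ve}$ dominates both $|I_2-I_1|$ and the right-hand side (each $\ll\ve$). Your sketch invokes $\xi\ge\sqrt{\ve}$ only as an afterthought and derives the gain from the wrong source; as written, the argument does not prove the statement.
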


\begin{proof}
Write $f := g_{x_0}^+$. We focus on one side of the estimate (the other side is proved similarly). We can assume that $\| v_0 \| = 1$.We also write $g^-_{x_1} = \varphi^{T^-}$ where $T^-$ is a $C^{1+\beta}$ function with $T^-(x_1) = -r_\Lambda(f^{-1}(x_1))$. Then $g^-_{x_1}({x_1}) = \varphi^{T^-({x_1})}({x_1})$ and $g^-_{x_1}({y_1}) = \varphi^{T^-({y_1})}({y_1})$. For simplicity of notation, let $t_0 = -T^-({x_1})$ and $t_1 = -T^-({y_1})$, then $g^-_{x_1}(x_1) = \varphi^{-t_0}({x_1})$ and $g^-_{x_1}({y_1}) = \varphi^{-t_1}({y_1})$. Defining $\varrho=\Phi^{-t_0}(w_1)$, we write
$$
\frac{\vertiii{v_0}}{\vertiii{w_0}} = \frac{\vertiii{v_0}}{\vertiii{\varrho}}\cdot \frac{\vertiii{\varrho}}{\|w_0\|}
$$
and estimate each of the fractions above separately. First, we make some comparisons.
\begin{enumerate}[$\circ$]
    \item Comparison of $w_0, v_0$: by Lemma \ref{ALP-Lemma 6.1}, 
    $$\|P_{y_0,x_0} v_0 - w_0 \| \leq \tfrac{1}{2} \eta_0^{15 \beta/48}.$$
    \item Comparison of $v_0, \varrho$: remembering the definitions of $\varrho$ and $v_0$, we have
    \begin{equation*}
    \begin{split}
        & \left\| \varrho - P_{y_0, f^{-1} (x_1)} v_0 \right\|  = \left\| df^{-1}_{x_1} w_1 - P_{y_0, f^{-1} (x_1)} \circ df^{-1}_{y_1} v_1 \right\| \\
        & = \left\| df^{-1}_{x_1} w_1 - \widetilde{(df^{-1}_{y_1})} [P_{y_1, x_1} v_1]\right \| \\
        & \leq \left\| (df^{-1}_{x_1}) \right\| \left\|w_1 - P_{y_1, x_1} v_1 \right\| + \left\| \widetilde{df^{-1}_{x_1}} - \widetilde{df^{-1}_{y_1}} \right\| \| v_1 \|.
    \end{split}
    \end{equation*}
    By Lemma \ref{ALP-Lemma 6.1}, $\| w_1 - P_{y_1,x_1} v_1 \| \leq \frac{1}{2} \| v_1 \| \eta_1^{15 \beta/48} \leq \| v_1 \| \eta_0^{15 \beta/48}$, where in the last passage we used that $\eta_1\leq e^\ve\eta_0$. Now, using Lemma \ref{lemma-local-coord}(2), the Hölder continuity of $df^{-1}$ with $\Hol{\beta}(df^{-1})\leq \max_{|t|\leq 1}\Hol{\beta}(d\vf^t)<\infty$, 
     and that $d(x_1, y_1) \leq 2 \eta_1 \leq 4 \eta_0$, we get that
\begin{equation*}
\begin{split}
    \| \varrho - P_{y_0, f^{-1}(x_1)} v_0 \| & \leq 2 \| v_1 \| \eta_0^{15 \beta/48} + \Hol{\beta}(df^{-1}) (4 \eta_0)^{\beta} \| v_1 \|
     \leq \tfrac{1}{2} \eta_0^{14 \beta/48},
\end{split}
\end{equation*}
where in the last passage we assume that $\ve >0$ is small enough.
    \item Comparison between $w_0, \varrho$: the two estimates above imply that
\begin{equation*}
\begin{split}
    &\, \| \varrho - P_{x_0, f^{-1} (x_1)} w_0 \| \leq \| \varrho - P_{y_0, f^{-1}(x_1)} v_0 \| + \| P_{y_0, f^{-1}(x_1)} v_0 - P_{x_0, f^{-1}(x_1)} w_0 \| \\
    & = \| \varrho - P_{y_0, f^{-1} (x_1)} v_0 \| + \| P_{y_0,x_0} v_0 - w_0 \| + O(\eta_0) < \eta_0^{14 \beta/48},
\end{split}
\end{equation*}
where the term $O(\eta_0)$ is an upper bound on the area of the geodesic triangle formed by $x_0,y_0,f^{-1}(x_1)$ (see the discussion after (Exp2) in page \pageref{geodesic-triangles}).
\end{enumerate}

Now we estimate the fractions $\frac{\vertiii{v_0}}{\vertiii{\varrho}}$ and $\frac{\vertiii{\varrho}}{\vertiii{w_0}}$.\\

\noindent
{\sc Estimate of $\frac{\|\varrho\|}{\|w_0\|}$}: by the definition of the Lyapunov inner product and Lemma \ref{ALP-Lemma 6.1}(1),

$$
\vertiii{w_0} = \|C(x_0)^{-1} w_0\| \geq \|w_0\| \geq \operatorname{exp} \left[ -\eta_0^{15\beta/48} \right] \geq \tfrac{1}{2}\cdot
$$
Letting $A = C(x_0)^{-1}$ and $B = C(f^{-1}(x_1))^{-1}$, we have
$$
\left| \frac{\vertiii{\varrho}}{\vertiii{w_0}} - 1 \right| \leq 2 \left| \vertiii{\varrho}-\vertiii{w_0}\right| = 2 \left|\|B\varrho\| - \|A w_0\|\right|.
$$  
Now let $\widetilde{B} = B \circ P_{x_0, f^{-1}(x_1)}$, so that $A, \widetilde{B}$ have the same domain and codomain. Then
\begin{align*}
&\left|\|B \varrho\| - \|A w_0\|\right| \leq \left|\|B \varrho\| -\|\widetilde{B} w_0\|\right| +  \left|\|\widetilde{B} w_0\|-\|Aw_0\|\right| \\
&\leq \|B \varrho - B \circ P_{x_0, f^{-1}(x_1)} w_0\| + \|\widetilde{B} w_0 - A w_0\| \\
&\leq \|B\|\left\|\varrho - P_{x_0, f^{-1}(x_1)} w_0\right\| + \left\|\widetilde{B} - A\right\| \|w_0\|.
\end{align*}
By the overlap condition, Proposition \ref{Lemma-overlap}(1) and the comparison between $w_0, \varrho$, the latter expression above is bounded by $2\|A\| \eta_0^{14\beta/48} + 2\eta_0 \leq 2\ve^{1/8} \eta_0^{13\beta/48} + 2\eta_0 < \frac{1}{4} \eta_0^{\beta/4}$.
Therefore
$\left| \frac{\|\varrho\|}{\|w_0\|} - 1 \right| < \frac{1}{2} \eta_0^{\beta/4}$
and so
$$
\frac{\|\varrho\|}{\|w_0\|}={\rm exp}\left[ \pm \eta_0^{\beta/4} \right].
$$

\noindent
{\sc Estimate of $\frac{\vertiii{v_0}}{\vertiii{\varrho}}$:} 
By the latter estimate, we need to prove that
\begin{equation}\label{ratio-norms-1}
    \frac{\vertiii{v_0}^2}{\vertiii{\varrho}^2} \leq {\rm exp}\left[2 \xi - 4Q(x_0)^{\beta/4}\right].
\end{equation}

Recall that we are assuming that $\frac{\vertiii{v_1}}{\vertiii{w_1}} \leq e^\xi$ for some $\xi\geq\sqrt{\ve}$. Write
$$I_1 := 4 e^{2 \rho} \int^{t_0}_0 e^{2\chi t} \| \Phi^t v_0 \|^2dt \ \text{ and }\ I_2:= 4 e^{2 \rho} \int^{t_1}_0 e^{2\chi t} \| \Phi^t \varrho \|^2dt.
$$
Then
\begin{align*}
   & \frac{\vertiii{v_0}^2}{\vertiii{\varrho}^2} = \frac{I_1 + 4 e^{2 \rho} \int^\infty_{t_0} e^{2\chi t} \| \Phi^t v_0 \|^2dt}{I_2 + 4 e^{2 \rho} \int^\infty_{t_1} e^{2\chi t} \| \Phi^t \varrho \|^2dt}
    = \frac{I_1+ e^{2 \chi t_0} \vertiii{v_1}^2}{I_2 + e^{2 \chi t_1} \vertiii{w_1}^2}
 \leq \frac{I_1+ e^{2 \chi t_0} e^{2 \xi} \vertiii{w_1}^2}{I_2 + e^{2 \chi t_1} \vertiii{w_1}^2} \\
    & = e^{2 \xi + 2 \chi (t_0 - t_1)} \left( 1 - \frac{I_2 - I_1e^{-2 \xi - 2 \chi (t_0 - t_1)}}{I_2 + e^{2 \chi t_1} \vertiii{w_1}^2} \right)
    = e^{2 \xi + 2 \chi (t_0 - t_1)} \left( 1 - \frac{I_2 - I_1e^{-2 \xi - 2 \chi (t_0 - t_1)}}{\vertiii{\varrho}^2} \right).
\end{align*}
We claim that (\ref{ratio-norms-1}) follows from the estimate
\begin{equation}\label{estimate-for-ratio}
I_2-I_1e^{-2 \xi -2 \chi (t_0 - t_1)} \geq 8 Q(x_0)^{\beta/4} \vertiii{\varrho}^2.
\end{equation}
Indeed, if this is the case, then 
$$
\frac{\vertiii{v_0}^2}{\vertiii{\varrho}^2}\leq e^{2 \xi + 2 \chi (t_0 - t_1)}\left[1 - 8Q(x_0)^{\beta/4}\right] \leq e^{2 \xi + 2 \chi (t_0 - t_1)-8Q(x_0)^{\beta/4}} \leq e^{2 \xi-4Q(x_0)^{\beta/4}},
$$
where in the last two passages we used that $1+x\leq e^x$ for all $x \in \R$ and Lemma \ref{lemma-local-coord}(3) to obtain that $2 \chi |t_0 - t_1| \leq 2d(y_0,f^{-1}(x_1)) \leq 4 Q(x_0)\ll 4 Q(x_0)^{\beta/4}$. Thus, we focus on establishing (\ref{estimate-for-ratio}), which we will prove by estimating each side separately. 

We begin with the left hand side, which we write as 
$(I_2-I_1)+I_1\left[1-e^{-2 \xi -2 \chi (t_0 - t_1)}\right]$.
We have the following estimates for $\ve>0$ small enough:
\begin{enumerate}[$\circ$]
\item $I_1$ has the uniform lower bound
$$
I_1\geq 4e^{2\rho}\int_0^{t_0}e^{2\chi t}e^{-2\rho-2t}dt=
\tfrac{2}{1-\chi}\left[1-e^{2(\chi-1)t_0}\right]\geq
\tfrac{2}{1-\chi}\left[1-e^{2(\chi-1)\inf(r_\Lambda)}\right]
=: C(\chi,\Lambda).
$$
\item Since $2\xi+2\chi(t_0-t_1)\geq 2\sqrt{\ve}-4Q(x_0)\geq \sqrt{\ve}$, we have   
$$
1-e^{-2 \xi -2 \chi (t_0 - t_1)}\geq 1-e^{-\sqrt{\ve}}\geq\tfrac{1}{2}\sqrt{\ve}.
$$
\end{enumerate}
We now estimate $I_2-I_1$ from above. We have
$$
I_2-I_1=4e^{2\rho}\underbrace{\int_0^{t_1}e^{2\chi t}\left(\|\Phi^t\varrho\|^2-\|\Phi^t v_0\|^2\right)dt}_{=: I_3}
+4e^{2\rho}\underbrace{\int_{t_0}^{t_1}e^{2\chi t}\|\Phi^t v_0\|^2dt}_{=:I_4}
$$
and:
\begin{enumerate}[$\circ$]
\item Estimate of $I_3$: noticing that
\begin{align*}
&\left|\|\Phi^t\varrho\|-\|\Phi^tv_0\|\right|=
\left|\|\widetilde{\Phi^t\varrho}\|-\|\widetilde{\Phi^tv_0}\|\right|\leq 
\left\| \widetilde{\Phi^t\varrho}-\widetilde{\Phi^tv_0}\right\|\\
&=\left\|\Phi^t\varrho-\Phi^t P_{y_0,f^{-1}(x_1)}v_0\right\|\leq 
\left\|\Phi^t\right\|\cdot \left\|\varrho-P_{y_0,f^{-1}(x_1)}v_0\right\|
\end{align*}
is bounded by $e^{2\rho}\tfrac{1}{2}\eta_0^{14\beta/48}\ll \ve^{3/2}$, that $\|\varrho\|\in \left[\tfrac{1}{2},2\right]$, and that
$\|\Phi^t\varrho\|+\|\Phi^tv_0\|\leq 3e^{2\rho}$,
we get that
\begin{align*}
|I_3|\leq 3e^{2\rho}\int_0^{t_1}e^{2\chi t}\left|\|\Phi^t\varrho\|-\|\Phi^tv_0\|\right|dt\leq 
3e^{2\rho}\rho e^{4\chi\rho}\ve^{3/2}\ll \ve.
\end{align*}
\item Estimate of $I_4$:
$$
|I_4|\leq |t_1-t_0|e^{2\chi\rho} e^{4\rho}\leq 2e^{2\chi\rho+4\rho}Q(x_0)\ll \ve.
$$
\end{enumerate}
Plugging the estimates together, we conclude that
$$
I_2-I_1e^{-2 \xi -2 \chi (t_0 - t_1)}\geq 
C(\chi,\Lambda)\tfrac{1}{2}\sqrt{\ve}-8e^{2\rho}\ve \geq \ve^{2/3}.
$$

Now we estimate the right hand side in \eqref{estimate-for-ratio}. Since $\vertiii{\varrho} = \|C(f^{-1}(x_1))^{-1} \varrho \| \leq 2 \| C(f^{-1}(x_1))^{-1} \| \leq 4 \| C(x_0)^{-1} \|$, it follows that
$$
8 Q(x_0)^{\beta/4} \vertiii{\varrho}^2 \leq 128 \ve^{3/2}\| C(x_0)^{-1} \|^{-12} \cdot \| C(x_0)^{-1} \|^2 \leq 128\ve^{3/2}\ll \ve.
$$
This completes the proof of \eqref{estimate-for-ratio}, and hence of the lemma. 
\end{proof}

As we just proved, the improvement lemma as stated above consists on an estimate of the ratio of Lyapunov inner norms, and its proof relies on estimating the ratios of the fractions $\tfrac{\vertiii{\varrho}}{\vertiii{w_0}}$ and $\tfrac{\vertiii{v_0}}{\vertiii{\varrho}}$. The first one is very close to one because of the overlap condition. The second one is the ratio that gives the improvement, and its estimate is {\em purely dynamical}, in the sense that it does not depend on the overlap condition. The only fact that we use is that both $\vertiii{v_0}$ and $\vertiii{\varrho}$ are finite, which means that along these directions the flow contracts. 

Therefore, if we consider an actual orbit of the flow (instead of an edge), we can obtain improvements for the Lyapunov inner norms associated to any $\chi'\in(0,\chi)$. This important fact was first implemented in \cite{Ova20}, and later in \cite{ALP}. To properly state it, we need to recall some notation.
Let $W^s = V^s[\un w^+]$, where $\un w^+ = \{\Psi_{y_n}^{q^s_n, q^u_n}\}_{n \geq 0}$ is a positive $\ve$--gpo. For each $n\geq 0$, write $g_{y_n}^+ = \vf^{T_n}$ where $T_n: B_{y_n} \to \R$ is a $C^{1+\beta}$ function satisfying $T_n(y_n) = r_\Lambda(y_n)$, let  
$G_n:= g_{y_{n-1}}^+ \circ \cdots \circ g_{y_0}^+$ with $G_0={\rm Id}$, and $\tau_n: W^s \to \R$ by
$$
\tau_n(x) := \sum_{k=0}^{n -1} T_{k}(G_k(x)),
$$
which represents the total flow time of the point $x$ under the maps $g_{y_0}^+,\ldots,g_{y_{n-1}}^+$.
Fix $x \in W^s \cap {\rm NUH}^\#$, let $\Tau := \{\tau_n(x)\}_{n \geq 0}$ and introduce the parameter $p^s_n := p(x, \Tau, n)$ as defined in Section \ref{section-Z-indexed}.  
Writing $x_n := G_n(x)$, for each $\delta < 1$ consider ${\un v}_{\delta}^+ := \{\Psi_{x_n}^{\delta p^s_n, \delta p^u_n}\}_{n \geq 0}$, which is a sequence of $\ve$-double charts (at this point, the choice of $p^u_n$ is irrelevant). It is very unlikely that ${\un v}_{\delta}^+$ is an $\ve$--gpo, because condition (GPO2) is hardly satisfied with the inclusion of the multiplicative constant $\delta$ (as already observed in Section \ref{ss.graph.transform}), and also because we did not even define $p^u_n$.
Nevertheless, $\{\delta p^s_n\}_{n\geq 0}$ satisfies the conditions
of \cite[Appendix A]{ALP} needed to define stable graph transforms, therefore we can define $V^s[{\un v}^+_\delta]$ to be the stable manifold associated to the sequence $\un v^+_\delta$ via the stable graph transforms.

Similarly, if $x \in W^u \cap {\rm NUH}^\#$, then for every $\delta$ we can define an unstable manifold $V^u[{\un v}_{\delta}^-]$. Recalling that $\vertiii{\cdot}_{\chi'}$ denotes the Lyapunov inner product defined by $\chi'$, 
we are ready to state the corollary.

\begin{corollary}\label{Corollary-improvement}
The following holds for all $\ve>0$ small enough. Given $\chi'\in (0,\chi)$, if $\delta>0$ is small enough in the above notation, then the following statement holds: 
for $y\in V^s[\un v_\delta^+]$, let $v_1\in T_{g_{x_0}^+(y)}V^s[\sigma(\un v_\delta^+)]$
and $v_0={d(g_{x_0}^+)}^{-1}v_1\in T_{y} V^s[\un v_\delta^+]$, and let also $w_0\in T_{x}V^s[\un v_\delta^+]$ s.t. $v_0=\Theta^s_{x,y}(w_0)$
and $w_1\in T_{g_{x_0}^+(x)}V^s[\sigma(\un v_\delta^+)]$ s.t.  $v_1=\Theta^s_{g_{x_0}^+(x),g_{x_0}^+(y)}(w_1)$;
if $\frac{\vertiii{v_1}_{\chi'}}{\vertiii{w_1}_{\chi'}}={\rm exp}[\pm\xi]$
for $\xi\geq {\sqrt{\ve}}$, then 
$$
\frac{\vertiii{v_0}_{\chi'}}{\vertiii{w_0}_{\chi'}}={\rm exp}\left[\pm(\xi-Q( x)^{\beta/4})\right].
$$
An analogous statement holds for unstable manifolds.
\end{corollary}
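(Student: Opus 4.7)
The plan is to revisit the proof of Lemma \ref{improvement-lemma} and transport it to the $\chi'$-setting, exploiting two simplifications peculiar to the present corollary. First, since the $x_n$ lie on a true flow orbit of $x$, we have $x_1=f(x_0)$ exactly, so no overlap between Pesin charts is required and Theorem \ref{Thm-non-linear-Pesin} applies directly in place of Theorem \ref{Thm-non-linear-Pesin-2}. Second, since $x\in\nuh^\#\subset\nuh_{\chi'}^\#$, the weaker Lyapunov inner product $\vertiii{\cdot}_{\chi'}$ is well-defined at every $x_n$, and the $\chi'$-versions of Lemmas \ref{Lemma-linear-reduction} and \ref{ALP-Lemma 6.1} hold (with $C_{\chi'}$ replacing $C$), as observed in the remark after Lemma \ref{Lemma-linear-reduction}.

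Writing $g_{x_0}^+=\vf^{t_0}$, setting $\varrho=\Phi^{-t_0}(w_1)\in N^s_{x_0}$, and decomposing
\[\frac{\vertiii{v_0}_{\chi'}}{\vertiii{w_0}_{\chi'}}=\frac{\vertiii{v_0}_{\chi'}}{\vertiii{\varrho}_{\chi'}}\cdot\frac{\vertiii{\varrho}_{\chi'}}{\vertiii{w_0}_{\chi'}},\]
I would estimate the two factors separately, mirroring the proof of Lemma \ref{improvement-lemma}. For the right-hand factor, the quantities $\|P_{y_0,x_0}v_0-w_0\|$ and $\|\varrho-P_{y_0,f^{-1}(x_1)}v_0\|$ are controlled by powers of $\delta(p^s_0\wedge p^u_0)$ via Lemma \ref{ALP-Lemma 6.1}(1) (in its $\chi'$-version) and the Hölder continuity of $d\vf^t$; combined with the inequality $\|C_{\chi'}(x_0)^{-1}\|\leq\|C(x_0)^{-1}\|$ and the bound on $C_{\chi'}(x_0)^{-1}/C_{\chi'}(f^{-1}(x_1))^{-1}$ coming from Lemma \ref{Lemma-linear-reduction}(3), one concludes that $\vertiii{\varrho}_{\chi'}/\vertiii{w_0}_{\chi'}={\rm exp}[\pm Q(x_0)^{\beta/4}]$, provided $\delta$ is sufficiently small.

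For the left-hand factor, the computation is verbatim that of \eqref{estimate-for-ratio} with $\chi$ replaced by $\chi'$ throughout. Setting
\[I_1:=4e^{2\rho}\int_0^{t_0}e^{2\chi' t}\|\Phi^tv_0\|^2\,dt,\qquad I_2:=4e^{2\rho}\int_0^{t_1}e^{2\chi' t}\|\Phi^t\varrho\|^2\,dt,\]
one has $\vertiii{v_0}_{\chi'}^2=I_1+e^{2\chi't_0}\vertiii{v_1}_{\chi'}^2$ and similarly for $\vertiii{\varrho}_{\chi'}^2$, so exactly as before
\[\frac{\vertiii{v_0}_{\chi'}^2}{\vertiii{\varrho}_{\chi'}^2}\leq e^{2\xi+2\chi'(t_0-t_1)}\left(1-\frac{I_2-I_1e^{-2\xi-2\chi'(t_0-t_1)}}{\vertiii{\varrho}_{\chi'}^2}\right).\]
The bounds on the auxiliary integrals $I_3$ and $I_4$ from Lemma \ref{improvement-lemma} only use $\|\Phi^t\|\leq e^{|t|+\rho}$, so they carry over unchanged; and $I_1$ admits the strictly positive uniform lower bound $C(\chi',\Lambda):=\tfrac{2}{1-\chi'}[1-e^{2(\chi'-1)\inf(r_\Lambda)}]$. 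Together with $\xi\geq\sqrt{\ve}$, these ingredients give $I_2-I_1e^{-2\xi-2\chi'(t_0-t_1)}\geq\ve^{2/3}$, and the argument concludes as in Lemma \ref{improvement-lemma}, yielding $\vertiii{v_0}_{\chi'}/\vertiii{\varrho}_{\chi'}\leq{\rm exp}[\xi-2Q(x_0)^{\beta/4}]$. Multiplying the two estimates proves the claim.

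The main obstacle is calibrating $\delta$: one must ensure that the error terms in the first factor, which are some positive power of $\delta(p^s_0\wedge p^u_0)$ rescaled by $\|C_{\chi'}(x_0)^{-1}\|^2$, are uniformly suppressed below $Q(x_0)^{\beta/4}$. Since $p^s_0\wedge p^u_0\leq \ve Q(x_0)$, this reduces to verifying that a fixed positive power of $\delta Q(x_0)\cdot\|C_{\chi'}(x_0)^{-1}\|^2$ beats $Q(x_0)^{\beta/4}$; using $\|C_{\chi'}(x_0)^{-1}\|\leq\|C(x_0)^{-1}\|=\ve^{1/8}Q(x_0)^{-\beta/48}$ from \eqref{estimates-Q}, this becomes an inequality involving only a small positive power of $Q(x_0)$, which is absorbed by choosing $\delta$ small depending only on $\chi$, $\chi'$, $\rho$, $\ve$, and $\inf(r_\Lambda)$.
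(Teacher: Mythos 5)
Your proposal is correct and follows essentially the same route as the paper: rerun the proof of Lemma \ref{improvement-lemma} with the $\chi'$-Lyapunov norm, using the uniform lower bound $C(\chi',\Lambda)$ on $I_1$, the unchanged bounds on $I_3,I_4$, and the key inequality $\vertiii{\cdot}_{\chi'}\leq\vertiii{\cdot}_{\chi}=\|C(\cdot)^{-1}\cdot\|$ to absorb the right-hand side $8Q(x)^{\beta/4}\vertiii{\cdot}_{\chi'}^2$. The only organizational difference is that you retain the two-factor decomposition through $\varrho=\Phi^{-t_0}(w_1)$ and estimate the overlap factor perturbatively, whereas the paper exploits the fact that the charts are centered on the genuine orbit (so $f^{-1}(x_1)=x_0$) to estimate $\vertiii{v_0}_{\chi'}/\vertiii{w_0}_{\chi'}$ in a single fraction; your extra factor is harmless but not needed, and likewise your final calibration of $\delta$ is more cautious than necessary, since the relevant error terms are already suppressed by the $\ve$-smallness exactly as in the lemma.
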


Observe that the improvement is the same of Lemma \ref{improvement-lemma}, i.e. $Q(x)$ is defined in terms of the parameter $\chi$. 

\begin{proof}
As in Lemma \ref{improvement-lemma}, it is enough to prove the result
for $V^s[\un v^+_\delta]$. Again, we focus on one side of the estimate: assuming that
$\tfrac{\vertiii{v_1}_{\chi'}}{\vertiii{w_1}_{\chi'}}\leq e^\xi$ for $\xi\geq\sqrt{\ve}$,
we will prove that
$$
\tfrac{\vertiii{v_0}_{\chi'}}{\vertiii{w_0}_{\chi'}}\leq {\rm exp}\left[\xi-Q(x)^{\beta/4}\right].
$$
Repeating the calculation made in Lemma \ref{improvement-lemma} to estimate $\tfrac{\vertiii{v_0}}{\vertiii{\varrho}}$, we have
$$
\frac{\vertiii{v_0}_{\chi'}^2}{\vertiii{w_0}_{\chi'}^2}
\leq \frac{I_1+ e^{2 \chi' t_0} e^{2 \xi} \vertiii{w_1}_{\chi'}^2}{I_2 + e^{2 \chi' t_1} \vertiii{w_1}_{\chi'}^2}
= e^{2 \xi + 2 \chi' (t_0 - t_1)} \left( 1 - \frac{I_2 - I_1e^{-2 \xi - 2 \chi'(t_0 - t_1)}}{\vertiii{w_0}_{\chi'}^2} \right).
$$
Therefore, it is enough to prove that
\begin{equation}\label{estimate-for-ratio-Real-orbit}
I_2-I_1e^{-2 \xi -2 \chi'(t_0 - t_1)} \geq 8 Q(x)^{\beta/4} \vertiii{w_0}_{\chi'}^2.
\end{equation}
We begin estimating the left-hand side, which we write as 
$(I_2-I_1)+I_1\left[1-e^{-2 \xi -2 \chi' (t_0 - t_1)}\right]$.
As in the proof of Lemma \ref{improvement-lemma}, we have the following estimates for $\ve > 0$ small enough:

\begin{enumerate}[$\circ$]
\item $I_1$ has the uniform lower bound
$$
I_1\geq
\tfrac{2}{1-\chi'}\left[1-e^{2(\chi'-1)\inf(r_\Lambda)}\right]
=: C(\chi',\Lambda).
$$
\item Since $2\xi+2\chi'(t_0-t_1)\geq 2\sqrt{\ve}-4Q(x)\geq \sqrt{\ve}$, we have   
$$
1-e^{-2 \xi -2 \chi' (t_0 - t_1)}\geq 1-e^{-\sqrt{\ve}}\geq\tfrac{1}{2}\sqrt{\ve}.
$$
\end{enumerate}
We now estimate $I_2-I_1$ from above. We have
$$
I_2-I_1=4e^{2\rho}\underbrace{\int_0^{t_1}e^{2\chi' t}\left(\|\Phi^t w_0\|^2-\|\Phi^t v_0\|^2\right)dt}_{=: I_3}
+4e^{2\rho}\underbrace{\int_{t_0}^{t_1}e^{2\chi' t}\|\Phi^t v_0\|^2dt}_{=:I_4}.
$$
As in the proof of Lemma \ref{improvement-lemma}, we obtain that $|I_3|,|I_4|\ll \ve$.
Plugging the estimates together, we get that
$$
I_2-I_1e^{-2 \xi -2 \chi' (t_0 - t_1)}\geq 
C(\chi',\Lambda)\tfrac{1}{2}\sqrt{\ve}-8e^{2\rho}\ve \geq \ve^{2/3}.
$$

Now we estimate the right hand side in \eqref{estimate-for-ratio-Real-orbit}. Since $\vertiii{w_0}_{\chi'}\leq 
\vertiii{w_0}_{\chi}=\|C(x)^{-1} w_0 \|\leq 2\|C(x)^{-1} \|$, it follows that
$$
8 Q(x)^{\beta/4} \vertiii{w_0}_{\chi'}^2 \leq 32 \ve^{3/2}\| C(x)^{-1} \|^{-12} \cdot \| C(x)^{-1} \|^2 \leq 32\ve^{3/2}\ll \ve.
$$
This completes the proof of \eqref{estimate-for-ratio-Real-orbit}, and hence of the corollary. 
\end{proof}

\subsection{Proof that $ x\in{\rm NUH}$}\label{Section-finite}

The first step in the proof of Theorem \ref{Thm-inverse} is to show that
$ x\in{\rm NUH}$. For that, we first prove that the relevance of each symbol of the alphabet
$\mathfs A$ implies the existence of stable/unstable manifolds where $S/U$ are bounded (recall their definition in Section \ref{Def-NUH3}).
The result below is \cite[Lemma 6.6]{ALP} adapted to our context, which in turn was inspired by \cite[Lemma 4.2 and Corollary 4.3]{Ova20}. 

\begin{lemma}\label{Lemma-finite-norm-1}
Let $ W^s= V^s[\un w^+]$, where $\un w^+$ is a positive $\ve$--gpo.
If $ W^s\cap {\rm NUH}^\#\neq\emptyset$ then
$$
\sup_{y \in W^s}s(y)<\infty.
$$
The same applies to negative $\ve$--gpo's, with respect to the function $u$.
\end{lemma}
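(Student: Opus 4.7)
The plan is to propagate the finite bound $s(x_0)<+\infty$ (available at the given point $x_0\in W^s\cap\nuh^\#$) to all $y\in W^s$, by iterating the Improvement Lemma~\ref{improvement-lemma} along the $\ve$--gpo $\un w^+$.

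Fix $x_0\in W^s\cap\nuh^\#$ and set $x_n:=G_n(x_0)$, $W^s_n:=V^s[\sigma^n\un w^+]$. By $\vf$--invariance of $\nuh$, every $x_n\in\nuh$, so $s(x_n)<+\infty$ and $T_{x_n}W^s_n=N^s_{x_n}$ by dimension count. Moreover, the assumption $x_0\in\nuh^\#$, combined with (NUH5), Lemma~\ref{Lemma-q} and Proposition~\ref{Prop-Z-par}, forces
\[
\sum_{n\geq 0}Q(x_n)^{\beta/4}=+\infty.
\]
Given any $y\in W^s$ and any unit vector $v_0\in T_yW^s$, set $y_n:=G_n(y)$, $v_n:=d(G_n)_y v_0$ (tangent to $W^s_n$ at $y_n$), and $w_n:=(\Theta^s_{x_n,y_n})^{-1}(v_n)\in N^s_{x_n}$. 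By Lemma~\ref{ALP-Lemma 6.1}, $\|w_n\|\asymp\|v_n\|$; since $x_n\in\nuh$, also $\vertiii{w_n}_\chi\leq s(x_n)\|w_n\|<+\infty$.

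Assuming $\vertiii{v_N}_\chi<+\infty$ for some $N$, define $\xi_n:=\log\bigl(\vertiii{v_n}_\chi/\vertiii{w_n}_\chi\bigr)$ for $n\leq N$. The Improvement Lemma then yields $|\xi_n|\leq\max\bigl\{\sqrt{\ve},\,|\xi_{n+1}|-Q(x_n)^{\beta/4}\bigr\}$ at each step. Iterating backward and exploiting the divergence of $\sum Q(x_n)^{\beta/4}$, the iteration must reach $|\xi_k|\leq\sqrt{\ve}$ for some $k\leq N$, and the bound persists from that point down to $n=0$. Therefore
\[
\vertiii{v_0}_\chi\leq e^{\sqrt{\ve}}\vertiii{w_0}_\chi\leq 2e^{\sqrt{\ve}}\,s(x_0),
\]
uniformly in $y\in W^s$ and unit $v_0\in T_yW^s$, whence $s(y)\leq 2e^{\sqrt{\ve}}s(x_0)<+\infty$, as required.

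The main obstacle is to justify the standing assumption that $\vertiii{v_N}_\chi<+\infty$ for at least one $N$: the naive bound $\|\Phi^t v_N\|\leq Ce^{-2\chi t/3}$ coming from Theorem~\ref{Thm-stable-manifolds}(3) is too weak to make the defining integral $\int_0^\infty e^{2\chi t}\|\Phi^t v_N\|^2\,dt$ converge. To circumvent this I would follow \cite[Lemma~4.2]{Ova20} and \cite[Lemma~6.6]{ALP}: apply Corollary~\ref{Corollary-improvement} along the actual orbit of $x_0$ with an auxiliary exponent $\chi'$ slightly below $\chi$ (and a suitably small $\delta$) to upgrade the true decay rate of $\|\Phi^t v_N\|$ essentially to $\chi$, and then combine this with the block structure of the Oseledets--Pesin reduction (Lemma~\ref{Lemma-linear-reduction}) to secure integrability at exponent $\chi$. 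This initializes the iteration above and the claim follows; the statement for negative $\ve$--gpo's and the function $u$ is symmetric.
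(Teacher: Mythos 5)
Your overall strategy (propagate $s(x_0)<\infty$ from the point $x_0\in W^s\cap\nuh^\#$ to all of $W^s$ by iterating the Improvement Lemma backward, using the divergence of $\sum_n Q(x_n)^{\beta/4}$ forced by (NUH5)) is the right one, and it is the skeleton of the paper's proof. The gap is exactly where you flag it, and your proposed fix does not close it. You run the iteration with the $\chi$--norm, so you must first know $\vertiii{v_N}_\chi<\infty$ for some $N$; you propose to get this by using Corollary~\ref{Corollary-improvement} with an auxiliary $\chi'<\chi$ to ``upgrade the true decay rate of $\|\Phi^t v_N\|$ essentially to $\chi$'' and thereby ``secure integrability at exponent $\chi$.'' This cannot work: even exact decay at rate $\chi$ gives $\int_0^\infty e^{2\chi t}\|\Phi^t v_N\|^2\,dt=\int_0^\infty O(1)\,dt=\infty$, and the mechanism available (working on the $\delta$--shrunk manifolds $V^s_{\delta,n}$, where the Pesin-coordinate contraction is $e^{-\chi r_\Lambda}+O(\delta^{\beta/3})\le e^{-\overline\chi r_\Lambda}$ with $\overline\chi=\tfrac{\chi+\chi'}{2}<\chi$) only ever yields decay strictly slower than $\chi$, hence integrability at every exponent $\chi'<\overline\chi$ but never at $\chi$ itself. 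Moreover Corollary~\ref{Corollary-improvement} controls ratios of $\chi'$--Lyapunov norms, not pointwise decay rates, so it cannot be used to ``upgrade'' $\|\Phi^t v_N\|$ in the way you describe.

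The paper avoids this by inverting the order of quantifiers. It never establishes $\vertiii{v}_\chi<\infty$ for a tangent vector of $W^s$ directly. Instead: (i) for each fixed $\chi'<\chi$ it gets a finite bound $\vertiii{v}_{\chi'}\le L(\chi')$ on $T V^s_{\delta,n}$ from the rate $\overline\chi>\chi'$ (this is cheap but $L(\chi')\to\infty$ as $\chi'\to\chi$); (ii) it applies Corollary~\ref{Corollary-improvement} infinitely often along the subsequence $n_k$ where $q(x_{n_k})\ge q>0$ to drive the ratio $\vertiii{\Theta^s_{x_{n_k},y}(v)}_{\chi'}/\vertiii{v}_{\chi'}$ into $e^{\pm\sqrt\ve}$, which, combined with $\vertiii{w}_{\chi'}\le\vertiii{w}_\chi\le 2 s(x_{n_k})\le 2q^{-\beta/48}$ and a splitting of the defining integral at time $\tau_{n_k}$, produces a bound on $\vertiii{v}_{\chi'}$ that is \emph{independent of} $\chi'$; (iii) only then does it invoke the identity $\vertiii{v}_\chi=\sup_{\chi'<\chi}\vertiii{v}_{\chi'}$ (Claim~1 of the paper's proof) to conclude $s(y)<\infty$. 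You need to restructure your argument along these lines: run your $\xi_n$--iteration entirely at the level of the $\chi'$--norms (where initialization is available), obtain a $\chi'$--independent bound, and pass to the supremum at the very end, rather than trying to initialize at exponent $\chi$.
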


\begin{proof}
By symmetry, we just need to prove the statement for positive $\ve$--gpo's.
For $\chi'\in(0,\chi)$, represent
$S_{\chi'}(\cdot,v)$ by $\vertiii{v}_{\chi'}$ and $S(\cdot,v)$ by $\vertiii{v}$.  We note the following straightforward statement.

\medskip
\noindent
{\sc Claim 1:} For every $v$, it holds
$\vertiii{v}=\sup_{\chi'<\chi}\vertiii{v}_{\chi'}$.
In particular, if $\vertiii{v}_{\chi'}\leq L$ for every $v\in T W^s$ with $\|v\|=1$ and every $\chi'\in (0,\chi)$,
then $\sup\limits_{ y\in  W^s}s( y)\leq L$.

\medskip
By assumption, there is $ x\in W^s\cap {\rm NUH}^\#$.
Write $ x_n= G_n(x)$, $p^s_n=q^s( G_n( x))$ and
$p^u_n=q^u(G_n( x))$. We have $\limsup\limits_{n\to\infty}q( x_n)>0$
and so there is $q>0$ and an increasing sequence
$\{n_k\}_{k\geq 0}$ s.t. $q( x_{n_k})\geq q$ for all $k\geq 0$. Using that $q(y) < Q(y)$ and estimates \eqref{estimates-Q}, for every $ y\in{\rm NUH^\#}$ we have
$\|C( y)^{-1}\|<\ve^{1/8}Q( y)^{-\beta/48}<q( y)^{-\beta/48}$ and
so in particular $Q( x_{n_k})>q$ and $\|C( x_{n_k})^{-1}\|<q^{-\beta/48}$ for every $k\geq 0$.

\medskip
Fix $\chi'\in (0,\chi)$, and let $\overline{\chi}=\tfrac{\chi+\chi'}{2}$.
Also, let $\un v=\{v_n\}_{n\in\Z}$ with $v_n=\Psi_{ x_n}^{\delta p^s_n,\delta p^u_n}$, 
where $\delta$ is a positive constant depending on $\chi'$ and $\chi$ satisfying 
$$
\delta^{\beta/3}<\inf\left\{e^{-t\overline{\chi}}-e^{-t\chi}:t\in[\inf(r_\Lambda), 2\rho] \right\}.
$$
Write $ V^s_{\delta,n}= V^s[\{v_\ell\}_{\ell\geq n}]$.
We want to bound $\vertiii{v}_{\chi'}$, uniformly in $v$ with $\|v\|=1$ and $\chi'$.
Instead of $ W^s$, doing this for $ V^s_{{\delta},0}$ is simpler,
since inside this latter set we have better estimates, as we will see in Claim 3.
Although $ W^s$ is in general not contained in $ V^s_{\delta,0}$, if
$n$ is large then $ G_n( W^s)\subset  V^s_{\delta,n}$.

\medskip
\noindent
{\sc Claim 2:} If $n$ is large enough then $G_n( W^s)\subset  V^s_{\delta,n}$.

\begin{proof}
Since $B^{d_s (x_n)} [\delta p^s_n] \times B^{d_u (x_n)} [\delta p^s_n] \supset B [\delta p^s_n]$, it is enough to prove that if $n$ is large then $G_n (W^s) \subset \Psi_{x_n} (B[\delta p^s_n])$. By Theorem \ref{Thm-stable-manifolds}(3), $G_n(W^s) \subset B(x_n, e^{-\frac{\chi}{2}\inf (r_\Lambda) n})$. Since
$$\Psi_{x_n} (B[\delta p^s_n]) \supset B\left(x_n, \frac{1}{2} \| C(x_n)^{-1} \|^{-1} \delta p^s_n \right),$$
it is enough to prove that for $n$ large enough it holds
$$\frac{1}{2} \| C(x_n)^{-1} \|^{-1} \delta p^s_n > e^{- \frac{\chi}{2}\inf (r_\Lambda)n} \iff  e^{- \frac{\chi}{2}\inf (r_\Lambda)n} \| C(x_n)^{-1} \|\tfrac{1}{p^s_n} < \frac{\delta}{2}\cdot$$
Using that $\| C(x_n)^{-1} \| (p^s_n)^{\beta/48} \leq \| C(x_n) ^{-1} \| Q(x_n)^{\beta/48} < 1$ and that $p^s_n \geq e^{-\ve \sup(r_\Lambda)n} p_0^s$, we obtain that
\begin{align*}
    e^{- \frac{\chi}{2}\inf(r_\Lambda)n} \|C(x_n)^{-1} \| \tfrac{1}{p^s_n} & < e^{- \frac{\chi}{2}\inf (r_\Lambda)n } \left( \tfrac{1}{p^s_n} \right)^{1+\frac{\beta}{48}} \leq e^{- \frac{\chi}{2}\inf (r_\Lambda)n} \left( \tfrac{e^{\ve \sup(r_\Lambda)n}}{p^s_0} \right)^{1+\frac{\beta}{48}} \\
    & = \left( \tfrac{1}{p^s_0} \right)^{1 + \frac{\beta}{48}} e^{-\left[ \frac{\chi}{2}\inf(r_\Lambda) - \ve \sup(r_\Lambda)\left(1 + \frac{\beta}{48}\right) \right] n}
\end{align*}
which, for $\ve > 0$ small enough, converges to zero exponentially fast.
\end{proof}

For $n,\ell\geq 0$, let $G_n^\ell:=g_{x_{n+\ell-1}}^+\circ\cdots\circ g_{x_n}^+$ and $\tau_n^\ell(y):= \sum_{k=n}^{n+\ell -1} T_{k}(G_k(y))$ when defined.

\medskip
\noindent
{\sc Claim 3:} 
For every $v\in T V^s_{\delta,n}$ with $\| v \|=1$
and every $\ell\geq 0$ it holds
$$
\|{dG_n^\ell}v\|\leq 8\|C(x_n)^{-1}\|e^{-\overline{\chi} \tau_n^\ell (x)}.
$$

\begin{proof}
Recalling the definition of $\delta$, proceed exactly as in the proof of \cite[Corollary 4.12]{ALP} where, in view of 
Theorem \ref{Thm-non-linear-Pesin}(2), the inequality (4.2) in \cite{ALP} is substituted by the stronger one
\begin{align*}
&\, \|w_k\|  \leq \left[e^{-\chi r_\Lambda (x_{k-1})}+4\ve(\delta p^s_{k-1})^{\beta/3}\right]\|w_{k-1}\|
\leq\left[e^{-\chi r_\Lambda (x_{k-1})}+\delta^{\beta/3}\right]\|w_{k-1}\| \\
& \leq e^{-\overline{\chi}r_\Lambda(x_{k-1})} \| w_{k-1} \| =e^{-\overline{\chi}T_{k-1}(x_{k-1})} \| w_{k-1} \|,
\end{align*}
thus establishing the result.
\end{proof}

Now we complete the proof of the lemma for $ W^s$.  We fix $v\in T_z V^s_{\delta,n}$ with $\|v\|=1$. For $\ell\geq 0$, write $\tau_\ell:=\tau_n^\ell(z)$ and $\overline{\tau}_\ell:=\tau_n^\ell(x)$, with $\tau_0=\overline{\tau}_0=0$. 
Observing that $\Phi^{\tau_\ell}(v)=dG_n^\ell v$, Claim 3 implies that
\begin{align*}
    &\, \vertiii{v}_{\chi'}^2=4 e^{2\rho}\sum_{\ell\geq 0}\int_{\tau_\ell}^{\tau_{\ell+1}} e^{2\chi't}\|\Phi^t(v)\|^2dt\overset{!}{\leq} 8\rho e^{12\rho} \sum_{\ell\geq 0}e^{2 
\chi' \tau_\ell}  
\|dG_n^{\ell}v\|^2\\
&\leq 8\rho e^{12\rho}\sum_{\ell\geq 0}e^{2\chi'\tau_\ell} \left(64\|C( x_n)^{-1}\|^2e^{-2\overline{\chi}\, \overline{\tau}_\ell}\right)
=512\rho e^{12\rho}\|C( x_n)^{-1}\|^2\sum_{\ell\geq 0}e^{2\chi'(\tau_\ell-\overline{\tau}_\ell) + (\chi'-\chi)\overline{\tau}_\ell},
\end{align*}
where in $\overset{!}{\leq}$ we used estimate \eqref{Norm of ILPF} to get that
\begin{align*}
&\,\int_{\tau_\ell}^{\tau_{\ell+1}} e^{2\chi't}\|\Phi^t(v)\|^2dt
=
e^{2\chi'\tau_\ell}\int_{0}^{\tau_{\ell+1}-\tau_\ell} e^{2\chi't}\|\Phi^t(dG_n^\ell v)\|^2dt\\
&\leq
e^{2\chi'\tau_\ell}\cdot \left(2\rho e^{4\chi'\rho}e^{6\rho}\|dG_n^\ell v\|^2\right)
\leq 2\rho e^{10\rho}e^{2\chi'\tau_\ell}
\|dG_n^\ell v\|^2.
\end{align*}
We estimate the sum $\sum_{\ell\geq 0}e^{2\chi'(\tau_\ell-\overline{\tau}_\ell) + (\chi'-\chi)\overline{\tau}_\ell}$
as follows:
\begin{enumerate}[$\circ$]
\item $\tau_\ell-\overline{\tau_\ell}$ has a uniform upper bound: since $T_k$ is 1--Lipschitz (Lemma \ref{lemma-local-coord}(3)) and the distance $d(G_n^k(x_n),G_n^k(z))$ goes to zero exponentially fast (Theorem \ref{Thm-stable-manifolds}(3)), we have 

\begin{align*}
&\ |\tau_\ell-\overline{\tau}_\ell|\leq \sum_{k=0}^{\ell-1}|T_k(G_n^k(x_n))-T_k(G_n^k(z))| \leq 2Q(x_n)\sum_{k=0}^{\ell-1}e^{-\frac{\chi}{2}\inf(r_{\wh \Lambda})k}\\
&\leq
2Q(x_n)\sum_{k\geq 0}e^{-\frac{\chi}{2}\inf(r_{\wh \Lambda})k}=:T'.
\end{align*}

\item $\sum\limits_{\ell\geq 0}e^{(\chi'-\chi)\overline{\tau}_\ell}\leq\sum\limits_{\ell\geq 0}e^{(\chi'-\chi)\overline{\tau}_\ell}\leq \sum\limits_{\ell\geq 0}e^{-(\chi-\chi')\inf(r_{\wh \Lambda})\ell}=\tfrac{1}{1-e^{-(\chi-\chi')\inf(r_{\wh \Lambda})}}$.
\end{enumerate}
Therefore, for $n=n_k$ we have
$$
\vertiii{v}_{\chi'}^2 \leq \frac{512\rho e^{12\rho}}q^{-\beta/24}e^{2T'}{1-e^{-(\chi-\chi')\inf(r_{\wh \Lambda})}}\,\cdot
$$
Call this bound $L^2$, so that $\vertiii{v}_{\chi'}\leq L$ for every $v\in T V^s_{\delta,n_k}$
with $\|v\|=1$. Note that  $L\to\infty$ as $\chi'\to \chi$, so  the proof is not  complete. To obtain a bound that does not depend on $\chi'$, we will improve the above estimate by applying Corollary \ref{Corollary-improvement}.

Define $\xi\geq \sqrt{\ve}$ by $e^\xi=\max\{\sqrt{2}L,e^{\sqrt{\ve}}\}$.
We claim that
$$
\tfrac{\vertiii{\Theta^s_{x_{n_k},y}(v)}_{\chi'}}{\vertiii{v}_{\chi'}}={\rm exp}[\pm \xi],
\ \ \text{ for all } y\in V^s_{\delta,n_k}\text{ and } v\in N^s_{ x_{n_k}}\backslash\{0\}.
$$
By a normalization, we just need to check this estimate for $\|v\|=1$.
By Lemma \ref{ALP-Lemma 6.1}(1), we have $\tfrac{1}{2}\leq \|\Theta^s_{ x_{n_k}, y}(v)\|\leq 2$
and so 
$$
(\sqrt{2}L)^{-1}=\tfrac{\sqrt{2}/2}{L}\leq\tfrac{\vertiii{\Theta^s_{ x_{n_k}, y}(v)}_{\chi'}}{\vertiii{v}_{\chi'}}
\leq\tfrac{2L}{\sqrt{2}}=\sqrt{2} L.
$$
Now fix $k \geq 1$. Apply Corollary \ref{Corollary-improvement} along the path
$ x_{n_{k-1}}\to\cdots\to  x_{n_k}$.
Since the ratio does not get worse for all transitions $ x_\ell\to  x_{\ell+1}$
and it improves a fixed amount in the last edge $ x_{n_{k-1}}\to  x_{n_{k-1}+1}$,
we conclude that
$$
\tfrac{\vertiii{\Theta^s_{ x_{n_{k-1}}, y}(v)}_{\chi'}}{\vertiii{v}_{\chi'}}={\rm exp}\left[\pm (\xi-q^{\beta/4})\right],
\ \ \text{ for all } y\in V^s_{\delta,n_{k-1}}\text{ and } v\in N^s_{ x_{n_{k-1}}}\backslash\{0\}.
$$
Repeating this procedure until reaching $ x_{n_0}$, we obtain
at least $k$ improvements, as long as the ratio remains outside $[{\rm exp}(-\sqrt{\ve}),{\rm exp}(\sqrt{\ve})]$.
Taking $k\to+\infty$, we conclude that
$\tfrac{\vertiii{\Theta^s_{ x_{n_0}, y}(v)}_{\chi'}}{\vertiii{v}_{\chi'}}={\rm exp}\left[\pm \sqrt{\ve}\right]$
for all $ y\in V^s_{\delta,n_0}$ and $v\in N^s_{ x_{n_0}}\backslash\{0\}$.
Similarly, we obtain that for every $k\geq 0$ it holds
\begin{equation}\label{improved-estimate}
\tfrac{\vertiii{\Theta^s_{ x_{n_k}, y}(v)}_{\chi'}}{\vertiii{v}_{\chi'}}={\rm exp}[\pm \sqrt{\ve}],
\ \ \text{ for all } y\in V^s_{\delta,n_k}\text{ and } v\in N^s_{ x_{n_k}}\backslash\{0\}.
\end{equation}
\medskip
Now fix $k$ large enough s.t. $G_{n_k}( W^s)\subset  V^s_{\delta,n_k}$.
Let $v\in T W^s$ with $\|v\|=1$.
If $w=dG_{n_k}v\in T V^s_{\delta,n_k}$ then
\begin{align*}
&\,\vertiii{v}_{\chi'}^2=4 e^{2\rho}\int_0^\infty e^{2\chi't}\|{\Phi}^tv\|^2=4 e^{2\rho}\int_0^{\tau_{n_k}} e^{2\chi't}\|{\Phi}^tv\|^2dt+4 e^{2\rho}\int_{\tau_{n_k}}^\infty e^{2\chi't}\|{\Phi}^tv\|^2dt\\
&\leq 4 e^{2\rho}\int_0^{\tau_{n_k}} e^{2\chi't}\|{\Phi}^tv\|^2dt+ e^{2\chi'\tau_{n_k}}\|w\|^2. \vertiii{\frac{w}{\|w\|}}_{\chi'}^2\\
&\leq \underbrace{4 e^{2\rho}\int_0^{\tau_{n_k}} e^{2\chi t}\|{\Phi}^tv\|^2dt}_{=:\, {\rm I}}+\underbrace{e^{2\chi\tau_{n_k}}\|w\|^2. \vertiii{\frac{w}{\|w\|}}_{\chi'}^2}_{=:\, {\rm II}}.
\end{align*}
Let $y_0$ be the center of the zeroth chart defining $W^s$.
For $t\in[\tau_\ell,\tau_{\ell+1}]$, Theorem \ref{Thm-stable-manifolds}(3) and estimate \eqref{Norm of ILPF} imply that
$$
\|\Phi^tv\|=\|\Phi^{t-\tau_\ell}dG_\ell v\|=
\left\|\Phi^{t-\tau_\ell}\left(\tfrac{dG_\ell v}{\|dG_\ell v\|}\right)\right\|\cdot \|dG_\ell v\|\leq 8e^{3\rho}\|C(y_0)^{-1}\|e^{-\frac{\chi}{2}\tau_\ell}
$$
and so
$$
\int_{\tau_\ell}^{\tau_{\ell+1}}e^{2\chi t}\|\Phi^t v\|^2dt
\leq \rho e^{2\chi \tau_\ell+ 2\rho}\cdot \left(8e^{3\rho}\|C(y_0)^{-1}\|e^{-\frac{\chi}{2}\tau_\ell}\right)^2
=64\rho e^{8\rho}\|C(y_0)^{-1}\|^2e^{\chi\tau_\ell}.
$$
This implies that 
$$
{\rm I }\leq 256\rho e^{10\rho}\|C(y_0)^{-1}\|^2\sum_{\ell=0}^{n_{k-1}}e^{\chi \tau_\ell}.
$$
To estimate II, write $w\in T_{ y} V^s_{\delta,n_k}$ and define
$\varrho$ by the equality $\tfrac{w}{\|w\|}=\Theta^s_{ x_{n_k}, y}(\varrho)$.
By estimate (\ref{improved-estimate}), Lemma \ref{ALP-Lemma 6.1}(1)
and Lemma \ref{Lemma-linear-reduction}(1), we get that
\begin{align*}
&\, \vertiii{\tfrac{w}{\|w\|}}_{\chi'}\leq e^{\sqrt{\ve}}\vertiii{\varrho}_{\chi'}\leq 2e^{\sqrt{\ve}}\vertiii{\tfrac{\varrho}{\|\varrho\|}}_{\chi'}
\leq 2e^{\sqrt{\ve}}s( x_{n_k})\leq 2e^{\sqrt{\ve}}\|C( x_{n_k})^{-1}\|<2e^{\sqrt{\ve}}q^{-\beta/48}.
\end{align*}
Since $\|w\|\leq 8\|C(y_0)^{-1}\| e^{-\overline{\chi}\tau_{n_k}}$ by Claim 3, we conclude that
$${\rm II }\leq 256\|C(y_0)^{-1}\|^2 e^{(\chi-\chi')\tau_{n_k} 
+2\sqrt{\ve}}q^{-\beta/24}\leq 
256\|C(y_0)^{-1}\|^2e^{\chi\tau_{n_k} 
+2\sqrt{\ve}}q^{-\beta/24}
$$
Plugging the estimates of I and II, it follows that
$$
\vertiii{v}_{\chi'}^2\leq 
256\|C(y_0)^{-1}\|^2\left[\rho e^{10\rho}\sum_{\ell=0}^{n_{k-1}}e^{\chi \tau_\ell}+e^{\chi\tau_{n_k} 
+2\sqrt{\ve}}q^{-\beta/24}\right],
$$
which is independent of $v$ and $\chi'$. 
By Claim 1, the proof is complete.
\end{proof}

Now we proceed to show that, in the notation of Theorem \ref{Thm-inverse}, $\pi(x) \in$ NUH. Recall the relevance property of each $\ve$-double chart of $\mathfs A$, see Theorem \ref{Thm-coarse-graining}.

\begin{proposition}\label{Prop-finite-norm-2}
For every $v_0\in\mathfs A$, there exists a constant $L=L(v_0)$ s.t. the following holds.
If $\un v=\{v_n\}_{n\in\Z}\in\Sigma^\#$ satisfies $v_n=v_0$ for infinitely
many $n>0$ and if $x=\pi(\un v)$, then $s(x)<L$.
The same applies to $u(x)$. Furthermore, $x\in{\rm NUH}$. 
\end{proposition}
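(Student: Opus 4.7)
The plan is to adapt the proof of Lemma \ref{Lemma-finite-norm-1}, using the combinatorial recurrence $v_{n_k} = v_0$ of $\un v$ in place of the $\nuh^\#$ recurrence of an orbit. Writing $v_n = \Psi_{y_n}^{p_n^s, p_n^u}$ and letting $n_k$ denote the positive recurrence times, the key point is that $y_{n_k} = y_0$ for all $k$, so the hyperbolicity parameters at these times --- $Q(y_{n_k})$, $\|C(y_{n_k})^{-1}\|$, $p_{n_k}^{s/u}$ --- are all fixed values depending only on $v_0$. This delivers precisely the uniform recurrent structure that Lemma \ref{Lemma-finite-norm-1} extracted from a $\nuh^\#$ point. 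Set $\un v^+ := \{v_n\}_{n \geq 0}$, so that $x = \pi(\un v) \in V^s[\un v^+]$.

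Fix $\chi' \in (0, \chi)$, set $\bar\chi := (\chi + \chi')/2$, and choose $\delta > 0$ small. The argument of Claim 2 of Lemma \ref{Lemma-finite-norm-1} --- which uses only the exponential contraction of Theorem \ref{Thm-stable-manifolds}(3) and the slow-growth bound $p_n^s \geq e^{-\ve \sup(r_\Lambda) n}\, p_0^s$, with no appeal to $\nuh^\#$ --- yields $N_0 = N_0(v_0, \delta)$ such that $G_n(V^s[\un v^+]) \subset V^s[\sigma^n \un v^+_\delta]$ for all $n \geq N_0$, where $\un v^+_\delta := \{\Psi_{y_m}^{\delta p_m^s, \delta p_m^u}\}_{m \geq 0}$. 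Taking $N := n_{k_0}$ as the smallest recurrence time with $n_{k_0} \geq N_0$ forces $y_N = y_0$, and Claim 3 of Lemma \ref{Lemma-finite-norm-1} produces the sharpened contraction $\|dG_N^\ell v\| \leq 8\|C(y_0)^{-1}\|\, e^{-\bar\chi \tau_N^\ell}$ for unit $v \in T V^s[\sigma^N \un v^+_\delta]$, with a leading constant depending only on $v_0$.

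Splitting $\vertiii{v}_{\chi'}^2 = 4e^{2\rho}\int_0^\infty e^{2\chi' t}\|\Phi^t v\|^2\, dt$ at time $\tau_N$ and combining Theorem \ref{Thm-stable-manifolds}(3) on $[0, \tau_N]$ with the sharpened estimate on $[\tau_N, \infty)$ produces a preliminary bound $\vertiii{v}_{\chi'} \leq L'_{\chi'}$ for unit $v \in T_x V^s[\un v^+]$. To remove the $\chi'$-dependence --- needed because $s(x)$ uses $\vertiii{\cdot} = \sup_{\chi' < \chi}\vertiii{\cdot}_{\chi'}$ --- I would run the bootstrap of Lemma \ref{Lemma-finite-norm-1}'s final step via Corollary \ref{Corollary-improvement}, iteratively moving backward along the chunks between consecutive recurrences $n_{k-1} \to n_k$: each such chunk contains a recurrence time with $y_{n_k} = y_0$, so it produces a fixed improvement $Q(y_0)^{\beta/4}$ in the ratio of Lyapunov norms. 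Since there are infinitely many recurrences, the ratio is driven within $e^{\sqrt\ve}$, yielding a bound $\vertiii{v} \leq L(v_0)$ --- and hence $s(x) < L$ --- depending only on $v_0$.

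The analogous argument handles $u(x)$ using the past-recurring symbol guaranteed by $\un v \in \Sigma^\#$. To conclude $x \in \nuh$, define $N^s_x := T_x V^s[\un v^+]$ and $N^u_x := T_x V^u[\un v^-]$; by transversality of admissible manifolds (Lemma \ref{Lemma-admissible-manifolds}) these are complementary in $N_x$, conditions (NUH1)--(NUH2) follow from Theorem \ref{Thm-stable-manifolds}(3), and (NUH3) is the finiteness of $s(x), u(x)$ just proved. The main obstacle is verifying that Corollary \ref{Corollary-improvement} --- originally stated for stable manifolds of $\nuh^\#$ orbits --- carries over to the generic gpo $\un v^+$; however its proof is purely edgewise, and the per-edge improvement $Q(y_n)^{\beta/4}$ depends only on the source chart $v_n$, making the adaptation routine.
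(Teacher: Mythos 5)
Your route diverges from the paper's and has a genuine gap at its core. The paper does not re-run the argument of Lemma \ref{Lemma-finite-norm-1} on the gpo $\un v$; it invokes the \emph{relevance} of $v_0$ (Theorem \ref{Thm-coarse-graining}(3)) to produce a gpo $\un w$ with $w_0=v_0$ shadowing a point of $\nuh^\#$, applies Lemma \ref{Lemma-finite-norm-1} to $W^s=V^s[\un w]$ to get a finite seed bound $L_1$, pushes $W^s$ backward through the graph transforms along $v_0\to\cdots\to v_{n_k}$, passes to the limit via a Fatou argument, and only then runs the $\chi$--norm Improvement Lemma \ref{improvement-lemma} along the gpo. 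Your plan skips relevance entirely and instead tries to transplant Claims 2--3 and Corollary \ref{Corollary-improvement} to the charts $\Psi_{y_n}^{\delta p^s_n,\delta p^u_n}$ centered at the gpo chart centers. This fails at the step you call routine. The $\delta$--shrinking trick in Claim 3 works because, for charts centered on an \emph{actual orbit}, the transition maps satisfy Theorem \ref{Thm-non-linear-Pesin} with $H(0)=0$ and $dH_0=0$, so the nonlinear error on $R[\delta p^s]$ is $O(\ve(\delta p^s)^{\beta/3})$ and can be made smaller than $e^{-\overline\chi t}-e^{-\chi t}$ by choosing $\delta=\delta(\chi')$. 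Along a gpo the transition maps are the $f^+_{y_n,y_{n+1}}$ of Theorem \ref{Thm-non-linear-Pesin-2}, where $\|dH_0\|<\ve\eta^{\beta/3}$ does \emph{not} shrink with $\delta$; since $e^{-\overline\chi t}-e^{-\chi t}\to 0$ as $\chi'\to\chi$, the contraction rate $\overline\chi$ cannot be achieved uniformly in $\chi'$, and the resulting bound blows up exactly where you need it (recall $s(x)=\sup_{\chi'<\chi}$ of the $\chi'$--norms). For the same reason Corollary \ref{Corollary-improvement} is not ``purely edgewise'': its proof exploits that the charts sit on the orbit so that the overlap-dependent comparison of $\chi'$--Lyapunov norms at nearby base points is unnecessary, whereas the $\ve$--overlap condition only controls the $\chi$--based $C(x)$, not $C_{\chi'}(x)$. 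Without relevance you have no mechanism to seed a bound that is uniform in $\chi'$.

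A secondary gap: the second condition of (NUH1), $\limsup_{t\to+\infty}\tfrac1t\log\|\Phi^t v\|\leq-\chi$, does not follow from Theorem \ref{Thm-stable-manifolds}(3), which only yields contraction at rate $\tfrac{2\chi}{3}$. The paper derives it from the finiteness of $s(x)$ by a separate contradiction argument: if $e^{\chi t}\|\Phi^t v\|\not\to 0$ then the integral $\int_0^\infty e^{2\chi t}\|\Phi^t v\|^2\,dt$ diverges. You should incorporate both the relevance-based seeding and this last step.
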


\begin{proof}
We continue representing $S(\cdot,v)$ by $\vertiii{v}$.
Write $V^s=V^s[\un v]$, $v_n=\Psi_{x_n}^{p^s_n,p^u_n}$,
$\eta_n=p^s_n\wedge p^u_n$, $d_{s/u}=d_{s/u}(x_n)$ for all $n\in\Z$, and let $\{n_k\}_{k\geq 1}$ be an increasing
sequence s.t. $v_{n_k}=v_0$. Since $v_0$ is relevant, there is $\un w\in\Sigma$
with $w_0=v_0$ s.t. $\pi(\un w)\in {\rm NUH}^\#$. By Lemma \ref{Lemma-finite-norm-1},
if $W^s=V^s[\un w]$ then $\sup\limits_{y\in W^s}s(y)<\infty$.
By Lemma \ref{ALP-Lemma 6.1}(1),
$$
L_0:=\sup\left\{\tfrac{\vertiii{\Theta^s_{x_0,y}(v)}}{\vertiii{v}}:
y\in W^s, y\in\Psi_{x_0}(B^{d_s}[\eta_0]\times B^{d_u}[\eta_0]),v\in N^s_{x_0}\setminus\{0\}
\right\}
$$
is also finite. Define $L_1=\max\{L_0,e^{\sqrt{\ve}}\}>1$.

For each $k\geq 1$, we have $W^s\in {{\mathfs M}}^s(v_{n_k})$.
Starting from $W^s$, apply the stable graph transform along the path
$v_0\overset{\ve}{\to}v_1\overset{\ve}{\to}\cdots\overset{\ve}{\to}v_{n_k}$
to obtain an $s$--admissible manifold at $v_0$, call it $W^s_k$.
Let $F$ be the representing function of $V^s$, and let
$F_k$ be the representing function of $W^s_k$. Since the 
convergence to $V^s$ occurs in the $C^1$ topology, 
$\|{F_k-F}\|_{C^1}\xrightarrow[k\to\infty]{}0$.

\medskip
\noindent
{\sc Claim:} If $\{w_k\}_{k\geq 1}\subset TM$ converges to $w\in TM$ in the Sasaki metric, then
$$
\int_0^\infty e^{2\chi t}\|\Phi^t w\|^2 dt\leq \liminf_{k \rightarrow +\infty}\int_0^\infty e^{2\chi t}\|\Phi^t w_k\|^2 dt.
$$

\begin{proof}[Proof of the claim.]
Define $f,f_k:[0,\infty)\to\R$ by $f(t) = e^{2 \chi t} \|\Phi^t w\|^2$, $f_k (t) = e^{2 \chi t} \| \Phi^t w_k \|^2$ for $k\geq 0$. Since $\{w_k\}_{k\geq 1}$ converges to $w$ and $\Phi^t$ is continuous, $f_k(t)$ converges to $f(t)$ for every $t\geq 0$. By the Fatou lemma, the claim follows.
\end{proof}

Write $x=\Psi_{x_0}(z,F(z))$. Since $x=\pi(\un v)$, 
by Lemma \ref{Lemma-admissible-manifolds} we have that $\|z\|<\eta_0$.
For each $k\geq 1$,
define $y_k$ as the unique element of $W^s_k$ s.t. $y_k=\Psi_{x_0}(z,F_k(z))$.
Fix $v\in N^s_{x_0}$ with $\|{v}\|=1$.
If $v=(d\Psi_{x_0})_0\begin{bmatrix} w \\ 0\end{bmatrix}$ then
$$
\Theta^s_{x_0,x}(v)=(d\Psi_{ x_0})_{(z,F(z))}\begin{bmatrix} w \\ (dF)_z w\end{bmatrix}\ \text{ and }\
\Theta^s_{x_0, y_k}(v)=(d\Psi_{x_0})_{(z,F_k(z))}\begin{bmatrix} w \\ (dF_k)_z w\end{bmatrix}
$$
and so $\Theta^s_{x_0, y_k}(v)\to \Theta^s_{x_0,x}(v)$ in the Sasaki metric. By the claim,
$$
\vertiii{\Theta^s_{x_0,x}(v)}\leq \liminf_{k\to+\infty}\vertiii{\Theta^s_{x_0, y_k}(v)}
$$
and so it is enough to bound the right hand side in the above inequality.

We claim that $G_n(y_k)\in \Psi_{x_n}(B^{d_s}[\eta_n]\times B^{d_u}[\eta_n])$
for $n=0,\ldots,n_k$. To prove this, note that $G_n(y_k)$ belongs to a stable set
and so it is enough to show that, in the charts representation, the first coordinate of
$G_n(y_k)$ belongs to $B^{d_s}[\eta_n]$. The case $n=0$ is true
because $\|z\|<\eta_0$. The proof is by induction, so we just show how to obtain it for $n=1$.
Write $f^+_{x_0,x_1}=D+H$,
where $D=\begin{bmatrix}D_s & 0 \\ 0 & D_u\end{bmatrix}$ is given by Lemma \ref{Lemma-linear-reduction}(2) and 
$H=(H_1,H_2)$ satisfies Theorem \ref{Thm-non-linear-Pesin-2}(2).
We have $y_k=\Psi_{x_0}(z,F_k(z))$ and so
$g_{x_0}^+(y_k)=\Psi_{x_1}(\overline{z},*)$ where
$\overline{z}=D_sz+H_1(z,F_k(z))$. Then
\begin{align*}
&\,\|{\overline z}\|  \leq \|{D_sz}\| +\|{H_1(z,F_k(z))} \| 
\leq \|{D_sz}\| +\|{H(z,F_k(z))}\| \\
&\leq \| {D_s}\| \|{z}\|+\|{H(0,0)}\|+
\|{dH}\|_{C^0(B[2\eta_0])} \|{(z,F_k(z))}\| \\
&\leq e^{-\chi r_\Lambda(x_0)}\eta_0+\ve \eta_0+\ve(3\eta_0)^{\beta/3}2\eta_0 
\leq (e^{-\chi \inf(r_\Lambda)}+2\ve)\eta_0 \\
& \leq (e^{-\chi\inf(r_\Lambda)}+2\ve)e^{\ve}\eta_1 
\end{align*}
is smaller than $\eta_1$ for $\ve>0$ small enough.
\medskip
Now, for fixed $k\geq 1$ write $\varrho_k=\Theta^s_{x_0,y_k}(v)$ and define $w_\ell\in N^s_{x_\ell}$
by the equality $\Theta^s_{x_\ell,G_\ell(y_k)}(w_\ell)={dG_\ell}(\varrho_k)$,
for all $\ell\geq 0$.
Since $G_{n_k}(y_k)\in G_{n_k}(W^s_k)\subset W^s$, we have
that $\tfrac{\vertiii{{dG}_{n_k}(\varrho_k)}}{\vertiii{w_{n_k}}}\leq L_1$.
By Lemma \ref{improvement-lemma}, we get that
$\tfrac{\vertiii{{dG}_{n_k-1}(\varrho_k)}}{\vertiii{w_{n_k-1}}}\leq L_1$ and, repeating this
procedure, that $\tfrac{\vertiii{\varrho_k}}{\vertiii{w_{0}}}\leq L_1$.
By Lemma \ref{ALP-Lemma 6.1}(1), $\vertiii{\varrho_k}\leq L_1\vertiii{w_0}\leq 2L_1 s(x_0)$
and so
\begin{equation}\label{estimate-finite-norm}
\vertiii{\Theta^s_{x_0,x}(v)}\leq 2L_1 s(x_0).
\end{equation}
Defining $L=L(v_0):=3L_1s(x_0)$ and applying Lemma \ref{ALP-Lemma 6.1}(1) again,
it follows that $\vertiii{v}<L$ for all $v\in T_{x}V^s$ with $\|{v}\|=1$,
and so $s(x)<L$.

\medskip
The proof for $V^u[\un v]$ is identical. This then proves that $x$ satisfies (NUH3). 
We now verify property (NUH1).
Define $N^s_{x}=T_{x}(V^s[\un v])$ and
$N^u_{x}=T_{x}(V^u[\un v])$.
The first condition of (NUH1) is proved as follows. Let $t_n = r_n (\un v)$. For $n \geq 0$, we have $0 \leq -t_{-n} \leq 2 n \sup(r_\Lambda)$, hence it is enough to prove that $\liminf\limits_{n\rightarrow\infty}\frac{1}{n}\log\| \Phi^{t_{-n}} v \| >0$ for all $v \in N^s_x \backslash \{0 \}$, which follows from the third estimate of Theorem \ref{Thm-stable-manifolds}(3).

Now we focus on the second condition of (NUH1). Differently from the case of diffeomorphisms, this is not a straightforward consequence of $s(x)<\infty$, but almost.
We claim that $e^{\chi t} \|\Phi^tv\| \rightarrow 0$ as $t \to + \infty$. Once this is proved, it is clear that  
$\limsup\limits_{t \to+\infty} \frac{1}{t} \log \|\Phi^t v \| \leq - \chi$.
By contradiction, suppose that $\lim\limits_{t \to +\infty} e^{\chi t} \| \Phi^t v \| \neq 0$. Then there is $C>0$ and a sequence $t_k \rightarrow +\infty$ such that $e^{\chi t_k} \| \Phi^{t_k} v \| > C$ for all $k>0$. Assuming that $t_{k+1}>t_k+\rho$, we have
$$
\int_{0}^\infty e^{2 \chi t} \| \Phi^t v\|^2dt \geq \sum_{k\geq 0} \int_{t_k}^{t_k+\rho} e^{2\chi t} \| \Phi^t v\|^2dt\geq \sum_{k\geq 0}\rho e^{2\chi t_k}e^{-4\rho}\|\Phi^{t_k}v\|^2>\rho e^{-4\rho}\sum_{k\geq 0}C^2=\infty 
$$
which contradicts the fact that $s(x)<\infty$. This completes the proof of (NUH1). The proof of (NUH2) is identical.
\end{proof}

\subsection{Control of $d$ and $C^{-1}$}
The control of $d$ follows by Lemma \ref{Lemma-admissible-manifolds} and by recalling that Pesin charts are 2--Lipschitz. This proves part (1) of Theorem \ref{Thm-inverse}.
Now we prove part (2). Recall that $x=\pi(\un v)$ for $\un v\in\Sigma^\#$.
We proved in the last section that $x\in {\rm NUH}$, 
i.e. there is a splitting $N_x=N^s_{x}\oplus N^u_{x}$
satisfying (NUH1)--(NUH3). To control $C^{-1}$, we need to control the Lyapunov
inner product for vectors in $N^s_{x}$ and $N^u_{x}$. We explain
how to make the control in $N^s_{x}$ (the control in $N^u_{x}$ is analogous).
Write $\un v=\{v_n\}_{n\in\Z}$ and $\Theta_n=\Theta_{x_n,G_n(x)}$. Without loss of generality,
assume that $v_{0}$ repeats infinitely often in the future, i.e. there is an increasing sequence
$\{n_k\}_{k\geq 1}$ s.t. $v_{n_k}=v_0$ for all $k\geq 1$.
Let $L=3L_1s(x_0)$ as in the proof of Proposition \ref{Prop-finite-norm-2},
and let $\xi>0$ s.t. $L=e^{\xi}$. Since $L_1\geq e^{\sqrt{\ve}}$ and $s(x_0)\geq \sqrt{2}$,
we have $\xi>\sqrt{\ve}$. We claim that
\begin{equation}\label{estimate-for-improvement}
\frac{\vertiii{v}}{\vertiii{\Theta_{n_k}(v)}}={\rm exp}[\pm \xi],
\ \text{ for all }v\in N^s_{x_0}\backslash\{0\}.
\end{equation}
By a normalization, we just need to check this for $\|{v}\|=1$.
We proved in Proposition \ref{Prop-finite-norm-2}
that $\vertiii{\Theta_{n_k}(v)}\leq 2L_1s(x_0)$, see estimate
(\ref{estimate-finite-norm}). On one hand, applying Lemma \ref{ALP-Lemma 6.1}(1) we have
$\tfrac{\vertiii{v}}{\vertiii{\Theta_{n_k}(v)}}\leq 2s(x_0)<L$,
and on the other hand
$\tfrac{\vertiii{v}}{\vertiii{\Theta_{n_k}(v)}}\geq \frac{\sqrt{2}}{2L_1s(x_0)}>L^{-1}$,
which proves (\ref{estimate-for-improvement}). Now fix $k\geq 1$.
Using (\ref{estimate-for-improvement}), apply
Lemma \ref{improvement-lemma} along the path
$v_{n_{k-1}}\overset{\ve}{\to}\cdots\overset{\ve}{\to}v_{n_k}$.
Since the ratio does not get worse for all edges $v_{\ell}\overset{\ve}{\to}v_{\ell+1}$
and it improves a fixed amount in the last edge $v_{n_{k-1}}\overset{\ve}{\to}v_{n_{k-1}+1}$,
we conclude that
$\tfrac{\vertiii{v}}{\vertiii{\Theta_{n_{k-1}}(v)}}={\rm exp}\left[\pm (\xi-Q( x_0)^{\beta/4})\right]$
for all $v\in N^s_{x_0}\backslash\{0\}$. Repeating this procedure until reaching $v_0$, we obtain
at least $k$ improvements, as long as the ratio remains outside $[{\rm exp}(-\sqrt{\ve}),{\rm exp}(\sqrt{\ve})]$.
Taking $k\to+\infty$, we conclude that
$\frac{\vertiii{v}}{\vertiii{\Theta_0(v)}}={\rm exp}[\pm \sqrt{\ve}]$
for all $v\in N^s_{x_0}\backslash\{0\}$.
Since $v_{n_k}=v_0$, we obtain similarly that
$\frac{\vertiii{v}}{\vertiii{\Theta_{n_k}(v)}}={\rm exp}[\pm \sqrt{\ve}]$
for all $v\in N^s_{x_0}\backslash\{0\}$. Finally, given $n\in\Z$, let $n_k>n$ and apply
Lemma \ref{improvement-lemma} along the path $v_{n}\overset{\ve}{\to}\cdots\overset{\ve}{\to}v_{n_k}$
to conclude that
\begin{equation}\label{comparison-C-1}
\frac{\vertiii{v}}{\vertiii{\Theta_n(v)}}={\rm exp}[\pm \sqrt{\ve}],
\ \text{ for all }v\in N^s_{x_n}\backslash\{0\}.
\end{equation}
By a similar argument, we get that
\begin{equation}\label{comparison-C-2}
\frac{\vertiii{v}}{\vertiii{\Theta_n(v)}}={\rm exp}[\pm \sqrt{\ve}],
\ \text{ for all }v\in N^u_{x_n}\backslash\{0\}.
\end{equation}

We now prove part (2) of Theorem \ref{Thm-inverse}.
For simplicity, assume $n=0$ and write $\Theta=\Theta_0$.
If $v=v^s+v^u\in N^s_{x_0}\oplus N^u_{x_0}$,
then $\Theta(v)=\Theta(v^s)+\Theta(v^u)\in N^s_{x}\oplus N^u_{x}$.
By the calculation made in the proof of Lemma \ref{Lemma-linear-reduction}(1) 
and estimates (\ref{comparison-C-1}) and (\ref{comparison-C-2}),
\begin{equation}\label{comparison-C-3}
\frac{\|{C(x_0)^{-1}v}\|^2}{\|{C(x)^{-1}\Theta(v)}\|^2}=
\frac{\vertiii{v^s}^2+\vertiii{v^u}^2}{\vertiii{\Theta(v^s)}^2+\vertiii{\Theta(v^u)}^2}={\rm exp}[\pm 2\sqrt{\ve}]
\end{equation}
and so $\frac{\|C(x_0)^{-1}\|}{\|{C(x)^{-1}\Theta}\|}={\rm exp}[\pm\sqrt{\ve}]$.
By Lemma \ref{ALP-Lemma 6.1}(2), if $\ve>0$ is small enough then
$\|{\Theta^{\pm 1}}\|={\rm exp}[\pm\sqrt{\ve}]$, hence
$\frac{\|C(x_0)^{-1}\|}{\|C(x)^{-1}\|}={\rm exp}[\pm2\sqrt{\ve}]$.

\subsection{Control of $Q, p^s,p^u$ and proof that $x \in {\rm NUH^\#}$}
Now we prove parts (3) and (4) of Theorem \ref{Thm-inverse}. We begin controlling $Q$. As usual, let $n=0$.
Recall that
$$
Q(x)= \ve^{6/\beta}\|{C(x)^{-1}}\|^{-48/\beta}.
$$
By part (2),
$\tfrac{\|C(x_0)^{-1}\|^{-48/\beta}}{\|C(x)^{-1}\|^{-48/\beta}}={\rm exp}\left[\pm \tfrac{96\sqrt{\ve}}{\beta}\right]$.
Hence $\tfrac{Q(x_0)}{Q(x)}={\rm exp}\left[\pm \tfrac{96\sqrt{\ve}}{\beta}\right]$, which is better than the claimed estimate when $\ve>0$ is small enough.

Now we prove Part (4). Once we get this, it follows that
$x\in\nuh^\#$. 
Write  $z_n=\vf^{r_n(\un v)}(x)$.
The control of $p^{s/u}_n$ consists on proving that it is comparable to $p^{s/u}(z_n)$.
To have the control from below, we will use that $\{\Psi_{x_n}^{p^s_n,p^u_n}\}_{n\in\Z}\in\Sigma^\#$
implies that the parameters $p^{s/u}_n$ are almost maximal infinitely often.
Proposition \ref{Prop-Z-par}(3) is the statement of maximality for $p^{s/u}(z_n)$.
The statement for $p^{s/u}_n$ is in the next lemma.
For simplicity of notation, write $T_k=T(v_k,v_{k+1})$.

\begin{lemma}\label{Lemma-max-in-chart}
If $\{\Psi_{x_n}^{p^s_n,p^u_n}\}_{n\in\Z}\in\Sigma^\#$ then
$\min\{e^{\ve T_n}p^s_{n+1},e^{-\ve}\ve Q(x_n)\}=e^{-\ve}\ve Q(x_n)$
for infinitely many $n>0$, and $\min\{e^{\ve T_n}p^u_n,e^{-\ve}\ve Q(x_{n+1})\}=e^{-\ve}\ve Q(x_{n+1})$
for infinitely many $n<0$.
\end{lemma}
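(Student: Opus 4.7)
The plan is to argue by contradiction in both halves of the statement, exploiting the hypothesis $\un v \in \Sigma^\#$ to pin $p^s$ (respectively $p^u$) to a fixed positive value along a subsequence that escapes to $+\infty$ (respectively $-\infty$).

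First I focus on the statement at $n>0$; the one at $n<0$ will follow by a symmetric argument using \eqref{gpo2-b} in place of \eqref{gpo2-a}. Suppose for contradiction that there exists $N$ such that for every $n \geq N$ one has $\min\{e^{\ve T_n}p^s_{n+1},e^{-\ve}\ve Q(x_n)\} = e^{\ve T_n} p^s_{n+1}$. Then in particular $e^{\ve T_n}p^s_{n+1} < \ve Q(x_n)$, so both inequalities in \eqref{gpo2-a} simplify, giving
$$
e^{-\ve p^s_n}\, e^{\ve T_n}\, p^s_{n+1} \;\leq\; p^s_n \;\leq\; e^{\ve T_n}\, p^s_{n+1}.
$$
Rearranging the left inequality yields the one-step recursion
$$
p^s_{n+1} \;\leq\; e^{\ve(p^s_n - T_n)}\, p^s_n.
$$

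The key point is a uniform positive lower bound on $T_n$. By Remark \ref{rmk-time} the transition time is attained at a point in $\Psi_{x_n}(R[\tfrac{1}{15}(p^s_n\wedge p^u_n)])$, and $T^+$ is Lipschitz on $B_{x_n}$ with $T^+(x_n)=r_\Lambda(x_n)\geq \inf(r_\Lambda)>0$; hence $|T_n - r_\Lambda(x_n)| = O(p^s_n\wedge p^u_n)$, which is negligible since $p^s_n \leq \ve Q(x_n) \leq \ve^{1+6/\beta}$. For $\ve$ small this gives $T_n - p^s_n \geq R$ with $R := \inf(r_\Lambda)/4$, and therefore $p^s_{n+1} \leq e^{-\ve R} p^s_n$ for all $n \geq N$. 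Iterating, $p^s_{N+k} \leq e^{-k\ve R}\, p^s_N \to 0$ as $k\to\infty$. This contradicts $\un v \in \Sigma^\#$: there exists a symbol $v = \Psi_{x^*}^{p^{s,*},p^{u,*}}$ and a sequence $n_j \to +\infty$ with $v_{n_j}=v$, forcing $p^s_{n_j} = p^{s,*}>0$ along this subsequence.

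For the $n<0$ case I plan to repeat the argument using \eqref{gpo2-b}: if the min equals $e^{\ve T_n}\, p^u_n$ for all $n \leq -N$, the same manipulation yields $p^u_n \leq e^{\ve(p^u_{n+1}-T_n)} p^u_{n+1}$, i.e.\ $p^u_{n+1} \geq e^{\ve R} p^u_n$, so $p^u$ grows by a definite factor at every forward step. Choosing two indices $n_{j-1} > n_j$, both $\leq -N$, from a sequence $n_j \to -\infty$ along which $p^u_{n_j} = p^{u,*}$ is constant, iteration between $n_j$ and $n_{j-1}$ gives $p^{u,*} \geq e^{(n_{j-1}-n_j)\ve R}\, p^{u,*}$, which is absurd since $n_{j-1}-n_j \geq 1$. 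The only step that requires care is the uniform lower bound $T_n \geq R$; everything else is elementary manipulation of \eqref{gpo2-a}--\eqref{gpo2-b} combined with the regularity of $\un v$.
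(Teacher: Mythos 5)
Your proposal is correct and follows essentially the same route as the paper's proof: argue by contradiction, use the left inequality of (GPO2) together with a uniform positive lower bound on the transition times $T_n$ to force a definite geometric decay (resp.\ growth) of $p^s_n$ (resp.\ $p^u_n$), and contradict the fact that $\un v\in\Sigma^\#$ pins these parameters to a fixed positive value along a subsequence escaping to $\pm\infty$. The only cosmetic difference is that you derive the lower bound $T_n\geq \inf(r_\Lambda)/2$ via the Lipschitz estimate of Remark \ref{rmk-time}, while the paper invokes $T_N\geq\inf(r_{\wh\Lambda})$ directly and packages the contraction factor as $\lambda=\exp[\ve^{1.5}]$; both yield the same conclusion.
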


\begin{proof}
The strategy is the same used in the proof of Proposition \ref{Prop-Z-par}(3). 
We prove the first statement (the second is identical). By contradiction, assume that
there exists $n\in\Z$ such that $\min\{e^{\ve T_N}p^s_{N+1},e^{-\ve}\ve Q(x_N)\}=e^{\ve T_N}p^s_{N+1}$
for all $N\geq n$. By (GPO2), it follows that $p^s_N\geq e^{\ve(T_N-p^s_N)}p^s_{N+1}$ for all $N\geq n$.
Let $\lambda=\exp{}[\ve^{1.5}]$,
then $\ve(T_N-p^s_N)\geq \ve[\inf(r_{\wh\Lambda})-\ve]>\ve^{1.5}$ when $\ve>0$ is sufficiently small.
Hence $p^s_N> \lambda p^s_{N+1}$ for all $N\geq n$, and so $p^s_n\geq \lambda^{N-n}p^s_N$
for all $N\geq n$. This is a contradiction, since $p^s_n<\ve$ and $\limsup\limits_{N\to+\infty}p^s_N>0$.
\end{proof}

Now we prove Theorem \ref{Thm-inverse}(4).
We will prove the statement for $p^s_n$ and $p^s(z_n)$
(the proof for $p^u_n$ and $p^u(z_n)$ is identical). 

\medskip
\noindent
{\bf Step 1.} $p^s_n\geq e^{-\sqrt[3]{\ve}}p^s(z_n)$ for all $n\in\Z$.

\medskip
We divide the proof into two cases, according to whether $n$ satisfies Lemma \ref{Lemma-max-in-chart}
or not. Assume first that it does, i.e. $\min\{e^{\ve T_n}p^s_{n+1},e^{-\ve}\ve Q(x_n)\}=e^{-\ve}\ve Q(x_n)$.
By (GPO2), we have $p^s_n\geq e^{-\ve p^s_n}e^{-\ve}\ve Q(x_n)\geq e^{-2\ve}\ve Q(x_n)$. 
By Theorem \ref{Thm-inverse}(3), it follows that
$$
p^s_n\geq  e^{-2\ve}\ve Q(x_n)\geq e^{-2\ve-O(\sqrt{\ve})}\ve Q(z_n)
\geq e^{-2\ve-O(\sqrt{\ve})}p^s(z_n)
\geq e^{-\sqrt[3]{\ve}}p^s(z_n).
$$

\medskip
Now assume that $n$ does not satisfy Lemma \ref{Lemma-max-in-chart}. Take the
smallest $m>n$ that satisfies Lemma \ref{Lemma-max-in-chart}. Hence
$\min\{e^{\ve T_k}p^s_{k+1},e^{-\ve}\ve Q(x_k)\}=e^{\ve T_k}p^s_{k+1}$
for $k=n,\ldots,m-1$. By (GPO2), we get that $p^s_k\geq e^{\ve(T_k-p^s_k)}p^s_{k+1}>\lambda p^s_{k+1}$
for $k=n,\ldots,m-1$. Therefore
$p^s_k\leq \lambda^{n-k}p^s_n$ for $k=n,\ldots,m-1$. Recalling that $t_k=r_k(\{\Psi_{x_n}^{p^s_n,p^u_n}\}_{n\in\Z})$ and writing $\Delta_k=(t_{k+1}-t_k)-T_k\geq 0$, 
this latter estimate gives two consequences:
\begin{enumerate}[$\circ$]
\item $\displaystyle\sum_{k=n}^{m-1}p^s_k<\ve$: indeed,
$$
\sum_{k=n}^{m-1}p^s_k\leq p^s_n\sum_{k=n}^{m-1}\lambda^{n-k}\leq \ve^{\frac{6}{\beta}}\frac{1}{1-\lambda^{-1}}
<2\ve^{\frac{6}{\beta}-1.5}<\ve,
$$
since $\lim\limits_{\ve\to 0}\tfrac{\ve^{1.5}}{1-\lambda^{-1}}=1$.
\item $\displaystyle\sum_{k=n}^{m-1} \Delta_k<\ve$: since the transition time from $x_k$ to $x_{k+1}$
is 1--Lipschitz (Lemma \ref{lemma-local-coord}(3)),
$$
\sum_{k=n}^{m-1}\Delta_k\leq 2\sum_{k=n}^{m-1}p^s_k<4\ve^{\frac{6}{\beta}-1.5}<\ve.
$$
\end{enumerate}
Using that $p^s_k\geq e^{\ve(T_k-p^s_k)}p^s_{k+1}=e^{-\ve(p^s_k+\Delta_k)}e^{\ve(t_{k+1}-t_k)}p^s_{k+1}$
for $k=n,\ldots,m-1$, we get that
\begin{align*}
&\ p^s_n\geq \exp{}\left[-\ve\sum_{k=n}^{m-1}p^s_k-\ve\sum_{k=n}^{m-1}\Delta_k\right]e^{\ve(t_m-t_n)}p^s_m\\
&\geq \exp{}\left[-2\ve^2-2\ve-O(\sqrt{\ve})\right]e^{\ve(t_m-t_n)}p^s(z_m)
\geq e^{-\sqrt[3]{\ve}}p^s(z_n),
\end{align*}
where in the last inequality we used Proposition \ref{Prop-Z-par}(2).

\medskip
\noindent
{\bf Step 2.} $p^s(z_n)\geq e^{-\sqrt[3]{\ve}}p^s_n$ for all $n\in\Z$.

\medskip
The motivation for this inequality is that $p^s(z_n)$ grows at least as much as $p^s_n$, since
$p^s(z_n)$ satisfies the recursive equality $p^s(z_n)=\min\{e^{\ve(t_{n+1}-t_n)}p^s(z_{n+1}),\ve Q(z_n)\}$ while by (GPO2) we have the recursive inequality $p^s_n\leq \min\{e^{\ve T_n}p^s_{n+1},\ve Q(x_n)\}$
and $t_{n+1}-t_n\geq T_n$. For ease of notation, let $n=0$ (the general case is identical).
By the above recursive equality and inequality, we have
$$p^s(z_0)=\ve\inf\{e^{\ve t_n}Q(z_n):n\geq 0\}\ \text{ and }\
p^s_0\leq \ve\inf\{e^{\ve(T_0+\cdots+T_{n-1})}Q(x_n):n\geq 0\}.
$$
Using part (3) and that $t_n=\sum\limits_{k=0}^{n-1}(t_{k+1}-t_k)\geq \sum\limits_{k=0}^{n-1}T_k$, 
we conclude that
$$
\ p^s(z_0)=\ve\inf\{e^{\ve t_n}Q(z_n):n\geq 0\}\geq e^{-\sqrt[3]{\ve}}\ve \inf\{e^{\ve(T_0+\cdots+T_{n-1})}Q(x_n):n\geq 0\}=e^{-\sqrt[3]{\ve}} p^s_0.
$$
Steps 1 and 2 conclude the proof of Part (4). In particular, since $\{\Psi_{x_n}^{p^s_n,p^u_n}\}_{n\in\Z}\in\Sigma^\#$,
it follows that $x\in\nuh^\#$.

\subsection{Control of $\Psi_{x_n}^{-1}\circ \Psi_{\vf^{r_n(\un v)}(x)}$ and $\Psi_{\vf^{r_n(\un v)}(x)}^{-1} \circ \Psi_{x_n}$}

We do the case $n=0$. The other cases are analogous. Let $\eta=p^s_0\wedge p^u_0$,
$\Theta=\Theta_{x_0,x}$, and $P=P_{x_0,x}$.
Since $x\in\nuh^\#$, the Pesin chart $\Psi_x$ is well-defined. Additionally, by parts (1)--(4) of Theorem \ref{Thm-inverse}, the parameters of $\Psi_{x_0}^{p^s_0,p^u_0}$ and $\Psi_x^{q^s(x),q^u(x)}$ are almost the same. This will be enough to establish part (5) of Theorem \ref{Thm-inverse}.

We start proving that the compositions are well-defined in the respective domains.
\begin{enumerate}[$\circ$]
\item $\Psi_{x}^{-1}\circ\Psi_{x_0}$ is well-defined in $R[10Q(x_0)]$: we have
$$
\Psi_{x_0}(R[10Q(x_0)])\subset B(x_0,20Q(x_0))\subset B(x,20Q(x_0)+d(x_0,x))
\subset R[\mathfrak r],
$$
where in the last inclusion we used that
$20Q(x_0)+d(x_0,x)<25Q(x_0)\ll
25\ve<\mathfrak r$.
By Lemma \ref{lemma3.7}, $\Psi_{x}^{-1}\circ\Psi_{x_0}$ is well-defined in $R[10Q(x_0)]$.
\item $\Psi_{x_0}^{-1}\circ \Psi_x$ is well-defined in $R[10Q(x)]$: same as above, changing the roles of $x$ and $x_0$.
\end{enumerate}

The next step is to represent $\Psi_{x}^{-1}\circ\Psi_{x_0}$ as required. 
We have $\Psi_{x}^{-1}\circ\Psi_{x_0}=C(x)^{-1}\circ \Pi\circ C(x_0)$,
where $\Pi=\exp{x}^{-1}\circ \exp{x_0}$. The composition
$C(x)^{-1}\circ \Theta\circ C(x_0)$ has norm close to one. Indeed, 
by (\ref{comparison-C-3}) we have
$\|C(x)^{-1}\circ \Theta\circ C(x_0)v\|=e^{\pm \sqrt{\ve}}\|C(x_0)^{-1}\circ C(x_0)v\|=
e^{\pm\sqrt{\ve}}\|{v}\|$. By the polar decomposition for matrices,
$C(x)^{-1}\circ \Theta\circ C(x_0)=OR$ where $O$ is an orthogonal matrix
and $R$ is positive symmetric with $\|{Rv}\|=e^{\pm \sqrt{\ve}}\|{v}\|$ for all $v\in\R^d$.
Since $C(x)^{-1}\circ \Theta\circ C(x_0)$ preserves the splitting $\R^{d_s(x)}\times\R^{d_u(x)}$,
the same holds for $O$. Also, diagonalizing $R$ and estimating its eigenvalues,
we get that if $\ve>0$ is small enough then $\|R-{\rm Id}\|\leq 4\sqrt{\ve}$,
see details in \cite[pp. 100]{O18}.
Define $\delta=(\Psi_{x}^{-1}\circ\Psi_{x_0})(0)\in\R^d$, and $\Delta:R[10Q(x_0)]\to \R^d$
s.t. $\Psi_{x}^{-1}\circ\Psi_{x_0}=\delta+O+\Delta$.
We start estimating $d\Delta$. For $z\in R[10Q(x_0)]$,
\begin{align*}
&\,(d\Delta)_z=C(x)^{-1}\circ (d\Pi)_{C(x_0)z}\circ C(x_0)-O\\
&=C(x)^{-1}\circ \underbrace{\left[(d\Pi)_{C(x_0)z}-\Theta\right]}_{=:E}\circ C(x_0)+OR-O.
\end{align*}
To estimate $E$, observe that:
\begin{enumerate}[$\circ$]
\item By Lemma \ref{ALP-Lemma 6.1}(2), $\|{P-\Theta}\|\leq \tfrac{1}{2}\eta^{15\beta/48}$.
\item By assumption (Exp3),
\begin{align*}
&\, \|{(d\Pi)_{C(x_0)z}-P}\|=
\|{\widetilde{(d\exp{x}^{-1})_{\Psi_{x_0}(z)}(d\exp{x_0})_{C(x_0)z}}-{\rm Id}}\| \\
&=\|{\widetilde{(d\exp{x}^{-1})_{\Psi_{x_0}(z)}(d\exp{x_0})_{C(x_0)z}}-
\widetilde{(d\exp{x_0}^{-1})_{\Psi_{x_0}(z)}(d\exp{x_0})_{C(x_0)z}}}\| \\
&\leq \|{\widetilde{(d\exp{x}^{-1})_{\Psi_{x_0}(z)}}-
\widetilde{(d\exp{x_0}^{-1})_{\Psi_{x_0}(z)}}}\| \cdot\|{(d\exp{x_0})_{C(x_0)z}}\| \\
&\leq 2\mathfrak{K}d(x,x_0)
\end{align*}
which, by part (1), is bounded by
$\frac{2 \mathfrak{K} \eta}{50} \ll\frac{1}{2} \eta^{15 \beta/48}$.
\end{enumerate}
Hence, $\|{E}\|<\eta^{15\beta/48}$ and so, by part (2),
\begin{align*}
&\|{C(x)^{-1}\circ E\circ C(x_0)}\|  \leq \|{C(x)^{-1}}\|\eta^{15\beta/48} \\
& \leq e^{2\sqrt{\ve}}\|{C(x_0)^{-1}}\|\eta^{15\beta/48}
\leq e^{2\sqrt{\ve}}\ve^{1/8}\eta^{14\beta/48}\ll \sqrt{\ve}.
\end{align*}
Since $\|{OR-O}\|=\|{R-{\rm Id}}\|\leq 4\sqrt{\ve}$, we conclude that
$\|{(d\Delta)_z}\|\leq 5\sqrt{\ve}$. In particular, since $\Delta(0)=0$, we have
$\|{\Delta(z)}\|\leq \|{d\Delta}\|_{C^0}\|{z}\|\leq 5\sqrt{\ve}\|{z}\|$.

We now estimate $\|{\delta}\|$. Let $\overline{z} \in \R^d$ s.t. $\Psi_{x_0}(\overline{z})=x$.
We have $0=(\Psi_{x}^{-1}\circ\Psi_{x_0})(\overline{z})=\delta+O\overline{z}+\Delta(\overline{z})$
and so $\delta=-O\overline{z}-\Delta(\overline{z})$. By Lemma \ref{Lemma-admissible-manifolds}
we have $\|{\overline{z}}\|< 250^{-1}\eta$, therefore for $\ve>0$ small
$$
\|{\delta}\| \leq \|{O\overline{z}}\|+\|{\Delta(\overline{z})}\|\leq (1+5\sqrt{\ve})\|{\overline{z}}\|
\leq \tfrac{1+5\sqrt{\ve}}{250}\eta<50^{-1}\eta.
$$

The final step is to represent $\Psi_{x_0}^{-1}\circ\Psi_{x}$.
We have $\Psi_{x_0}^{-1}\circ\Psi_x=C(x_0)^{-1}\circ \Pi^{-1}\circ C(x)$. Changing $v$ by $\Theta(v)$ in \eqref{comparison-C-3} allows us to similarly prove that $\|C(x_0)^{-1}\circ \Theta^{-1}\circ C(x)v\|=e^{\pm\sqrt{\ve}}\|v\|$. Since the estimates used above for $\Pi,\Theta$ also hold for $\Pi^{-1},\Theta^{-1}$ (see {(Exp3) and Lemma 
\ref{ALP-Lemma 6.1}(2)), we can write $(\Psi_{x_0}^{-1}\circ\Psi_{x})(z)=\delta+Oz+\Delta(z)$ where $O,\Delta$ satisfy the same estimates. Finally, letting $z=0$, we obtain that $\overline{z}=\delta$ and so by Lemma \ref{Lemma-admissible-manifolds} we conclude that $\|\delta\|<250^{-1}\eta<50^{-1}\eta$.

\section{A countable locally finite section}\label{Section-locally-finite-section}

We summarize our discussion from the previous sections:
\begin{enumerate}[$\circ$]
\item We constructed a countable family $\mathfs A$ of $\ve$--double charts,
see Theorem \ref{Thm-coarse-graining}.
\item Letting $\Sigma$ be the TMS defined by $\mathfs A$ with the edge condition
defined in Section \ref{ss.pseudo.orbits}, we constructed a H\"older continuous map
$\pi:\Sigma\to \widehat\Lambda$ that ``captures'' all orbits in $\nuh^\#$, see
Propositions \ref{Prop-pi} and \ref{Prop-pi_R}. The map $\pi$ is defined as
$\{\pi(\un v)\}:=V^s[\un v]\cap V^u[\un v]$.
\item Although $\pi$ is not finite-to-one, we solved the inverse problem by analyzing 
when $\pi$ loses injectivity, see Theorem \ref{Thm-inverse}.
\end{enumerate}
We now employ these information to construct a countable family $\mathfs Z$ of subsets of $\widehat\Lambda$ s.t.:
\begin{enumerate}[$\circ$]
\item The union of elements of $\mathfs Z$, from now on also denoted by $\mathfs Z$,
is a section that contains $\Lambda\cap\nuh^\#$.
\item $\mathfs Z$ is {\em locally finite}: each point $x\in\mathfs Z$ belongs to at most finitely many
rectangles $Z\in\mathfs Z$.
\item Every element $Z\in \mathfs Z$ is a {\em rectangle}: each point $x\in Z$ has
{\em invariant fibres} $W^s(x,Z)$, $W^u(x,Z)$ in $Z$,
and these fibres induce a local product structure on $Z$.
\item $\mathfs Z$ satisfies a {\em symbolic Markov property}.

\end{enumerate} 
In this section, all statements assume that $0<\ve\ll \rho\ll 1$, so we will omit this information.

\subsection{The Markov cover $\mathfs Z$}
Let $\mathfs Z:=\{Z(v):v\in\mathfs A\}$, where
$$
Z(v):=\{\pi(\un v):\un v\in\Sigma^\#\text{ and }v_0=v\}.
$$
Using admissible manifolds, we define {\em invariant fibres} inside each $Z\in\mathfs Z$. Let $Z=Z(v)$.

\medskip
\noindent
{\sc $s$/$u$--fibres in $\mathfs Z$:} Given $x\in Z$, let $W^s(x,Z):=V^s[\{v_n\}_{n\geq 0}]\cap Z$
be the {\em $s$--fibre} of $x$ in $Z$ for some (any) $\un v=\{v_n\}_{n\in\Z}\in\Sigma^\#$
such that $\pi(\un v)=x$ and $v_0=v$. Similarly, let $W^u(x,Z):=V^u[\{v_n\}_{n\leq 0}]\cap Z$ be
the {\em $u$--fibre} of $x$ in $Z$.

\medskip
By Proposition \ref{Prop-disjointness}, the above definitions do not depend on the choice of $\un v$, 
and any two $s$--fibres ($u$--fibres) in $Z$ either coincide or are disjoint. We also
define $V^s(x,Z):=V^s[\{v_n\}_{n\geq 0}]$ and $V^u(x,Z):=V^u[\{v_n\}_{n\leq 0}]$.
Note that:
\begin{enumerate}[$\circ$]
\item $V^{s/u}(x,Z)$ are smooth curves, while $W^{s/u}(x,Z)$ are usually fractal sets.
\item $V^{s/u}(x,Z)$ are {\em not} subsets of $Z$, while $W^{s/u}(x,Z)$ are.
\end{enumerate}

\subsection{Fundamental properties of $\mathfs Z$}\label{subsec-fundpropZ}

\medskip
Although $\mathfs Z$ is usually a fractal set (and thus not a proper section),
we can still define its Poincar\'e return map. If $x=\pi(\un v)\in\mathfs Z$ with
$\un v\in\Sigma^\#$ then $\vf^{r_n(\un v)}(x)=\pi[\sigma^n(\un v)]\in\mathfs Z$ for all $n\in\N$.
Define $r_{\mathfs Z}:\mathfs Z\to (0,\rho)$ by $r_{\mathfs Z}(x):=\min\{t>0:\vf^t(x)\in\mathfs Z\}$. 

\medskip
\noindent
{\sc The return map $H$:} It is the map $H:\mathfs Z\to\mathfs Z$ defined by $H(x):=\vf^{r_{\mathfs Z}}(x)$.

\medskip
Below we collect the main properties of $\mathfs Z$.

\begin{proposition}\label{Prop-Z}
The following are true.
\begin{enumerate}[{\rm (1)}]
\item {\sc Covering property:} $\mathfs Z$ is a cover of $\Lambda\cap\nuh^\#$.
\item {\sc Local finiteness:} For every $Z\in\mathfs Z$,
$$
\#\left\{Z'\in\mathfs Z:\left[\bigcup_{|n|\leq 1}H^n(Z)\right]\cap Z'\neq\emptyset\right\}<\infty.
$$
\item {\sc Local product structure:} For every $Z\in\mathfs Z$ and every $x,y\in Z$, the intersection
$W^s(x,Z)\cap W^u(y,Z)$ consists of a single point, and this point belongs to $Z$.
\item {\sc Symbolic Markov property:} If $x=\pi(\un v)\in\mathfs Z$ with
$\un v=\{v_n\}_{n\in\Z}=\{\Psi_{x_n}^{p^s_n,p^u_n}\}_{n\in\Z}\in\Sigma^\#$, then
\begin{align*}
&g_{x_0}^+(W^s(x,Z(v_0)))\subset W^s(g_{x_0}^+(x),Z(v_1))\text{ and }\\
&g_{x_1}^{-}(W^u(g_{x_0}^+(x),Z(v_1)))\subset W^u(x,Z(v_0)).
\end{align*}
\end{enumerate}
\end{proposition}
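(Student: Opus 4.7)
The plan is to derive (1) directly from Proposition \ref{Prop-pi}, to use concatenation of $\ve$--gpo's for (3) and (4), and to invoke the Inverse Theorem together with the discreteness of $\mathfs A$ for the local finiteness (2), which is the main obstacle.

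For (1), given $x\in\Lambda\cap\nuh^\#$, Proposition \ref{Prop-pi}(3) gives $\un v\in\Sigma^\#$ with $\pi(\un v)=x$, hence $x\in Z(v_0)\in\mathfs Z$. For (3), given $x,y\in Z=Z(v)$, pick $\un v,\un v'\in\Sigma^\#$ with $v_0=v_0'=v$, $\pi(\un v)=x$ and $\pi(\un v')=y$, and form the concatenation $\un w$ defined by $w_n=v_n$ for $n\geq 0$ and $w_n=v'_n$ for $n<0$. Since every edge condition (GPO1)--(GPO2) only depends on two consecutive double charts and the relevant transitions $w_{-1}\to w_0$ and $w_0\to w_1$ are inherited from $\un v'$ and $\un v$, the sequence $\un w$ is an $\ve$--gpo. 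Because $\un v\in\Sigma^\#$ guarantees a symbol repeating infinitely often in the future and $\un v'\in\Sigma^\#$ guarantees one repeating infinitely often in the past, $\un w\in\Sigma^\#$. Then $z:=\pi(\un w)\in Z$, and by construction $z\in V^s[\{v_n\}_{n\geq 0}]\cap V^u[\{v'_n\}_{n\leq 0}]=V^s(x,Z)\cap V^u(y,Z)$. Uniqueness of this intersection follows from Lemma \ref{Lemma-admissible-manifolds} applied to $\Psi_x^{p^s,p^u}$. For (4), assuming $x=\pi(\un v)$ with $v_0=\Psi_{x_0}^{p^s_0,p^u_0}$, the shift $\sigma(\un v)$ lies in $\Sigma^\#$, satisfies $\pi(\sigma\un v)=g_{x_0}^+(x)\in Z(v_1)$, and by Theorem \ref{Thm-stable-manifolds}(2), $g_{x_0}^+(V^s[\{v_n\}_{n\geq 0}])\subset V^s[\{v_n\}_{n\geq 1}]$. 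Intersecting with $Z(v_1)$ gives the first inclusion; the unstable inclusion is analogous using the backward graph transforms.

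The main work lies in (2). Let $Z=Z(v)$ with $v=\Psi_{x_0}^{p^s_0,p^u_0}$, and suppose $Z'=Z(w)\in\mathfs Z$ with $w=\Psi_y^{q^s,q^u}$ meets $Z$ (the cases $H(Z)$ and $H^{-1}(Z)$ are treated identically by shifting the $\ve$--gpo's once forward or backward, using symbolic Markov in (4) and the fact that $H(x)=\vf^{r(\un v)}(x)=\pi(\sigma\un v)$). Pick a common point $z=\pi(\un v)=\pi(\un w)$ with $\un v,\un w\in\Sigma^\#$, $v_0=v$, $w_0=w$. Apply Theorem \ref{Thm-inverse} to both sequences at index $0$: parts (1)--(4) give
\[
d(x_0,z)<\tfrac{1}{50}(p^s_0\wedge p^u_0),\quad d(y,z)<\tfrac{1}{50}(q^s\wedge q^u),
\]
and
\[
\frac{p^s_0}{p^s(z)}=e^{\pm\sqrt[3]\ve},\quad \frac{p^u_0}{p^u(z)}=e^{\pm\sqrt[3]\ve},\quad \frac{q^s}{p^s(z)}=e^{\pm\sqrt[3]\ve},\quad \frac{q^u}{p^u(z)}=e^{\pm\sqrt[3]\ve},
\]
together with the analogous comparison of $\|C(x_0)^{-1}\|,\|C(y)^{-1}\|$ with $\|C(z)^{-1}\|$ and of $Q(x_0),Q(y)$ with $Q(z)$. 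In particular $q^{s},q^{u}$ are bounded below by $e^{-2\sqrt[3]\ve}(p^s_0\wedge p^u_0)$, a constant $t_0>0$ depending only on $Z$, and $y$ lies in the fixed ball $B(x_0,t_0)\subset\wh\Lambda$. By Theorem \ref{Thm-coarse-graining}(1) the set $\{\Psi_y^{q^s,q^u}\in\mathfs A:q^s,q^u>t_0\}$ is finite; combined with the localization of $y$, this gives finitely many admissible $w$, hence finitely many $Z'$. The hard point here is to check the case $H(Z)\cap Z'\neq\emptyset$: one has $H(x)=\pi(\sigma\un v)$, so a common point of $H(Z)$ and $Z'$ is realized as $\pi(\sigma\un v)=\pi(\un w)$, which allows to apply Theorem \ref{Thm-inverse} at index $1$ for $\un v$ and at index $0$ for $\un w$, yielding the same comparison of hyperbolicity parameters (up to a further factor bounded by Proposition \ref{Prop-Z-par} and \eqref{Relation-Q(x)}) and reducing to the same finiteness argument.
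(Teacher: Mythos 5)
Your proposal follows essentially the same route as the paper: part (1) is read off from Proposition \ref{Prop-pi}(3), parts (3) and (4) rest on concatenation of $\ve$--gpo's together with Lemma \ref{Lemma-admissible-manifolds} and Theorem \ref{Thm-stable-manifolds}(2), and part (2) combines the Inverse Theorem with the discreteness of $\mathfs A$. Two points deserve tightening.

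In (2), when $z=\pi(\un v)=\pi(\un w)$, Theorem \ref{Thm-inverse}(4) compares $p^s_0$ with the $\Z$--indexed parameter computed along the time sequence $\{r_n(\un v)\}$, and $q^s$ with the one computed along $\{r_n(\un w)\}$; these are a priori different quantities, so writing both ratios against the same $p^s(z)$ is not literally correct. The paper routes both comparisons through the continuous-time parameter $q(z)$ via the robustness estimate of Proposition \ref{Prop-Z-par}(1), at the cost of an extra factor $e^{\pm\mathfrak H}$ on each side (and a factor $e^{\pm 2\ve}$ from Lemma \ref{Lemma-q} when the common point is only a small flow displacement, as in the $H^{\pm1}$ cases). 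You invoke this correction only for the $H^{\pm1}$ case, but it is needed for $Z\cap Z'\neq\emptyset$ as well; the finiteness conclusion survives, with a slightly worse uniform constant. (The localization of $y$ in a ball is superfluous: Theorem \ref{Thm-coarse-graining}(1) already gives finiteness from the lower bound on $q^s\wedge q^u$ alone.)

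In (4), you verify that $g_{x_0}^+(y)\in V^s[\{v_n\}_{n\geq 1}]$ via the invariance of stable manifolds, and that $g_{x_0}^+(x)\in Z(v_1)$, but the claimed inclusion requires $g_{x_0}^+(y)\in Z(v_1)$ for \emph{every} $y\in W^s(x,Z(v_0))$, i.e.\ that $g_{x_0}^+(y)=\pi(\sigma\un u')$ for some $\un u'\in\Sigma^\#$ with $u'_1=v_1$. This is exactly the concatenation argument you already used in (3): write $y=\pi(\un u)$ with $u_0=v_0$ and splice $\{u_n\}_{n\leq 0}$ with $\{v_n\}_{n\geq 0}$. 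The paper defers this to \cite[Prop.~10.9]{Sarig-JAMS}; your sketch omits it, so add that step to close the argument.
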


Before discussing the proof, we use part (3) to introduce the following definition: 
for $x,y\in Z$, let $[x,y]_Z:=$ intersection point of $W^s(x,Z)$ and $W^u(y,Z)$, and
call it the {\em Smale bracket} of $x,y$ in $Z$.

\begin{proof}
We have $\mathfs Z=\pi[\Sigma^\#]$. Since $\pi[\Sigma^\#]\supset\Lambda\cap\nuh^\#$
by Proposition \ref{Prop-pi}(3), it follows that $\mathfs Z$ contains $\Lambda\cap\nuh^\#$.
This proves part (1).

\medskip
\noindent
(2) Write $Z=Z(\Psi_x^{p^s,p^u})$, and take $Z'=Z(\Psi_y^{q^s,q^u})$ such that
$$\left[\bigcup_{|n|\leq 1}H^n(Z)\right]\cap Z'\neq\emptyset.$$ We will estimate the ratio
$\tfrac{p^s\wedge p^u}{q^s\wedge q^u}$. By assumption, there is $x\in Z$ such that $x'=H^n(x)\in Z'$
for some $|n|\leq 1$. Let $\un v\in\Sigma^\#$ with $v_0=\Psi_x^{p^s,p^u}$ such that $x=\pi(\un v)$.
Recalling that $p^{s/u}(x)=p^{s/u}(x,\mathcal T,0)$ for $\mathcal T=\{r_n(\un v)\}_{n\in\Z}$, 
the following holds:
\begin{enumerate}[$\circ$]
\item $x\in Z$, hence by Theorem \ref{Thm-inverse}(4) we have
$\tfrac{p^s}{p^s(x)}=e^{\pm\sqrt[3]{\ve}}\text{ and }\tfrac{p^u}{p^u(x)}=e^{\pm\sqrt[3]{\ve}}$,
and so $\tfrac{p^s\wedge p^u}{p^s(x)\wedge p^u(x)}=e^{\pm\sqrt[3]{\ve}}$.
By Proposition \ref{Prop-Z-par}(1), we have $\tfrac{p^s(x)\wedge p^u(x)}{q(x)}=e^{\pm\mathfrak H}$.
The conclusion is that $\tfrac{p^s\wedge p^u}{q(x)}=e^{\pm(\sqrt[3]{\ve}+\mathfrak H)}$.
\item $x'\in Z'$, hence by the same reason $\tfrac{q^s\wedge q^u}{q(x')}=e^{\pm(\sqrt[3]{\ve}+\mathfrak H)}$.
\item $x'=\vf^t(x)$ with $|t|\leq \rho$, hence by Lemma \ref{Lemma-q} we have
$\tfrac{q(x)}{q(x')}=e^{\pm 2\ve}$.
\end{enumerate}
Altogether, we conclude that $\tfrac{p^s\wedge p^u}{q^s\wedge q^u}=e^{\pm2(\sqrt[3]{\ve}+\ve+\mathfrak H)}$,
and so
$$
\left\{Z'\in\mathfs Z:\left[\bigcup_{|n|\leq 1}H^n(Z)\right]\cap Z'\neq\emptyset\right\}\subset\left\{\Psi_y^{q^s,q^u}\in\mathfs A:(q^s\wedge q^u)\geq e^{-2(\sqrt[3]{\ve}+\ve+\mathfrak H)}(p^s\wedge p^u)\right\}.
$$
The latter set is finite, by Theorem \ref{Thm-coarse-graining}(1).

\medskip
\noindent
(3) We proceed as in \cite[Prop. 10.5]{Sarig-JAMS}.
Let $Z=Z(v)$, and take
$x,y\in Z$, say $x=\pi(\un v),y=\pi(\un w)$ with $\un v,\un w\in\Sigma^\#$, where
$\un v=\{v_n\}_{n\in\Z}=\{\Psi_{x_n}^{p^s_n,p^u_n}\}_{n\in\Z}$ and
$\un w=\{w_n\}_{n\in\Z}=\{\Psi_{y_n}^{q^s_n,q^u_n}\}_{n\in\Z}$ with $v_0=w_0=v$.
We let $z=\pi(\un u)$ where $\un u=\{u_n\}_{n\in\Z}$ is defined by
$$
u_n=\left\{\begin{array}{ll}v_n&,n\geq 0\\ w_n&,n\leq 0.\end{array}\right.
$$
We claim that $\{z\}=W^s(x,Z)\cap W^u(y,Z)$. To prove this, first remember that
$V^s[\{u_n\}_{n\geq 0}]\cap V^u[\{u_n\}_{n\geq 0}]$ intersects at a single point
(Lemma \ref{Lemma-admissible-manifolds}), and that $z$ belongs to such intersection.
Therefore, it is enough to show that $z\in\pi[\Sigma^\#]$, which is clear since
$\un u\in\Sigma^\#$.

\medskip
\noindent
(4) Proceed exactly as in \cite[Prop. 10.9]{Sarig-JAMS}.
\end{proof}

Let $Z=Z(v), Z'=Z(w)$ where $v=\Psi_x^{p^s,p^u},w=\Psi_y^{q^s,q^u}\in\mathfs A$, and assume
that $Z\cap \vf^{[-2\rho,2\rho]}Z'\neq\emptyset$. Let $D,D'$ be the 
connected components of $\widehat\Lambda$ such that $Z\subset D$ and $Z'\subset D'$. We wish to 
compare $s$--fibres of $Z$ with $u$--fibres of $Z'$ and vice-versa. 
To do that, we apply the holonomy maps $\mathfrak q_D$ and $\mathfrak q_{D'}$. Given $z\in Z,z'\in Z'$, define
\begin{align*}
\{[z,z']_Z\}&:=V^s(z,Z)\cap \mathfrak q_D[V^u(z',Z')]\\
\{[z,z']_{Z'}\}&:=\mathfrak q_{D'}[V^s(z,Z)]\cap V^u(z',Z').
\end{align*}
The next proposition proves that $[z,z']_Z$ and $[z,z']_{Z'}$ consist of single points, and some
compatibility properties that will be used in the next section. 

\begin{proposition}\label{Prop-overlapping-charts}
Let $Z=Z(v), Z'=Z(w)$ where $v=\Psi_x^{p^s,p^u},w=\Psi_y^{q^s,q^u}\in\mathfs A$, and assume
that $Z\cap \vf^{[-2\rho,2\rho]}Z'\neq\emptyset$. Let $D,D'$ be the 
connected components of $\widehat\Lambda$ such that $Z\subset D$ and $Z'\subset D'$.
The following are true.
\begin{enumerate}[{\rm (1)}]
\item $\mathfrak q_{D'}\circ \Psi_x(R[\tfrac{1}{2}(p^s\wedge p^u)])\subset \Psi_y(R[q^s\wedge q^u])$.
\item If $z\in Z$ with $z'=\mathfrak q_{D'}(z)\in Z'$, then $\mathfrak q_{D'}[W^{s/u}(z,Z)]\subset V^{s/u}(z',Z')$.
\item If $z\in Z,z'\in Z'$ then $[z,z']_Z$, $[z,z']_{Z'}$ are points with
$[z,z']_Z=\mathfrak q_{D}([z,z']_{Z'})$.
\end{enumerate}
\end{proposition}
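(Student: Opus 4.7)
The plan is to apply the Inverse Theorem (Theorem~\ref{Thm-inverse}) in order to show that the Pesin data defining $v$ and $w$ become essentially compatible once $\Psi_x$ is composed with the holonomy $\mathfrak{q}_{D'}$, and then to deduce the three assertions from Lemma~\ref{lemma-local-coord}, the stable manifold theorem, and Lemma~\ref{Lemma-admissible-manifolds}. The first step is to pick $a\in Z$ with $a'=\vf^t(a)\in Z'$ for some $|t|\le 2\rho$, and represent them as $a=\pi(\un{v}')$, $a'=\pi(\un{w}')$ with $\un{v}',\un{w}'\in\Sigma^\#$ and $v'_0=v$, $w'_0=w$. Theorem~\ref{Thm-inverse} applied at $a$ and $a'$ then provides $d(x,a)<\tfrac{1}{50}(p^s\wedge p^u)$, $d(y,a')<\tfrac{1}{50}(q^s\wedge q^u)$, the ratios $\tfrac{\|C(x)^{-1}\|}{\|C(a)^{-1}\|},\tfrac{\|C(y)^{-1}\|}{\|C(a')^{-1}\|}=e^{\pm 2\sqrt{\ve}}$, and $\tfrac{p^{s/u}}{p^{s/u}(a)},\tfrac{q^{s/u}}{p^{s/u}(a')}=e^{\pm\sqrt[3]{\ve}}$. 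Combining these with the slow variation of $q,\|C^{-1}\|,Q,p^{s/u}$ along the short orbit from $a$ to $a'$ (Lemma~\ref{Lemma-q}, Lemma~\ref{Lemma-linear-reduction}(3), Proposition~\ref{Prop-Z-par}(1)--(2)) yields the quantitative comparison $\tfrac{p^{s/u}}{q^{s/u}}=e^{\pm O(\rho+\mathfrak{H})}$, together with upper bounds on $d(\mathfrak{q}_{D'}(x),y)$ and $\|\wt{C(x)}-\wt{C(y)}\|$ of the order of $(p^s\wedge p^u)(q^s\wedge q^u)$.

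For part~(1), the image $\Psi_x(R[\tfrac{1}{2}(p^s\wedge p^u)])$ lies in $B(x,\sqrt{2}(p^s\wedge p^u))$ by Lemma~\ref{lemma3.7}(1), and $\mathfrak{q}_{D'}$ is $2$--Lipschitz by Lemma~\ref{lemma-local-coord}(2), so its image under $\mathfrak{q}_{D'}$ sits in a ball around $\mathfrak{q}_{D'}(x)$ of comparable radius. The parameter comparison of the first step bounds $d(\mathfrak{q}_{D'}(x),y)$ by a small multiple of $(p^s\wedge p^u)$, so the whole image lies in a ball around $y$ of radius below $(q^s\wedge q^u)/(2\|C(y)^{-1}\|)$. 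Since $\Psi_y^{-1}$ has Lipschitz constant $2\|C(y)^{-1}\|$ by Lemma~\ref{lemma3.7}(1), this ball pulls back inside $R[q^s\wedge q^u]$. For part~(2), on the shared flow box the map $\mathfrak{q}_{D'}$ differs from the forward holonomy $g_{x_0}^+$ (or a local flow projection) only by flow-time shifts of size $O(\rho)$; the invariance of stable manifolds under such holonomies (Theorem~\ref{Thm-stable-manifolds}(2)) then shows that $\mathfrak{q}_{D'}(V^s(z,Z))$ is an $s$--admissible manifold whose representing function at $w$ satisfies (AM1)--(AM3), so the uniqueness of the graph-transform limit forces it to be contained in $V^s(z',Z')$. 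The $u$--case is symmetric.

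Once~(2) is established, part~(3) follows from Lemma~\ref{Lemma-admissible-manifolds}: inside the chart $\Psi_y$, the projected fibre $\mathfrak{q}_{D'}[V^s(z,Z)]$ and the fibre $V^u(z',Z')$ are respectively an $s$- and a $u$--admissible manifold at $w$, so they intersect at the single point $[z,z']_{Z'}$; the same argument in $\Psi_x$ produces $[z,z']_Z$, and the identity $[z,z']_Z=\mathfrak{q}_D([z,z']_{Z'})$ holds because $\mathfrak{q}_D$ and $\mathfrak{q}_{D'}$ are mutual inverses on the flow box they share. The main obstacle will be the quantitative first step: the several $\ve^4$-tolerances produced by the Inverse Theorem must be combined with the $O(\rho)$ drifts of the parameters along the short flow segment $a\to a'$, and one must verify that the parameters $p^{s/u}$ and $q^{s/u}$ (controlled through $q(a),q(a')$ via Proposition~\ref{Prop-Z-par}(1) and Lemma~\ref{Lemma-q}) remain close enough for these drifts to be absorbed into the overlap tolerance driving the change-of-coordinates estimate of Proposition~\ref{Lemma-overlap}.
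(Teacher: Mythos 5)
Your overall architecture --- compare parameters via the Inverse Theorem, control the change of coordinates, then use admissibility and uniqueness of intersections --- is the same as the paper's, but two steps fail as written. The first is your claim that $\|\widetilde{C(x)}-\widetilde{C(y)}\|$ is of order $(p^s\wedge p^u)(q^s\wedge q^u)$, which you then feed into the overlap machinery of Proposition \ref{Lemma-overlap}. Theorem \ref{Thm-inverse} does not give this: its part (2) compares only the \emph{norms} $\|C(\cdot)^{-1}\|$, and its part (5) represents $\Psi_{z}^{-1}\circ\Psi_{x}$ as $\delta+Ov+\Delta(v)$ with $O$ an \emph{orthogonal} splitting-preserving map, not the identity. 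Since $C(\cdot)$ is only defined up to rotations inside $N^s$ and $N^u$ (and the cocycle $\Phi^t$ intervenes between $z$ and $z'=\vf^t(z)$), the matrices $\widetilde{C(x)}$ and $\widetilde{C(y)}$ may be genuinely far apart, the $\ve$--overlap condition between $\Psi_x$ and $\Psi_y$ fails, and Proposition \ref{Lemma-overlap} is not applicable. This is precisely the high-dimensional subtlety flagged after the statement of the proposition: one must instead show directly that $\Upsilon:=\Psi_y^{-1}\circ\mathfrak q_{D'}\circ\Psi_x$, factored as $(\Psi_y^{-1}\circ\Psi_{z'})\circ(\Psi_{z'}^{-1}\circ\mathfrak q_{D'}\circ\Psi_z)\circ(\Psi_z^{-1}\circ\Psi_x)$, equals $O+\Delta$ with $O$ orthogonal and splitting-preserving, $\|\Delta(0)\|<\tfrac{3}{50}(q^s\wedge q^u)$ and $\|d\Delta\|_{C^0}=O(\rho)+O(\sqrt\ve)$; the middle factor is handled by the polar decomposition of $C(z')^{-1}\circ\Phi^t\circ C(z)$ together with Lemma \ref{Lemma-linear-reduction}(2), and the outer factors by Theorem \ref{Thm-inverse}(5).

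The second gap is quantitative and kills your proof of (1). Your containing ball around $y$ has radius at least $2\sqrt2(p^s\wedge p^u)+d(\mathfrak q_{D'}(x),y)$, while the target radius $(q^s\wedge q^u)/(2\|C(y)^{-1}\|)$ is at most $(q^s\wedge q^u)/(2\sqrt2)$, because $\|C(y)^{-1}\|\geq\sqrt2$ always (and is unbounded over $\mathfs A$). Since $p^s\wedge p^u=e^{\pm(O(\sqrt[3]{\ve})+O(\rho))}(q^s\wedge q^u)$, the inequality you need never holds: the crude bound $\Lip(\Psi_y^{-1})\leq 2\|C(y)^{-1}\|$ is far too lossy, and the inclusion must come from the Lipschitz constant of the \emph{composition} $\Upsilon$ being $1+O(\rho)+O(\sqrt\ve)$, i.e., from the cancellation of $C(y)^{-1}$ against $C(x)$ through $\Phi^t$. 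Relatedly, for (2) note that the statement concerns the fractal fibres $W^{s/u}(z,Z)$, not the smooth manifolds $V^{s/u}(z,Z)$ (whose full image under $\mathfrak q_{D'}$ need not lie in $V^{s/u}(z',Z')$); the paper's argument runs through the commutation $G^n_{\un w}\circ\mathfrak q_{D'}=\mathfrak q_{D_k}\circ G^m_{\un v}$, the symbolic Markov property, part (1), and the characterization of $V^s$ in Theorem \ref{Thm-stable-manifolds}(1), rather than through admissibility and graph-transform uniqueness alone.
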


When $M$ is compact and $f$ is a surface diffeomorphism, this result corresponds to \cite[Lemmas 10.8 and 10.10]{Sarig-JAMS}. For flows in three dimensions, this is \cite[Proposition 7.2]{BCL23}, and a very similar strategy of proof applies in our setting: Theorem \ref{Thm-non-linear-Pesin} remains valid when replacing $g_x^+$ with $\mathfrak q_{D'}$, allowing us to control the composition
$\Psi_y^{-1}\circ\mathfrak q_{D'}\circ\Psi_x$. The main difference compared to \cite{Sarig-JAMS,BCL23} is that the functions involved are no longer close to the identity, but rather close to orthogonal linear maps that preserve the splitting $\mathbb{R}^{d_s} \oplus \mathbb{R}^{d_u}$, as in \cite{O18}. The details of this construction are provided in Appendix \ref{Appendix-proofs}.

Additionally, we will require further information about the Smale product structure in nearby charts.

\begin{proposition}\label{Prop-overlapping-charts-2}
Let $Z, Z',Z''$ such that $Z\cap \vf^{[-2\rho,2\rho]}Z'\neq\emptyset$, $Z\cap \vf^{[-2\rho,2\rho]}Z''\neq\emptyset$.
Assume that there is $z'\in Z'$ such that $\vf^t(z')\in Z''$ for some $|t|\leq 2\rho$. For every $z\in Z$, it holds
$$
[z,z']_Z=[z,\vf^t(z')]_Z.
$$
\end{proposition}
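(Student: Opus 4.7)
Let $D, D', D''$ be the connected components of $\wh\Lambda$ containing $Z, Z', Z''$, respectively, and abbreviate $\hat z':=\vf^t(z')\in Z''$. Unpacking the definitions, the identity $[z,z']_Z=[z,\hat z']_Z$ reduces to showing that, locally near $V^s(z,Z)$, the two sets
$$
\mathfrak q_D[V^u(z',Z')]\ \text{ and }\ \mathfrak q_D[V^u(\hat z',Z'')]
$$
contain the same point of $V^s(z,Z)$. Since by Lemma \ref{Lemma-admissible-manifolds} (combined with Proposition \ref{Prop-overlapping-charts}(1)) any such intersection consists of a single point, it is enough to produce, for a fixed preimage $\bar y\in V^u(\hat z',Z'')$ of $[z,\hat z']_Z$ under $\mathfrak q_D$, a point $\tilde y\in V^u(z',Z')$ with $\mathfrak q_D(\tilde y)=\mathfrak q_D(\bar y)$.

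The guiding principle is that $V^u(z',Z')$ and $V^u(\hat z',Z'')$ are two representations of the \emph{same} local strong unstable manifold of the orbit of $z'$, only sliced by the different sections $D'$ and $D''$. To make this rigorous, apply the unstable analogue of Proposition \ref{Prop-center-stable} to lift each $V^u$ to a flow-saturated set $\widetilde{V}^u(z',Z')$, $\widetilde{V}^u(\hat z',Z'')\subset M$ whose points have backward orbits exponentially contracting toward the backward orbit of $z'$ (equivalently, of $\hat z'$). This dynamical characterization is intrinsic to the orbit, so it gives
$$
\widetilde{V}^u(z',Z')=\widetilde{V}^u(\hat z',Z'')
$$
on a neighborhood of the orbit segment $\{\vf^s(z'): s\in[0,t]\}$ — here the bound $|t|\le 2\rho$ and the smallness of the admissible manifolds ensure that both saturations actually cover this segment.

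From the equality of flow saturations, the point $\bar y\in V^u(\hat z',Z'')$ can be written as $\bar y=\vf^s(\tilde y)$ for some $\tilde y\in V^u(z',Z')$ and some $s$ with $|s|\leq 3\rho$, which remains in the flow-box where $\mathfrak q_D$ is defined. Since $\mathfrak q_D$ is constant along orbit segments inside its flow-box, $\mathfrak q_D(\tilde y)=\mathfrak q_D(\bar y)=[z,\hat z']_Z$. Therefore $[z,\hat z']_Z\in V^s(z,Z)\cap \mathfrak q_D[V^u(z',Z')]$, and uniqueness of that intersection forces $[z,z']_Z=[z,\hat z']_Z$.

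The main obstacle I anticipate is the last step of the geometric argument: verifying that the flow-time $s$ relating $\tilde y$ to $\bar y$ is small enough that $\tilde y$ actually lies inside the (small) admissible manifold $V^u(z',Z')$, and not merely in its flow-saturation. This is the point where one needs the precise size estimates for $V^u$ given by the admissibility conditions (AM1)--(AM3) together with the fact that the transition time $t$ is bounded by $2\rho$, so that the required shift $s$ stays within the domain of $\mathfrak q_D$ and within the domain where the graph-transform description of $V^u(z',Z')$ is valid. Once this geometric bookkeeping is settled, the rest of the argument is purely formal manipulation of the definitions and an application of the unstable analogue of Proposition \ref{Prop-center-stable}.
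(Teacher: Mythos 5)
Your overall strategy --- compare $V^u(z',Z')$ and $V^u(\vf^t(z'),Z'')$ as two slices of the same local unstable set and then push through $\mathfrak q_D$ --- is the right shape, and it is essentially what the paper does. But the central step as you state it has a genuine gap. You invoke the unstable analogue of Proposition \ref{Prop-center-stable} and assert that, because both lifted sets $\widetilde V^u(z',Z')$ and $\widetilde V^u(\vf^t(z'),Z'')$ consist of points whose backward orbits contract toward the orbit of $z'$, the two flow-saturations must coincide near the orbit segment. Proposition \ref{Prop-center-stable} only gives a \emph{sufficient} condition satisfied by points of the lifted set; it is not a characterization. The set of all points backward-asymptotic to the orbit of $z'$ is the (much larger) global unstable set, and two local pieces of it --- built from different $\ve$--gpo's, centered at different charts, with different size parameters --- need not be equal; a priori one could even fail to contain the other. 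Equality (or rather the needed one-sided containment) is exactly what has to be proved, and "intrinsic to the orbit" does not prove it. The paper's route is to establish the containment $\mathfrak q_{D''}[V^u(z',Z')\cap \Psi_y(R[\tfrac12 q])]\subset V^u(\vf^t(z'),Z'')$ by verifying the \emph{defining} shadowing property of Theorem \ref{Thm-stable-manifolds}(1): one shows, as in Proposition \ref{Prop-disjointness}, that the iterates of the restricted piece under the holonomy compositions $G_n$ stay inside $\Psi_{y_n}(R[10Q(y_n)])$ for all $n$, using the diameter decay from Proposition \ref{Prop-center-stable} together with the chart-size comparisons $Q(y_n)\geq e^{-2\ve n}(p^s_0\wedge p^u_0)$. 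That is where the contraction property is actually used, and it yields containment of a restricted piece, not equality of saturations.

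The second issue, which you flag as "geometric bookkeeping" but do not resolve, is in fact a substantive half of the proof: since the containment above only holds for the piece $V^u(z',Z')\cap \Psi_y(R[\tfrac12 q])$, you must show that the intersection point $[z,z']_Z$ is the $\mathfrak q_D$--image of a point in \emph{that} restricted window, and that it sits at a controlled position $\Psi_x(s,G(s))$ with $\|s\|\leq\tfrac13(p^s\wedge p^u)$ on $V^s(z,Z)$. This is the paper's Claim~1, and it requires writing $\Psi_x^{-1}\circ\mathfrak q_D\circ\Psi_y$ as an orthogonal map plus a small perturbation (the estimates of Proposition \ref{Prop-overlapping-charts}), representing $\mathfrak q_D[V^u(z',Z')\cap\Psi_y(R[\tfrac12 q])]$ as a graph over $B^{d_u}[\tfrac13 p]$ via a Banach fixed-point argument, and then locating the intersection with $V^s(z,Z)$ as a fixed point of $H\circ G$ on $B^{d_s}[\tfrac13 p]$. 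Without this, the uniqueness-of-intersection argument at the end of your proposal does not close, because you have no guarantee that the point you produce on $V^u(z',Z')$ lies in the region where the two unstable manifolds have been compared.
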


Note that $[z,z']_Z$ is defined by $Z,Z'$ while $[z,\vf^t(z')]_Z$ is defined by 
$Z,Z''$. The equality shows a compatibility of the Smale product along small flow displacements. 
It holds because such displacements barely change the sizes of invariant
fibres, hence the unique intersection is preserved. The proof is in Appendix \ref{Appendix-proofs}.

\section{A refinement procedure}\label{Section-refinement}

Up to now, we have constructed a countable family $\mathfs Z$ of subsets of $\widehat\Lambda$
with the following properties:
\begin{enumerate}[$\circ$]
\item The union of elements of $\mathfs Z$, also denoted by $\mathfs Z$,
is a section that contains $\Lambda\cap\nuh^\#$.
\item $\mathfs Z$ is locally finite: each point $x\in\mathfs Z$ belongs to at most finitely many
rectangles $Z\in\mathfs Z$.
\item Every element $Z\in \mathfs Z$ is a rectangle: each point $x\in Z$ has
invariant fibres $W^s(x,Z)$, $W^u(x,Z)$ in $Z$,
and these fibres induce a local product structure on $Z$.
\item $\mathfs Z$ satisfies a {\em symbolic} Markov property.
\end{enumerate}
In this section, we will refine $\mathfs Z$ to obtain a countable family of disjoint sets
$\mathfs R$ that satisfy a {\em geometrical} Markov property.
We stress the difference from a symbolic to a geometrical Markov property:
by Proposition \ref{Prop-Z}(4), $g_{x_0}^\pm$ satisfies a symbolic Markov property;
our goal is to obtain a Markov property for the first return map $H$.
In general the orbit of $x$ can intersect $\mathfs Z$
between $x$ and $g_{x_0}^+(x)$, in which case we will have that $g_{x_0}^+(x)\neq H(x)$.
Therefore the symbolic Markov property
of Proposition \ref{Prop-Z}(4) does not directly translate into a geometrical Markov
property for $H$. To obtain this latter property, we will use a refinement procedure
developed by Bowen \cite{Bowen-Symbolic-Flows}, motivated by the work of
Sina{\u\i} \cite{Sinai-Construction-of-MP,Sinai-MP-U-diffeomorphisms}. The difference
from our setup to Bowen's is that, while in Bowen's case all families are finite, in ours
they are usually countable. Fortunately, as implemented in \cite{Sarig-JAMS,BCL23}, the refinement procedure
works well for countable covers with the local finiteness property, which we have in
Proposition \ref{Prop-Z}(2).

\subsection{The Markov partition $\mathfs R$}
We first see that the map $g_{x_0}^+$ can be deduced from $H$ by a bounded iteration.

\begin{lemma}\label{l.time}
There exists $N\geq 1$ such that for any $x=\pi(\un v)\in\mathfs Z$ there exists $0< n < N$
such that $g_{x_0}^+(x)=H^n(x)$.
\end{lemma}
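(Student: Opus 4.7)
The plan is to bound two quantities and divide: an upper bound $T_0$ on the flow time $T^+(x)$ with $g_{x_0}^+(x)=\vf^{T^+(x)}(x)$, and a positive lower bound $\tau_0$ on the flow time between two consecutive returns of any orbit to $\mathfs Z$. The required constant is then $N:=\lceil T_0/\tau_0\rceil+1$, and the key extra input is that $g_{x_0}^+(x)$ itself lies in $\mathfs Z$.

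For the upper bound I would use that $T^+$ is the $C^{1+\beta}$ function on $B_{x_0}$ with $T^+(x_0)=r_\Lambda(x_0)<\rho/2$; since $x$ lies in the image of the Pesin chart $\Psi_{x_0}$, whose domain has radius $\leq Q(x_0)\ll\rho$, and $T^+$ is 1--Lipschitz by Lemma \ref{lemma-local-coord}(3), $T^+(x)$ is uniformly bounded above by some $T_0<\rho$. For the lower bound, since $\mathfs Z\subset\widehat\Lambda$ every return to $\mathfs Z$ is a return to $\widehat\Lambda$, so it suffices to bound $r_{\widehat\Lambda}$ below. Write $\widehat\Lambda=\bigsqcup \widehat D_i$. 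If $y\in \widehat D_i$, the flow-box property $\vf^{(0,4\rho]}(y)\cap \widehat D_i=\emptyset$ combined with the cover condition $r_{\widehat\Lambda}<\rho/2<4\rho$ forces the first return $\vf^s(y)\in\widehat\Lambda$ to land in some $\widehat D_j$ with $j\neq i$. The partial order condition gives $\overline{\widehat D_i}\cap\overline{\widehat D_j}=\emptyset$, and compactness together with finiteness of the collection yields $\delta:=\min_{i\neq j}d(\overline{\widehat D_i},\overline{\widehat D_j})>0$. Since $d(y,\vf^s(y))\leq s\|X\|_{C^0}$, with $\|X\|_{C^0}$ finite by compactness of $M$, we conclude $s\geq \delta/\|X\|_{C^0}=:\tau_0>0$.

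With both bounds in hand, the orbit segment $\vf^{(0,T^+(x)]}(x)$ can meet $\mathfs Z$ at most $\lceil T_0/\tau_0\rceil$ times. By the symbolic Markov property of Proposition \ref{Prop-Z}(4), $g_{x_0}^+(x)\in Z(v_1)\subset\mathfs Z$, so the end point of the segment is itself in $\mathfs Z$ and must therefore be the $n$-th return of $x$ to $\mathfs Z$ for some integer $n$ with $1\leq n\leq\lceil T_0/\tau_0\rceil$. Setting $N:=\lceil T_0/\tau_0\rceil+1$, independent of $x$, gives the lemma. The argument is purely geometric and soft; the only point requiring care is the uniform positive lower bound on $r_{\widehat\Lambda}$, which is precisely what the flow-box size and the partial-order condition built into the notion of proper section guarantee.
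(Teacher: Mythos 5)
Your proof is correct and follows essentially the same route as the one the paper defers to \cite[Lemma 8.1]{BCL23}: bound the flow time realizing $g_{x_0}^+$ above by (roughly) $\rho$, bound the gap between consecutive returns to $\mathfs Z\subset\widehat\Lambda$ below by $\inf(r_{\widehat\Lambda})>0$, and use that $g_{x_0}^+(x)=\pi[\sigma(\un v)]\in Z(v_1)\subset\mathfs Z$. The only point worth adding is that $n\geq 1$ requires $T^+(x)>0$, which follows from the same Lipschitz estimate you already invoke, since $T^+(x_0)=r_\Lambda(x_0)\geq\inf(r_\Lambda)$ while $d(x,x_0)\ll\inf(r_\Lambda)$.
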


The statement and proof are the same of \cite[Lemma 8.1]{BCL23}.
Proposition \ref{Prop-Z}(4) then implies that for every $x\in\mathfs Z$ there are 
$0<k,\ell<N$ such that $H^k(x)$ satisfies a Markov property in the stable direction and
$H^{-\ell}(x)$ satisfies a Markov property in the unstable direction.

At this point, it is worth mentioning the method that Bowen used
to construct Markov partitions for Axiom A flows \cite{Bowen-Symbolic-Flows}:
\begin{enumerate}[(1)]
\item[(1)] Fix a global section for the flow; inside this section, construct a finite family of rectangles
(sets that are closed under the Smale bracket operation). Let $H$ be the Poincar\'e return map
of this family.
\item[(2)] Apply the method of Sina{\u\i} of successive approximations to get a new family
of rectangles $\mathfs Z$ with the following property: if $H$ is the Poincar\'e return map of $\mathfs Z$,
then for every $x\in\mathfs Z$ there are $k,\ell>0$ such that $H^k(x)$ satisfies a Markov property in the stable
direction and $H^{-\ell}(x)$ satisfies a Markov property in the unstable direction. In addition, there is a global
constant $N>0$ such that $k,\ell<N$.
\item[(3)] Apply a refinement procedure to $\mathfs Z$ such that the resulting partition $\mathfs R$ is a
disjoint family of rectangles satisfying the Markov property for $H$.
\end{enumerate}
So far, we have implemented steps (1) and (2) above,
with the difference that while Bowen used the method of successive approximations, we used
the method of $\ve$--gpo's. It remains to establish step (3), and we will do this closely
following Bowen \cite{Bowen-Symbolic-Flows}, as already done in \cite{BCL23}. Fortunately, the arguments made in \cite{BCL23} are abstract enough to work equally well in higher dimension, so in the remaining of this section we state the definitions and main results and only discuss proofs that require modifications. 

For each $Z\in\mathfs Z$, let
$$
\mathfs I_Z:=\left\{Z'\in\mathfs Z:\vf^{[-\rho,\rho]}Z\cap Z'\neq\emptyset\right\}.
$$
By Theorem \ref{Thm-inverse}, $\mathfs I_Z$ is finite.
Let $D$ be the connected component of $\widehat\Lambda$ such that $Z\subset D$.
By continuity, having chosen $\ve\ll \rho\ll 1$
the following property holds:
\begin{equation}\label{e.2rho}
\text{If $Z'\in\mathfs I_Z$ then $Z'\subset \vf^{[-2\rho,2\rho]}D$.}
\end{equation}
Therefore $\mathfrak q_D(Z')$ is a well-defined subset of $D$.
For each $Z'\in\mathfs I_Z$ we consider the partition of $Z$ into four subsets as follows:
\begin{align*}
E_{Z,Z'}^{su}&=\{x\in Z: W^s(x,Z)\cap\mathfrak{q}_{D}(Z')\neq\emptyset,
W^u(x,Z)\cap\mathfrak{q}_{D}(Z')\neq\emptyset\}\\
E_{Z,Z'}^{s\emptyset}&=\{x\in Z: W^s(x,Z)\cap\mathfrak{q}_{D}(Z')\neq\emptyset,
W^u(x,Z)\cap\mathfrak{q}_{D}(Z')=\emptyset\}\\
E_{Z,Z'}^{\emptyset u}&=\{x\in Z: W^s(x,Z)\cap\mathfrak{q}_{D}(Z')=\emptyset,
W^u(x,Z)\cap\mathfrak{q}_{D}(Z')\neq\emptyset\}\\
E_{Z,Z'}^{\emptyset\emptyset}&=\{x\in Z: W^s(x,Z)\cap\mathfrak{q}_{D}(Z')=\emptyset,
W^u(x,Z)\cap\mathfrak{q}_{D}(Z')=\emptyset\}.
\end{align*}
Call this partition
$\mathfs P_{Z,Z'}:=\{E_{Z,Z'}^{su},E_{Z,Z'}^{s\emptyset},E_{Z,Z'}^{\emptyset u},E_{Z,Z'}^{\emptyset\emptyset}\}$.
Clearly, $E^{su}_{Z,Z'}=Z\cap \mathfrak{q}_D(Z')$.

\medskip
\noindent
{\sc The partition $\mathfs E_Z$:} It is the coarser partition of $Z$ that refines
all of $\mathfs P_{Z,Z'}$, $Z'\in\mathfs I_Z$.

\medskip
To define a partition of $\mathfs Z$, we define an equivalence relation on $\mathfs Z$.

\medskip
\noindent
{\sc Equivalence relation $\simN$ on $\mathfs Z$:} For $x,y\in\mathfs Z$, we write
$x\simN y$ if for any $|k|\leq N$:
\begin{enumerate}[aa)]
\item[(i)] For all $Z\in\mathfs Z$: $H^k(x)\in Z\Leftrightarrow H^k(y)\in Z$.
\item[(ii)] For all $Z\in\mathfs Z$ such that $H^k(x),H^k(y)\in Z$, the points $H^k(x),H^k(y)$
belong to the same element of $\mathfs E_Z$. 
\end{enumerate}

\medskip
Clearly $\simN$ is an equivalence relation in $\mathfs Z$, hence it defines a partition of $\mathfs Z$.
Before proceeding, let us state a fact that will be used in the sequel: if $x\simN y$ with $x\in Z=Z(\Psi_{x_0}^{p^s_0,p^u_0})\in\mathfs Z$, then there exists $|k|\leq N$ such that
$g_{x_0}^+(x)=H^k(x)$ and $g_{x_0}^+(y)=H^k(y)$. To see this, write
$x=\pi(\un v)$ with $v_0=\Psi_{x_0}^{p^s_0,p^u_0}$,
and let $D'$ be the connected component of $\widehat\Lambda$ with $Z(v_1)\subset D'$.
On one hand, $g_{x_0}^+(y)=\mathfrak q_{D'}(y)$. On the other hand,
since $H^k(x)\in Z(v_1)\subset D'$ for some $|k|\leq N$, the definition of $\simN$ implies that
$H^k(y)\in Z(v_1)\subset D'$, hence $H^k(y)=\mathfrak q_{D'}(y)$. A similar result holds for $g_{x_0}^-$.

\medskip
\noindent
{\sc The Markov partition $\mathfs R$:} It is the partition of $\mathfs Z$ whose elements are the
equivalence classes of $\simN$.

\medskip
By definition, $\mathfs R$ is a refinement of $\mathfs Z$.

\begin{lemma}\label{Lemma-local-finite}
The partition $\mathfs R$ satisfies the following properties.
\begin{enumerate}[{\rm (1)}]
\item For every $Z\in\mathfs Z$, $\#\{R\in\mathfs R:R\subset \vf^{[-\rho,\rho]}Z\}<\infty$.
\item For every $R\in\mathfs R$, $\#\{Z\in\mathfs Z:R\subset \vf^{[-\rho,\rho]}Z\}<\infty$.
\end{enumerate}
\end{lemma}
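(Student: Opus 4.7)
For part (1), my plan is to reduce to counting $\simN$--equivalence classes inside a single $Z' \in \mathfs Z$. Since $\mathfs R$ refines $\mathfs Z$, any $R \in \mathfs R$ with $R \subset \vf^{[-\rho,\rho]}Z$ is contained in some $Z' \in \mathfs Z$, and the inclusion $\emptyset \neq R \subset Z' \cap \vf^{[-\rho,\rho]}Z$ forces $Z' \in \mathfs I_Z$. Since $\mathfs I_Z$ is finite, it suffices to bound, for each fixed $Z' \in \mathfs I_Z$, the number of $\simN$--classes contained in $Z'$.

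To carry this out, I would unfold the definition of $\simN$: the class of $x \in Z'$ is determined by, for each $k$ with $|k| \le N$, (a) the collection $\{Z \in \mathfs Z : H^k(x) \in Z\}$, and (b) for each such $Z$, the cell of $\mathfs E_Z$ containing $H^k(x)$. Only $Z$ meeting $H^k(Z')$ can contribute, so I would introduce $\mathfs S_k := \{Z \in \mathfs Z : Z \cap H^k(Z') \ne \emptyset\}$ and prove by induction on $|k|$ that $\mathfs S_k$ is finite for all $|k| \le N$. The cases $|k|\le 1$ follow directly from Proposition \ref{Prop-Z}(2) applied to $Z'$. For the inductive step, note that if $Z''' \in \mathfs S_{k+1}$ then there exist $x \in Z'$ and $Z'' \in \mathfs S_k$ with $H^k(x) \in Z''$ and $H^{k+1}(x) \in Z'''$, so $H(Z'') \cap Z''' \ne \emptyset$; this gives $\mathfs S_{k+1} \subset \bigcup_{Z'' \in \mathfs S_k}\{Z''' : H(Z'') \cap Z''' \ne \emptyset\}$, a finite union of finite sets by Proposition \ref{Prop-Z}(2). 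Combined with the fact that each $\mathfs E_Z$ is finite (being the coarsest refinement of the four-element partitions $\mathfs P_{Z, Z''}$ indexed by the finite set $\mathfs I_Z$), this bounds the number of possible types, hence of $\simN$--classes inside $Z'$. Summing over the finite set $\mathfs I_Z$ concludes part (1).

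For part (2), the argument is short. Fix $R \in \mathfs R$ and choose $Z_0 \in \mathfs Z$ with $R \subset Z_0$ (possible since $\mathfs R$ refines $\mathfs Z$). Pick any $y \in R$. For any $Z \in \mathfs Z$ with $R \subset \vf^{[-\rho,\rho]}Z$, we have $y \in \vf^{[-\rho,\rho]}Z$, i.e., $y = \vf^{t}(z)$ for some $z \in Z$ and $|t| \le \rho$. Since $y \in Z_0$, this yields $z \in Z \cap \vf^{[-\rho,\rho]}Z_0$, whence $Z \in \mathfs I_{Z_0}$. Finiteness of $\mathfs I_{Z_0}$ closes the argument.

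The main obstacle will be the iteration of Proposition \ref{Prop-Z}(2) to obtain finiteness of $\mathfs S_k$ for all $|k| \le N$. The induction is conceptually straightforward, but hinges on two observations: that $H$ maps $\mathfs Z$ into $\mathfs Z$ (so intermediate iterates actually live in $\mathfs Z$ and one-step local finiteness applies), and that $N$ is a universal constant fixed by Lemma \ref{l.time}, which guarantees that the induction terminates in uniformly many steps regardless of the starting $Z'$.
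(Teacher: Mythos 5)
Your proof is correct. Part (2) is identical to the paper's argument (pick $Z_0\supset R$ and observe $\{Z:R\subset\vf^{[-\rho,\rho]}Z\}\subset\mathfs I_{Z_0}$). Part (1) follows the same overall skeleton as the paper — every $R$ with $R\subset\vf^{[-\rho,\rho]}Z$ lies inside some $Z'\in\mathfs I_Z$, so one sums a per-rectangle count over the finite set $\mathfs I_Z$ — but you justify the per-rectangle count differently. The paper simply asserts $\#\{R\in\mathfs R:R\subset Z'\}\leq 4^{\#\mathfs I_{Z'}}$, i.e. it bounds the number of $\simN$--classes in $Z'$ by the number of cells of $\mathfs E_{Z'}$; that bound only reflects the $k=0$ conditions relative to $Z'$ itself, whereas $\simN$ also imposes, for every $|k|\leq N$, membership conditions in all rectangles containing $H^k(x)$ and cell conditions in each of their partitions, so a single cell of $\mathfs E_{Z'}$ can a priori split into several classes. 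Your induction showing that each $\mathfs S_k=\{Z\in\mathfs Z:Z\cap H^k(Z')\neq\emptyset\}$ is finite for $|k|\leq N$ — seeded by Proposition \ref{Prop-Z}(2) and propagated one step at a time, using that $H$ maps $\mathfs Z$ into $\mathfs Z$ and that $N$ is the uniform constant of Lemma \ref{l.time} — is precisely the extra ingredient needed to control those additional conditions; combined with the finiteness of each $\mathfs E_Z$ it gives a finite (if larger) bound on the number of profiles, hence of classes. In short: same decomposition, but your version supplies the justification for the step the paper compresses into a single inequality, at the cost of a longer argument and a less explicit numerical bound.
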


\begin{proof} In three dimensions, this is \cite[Lemma 8.2]{BCL23}, and the same proof applies. Since it is short, we include it.

\medskip
\noindent
(1) Start noting that, for every $Z\in\mathfs Z$,
$\#\{R\in\mathfs R:R\subset Z\}\leq 4^{\#\mathfs I_Z}$. Hence
$$
\#\{R\in\mathfs R:R\subset \vf^{[-\rho,\rho]}Z\}\leq
\sum_{Z'\in\mathfs I_Z}\#\{R\in\mathfs R:R\subset Z'\}
\leq \sum_{Z'\in\mathfs I_Z}4^{\#\mathfs I_{Z'}}<+\infty
$$
since the last summand is the finite sum of finite numbers.

\medskip
\noindent
(2) For any $Z'\in\mathfs Z$ such that $Z'\supset R$, we have
$\{Z\in\mathfs Z:R\subset \vf^{[-\rho,\rho]}Z\}\subset \mathfs I_{Z'}$.
Since each $\mathfs I_{Z'}$ is finite, the result follows.
\end{proof}

The final step in the refinement procedure is to show that $\mathfs R$ is a Markov
partition for the map $H$, in the sense of Sina{\u\i} \cite{Sinai-MP-U-diffeomorphisms}. 

\medskip
\noindent
{\sc $s$/$u$--fibres in $\mathfs R$:} Given $x$ in $R\in\mathfs R$, we define the {\em $s$--fibre}
and {\em $u$--fibre} of $x$ by:
\begin{align*}
W^s(x,R):=
\bigcap_{Z\in\mathfs Z:Z\supset R} V^s(x,Z)\cap R
\, \text{, }\quad W^u(x,R):=
\bigcap_{Z\in\mathfs Z:Z\supset R} V^u(x,Z)\cap R.
\end{align*}

By Proposition \ref{Prop-disjointness}, any two $s$--fibres ($u$--fibres) in $\mathfs R$ either coincide or are disjoint.

\begin{proposition}\label{Prop-R}
The following are true.
\begin{enumerate}[{\rm (1)}]
\item {\sc Product structure:} For every $R\in\mathfs R$ and every $x,y\in R$, the intersection
$W^s(x,R)\cap W^u(y,R)$ is a single point, and this point is in $R$. Denote it by $[x,y]$.
\item {\sc Hyperbolicity:} If $z,w\in W^s(x,R)$ then $d(H^n(z),H^n(w))\xrightarrow[n\to\infty]{}0$, and
if $z,w\in W^u(x,R)$ then $d(H^n(z),H^n(w))\xrightarrow[n\to-\infty]{}0$. The rates are exponential.
\item {\sc Geometrical Markov property:} Let $R_0,R_1\in\mathfs R$. If $x\in R_0\cap H^{-1}(R_1)$ then 
$$
H(W^s(x,R_0))\subset W^s(H(x),R_1)\, \text{ and }\, H^{-1}(W^u(H(x),R_1))\subset W^u(x,R_0).
$$
\end{enumerate}
\end{proposition}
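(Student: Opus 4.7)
The plan is to verify the three parts by exploiting the structure of $\mathfs R$ as a refinement of $\mathfs Z$ via the equivalence $\simN$ and the partitions $\mathfs E_Z$, and by systematically transferring the symbolic Markov property of $\mathfs Z$ (Proposition \ref{Prop-Z}(4)) into a geometric Markov property for the return map $H$. For part (1), given $x,y\in R$, I would define $[x,y]:=[x,y]_Z$ using Proposition \ref{Prop-Z}(3) for any $Z\in\mathfs Z$ with $R\subset Z$; the independence of the choice of $Z$ follows from Proposition \ref{Prop-overlapping-charts}(3), which supplies the required compatibility of Smale brackets under the holonomy maps $\mathfrak q_D$ between overlapping charts. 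The crucial point is to verify $[x,y]\simN x$ (and symmetrically $\simN y$): for each $|k|\leq N$ one must check that $H^k([x,y])$ visits the same charts of $\mathfs Z$ and the same atoms of $\mathfs E_Z$ as $H^k(x)$. This is where the design of $\mathfs E_Z$ pays off, since its atoms are determined precisely by which fibres meet $\mathfrak q_D(Z')$, and $[x,y]$ lies on the same stable fibre as $x$ and on the same unstable fibre as $y$; a short iteration using Proposition \ref{Prop-Z}(4) and Lemma \ref{l.time} propagates these coincidences through $|k|\leq N$ steps.

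For part (2), I would observe that $z,w\in W^s(x,R)$ implies $z,w\in V^s(x,Z)$ for every $Z\supset R$, and then invoke Proposition \ref{Prop-center-stable} to lift the stable manifolds to sets that contract exponentially \emph{under the flow} $\vf$. Since $H$ is the first-return map to $\mathfs Z$ and the successive return times are uniformly bounded below and above, the exponential contraction $d(\vf^t(\tilde z),\vf^t(\tilde w))\leq e^{-ct}$ from Proposition \ref{Prop-center-stable} translates immediately to exponential contraction of $d(H^n(z),H^n(w))$ for $n\geq 0$; the unstable case is symmetric.

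For part (3), let $x\in R_0\cap H^{-1}(R_1)$ and $y\in W^s(x,R_0)$. I separate the claim into a geometric assertion ($H(y)$ lies in $V^s(H(x),Z)$ for every $Z\supset R_1$) and a combinatorial one ($H(y)\simN H(x)$, hence $H(y)\in R_1$). For the geometric assertion, the symbolic Markov property of Proposition \ref{Prop-Z}(4) gives $g_{x_0}^+(y)\in V^s(g_{x_0}^+(x),Z(v_1))$; applying Lemma \ref{l.time} to express $g_{x_0}^+$ as a composition of bounded iterates of $H$ and repeating the argument at each intermediate return yields $H(y)\in V^s(H(x),Z)$ for every relevant $Z$. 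For the combinatorial assertion, the conditions defining $\simN$ at iterates $H^{k+1}(x),H^{k+1}(y)$ with $-N\leq k+1\leq N$ are already guaranteed by $x\simN y$; the extremal index $k+1=N+1$ is handled by using that $y$ and $x$ share the stable fibre in $Z_0$, so their forward $\mathfs Z$-itineraries are forced to coincide one step further by the symbolic Markov property and the fact that $N$ from Lemma \ref{l.time} was chosen large enough to encode one full symbolic step of $g_{x_0}^+$. The second inclusion, $H^{-1}(W^u(H(x),R_1))\subset W^u(x,R_0)$, is proved in an entirely symmetric manner using backward holonomies.

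The main obstacle will be the combinatorial part of (3): bridging the symbolic Markov property of $\mathfs Z$, which holds for the composite maps $g_{x_0}^{\pm}=H^k$ with $0<k<N$ rather than for $H$ itself, into a one-step geometric Markov property on $\mathfs R$. This mirrors Bowen's refinement argument in \cite{Bowen-Symbolic-Flows}, but in our non-uniformly hyperbolic and countable setting the verification is more delicate: one must keep careful track of the extremal indices in the $\simN$-relation and invoke the local finiteness of $\mathfs R$ from Lemma \ref{Lemma-local-finite} to ensure that all intermediate steps involve only finitely many rectangles. Once this is handled, the remainder of (3) reduces to the bookkeeping already encapsulated in Propositions \ref{Prop-Z} and \ref{Prop-overlapping-charts}.
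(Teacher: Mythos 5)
Your outline for parts (1) and (2) is consistent with what the paper does (it defers both to \cite[Prop.~8.3]{BCL23}): the bracket $[x,y]_Z$ is well defined and chart-independent by Proposition \ref{Prop-Z}(3) and Proposition \ref{Prop-overlapping-charts}(3), and membership in $R$ reduces to checking $[x,y]\simN x$, which follows because $[x,y]$ tracks $x$ forward and $y$ backward while $x\simN y$. For (2) the paper's natural route is the hyperbolicity estimate of Theorem \ref{Thm-stable-manifolds}(3) for the compositions $G_n$ of holonomy maps together with Lemma \ref{l.time}; your detour through Proposition \ref{Prop-center-stable} also works but you should note that $H^n(z)$ is not literally a point of $\vf^t(\wt V^s)$ (there is a bounded shear $\Delta$ and a $z$-dependent return time to account for), so the translation is not quite ``immediate''.

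The genuine gap is in part (3), at the extremal index $k=N+1$. You correctly reduce to showing $H^{N+1}(x)$ and $H^{N+1}(y)$ lie in the same atom of $\mathfs E_Z$, but then dispose of it by saying that sharing a stable fibre ``forces the itineraries to coincide one step further.'' That only handles the easy half of the problem. The atoms of $\mathfs E_Z$ are cut by \emph{two} families of conditions: whether $W^s(\cdot,Z)$ meets $\mathfrak q_D(Z')$ and whether $W^u(\cdot,Z)$ meets $\mathfrak q_D(Z')$, for each $Z'\in\mathfs I_Z$. The stable condition is indeed immediate, because $W^s(H^{N+1}(x),Z)=W^s(H^{N+1}(y),Z)$ propagates from $y\in W^s(x,R_0)$ via Proposition \ref{Prop-overlapping-charts}(2). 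But the unstable fibres $W^u(H^{N+1}(x),Z)$ and $W^u(H^{N+1}(y),Z)$ are genuinely distinct sets, and nothing about coinciding forward itineraries tells you that if the first meets $\mathfrak q_D(Z')$ then so does the second. This is the actual content of the paper's proof: given $z\in W^u(H^{N+1}(x),Z)\cap\mathfrak q_D(Z')$, one pulls $z$ back one symbolic step (via the symbolic Markov property) to a point $\widetilde z\in W^u(\widetilde x,\widetilde Z)$ with $\widetilde Z=Z(v_{-1})$, locates a rectangle ($\widetilde Z'$ or $Z'$ itself, depending on whether the relevant time offset exceeds $\rho$ --- the two cases in the paper) affiliated to $\widetilde Z$, invokes the inductive hypothesis on the partition $\mathfs P_{\widetilde Z,\cdot}$ at index $k\le N$ to produce an intersection point on $W^u(\widetilde y,\widetilde Z)$, replaces it by the Smale bracket $[\widetilde z,\widetilde y]_{\widetilde Z}$, and pushes forward using Proposition \ref{Prop-overlapping-charts}(3) and Proposition \ref{Prop-Z}(4) to land in $W^u(H^{N+1}(y),Z)\cap\mathfrak q_D(Z')$. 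Without this pull-back/bracket/push-forward argument (and the case distinction on the time offset, which is precisely the case omitted in \cite{BCL23} that the present paper supplies), your proof of the geometric Markov property does not close.
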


For flows in three dimensions, this statement is \cite[Prop. 8.3]{BCL23}. Since the proof of part (3) there does not treat all cases, we have decided to include the proof of all cases for completeness.

\begin{proof}
Parts (1) and (2) are proved as in \cite[Prop. 8.3]{BCL23}. Let us prove part (3), first recasting the case treated in \cite{BCL23} and then proving the case not covered there.

Fix $R_0,R_1\in\mathfs R$ and $x\in R_0\cap H^{-1}(R_1)$.
We check that $H(W^s(x,R_0))\subset W^s(H(x),R_1)$ (the other inclusion is proved
similarly). Let $y\in W^s(x,R_0)$. By Proposition~\ref{Prop-overlapping-charts}(2)
and the definition of $W^s(H(x),R_1)$, it is enough to check that $H(x)\simN H(y)$.
Since $x\simN y$, we already know that $H^k(x),H^k(y)$ satisfy the properties (i) and (ii)
defining the relation $\simN$ when $-N\leq k\leq N$, hence it is enough to prove that this is also
true for $k=N+1$. The property (ii) for $k=N$ says that
$H^{N}(x), H^{N}(y)$ belong to the same elements of the partitions
$\mathfs E_Z$. We claim that this implies that $H^{N+1}(x),H^{N+1}(y)$ belong to the same
sets $Z\in \mathfs Z$, which gives (i) for $k=N+1$. To see this, let $Z'\in\mathfs Z$
such that $H^{N+1}(x)\in Z'$, and let $D'$ be the connected component of $\widehat\Lambda$
that contains $Z'$.
Let $Z\in\mathfs Z$ containing $H^N(x),H^N(y)$. Noting that 
$H^N(x)\in E^{su}_{Z,Z'}$, it follows from property (ii) for $k=N$ that 
$H^N(y)\in E^{su}_{Z,Z'}$, hence $\mathfrak q_{D'}(H^N(y))\in Z'$. If
$\mathfrak q_{D'}(H^N(y))=H^{N+1}(y)$, the claim is proved. If not, 
there is $Z''\in\mathfs Z$ such that $H^{N+1}(y)\in Z''$,
and so repeating the same argument with the roles of $x,y$ interchanged
gives that $\mathfrak q_{D''}(H^N(x))\in Z''$, a contradiction since 
the time transition from $Z$ to $Z''$ is smaller than time transitions from $Z$ to $Z'$.
Hence property (i) for $k=N+1$ is proved, and
it remains to prove property (ii) for $k=N+1$.

Let $Z\in \mathfs Z$ be a rectangle which contains $H^{N+1}(x),H^{N+1}(y)$
and let $D$ be the connected component of $\widehat\Lambda$ that contains $Z$.
We need to show that $H^{N+1}(x),H^{N+1}(y)$ belong to the same element of $\mathfs E_Z$.
We first note that $W^s(H^{N+1}(x),Z)=W^s(H^{N+1}(y),Z)$:
since $x,y$ belong to the same $s$--fibre of a rectangle in $\mathfs Z$,
this can be checked by applying Proposition~\ref{Prop-overlapping-charts}(2) inductively.
In particular, we have the following property:
\begin{equation}\label{e.stable}
\forall Z'\in \mathfs I_Z,\quad
W^s(H^{N+1}(x),Z)\cap \mathfrak{q}_{D}(Z')\neq \emptyset
\iff W^s(H^{N+1}(y),Z)\cap \mathfrak{q}_{D}(Z')\neq \emptyset.
\end{equation}
We then prove the analogous property for the sets $W^u(H^{N+1}(x),Z)$, $W^u(H^{N+1}(y),Z)$.

Write $H^{N+1}(x)=\pi(\un v)$ with
$\un v=\{v_n\}_{n\in\Z}=\{\Psi_{x_n}^{p^s_n,p^u_n}\}_{n\in\Z}\in\Sigma^\#$ and $Z=Z(v_0)$.
By Lemma~\ref{l.time}, there exists $0\leq k\leq N$ such that 
the point $\widetilde x:=H^k(x)$ coincides with $\pi[\sigma^{-1}(\un v)]$.
The rectangle $\widetilde Z:=Z(v_{-1})$ contains $\widetilde x$. By the induction assumption, the point $\widetilde y:=H^k(y)$ also belongs to $\widetilde Z$.

Let us consider $Z'\in  \mathfs I_Z$ and assume for instance that
$W^u(H^{N+1}(x),Z)\cap \mathfrak{q}_{D}(Z')$ contains a point $z$
(the case when $W^u(H^{N+1}(y),Z)\cap \mathfrak{q}_{D}(Z')\neq \emptyset$ is treated analogously).
Let $|s|<2\rho$ s.t. $\varphi^s(z)\in Z'$.
The symbolic Markov property in Proposition~\ref{Prop-Z}(4)
implies that the image of $W^u(\widetilde x,\widetilde Z)$ under $g^+_{x_{-1}}$
contains $W^u(H^{N+1}(x),Z)$, hence the point $z$.
In particular, the backward orbit of $z$ under the flow intersects $W^u(\widetilde x,\widetilde Z)$
at some point $\widetilde z=\vf^{\wt s}(z)$.
Here is where we make a distinction between two cases.

\medskip
\noindent
{\sc Case 1: $|s-\wt s|>\rho$.}

\medskip
This is the case treated in the proof of \cite[Prop. 8.3]{BCL23}.
Figure \ref{figure-markov} contains the points we will define below. Write $\varphi^s(z)=\pi(\un w)$ with $\un w=\{w_n\}_{n\in\Z}\in\Sigma^\#$ and
$Z'=Z(w_0)$. Since all transition times of holonomy maps 
are bounded by $\rho$, necessarily the piece of orbit $\vf^{[0,\rho]}(\widetilde z)$
contains some $\pi[\sigma^{-b}(\un w)]$ with $b\geq 1$. Let $b\geq 1$ and $0\leq \widetilde s'\leq \rho$
with $\pi[\sigma^{-b}(\un w)]=\varphi^{\widetilde s'}(\widetilde z)$.
Consequently the rectangle
$\widetilde Z':=Z(w_{-b})$ belongs to $ \mathfs I_{\widetilde Z}$. 
Moreover, $\widetilde z$ belongs to the intersection between $W^u(\widetilde x,\widetilde Z)$ and 
$\mathfrak q_{\wt D}(\widetilde Z')$, where $\widetilde D$ is the connected component of $\widehat\Lambda$ 
containing $\widetilde Z$.

\begin{figure}[hbt!]
\centering
\def\svgwidth{14cm}
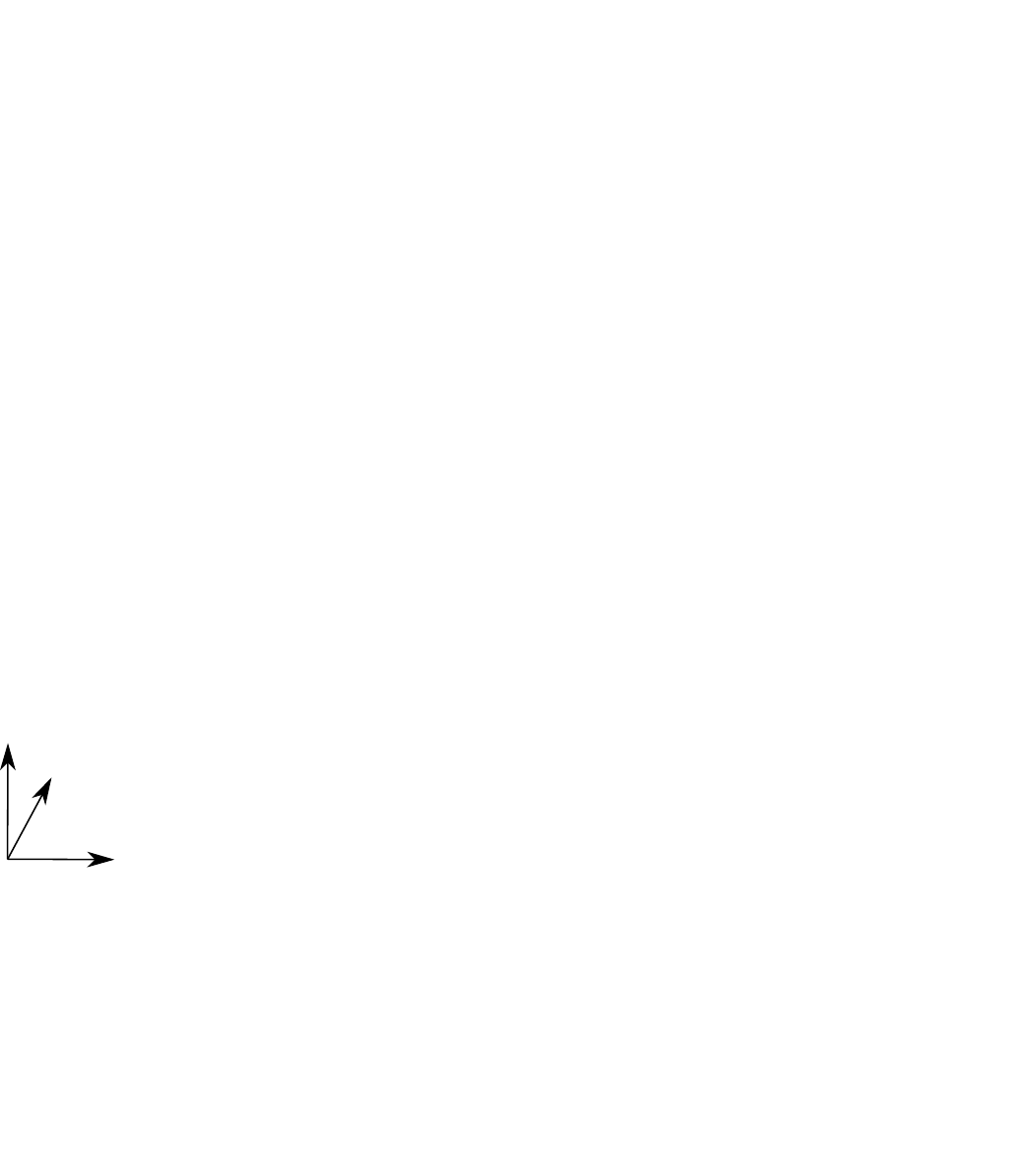\caption{Case 1: $|s-\wt s|>\rho$.}
\label{figure-markov}
\end{figure}

Again by the induction assumption, the point $\widetilde y:=H^k(y)$ belongs to the same element of the partition $\mathfs P_{\widetilde Z, \widetilde Z'}$ as $\widetilde x$.
Since $W^u(\widetilde x,\widetilde Z)$ intersects $\mathfrak q_{\widetilde D}(\widetilde Z')$,
the $u$--fibre $W^u(\widetilde y,\widetilde Z)$ intersects it as well at some point $\widetilde t$.
Note that $[\widetilde z, \widetilde t]_{\widetilde Z}=[\widetilde z, \widetilde y]_{\widetilde Z}$
also belongs to $W^u(\widetilde y,\widetilde Z)$ and to
$\mathfrak q_{\widetilde D}(\widetilde Z')$ (this latter property follows from
Proposition \ref{Prop-overlapping-charts}(3), noting that 
$\widetilde z,\widetilde t\in \widetilde Z\cap \mathfrak q_{\widetilde D}(\widetilde Z')$),
hence we can replace $\widetilde t$
by any point in $W^u(\widetilde y,\widetilde Z)\cap \mathfrak q_{\widetilde D}(\widetilde Z')$.
Take $\widetilde t :=[\widetilde z,\widetilde y]_{\widetilde Z}$.

Let $|r|\leq 2\rho$ such that $\varphi^r(\widetilde t)\in W^s(\vf^{\widetilde s'}(\widetilde z), \widetilde Z')$.
The symbolic Markov property in Proposition~\ref{Prop-Z}(4) then implies that
its forward orbit under the flow will meet the rectangles
$Z(w_{-b})$,\dots, $Z(w_0)=Z'$, hence in particular it meets $Z'$. 

Note that $\widetilde z\in \widetilde Z=Z(v_{-1})$ and $z=g^+_{x_{-1}}(\widetilde z)\in Z=Z(v_0)$.
The same property holds for $\widetilde y$ and $H^{N+1}(y)=g^+_{x_{-1}}(\widetilde y)$ since
the points $H^i(x)$ and $H^i(y)$ belong to the same rectangles in $\mathfs Z$ for each $i=k,\dots,N+1$.
Using Proposition~\ref{Prop-overlapping-charts}(3), it follows that
the image of $\widetilde t=[\widetilde z,\widetilde y]_{\widetilde Z}$
by $g^+_{x_{-1}}$ belongs to $Z$ and coincides with the Smale product
$[z,H^{N+1}(y)]_Z$.

The properties found in the two previous paragraphs imply that
$W^u(H^{N+1}(y),Z)$ intersects $\mathfrak{q}_{D}(Z')$
at a point of the orbit of $\widetilde t$, contained in $W^s(z,Z)$.
In particular, the intersection $W^u(H^{N+1}(y),Z)\cap \mathfrak{q}_{D}(Z')$ is non-empty.
We have thus shown:
\begin{equation}\label{e.unstable}
\forall Z'\in \mathfs I_Z,\quad
W^u(H^{N+1}(x),Z)\cap \mathfrak{q}_{D}(Z')\neq \emptyset
\iff  W^u(H^{N+1}(y),Z)\cap \mathfrak{q}_{D}(Z')\neq \emptyset.
\end{equation}
Properties~\eqref{e.stable} and~\eqref{e.unstable} mean that
$H^{N+1}(x)$ and $H^{N+1}(y)$ belong to the same element of $\mathfs E_Z$
for any rectangle $Z\in \mathfs Z$ containing $H^{N+1}(x),H^{N+1}(y)$.
This concludes the proof that $H(x)\simN H(y)$ in this case.

\medskip
\noindent
{\sc Case 2: $|s-\wt s|\leq \rho$.}

\begin{figure}[hbt!]
\centering
\def\svgwidth{15cm}
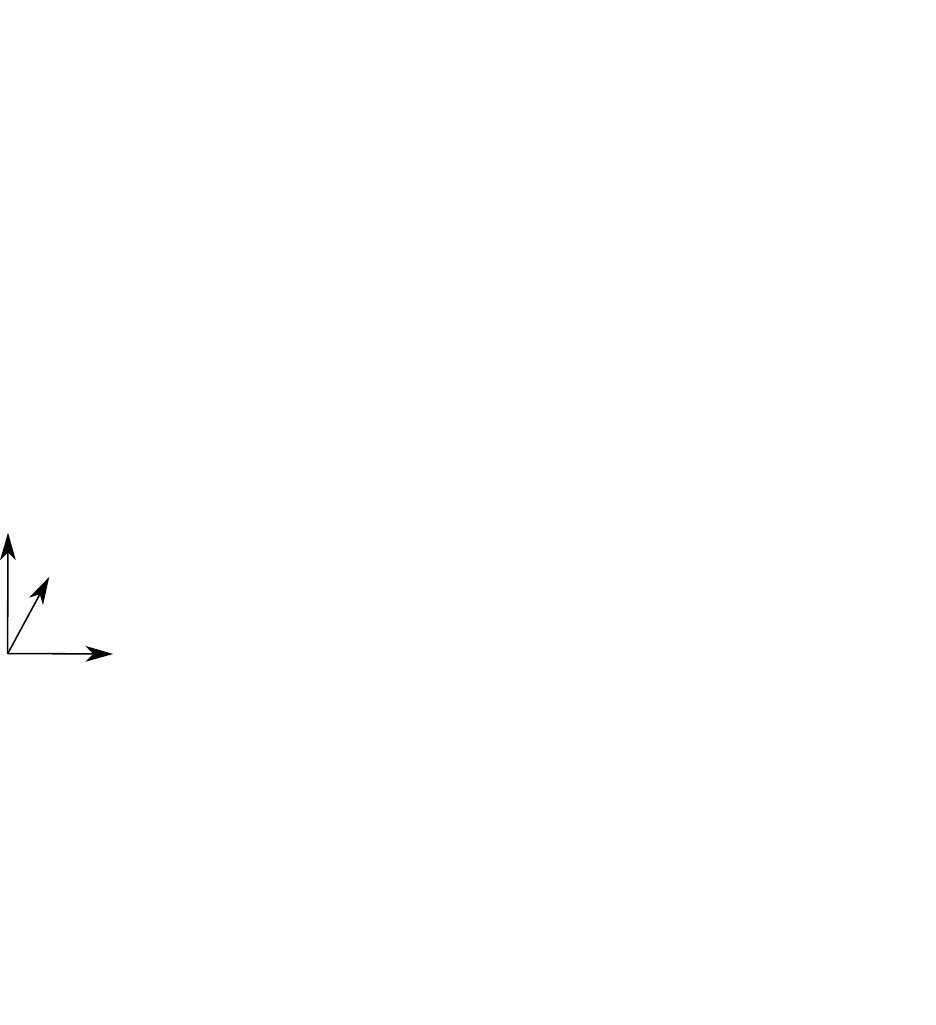\caption{Case 2: $|s-\wt s|\leq \rho$.}
\label{figure-markov-2}
\end{figure}

\medskip
We have that $\wt z\in \wt Z\cap \vf^{[-\rho,\rho]}Z'$, and so $Z'\in \mathfs  I_{\wt Z}$.
Now we adapt the proof of Case 1 as follows.
By the induction assumption, the point $\widetilde y:=H^k(y)$ belongs to the same element of the partition $\mathfs P_{\widetilde Z, Z'}$ as $\widetilde x$.
Since $W^u(\widetilde x,\widetilde Z)$ intersects $\mathfrak q_{\widetilde D}(Z')$,
the $u$--fibre $W^u(\widetilde y,\widetilde Z)$ intersects it as well at some point $\widetilde t$. 
As in Case 1, we can take 
$\widetilde t :=[\widetilde z,\widetilde y]_{\widetilde Z}$.
Using Proposition~\ref{Prop-overlapping-charts}(3) for the points $z\in Z$ and $\wt y\in \wt Z$, 
it follows that
the image of $\widetilde t=[\widetilde z,\widetilde y]_{\widetilde Z}$
by $g^+_{x_{-1}}$ belongs to $Z$ and coincides with the Smale product $[z,H^{N+1}(y)]_Z$. As in Case 1, we conclude the validity of property \eqref{e.unstable}.
This completes the proof that $H(x)\simN H(y)$ in this case, and of part (3) of the proposition.
\end{proof}

\section{A finite-to-one extension}

In this section, we construct a finite-to-one extension and deduce the Main Theorem.
We rely on the
family of disjoint sets $\mathfs R$ satisfying a geometrical Markov property.
This family was obtained in the previous section
as a refinement of the family $\mathfs Z$ constructed in Section \ref{Section-locally-finite-section}, which
was itself induced by the coding $\pi$ introduced in Section \ref{ss.first.coding}.
One important property of $\mathfs Z$ is that, due to the inverse theorem
(Theorem \ref{Thm-inverse}), it satisfies a local finiteness property, see Proposition \ref{Prop-Z}(2).
We use these objects to construct a symbolic coding of the return map $H$.

\subsection{A detailed statement}\label{s.detailed}

The theorem below implies the Main Theorem and includes additional properties that will
be useful for some applications, including the one we will obtain in Section \ref{sec.homoclinic}.
We begin defining a Bowen relation for flows. This notion was formalized for diffeomorphisms in \cite{Boyle-Buzzi},
and the following is an adaptation for flows, introduced in \cite{BCL23}. We refer to \cite{Buzzi-JMD} for a discussion on the notion.

\newcommand\hS{\widehat S}

Let  $T_r: S_r\to S_r$ be a suspension flow over a symbolic system $S$ that is an extension
of some flow $U:X\to X$ by a semiconjugacy map $\pi:S_r\to X$,  i.e.
$U^t\circ\pi=\pi\circ T^t_r$ for all $t\in\R$.\\

\noindent
{\sc Bowen relation:} A \emph{Bowen relation} $\sim$ for $(T_r,\pi,U)$ is a symmetric binary
relation on the alphabet of $S$ satisfying the following two properties:
\begin{enumerate}[i,]
\item[{\rm (i)}] $\forall\omega,\omega'\in S_r,\;\; \pi(\omega)=\pi(\omega')\implies \operatorname{v}(\omega)\sim\operatorname{v}(\omega')$, where $\operatorname{v}(x,t):=x_0$ for $x\in S$;
\item[{\rm (ii)}] $\exists \gamma>0$ with the following property:
$$\forall\omega,\omega'\in S_r,\;\; \left[ \forall t\in\R,\; \operatorname{v}(T_r^t\omega)\sim\operatorname{v}(T_r^t\omega') \right] \implies \left[ \exists |t|<\gamma\text{ s.t. } \pi(\omega)= U^t(\pi(\omega')) \right].
$$
  \end{enumerate}

\begin{theorem}\label{t.main}
Let $X$ be a non-singular $C^{1+\beta}$ vector field ($\beta>0$) on a closed manifold $M$.
Given $\chi>0$, there exists a locally compact topological Markov flow
$(\widehat \Sigma_{\widehat r},\widehat\sigma_{\widehat r})$ with graph
$\widehat{\mathfs G}=(\widehat V,\widehat E)$ and roof function $\widehat{r}$
and a map $\widehat \pi_{\widehat r}:\widehat \Sigma_{\widehat r}\to M$ such that
$\widehat \pi_{\widehat r}\circ {\widehat\sigma}_{\widehat r}^t=\vf^t\circ\widehat \pi_{\widehat r}$ for all $t\in\R$, and satisfying:
\smallskip
\begin{enumerate}[{\rm (1)}]
\item $\widehat r$ and $\widehat \pi_{\widehat r}$ are H\"older continuous.
\smallskip

\item $\widehat \pi_{\widehat r}[\widehat \Sigma_{\widehat r}^\#]=\nuh^\#$ has full measure for every $\chi$--hyperbolic measure; for every ergodic $\chi$--hyperbolic measure $\mu$,
there is an ergodic $\widehat\sigma_{\widehat r}$--invariant measure $\overline \mu$ 
on $\widehat \Sigma_{\widehat r}$
such that $\overline \mu\circ\widehat \pi_{\widehat r}^{-1}=\mu$ and $h_{\overline{\mu}}(\widehat \sigma_{\widehat{r}})=h_\mu(\vf)$.
\smallskip

\item If
$(\un R,t)\in \widehat \Sigma_{\widehat r}^{\#}$ satisfies
$R_n=R$ and $R_m=S$ for infinitely many $n<0$ and $m>0$, then $\operatorname{Card}\{z\in \widehat \Sigma_{\widehat r}^\#:\widehat \pi_{\widehat r}(z)=\widehat \pi_{\widehat r}(\un R,t)\}$
is bounded by a number $C(R,S)$, depending only on $R,S$.
\smallskip

\item\label{i.splitting} There is $\lambda>0$ and for $x\in \widehat \pi_{\widehat r}(\widehat \Sigma_{\widehat r})$ there is a unique splitting
$N_{x}=N^s_{x}\oplus N^u_{x}$ such that:
\begin{align*}
\limsup_{t\to +\infty} \tfrac{1}{t}\log \|\Phi^t|_{N^s_{x}}\|\leq -\lambda
\quad \text{ and }\quad\liminf_{t\to +\infty} \tfrac{1}{t}\log \|\Phi^{-t}|_{N^s_{x}}\|\geq \lambda\\
\quad \limsup_{t\to +\infty} \tfrac{1}{t}\log \|\Phi^{-t}|_{N^u_{x}}\|\leq -\lambda
\quad \text{ and }\quad\liminf_{t\to +\infty} \tfrac{1}{t}\log \|\Phi^{t}|_{N^u_{x}}\|\geq \lambda.
\end{align*}
The splitting is $\Phi$--equivariant, and the maps $z\mapsto N^{s/u}_{\widehat \pi_{\widehat r}(z)}$ are H\"older continuous on $\widehat \Sigma_{\widehat r}$.
\smallskip

\item\label{i.manifold} There is $\alpha>0$ and for every $z\in \widehat \Sigma_{\widehat r}$
there are $C^1$ submanifolds $V^{cs}(z),V^{cu}(z)$ passing through $x:=\widehat \pi_{\widehat r}(z)$
such that:
\begin{enumerate}[{\rm (a)}]
\item
$T_{x}V^{cs}(z)=N^s_x+\mathbb{R}\cdot X(x)$ and $T_{x}V^{cu}(z)=N^{u}_x+\mathbb{R}\cdot X(x)$.
\item For all $y\in V^{cs}(z)$, there is $\tau\in \mathbb{R}$ such that
$d(\varphi^t(x),\varphi^{t+\tau}(y))\leq e^{-\alpha t}$, $\forall t\geq 0$.
\item For all $y\in V^{cu}(z)$, there is $\tau\in \mathbb{R}$ such that
$d(\varphi^{-t}(x),\varphi^{-t+\tau}(y))\leq e^{-\alpha t}$, $\forall t\geq 0$.
\end{enumerate}
\smallskip

\item\label{i.Bowen} There is a symmetric binary relation $\sim$ on the alphabet $\widehat V$
satisfying:
\begin{enumerate}[{\rm (a)}]
\item For any $R\in \widehat V$, the set $\{S\in \widehat V:R\sim S\}$ is finite.
\item The relation $\sim$ is a Bowen relation for $(\widehat\sigma_{\widehat r},\widehat\pi_{\widehat r}|_{\widehat\Sigma^\#_{\widehat r}},\vf^t)$.
\end{enumerate}
\smallskip

\item\label{i.canonical} There exists a measurable set $\mathfs R$ with a measurable partition
indexed by $\widehat V$, which we denote by $\{R:R\in\widehat V\}$, such that:
\begin{enumerate}[{\rm (a)}]
\item The orbit of any point $x\in \nuh^\#$ intersects $\mathfs R$.
\item The first return map $H\colon \mathfs R\to \mathfs R$ induced by $\varphi$ is a well-defined bijection.
\item For any $x\in \mathfs R$, if $\un R=\{R_n\}_{n\in\Z}$ satisfies $H^n(x)\in R_n$ for all $n\in\Z$,
then $(\un R,0)\in \widehat \Sigma^\#_{\widehat r}$ and $\widehat \pi_{\widehat r}(\un R,0)=x$.
\end{enumerate}

\item\label{i.lift}
For any compact transitive invariant hyperbolic set $K\subset M$ whose ergodic $\vf$--invariant
measures are all $\chi$--hyperbolic, there is a {\em transitive} invariant compact set 
$X\subset \widehat \Sigma_{\widehat r}$ such that $\widehat\pi_{\widehat r}(X)=K$.
\end{enumerate}
\end{theorem}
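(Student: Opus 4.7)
The plan is to construct $(\widehat\Sigma_{\widehat r},\widehat\sigma_{\widehat r},\widehat\pi_{\widehat r})$ directly from the Markov partition $\mathfs R$ built in Section~\ref{Section-refinement}. Take the alphabet $\widehat V := \mathfs R$, and draw an edge $R\to R'$ whenever $R\cap H^{-1}(R')\neq \emptyset$; by Lemma~\ref{Lemma-local-finite}, the resulting graph $\widehat{\mathfs G}$ has finite ingoing and outgoing degree at each vertex, so the associated TMS $\widehat\Sigma$ is locally compact. For $\un R\in \widehat\Sigma$, the geometrical Markov property combined with the exponential contraction along stable/unstable fibres (Proposition~\ref{Prop-R}) yields a unique point $\widehat\pi(\un R)$ satisfying $H^n(\widehat\pi(\un R))\in R_n$ for every $n\in\Z$. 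Define $\widehat r(\un R)$ as the first return time of $\varphi$ to $\mathfs R$ at $\widehat\pi(\un R)$, which lies between $\inf(r_\Lambda)$ and $\rho$, and set $\widehat\pi_{\widehat r}(\un R,t):=\varphi^t(\widehat\pi(\un R))$ for $(\un R,t)\in \widehat\Sigma_{\widehat r}$.

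For property~(1), H\"older continuity of $\widehat\pi$ reduces to the observation that each $R\in \mathfs R$ refines some $Z(v)\in \mathfs Z$, so admissible sequences in $\widehat\Sigma$ lift to admissible sequences in $\Sigma$; this reduces the problem to Theorem~\ref{Thm-stable-manifolds}(4), and the argument of Proposition~\ref{Prop-pi_R} extends H\"older continuity to $\widehat r$ and $\widehat\pi_{\widehat r}$ in the Bowen--Walters metric. For~(2), Proposition~\ref{Prop-adaptedness} gives $\mu(\nuh^\#)=1$ for any $\chi$--hyperbolic $\mu$, so $\mu$--a.e.\ orbit intersects $\mathfs R$ recurrently; assigning to such a point its bi-infinite sequence of visited rectangles produces an invariant measure $\overline{\mu}$ on $\widehat\Sigma_{\widehat r}$, and the entropy equality follows from Abramov's formula together with essential injectivity of $\widehat\pi_{\widehat r}$ on $\widehat\Sigma^\#_{\widehat r}$ (which uses~(3)). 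Property~(3) is the combinatorial core of the inverse theorem: by Theorem~\ref{Thm-inverse}, any two regular codings of the same point carry Pesin parameters that are uniformly comparable at every coordinate, and the local finiteness of $\mathfs Z$ (Proposition~\ref{Prop-Z}(2)) bounds the number of admissible choices at each coordinate by a quantity depending only on the recurrent symbols $R,S$, yielding the finite bound $C(R,S)$.

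For~(4), lift each $\un R\in \widehat\Sigma$ to a compatible $\un v\in \Sigma$ with $R_n\subset Z(v_n)$ and set $N^{s/u}_{\widehat\pi(\un R)}:=T_{\widehat\pi(\un R)}V^{s/u}[\un v]$; the uniform hyperbolic rates come from Theorem~\ref{Thm-non-linear-Pesin}(2)(a), whose estimate $\|D_s\|,\|D_u^{-1}\|<e^{-\chi r_\Lambda(x)}\leq e^{-\chi\inf(r_\Lambda)}$ at each transition forces the accumulated exponent to grow linearly in the number of returns, each of duration at most $\rho$, yielding a uniform rate $\lambda$ depending only on $\chi,\inf(r_\Lambda),\beta$. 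Uniqueness follows from the asymptotic characterization, and H\"older dependence on $z$ follows again from Theorem~\ref{Thm-stable-manifolds}(4). Property~(5) is obtained by lifting each admissible manifold $V^{s/u}[\un v^{\pm}]$ via the flow displacement of Proposition~\ref{Prop-center-stable} to produce $C^1$ center-stable/unstable submanifolds $V^{cs/cu}(z)$ with the required tangencies and exponential contraction. For~(6), define $R\sim S$ iff $R\cap \varphi^{[-2\rho,2\rho]}S\neq\emptyset$; item~(6)(a) is immediate from local finiteness, and~(6)(b) follows from Theorem~\ref{Thm-inverse}: if two codings visit $\sim$--related rectangles for all times, then the associated Pesin centers lie within bounded flow distance at every coordinate, forcing the projections to lie on the same orbit within uniform time $\gamma$.

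Property~(7) is essentially built into the construction: item~(a) uses Proposition~\ref{Prop-Z}(1) together with the fact that $\mathfs R$ partitions $\mathfs Z$, item~(b) follows from the definition of $H$ and Proposition~\ref{Prop-R}(1), and item~(c) follows from the admissibility of the visit sequence and the Markov property. For~(8), on a compact transitive invariant hyperbolic set $K$ whose ergodic measures are all $\chi$--hyperbolic, a compactness argument yields uniform lower bounds on $q^{s/u}$ and uniform upper bounds on $\|C(\cdot)^{-1}\|$ along $K$; hence only finitely many symbols of $\widehat V$ meet $K$, their union defines a compact invariant subsystem $X$ with $\widehat\pi_{\widehat r}(X)=K$, and transitivity of $X$ is inherited from $K$ via a dense-orbit lifting argument. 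The main obstacle I anticipate lies in property~(3), specifically in extracting from the inverse theorem a quantitative upper bound on preimages that depends only on the two recurrent symbols rather than on the orbit itself; this parallels the delicate arguments in \cite{Sarig-JAMS,O18,BCL23,ALP} and will require combining the bounded-ambiguity from Theorem~\ref{Thm-inverse}(1)--(5) with the recurrence structure enforced by membership in $\widehat\Sigma^\#_{\widehat r}$.
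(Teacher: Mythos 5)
Your overall architecture coincides with the paper's: same graph $\widehat{\mathfs G}$ on the alphabet $\mathfs R$ with edges given by $H(R)\cap S\neq\emptyset$, same coding map and roof function, and for parts (1), (2), (4), (5), (7), (8) your reductions (to Theorem \ref{Thm-stable-manifolds}, Proposition \ref{Prop-adaptedness}, Proposition \ref{Prop-center-stable}, and the compactness argument on $K$) are exactly the ones the paper carries out. One genuine difference: for part (6) you define $R\sim S$ by $R\cap\vf^{[-2\rho,2\rho]}S\neq\emptyset$, whereas the paper uses the coarser \emph{affiliation} ($R\subset Z$, $S\subset Z'$ with $Z'\in\mathfs I_Z$). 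Your relation still satisfies Bowen property (i) (two codings of flow-equivalent points give flow-close rectangles) and is locally finite by the same $\mathfs I_Z$ argument, so it is a workable variant; but note the paper's coarser relation is the one whose local finiteness feeds directly into the counting in part (3) and into the compactness argument in the proof of Theorem \ref{thm.homoclinic}.

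The genuine gap is in part (3). You argue that local finiteness ``bounds the number of admissible choices at each coordinate by a quantity depending only on $R,S$, yielding the finite bound $C(R,S)$.'' That inference fails: a shift with bounded branching at every coordinate still has uncountably many sequences, so coordinate-wise finiteness does not bound the number of preimages of a point. The actual mechanism is Bowen's diamond argument. One defines $N(R)=\#\{(S,Z'): R\sim S,\ S\subset Z'\}$, which is finite, and shows the number of regular preimages is at most $N(R)N(S)$: if there were more, then by the pigeonhole principle applied at the infinitely many positions where the given coding reads $R$ (in the future) and $S$ (in the past), two \emph{distinct} preimages would carry the same affiliated pair $(S',Z')$ at some arbitrarily far future position and the same pair at some arbitrarily far past position; the local product structure, the compatibility of Smale brackets under holonomies (Propositions \ref{Prop-overlapping-charts} and \ref{Prop-overlapping-charts-2}), the geometrical Markov property (Proposition \ref{Prop-R}), and uniqueness of shadowing (Proposition \ref{Prop-shadowing}) then force the two preimages to coincide. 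Your sketch contains neither the pigeonhole step nor the diamond, and the bounded-ambiguity statements of Theorem \ref{Thm-inverse}(1)--(5) alone do not substitute for them. A related (smaller) thinness occurs in your one-line proof of (6)(b): ``Pesin centers within bounded flow distance at every coordinate'' does not by itself force the two points onto the same orbit; the paper must construct, from the second coding, an actual $\ve$--gpo shadowing the first point (Claims 1--5 of Section \ref{ss.conclusion}, verifying $y_{i+1}=g_{X_i}^+(y_i)$) before invoking Proposition \ref{Prop-shadowing}.
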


For flows in three dimensions, this is \cite[Theorem 9.1]{BCL23}.
Part~\eqref{i.Bowen}
is a combinatorial characterization of the noninjectivity of  the coding.
It is an adaptation for flows of the \emph{Bowen property}, which was
introduced in \cite{Boyle-Buzzi} for diffeomorphisms and motivated by the work of
Bowen \cite{Bowen-Regional-Conference}. Note that, in contrast to \cite{Bowen-Regional-Conference},
we {\em do not} claim that the flow restricted to $\widehat\pi_{\widehat r}[\widehat\Sigma_{\widehat r}^\#]$
is topologically equivalent to the corresponding quotient dynamics.

The relation $\sim$ will be the \emph{affiliation}, which will be introduced in Section~\ref{s.finite-one},
following a similar notion introduced in \cite{Sarig-JAMS}. 
Note that the assumption 
$\bigl[\operatorname{v}(\widehat \sigma_{\widehat r}^t(z))\sim \operatorname{v}(\widehat\sigma_{\widehat r}^t(z'))$
for all $t\in \mathbb{R}\bigr]$ consists of countably many
affiliation conditions: if $z=(\un R,s)$ and $z'=(\un S,s')$, then varying $t$ in the interval $[\widehat r_n(\un R),\widehat r_{n+1}(\un R))$
provides $i\leq \tfrac{\sup(\widehat r)}{\inf(\widehat r)}$ affiliations of the form
$R_n\sim S_{m+1},\ldots,R_n\sim S_{m+i}$.

Part~\eqref{i.canonical} provides for any
$x\in  \nuh^\#$ a particular pair $(\un R,t)\in \widehat \Sigma_{\widehat r}^\#$
such that $\widehat \pi_{\widehat r}(\un R,t)=x$
($t$ is the smallest non-negative number such that $\varphi^{-t}(x)\in \mathfs R$). We call
the pair $(\un R,t)$ the \emph{canonical lift} of $x$.
This is a measurable embedding of $\nuh^\#$ into $\widehat\Sigma_{\widehat r}$.
\color{black}

Part~\eqref{i.lift} is a version of \cite[Proposition 3.9]{BCS-MME} in our context, and the proof
is very similar, see Section~\ref{ss.conclusion}.

\subsection{Second coding}\label{s.second}
Let $\widehat{\mathfs G}=(\widehat V,\widehat E)$ be the oriented graph with vertex set
$\widehat V=\mathfs R$ and edge set $\widehat E=\{R\to S:R,S\in\mathfs R\text{ s.t. }H(R)\cap S\neq\emptyset\}$,
and let $(\widehat\Sigma,\widehat\sigma)$ be the TMS induced by $\widehat{\mathfs G}$.
We note that the ingoing and outgoing degree of every vertex in $\widehat\Sigma$ is finite. We show this
for the outgoing edges, since the proof for the ingoing edges is analogous. Fix $R\in\mathfs R$, and fix
$Z\in\mathfs Z$ such that $Z\supset R$. 
If $(R,S)\in\widehat E$ then $\vf^{[0,\rho]}(R)\cap S\ne\emptyset$, hence for any $Z'\in\mathfs Z$ with $S\subset Z'$, we have $Z'\in\mathfs I_Z$. In particular,
$$
\#\{(R,S)\in\widehat E\}\leq \sum_{Z'\in\mathfs I_Z}\#\{S\in\mathfs R:S\subset Z'\}<+\infty,
$$
since both $\mathfs I_Z$ and each $\{S\in\mathfs R:S\subset Z'\}$ are finite sets
(see Lemma \ref{Lemma-local-finite}(1)).

For $\ell\in\Z$ and a path $R_m\to\cdots\to R_n$ on $\widehat{\mathfs G}$ define
$$
_\ell[R_m,\ldots,R_n]:=H^{-\ell}(R_m)\cap\cdots\cap H^{-\ell-(n-m)}(R_n),
$$
the set of points whose itinerary
under $H$ from $\ell$ to $\ell+(n-m)$ visits the rectangles $R_m,\ldots,R_n$ respectively.
The crucial property that
gives the new coding is that $_\ell[R_m,\ldots,R_n]\neq\emptyset$. This follows by induction, using the
Markov property of $\mathfs R$ (Proposition \ref{Prop-R}(3)).

The map $\pi$ defines similar sets: for $\ell\in\Z$ and a path
$v_m\overset{\ve}{\to}\cdots\overset{\ve}{\to}v_n$ on $\Sigma$, let
$$
Z_\ell[v_m,\ldots,v_n]:=\{\pi(\un w):\un w\in\Sigma^\#\text{ and }w_\ell=v_m,\ldots,w_{\ell+(n-m)}=v_n\}.
$$
There is a relation between these sets we just defined. Before stating such a relation, we will define the coding of $H$,
and then collect some of its properties.

\medskip
\noindent
{\sc The map $\widehat\pi:\widehat\Sigma\to M$:} Given $\un R=\{R_n\}_{n\in\Z}\in\widehat\Sigma$,
$\widehat\pi(\un R)$ is defined by the identity
$$
\{\widehat\pi(\un R)\}:=\bigcap_{n\geq 0}\overline{_{-n}[R_{-n},\ldots,R_n]}.
$$

\medskip
Note that $\widehat\pi$ is well-defined, because the right hand side is an intersection of
nested compact sets with diameters going to zero.
The proposition below states relations between $\Sigma$ and $\widehat\Sigma$, and between $\pi$ and $\widehat\pi$.
For $\un v=\{\Psi_{x_n}^{p^s_n,p^u_n}\}_{n\in\Z}\in\Sigma$, let
$$
G_{\un v}^n=\left\{
\begin{array}{ll}
g_{x_{n-1}}^+\circ\cdots\circ g_{x_0}^+ &,n\geq 0\\
g_{x_{n+1}}^-\circ\cdots\circ g_{x_0}^- &,n<0.
\end{array}
\right.
$$
Recall the integer $N$ introduced in Lemma \ref{l.time}.

\begin{proposition}\label{Prop-relation-codings}
For each $\un R=(R_n)_{n\in\Z}\in\widehat\Sigma$ and $Z\in\mathfs Z$ with
$Z\supset R_0$, there are an $\ve$--gpo
$\un v=\{v_k\}_{k\in\Z}\in\Sigma$ with $Z(v_0)=Z$ and a sequence
$(n_k)_{k\in\Z}$ of integers with $n_0=0$ and $1\le n_k-n_{k-1}\le N$ for all $k\in\Z$ such that:
\begin{enumerate}[{\rm (1)}]
\item For each $k\ge1$, $$ _{n_{-k}}[R_{n_{-k}},\ldots,R_{n_k}]\subset Z_{-k}[v_{-k},\ldots,v_k].$$
In particular, $\widehat\pi(\un R)=\pi(\un v)$.
Moreover, $R_{n_k}\subset Z(v_k)$ for all $k\in\Z$.

\item The map $\widehat\pi$ is H\"older continuous over $\widehat\Sigma$. In fact,
$\{v_i\}_{|i|\le k}$ depends only on $\{R_j\}_{|j|\le kN}$ for each $k\ge1$.

\item If $\un R\in {\widehat \Sigma}^\#$, then $\un v\in {\Sigma}^\#$.

\item The two codings have the same regular image: $\pi[\Sigma^\#]=\widehat\pi[\widehat\Sigma^\#]$.
\end{enumerate}
\end{proposition}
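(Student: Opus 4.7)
\medskip
\noindent
\textbf{Plan of proof.}
The strategy is to use Lemma~\ref{l.time} to convert an $H$-orbit into a $g^\pm$-orbit, thereby passing from the $\widehat\Sigma$-coding to the $\Sigma$-coding. Part (1) is the construction, and parts (2)--(4) will follow as consequences.

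\medskip
\noindent
\emph{Part (1).} Let $x:=\widehat\pi(\un R)$, so $x\in\bigcap_n R_n$ by definition of $\widehat\pi$. Since $R_0\subset Z$ and $\mathfs Z=\{Z(v):v\in\mathfs A\}$, write $Z=Z(v_0)$ for some $v_0\in\mathfs A$. Because $x\in Z(v_0)=\{\pi(\un v'):\un v'\in \Sigma^\#,v'_0=v_0\}$, we may pick $\un v\in \Sigma^\#$ with $\pi(\un v)=x$ and $v_0$ as above. For $k\geq 1$ set
$n_k:=$ the unique integer with $\pi(\sigma^k \un v)=H^{n_k}(x)$; that $n_k$ exists and satisfies $1\leq n_k-n_{k-1}\leq N$ follows from Lemma~\ref{l.time} applied iteratively to $g^+_{x_0},g^+_{x_1},\ldots$ Similarly for $k<0$ using $g^-$. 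Since both $H^{n_k}(x)$ and every point of $R_{n_k}$ lie in the same equivalence class of $\simN$, condition (i) of $\simN$ with $\ell=0$ gives $R_{n_k}\subset Z(v_k)$, because $H^{n_k}(x)\in R_{n_k}\cap Z(v_k)$. The equality $\widehat\pi(\un R)=\pi(\un v)=x$ holds by construction.

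\medskip
\noindent
\emph{Set inclusion.} The core step is: given $y\in {}_{n_{-k}}[R_{n_{-k}},\ldots,R_{n_k}]$, build $\un w\in \Sigma^\#$ with $w_j=v_j$ for $|j|\leq k$ and $\pi(\un w)=y$. Because $y\in R_0\subset Z(v_0)$ and $H^{n_k}(y)\in R_{n_k}\subset Z(v_k)$, pick $\un w^{+}\in \Sigma^\#$ with $\pi(\un w^+)=H^{n_k}(y)$ and $w^+_0=v_k$, and analogously $\un w^-$ for $H^{n_{-k}}(y)$ with $w^-_0=v_{-k}$. Define
\[
w_j:=\begin{cases} w^-_{j+k}, & j\leq -k,\\ v_j, & |j|\leq k,\\ w^+_{j-k}, & j\geq k.\end{cases}
\]
All edges $w_j\to w_{j+1}$ are valid: the interior ones are inherited from $\un v$ and the junction ones from $\un w^{\pm}$ (whose $0$-th symbol agrees with $v_{\pm k}$). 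That $\pi(\un w)=y$ follows by verifying $y\in V^s[\{w_n\}_{n\geq 0}]\cap V^u[\{w_n\}_{n\leq 0}]$: apply the symbolic Markov property (Proposition~\ref{Prop-Z}(4)) along with the $\simN$-equivalence of $y$ and $x$ inside each intermediate rectangle to propagate the correct image under compositions of holonomies; combine this with Lemma~\ref{Lemma-admissible-manifolds} (unique intersection of admissible manifolds). Finally, $\un w\in \Sigma^\#$ because both $\un w^\pm$ are regular.

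\medskip
\noindent
\emph{Parts (2)--(4).} For \emph{part (2)}, the bound $|n_k|\leq k N$ combined with the recursion defining $v_k$ (purely from the data $\pi(\sigma^j\un v)=H^{n_j}(x)\in R_{n_j}$ for $|j|\leq k$) shows that $\{v_i\}_{|i|\leq k}$ is determined by $\{R_j\}_{|j|\leq kN}$; H\"older continuity of $\widehat\pi$ follows by composing this combinatorial dependence with the H\"older continuity of $\pi$ (Proposition~\ref{Prop-pi}(2)), with a H\"older exponent divided by $N$. For \emph{part (3)}: if $R_{m_i}=R$ for some unbounded sequence $m_i>0$, let $k_i$ be the integer with $n_{k_i}\leq m_i<n_{k_i+1}$. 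Local finiteness of $\mathfs Z$ (Proposition~\ref{Prop-Z}(2)) implies that only finitely many $Z\in\mathfs Z$ contain (points of) $R$, hence by Theorem~\ref{Thm-coarse-graining}(1) only finitely many $v\in\mathfs A$ arise as $v_{k_i}$; pigeonhole gives some $v$ repeated infinitely often in the future of $\un v$, and symmetrically in the past. Finally, for \emph{part (4)}, the inclusion $\widehat\pi[\widehat\Sigma^\#]\subset \pi[\Sigma^\#]$ is immediate from parts (1) and (3); for the converse, given $x=\pi(\un v)$ with $\un v\in \Sigma^\#$, set $R_n$ to be the unique rectangle of $\mathfs R$ containing $H^n(x)$, and check $\un R\in \widehat\Sigma^\#$ via Lemma~\ref{Lemma-local-finite}(1) applied to any $Z(v)\in\mathfs Z$ visited infinitely often by the $H$-orbit of $x$.

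\medskip
\noindent
\emph{Main obstacle.} The delicate point is the set inclusion in part (1): verifying that the glued sequence $\un w$ is a bona fide $\varepsilon$-gpo (the edge condition at the two junctions $w_{\pm k}\to w_{\pm k\pm 1}$) and, above all, that $\pi(\un w)=y$. The latter requires carefully combining the $\simN$-compatibility of $y$ and $x$ with the symbolic Markov property of Proposition~\ref{Prop-Z}(4) to show that the $\varepsilon$-gpo structure near the reference orbit of $x$ still shadows the perturbed point $y$ at the prescribed middle symbols.
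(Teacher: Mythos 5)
Your proof has a genuine gap at its very first step, and this gap propagates into part (2). You set $x:=\widehat\pi(\un R)$ and assert $x\in\bigcap_n R_n$, hence $x\in R_0\subset Z(v_0)$, which lets you "pick $\un v\in\Sigma^\#$ with $\pi(\un v)=x$". But $\widehat\pi(\un R)$ is defined as $\bigcap_{n\ge 0}\overline{_{-n}[R_{-n},\ldots,R_n]}$ — an intersection of \emph{closures}. The rectangles $R_n$ and the sets $Z(v)=\pi[\{v_0=v\}\cap\Sigma^\#]$ are typically non-closed (fractal) sets, so for a general $\un R\in\widehat\Sigma$ (the proposition is stated for all of $\widehat\Sigma$, not just $\widehat\Sigma^\#$) the point $x$ need not lie in any $R_n$, nor in $\pi[\Sigma^\#]$ at all. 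Thus the existence of a regular $\ve$--gpo through $x$ cannot be assumed; indeed the identity $\widehat\pi(\un R)=\pi(\un v)$ is a \emph{conclusion} of part (1), obtained by combining the cylinder inclusion $_{n_{-k}}[R_{n_{-k}},\ldots,R_{n_k}]\subset Z_{-k}[v_{-k},\ldots,v_k]$ with the fact that $\operatorname{diam}(Z_{-k}[v_{-k},\ldots,v_k])\to 0$ (H\"older continuity of $\pi$), so that both nested intersections of closures pin down the same point. Starting from that identity is circular.

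The correct route (as in \cite[Lemma 12.2]{Sarig-JAMS} and \cite[Prop. 9.2]{BCL23}) builds $\un v$ \emph{inductively from the finite words of $\un R$}: having chosen $v_0,\ldots,v_k$ and $n_k$, one picks a point $y\in{}_{0}[R_0,\ldots,R_{n_k+N}]$ (nonempty by the geometric Markov property), writes $y=\pi(\un u)$ with $u_0=v_0,\ldots,u_k=v_k$, and uses Lemma~\ref{l.time} together with the definition of $\simN$ (all points of a rectangle lie in exactly the same $Z$'s and the same elements of $\mathfs E_Z$) to see that $u_{k+1}$ depends only on $R_0,\ldots,R_{n_{k+1}}$ and that $R_{n_{k+1}}\subset Z(u_{k+1})$; one then sets $v_{k+1}:=u_{k+1}$ (local finiteness guarantees finitely many candidates, so a canonical choice can be fixed). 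This finite determination is precisely what part (2) asserts and what gives the H\"older continuity of $\widehat\pi$; your construction, which selects $\un v$ globally through the single point $x$, provides no control on how $\{v_i\}_{|i|\le k}$ depends on $\{R_j\}_{|j|\le kN}$, so your part (2) is unsupported even granting part (1). By contrast, your gluing argument for the cylinder inclusion, and your outlines of parts (3) and (4) (pigeonhole via local finiteness, and the canonical lift for the reverse inclusion), are in the right spirit — modulo the fact that in (3) one must iterate the local finiteness of Proposition~\ref{Prop-Z}(2) up to $N$ times, since your index $m_i$ need not equal $n_{k_i}$.
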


For diffeomorphisms, the above lemma is \cite[Lemma 12.2]{Sarig-JAMS}. The difference from the case
of diffeomorphisms relies on our definitions of $\mathfs G$ and $\widehat{\mathfs G}$. While the edges of
$\widehat{\mathfs G}$ correspond to possible time evolutions of $H$, the edges of $\mathfs G$ correspond
to $\ve$--overlaps. In particular, not every edge of $\widehat{\mathfs G}$ corresponds to an edge of $\mathfs G$,
and this is the reason we have to introduce the sequence $(n_k)_{k\in\Z}$. 
In fact, each edge $v_k\to v_{k+1}$ of $\mathfs G$ corresponds to a sequence of edges $R_{n_k}\to\dots\to R_{n_{k+1}}$ of $\widehat{\mathfs G}$. For three dimensional flows, this statement is \cite[Prop. 9.2]{BCL23}, and the same proof applies to high dimenson.

We now define the topological Markov flow (TMF) and coding that satisfy the Main Theorem.
For that, recall the definition of TMF in Section \ref{Section-Preliminaries}.

\medskip
\noindent
{\sc The triple $({\widehat\Sigma}_{\widehat r},{\widehat\sigma}_{\widehat r},{\widehat\pi}_{\widehat r})$:} 
The topological Markov flow $({\widehat\Sigma}_{\widehat r},{\widehat\sigma}_{\widehat r})$ is the suspension of $(\widehat\Sigma,\widehat\sigma)$ by the roof function $\widehat r:\widehat\Sigma\to(0,\rho)$ defined by
$$
 \widehat r(\un R) := \min\{t>0:\vf^t(\widehat\pi(\un R)) = \widehat\pi(\widehat\sigma(\un R))\},
$$
and the factor map ${\widehat\pi}_{\widehat r}:\widehat\Sigma_{\widehat r}\to M$ is 
given by $\widehat\pi_{\widehat r}(\un R,s):=\vf^s(\widehat\pi(\un R))$.

\medskip
As claimed above, we have $\sup(\widehat r)<\rho$. Indeed, by 
Proposition \ref{Prop-relation-codings} there is $\un v=\{v_n\}_{n\in\Z}\in\Sigma$
such that $\widehat\pi(\un R)=\pi(\un v)$, and 
there are integers $n_{-1}<0<n_1$ such that  
$ _{n_{-1}}[R_{n_{-1}},\ldots,R_{n_1}]\subset Z_{-1}[v_{-1},v_0,v_1]$, hence
$\widehat r(\un R)\leq \widehat r_{n_1}(\un R)=r(\un v)<\rho$.
The rest of this section is devoted to proving that
$({\widehat\Sigma}_{\widehat r},{\widehat\sigma}_{\widehat r},{\widehat\pi}_{\widehat r})$
satisfies Theorem~\ref{t.main}. We start with some fundamental properties.

\begin{proposition}\label{Prop-pi_r}
The following holds for all $\ve>0$ small enough.
\begin{enumerate}[{\rm (1)}]
\item $\widehat r:\widehat\Sigma\to(0,\rho)$ is well-defined and H\"older continuous.
\item $\widehat\pi_{\widehat r}\circ\widehat\sigma_{\widehat r}^t=\vf^t\circ\widehat \pi_{\widehat r}$,
for all $t\in\R$.
\item $\widehat\pi_{\widehat r}$ is H\"older continuous with respect to the Bowen-Walters distance.
\item $\widehat\pi_{\widehat r}[\widehat\Sigma_{\widehat r}^\#]=\nuh^\#$.
\end{enumerate}
\end{proposition}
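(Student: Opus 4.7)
The plan is to treat the four parts in order, using Proposition~\ref{Prop-relation-codings} as the main bridge to transport statements about the first coding $(\Sigma,\sigma,\pi)$ over to $(\widehat\Sigma,\widehat\sigma,\widehat\pi)$, and using the local finiteness of $\mathfs R$ inside $\mathfs Z$ to close the combinatorial arguments. For part (1), positivity of $\widehat r$ is immediate from its definition via $r_{\mathfs Z}$ (Section~\ref{subsec-fundpropZ}), and the upper bound $\widehat r<\rho$ was already noted after the definition of the triple. The lower bound $\inf\widehat r>0$ would follow from the partial order condition on $\widehat\Lambda$, which forbids consecutive returns to $\widehat\Lambda$ from being arbitrarily close in time. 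For Hölder continuity, I would exploit the fact that $\widehat r$ factors through $\widehat\pi$ and the return time to $\widehat\Lambda$, combining the $C^{1+\beta}$ estimates from Lemma~\ref{lemma-local-coord} with the Hölder bound on $\widehat\pi$ from Proposition~\ref{Prop-relation-codings}(2).

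Part (2) is essentially the defining identity of the suspension flow: writing $\widehat\sigma_{\widehat r}^t(\un R,s)=(\widehat\sigma^n(\un R),s')$ with $s'=s+t-\widehat r_n(\un R)$, iterating the tautology $\vf^{\widehat r(\un R)}[\widehat\pi(\un R)]=\widehat\pi[\widehat\sigma(\un R)]$ gives $\vf^{\widehat r_n(\un R)}[\widehat\pi(\un R)]=\widehat\pi[\widehat\sigma^n(\un R)]$, from which the conjugacy follows by direct computation. Part (3) then follows by combining the Hölder regularity of $\widehat r$ established in part (1) with Proposition~\ref{Prop-relation-codings}(2), via the same estimates used in \cite[Lemma 5.9]{Lima-Sarig} to compare Hölder continuity in the symbolic distance to Hölder continuity in the Bowen-Walters distance; the verification amounts to examining separately the contributions of the base symbolic metric, the vertical coordinate, and the jump across the roof.

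For part (4), the inclusion $\widehat\pi_{\widehat r}[\widehat\Sigma_{\widehat r}^\#]\subset \nuh^\#$ is almost direct: given $(\un R,t)\in\widehat\Sigma_{\widehat r}^\#$, Proposition~\ref{Prop-relation-codings}(3) produces $\un v\in\Sigma^\#$ with $\pi(\un v)=\widehat\pi(\un R)$, so $\widehat\pi(\un R)\in\nuh^\#$ by Proposition~\ref{Prop-pi_R}(3), and then $\widehat\pi_{\widehat r}(\un R,t)\in\nuh^\#$ by $\vf$--invariance. For the reverse inclusion, reduce by $\vf$--invariance to the case $x\in\Lambda\cap\nuh^\#$; letting $R_n\in\mathfs R$ denote the rectangle containing $H^n(x)$, the itinerary $\un R=(R_n)_{n\in\Z}$ lies in $\widehat\Sigma$ and satisfies $\widehat\pi(\un R)=x$ by construction of the Markov partition. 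The key point is to verify that $\un R\in\widehat\Sigma^\#$: Proposition~\ref{Prop-pi}(3) produces a regular $\ve$--gpo $\un v\in\Sigma^\#$ with $\pi(\un v)=x$, some $Z\in\mathfs Z$ contains $H^{n_k}(x)$ for infinitely many $n_k>0$ (by Proposition~\ref{Prop-relation-codings}(1) and Lemma~\ref{l.time}), and by Lemma~\ref{Lemma-local-finite}(1) only finitely many rectangles $R\in\mathfs R$ fit inside such a $Z$; pigeonhole then provides a single $R$ visited infinitely often, and the symmetric argument handles the negative side.

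The main obstacle is the Hölder continuity claim in part (1). Although $\widehat r$ morally is the first return time to a countable cover and hence should be locally constant on small enough cylinders, producing a uniform Hölder exponent compatible with the one already available for $\widehat\pi$ is delicate; this is exactly where the bounded combinatorial step $N$ of Lemma~\ref{l.time}, the local finiteness of $\mathfs R$, and the geometric estimates of Section~\ref{Section-sections} must be combined carefully. Once this is handled, parts (2)--(4) reduce to essentially bookkeeping.
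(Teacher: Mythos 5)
Your proposal is correct and follows essentially the route the paper intends (the paper delegates the proof to \cite{BCL23}, but the ingredients you assemble --- positivity and the bound $\sup(\widehat r)<\rho$, the Lipschitz bounds on $\mathfrak t_D$ from Lemma~\ref{lemma-local-coord}, Proposition~\ref{Prop-relation-codings}(2)--(3) for the H\"older estimates, and Lemma~\ref{l.time} together with the pigeonhole argument via Lemma~\ref{Lemma-local-finite}(1) for part (4) --- are exactly the right ones). One citation must be fixed: in part (4), the inclusion $\pi[\Sigma^\#]\subset\nuh^\#$ is \emph{not} Proposition~\ref{Prop-pi_R}(3), which gives the reverse containment $\nuh^\#\subset\pi_r[\Sigma_r^\#]$; the fact you need is the opening assertion of the Inverse Theorem (Theorem~\ref{Thm-inverse}), namely that $\un v\in\Sigma^\#$ and $x=\pi(\un v)$ force $x\in\nuh^\#$. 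With that correction, part (4) can even be shortened: Proposition~\ref{Prop-relation-codings}(4) gives $\widehat\pi[\widehat\Sigma^\#]=\pi[\Sigma^\#]$, which lies between $\Lambda\cap\nuh^\#$ (Proposition~\ref{Prop-pi}(3)) and $\nuh^\#$ (Theorem~\ref{Thm-inverse}), and saturating under the flow yields the stated equality.
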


For three dimensional flows, this is \cite[Prop. 9.3]{BCL23} and the same proof applies to high dimension.
By Proposition \ref{Prop-adaptedness}, the above proposition establishes Part
(1) and the first half of Part (2) of Theorem~\ref{t.main}. In the next sections, we focus on proving the remaining statements.

\subsection{The map $\widehat\pi_r$ is finite-to-one}\label{s.finite-one}

Given $Z\in\mathfs Z$, remember that $\mathfs I_Z=\{Z'\in\mathfs Z:\vf^{[-\rho,\rho]}Z\cap Z'\neq\emptyset\}$.
The loss of injectivity of $\widehat\pi_{\widehat r}$ is related to the following notion.

\medskip
\noindent
{\sc Affiliation:} We say that two rectangles $R,S\in\mathfs R$ are {\em affiliated}, and write
$R\sim S$, if there are
$Z,Z'\in\mathfs Z$ such that $R\subset Z$, $S\subset Z'$ and $Z'\in\mathfs I_Z$. 
This is a symmetric relation.

\begin{lemma}\label{Lemma-affiliation}
If $\widehat\pi(\un R)=\vf^t[\widehat\pi(\un S)]$ with $\un R,\un S\in\widehat\Sigma^\#$ and $|t|\leq\rho$, then $R_0\sim S_0$.
More precisely, if $\un v,\un w\in\Sigma^\#$ are such that $\pi(\un v)=\widehat\pi(\un R)$ and $\pi(\un w)=\widehat\pi(\un S)$, then $R_0\subset Z(v_0)$ and $S_0\subset Z(w_0)$ with $Z(w_0)\in\mathfs I_{Z(v_0)}$.
\end{lemma}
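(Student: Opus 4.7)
The proof will proceed in three steps. First, I claim that $\widehat\pi(\un R)$ lies in $R_0$ itself: since $\un R\in\widehat\Sigma^\#$, the Markov property (Proposition~\ref{Prop-R}(3)) together with the uniform contraction of $s$-fibres under $H$ and of $u$-fibres under $H^{-1}$ (Proposition~\ref{Prop-R}(2)) guarantee that the nested sets ${}_{-n}[R_{-n},\dots,R_n]$ are non-empty and that the $H$-orbit of $\widehat\pi(\un R)$ is well-defined with $H^n(\widehat\pi(\un R))\in R_n$ for every $n\in\Z$. In particular, $\widehat\pi(\un R)\in R_0$, and similarly $\widehat\pi(\un S)\in S_0$.

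Next, I show that $R_0\subset Z(v_0)$ for any $\un v\in\Sigma^\#$ with $\pi(\un v)=\widehat\pi(\un R)$. By the very definition of $Z(v_0)$, the point $\pi(\un v)$ belongs to $Z(v_0)$, and combined with the previous step, $\widehat\pi(\un R)\in R_0\cap Z(v_0)$. Now $R_0\in\mathfs R$ is an equivalence class of $\simN$, and condition~(i) in the definition of $\simN$ (applied with $k=0$) asserts that any two $\simN$-equivalent points belong to exactly the same elements of $\mathfs Z$; hence the fact that one point of $R_0$ belongs to $Z(v_0)$ forces $R_0\subset Z(v_0)$. An identical argument gives $S_0\subset Z(w_0)$ for any $\un w\in\Sigma^\#$ with $\pi(\un w)=\widehat\pi(\un S)$.

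For the final claim that $Z(w_0)\in\mathfs I_{Z(v_0)}$, I combine the previous observations: $\pi(\un v)\in Z(v_0)$, $\pi(\un w)\in Z(w_0)$, and $\pi(\un v)=\vf^t(\pi(\un w))$ with $|t|\leq\rho$. Therefore $\vf^{-t}(Z(v_0))\cap Z(w_0)\neq\emptyset$, and since $|{-t}|\leq\rho$, this yields $\vf^{[-\rho,\rho]}Z(v_0)\cap Z(w_0)\neq\emptyset$, which is precisely the definition of $Z(w_0)\in\mathfs I_{Z(v_0)}$. Combined with $R_0\subset Z(v_0)$ and $S_0\subset Z(w_0)$, this establishes the affiliation $R_0\sim S_0$.

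The only delicate point is the first step, namely confirming that $\widehat\pi(\un R)$ is a point of the rectangle $R_0$ rather than only of its closure, so that the intersection $R_0\cap Z(v_0)$ is non-empty and one may invoke the rigidity of the partition $\simN$. The hyperbolic and Markov structure of $\mathfs R$ encapsulated in Proposition~\ref{Prop-R} enters essentially here (the coincidence with Theorem~\ref{t.main}(7)(c) also yields it directly); once this is in hand, the remainder of the proof is a direct consequence of the definitions of $\simN$, $Z(v_0)$, and $\mathfs I_Z$.
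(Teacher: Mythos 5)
Your final step (deducing $Z(w_0)\in\mathfs I_{Z(v_0)}$ from $\pi(\un v)\in Z(v_0)$, $\pi(\un w)\in Z(w_0)$ and $\pi(\un v)=\vf^t(\pi(\un w))$ with $|t|\leq\rho$) is exactly the paper's argument and is fine. The gap is in your Step 1. The map $\widehat\pi$ is defined by $\{\widehat\pi(\un R)\}=\bigcap_{n\geq 0}\overline{_{-n}[R_{-n},\ldots,R_n]}$, i.e.\ as an intersection of \emph{closures}; the rectangles $R\in\mathfs R$ are typically fractal, non-closed sets, and the first return time $r_{\mathfs Z}$ is not continuous. So the shrinking-cylinder argument only places $\widehat\pi(\un R)$ in $\overline{R_0}$, not in $R_0$, and Proposition~\ref{Prop-R}(2)--(3) (which speak about points that already lie in the rectangles) do not bridge this. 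Theorem~\ref{t.main}(7)(c) does not rescue you either: it is the converse statement, namely that a point of $\mathfs R$ is coded by its own itinerary; it does not say that an arbitrary $\un R\in\widehat\Sigma^\#$ is the itinerary of its image. In fact your full Step-1 claim ($H^n(\widehat\pi(\un R))\in R_n$ for all $n$) would force $\widehat\pi$ to be \emph{injective} on $\widehat\Sigma^\#$, since $\mathfs R$ is a partition and two sequences coding the same point would then have to agree coordinate by coordinate; this is incompatible with the whole finite-to-one/Bowen-relation analysis (Theorem~\ref{Thm-finite-extension} only bounds the fibres by $N(R)N(S)$, which need not be $1$). So the foundational claim of your argument is not only unjustified but false in general. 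Once Step 1 fails, so does Step 2, because you no longer have a point of $R_0$ known to lie in $Z(v_0)$ with which to invoke the $\simN$-rigidity.

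The paper avoids this entirely: it applies Proposition~\ref{Prop-relation-codings}, which \emph{constructs} from $\un R$ an $\ve$--gpo $\un v\in\Sigma^\#$ satisfying simultaneously $\widehat\pi(\un R)=\pi(\un v)$, the cylinder inclusions $_{n_{-k}}[R_{n_{-k}},\ldots,R_{n_k}]\subset Z_{-k}[v_{-k},\ldots,v_k]$, and the containment $R_0\subset Z(v_0)$ as part of its conclusion. With that proposition in hand the lemma is a two-line consequence (the affiliation then follows exactly as in your last paragraph). Note also that the ``more precisely'' clause of the lemma is only ever used, and only ever proved, for the specific $\un v,\un w$ furnished by Proposition~\ref{Prop-relation-codings}, not for arbitrary preimages.
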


\begin{proof}
For three dimensional flows, this is \cite[Lemma 9.4]{BCL23}. Since its proof is short, we reproduce it here.
Let $y=\widehat\pi(\un R)$ and $z=\widehat\pi(\un S)$, so that $y=\vf^{t}(z)$.
Applying Proposition \ref{Prop-relation-codings} to $\un R$ and $\un S$, there are
two $\ve$--gpo's $\un v,\un w\in \Sigma^\#$ such that:
\begin{enumerate}[$\circ$]
\item $\pi(\un v)=y$ and $R_0\subset Z(v_0)$,
\item $\pi(\un w)=z$ and $S_0\subset Z(w_0)$.
\end{enumerate}
The lemma thus follows with $Z=Z(v_0)$ and $Z'=Z(w_0)$, since $\vf^t(z)\in Z(v_0)$.
\end{proof}

For each $R\in\mathfs R$, define
$$
A(R):=\{(S,Z')\in\mathfs R\times\mathfs Z:R\sim S\text{ and }S\subset Z'\}\text{ and }N(R):=\#A(R).
$$
We can use Lemma \ref{Lemma-local-finite} and proceed as in the proof of \cite[Lemma 12.7]{Sarig-JAMS}
to show that $N(R)<\infty$, $\forall R\in\mathfs R$. Having this in mind, we are able to state the finiteness-to-one
property of $\widehat\pi_{\widehat r}$, i.e. part (3) of the Main Theorem and of Theorem~\ref{t.main}.

\begin{theorem}\label{Thm-finite-extension}
Every $x\in \widehat\pi_{\widehat r}[\widehat\Sigma_{\widehat r}^\#]$ has finitely many
$\widehat\pi_{\widehat r}$--preimages inside $\widehat\Sigma_{\widehat r}^\#$.
More precisely, if $x=\widehat\pi_{\widehat r}(\un R,t)$ with $R_n=R$ for infinitely many $n>0$
and $R_n=S$ for infinitely many $n<0$, then
$\#\{(\un S,t')\in\widehat\Sigma^\#_{\widehat r}:\widehat\pi_{\widehat r}(\un S,t')=x\}\leq N(R)N(S)$.
\end{theorem}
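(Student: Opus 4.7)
My plan is to reduce the counting to the affiliation relation (Lemma~\ref{Lemma-affiliation}) combined with the regularity of sequences in $\widehat\Sigma^\#$. Fix the reference preimage $(\un R,t)$ with $R_n=R$ infinitely often in the future and $R_n=S$ infinitely often in the past, set $y:=\widehat\pi(\un R)$, and consider an arbitrary competing preimage $(\un R',t')\in\widehat\Sigma_{\widehat r}^\#$ of $x$ with $z:=\widehat\pi(\un R')$. Then $\vf^t(y)=\vf^{t'}(z)$ with $|t-t'|<\rho$, so $y$ and $z$ lie on a common flow orbit and both belong to $\mathfs R$. Because $H$ is the first return map to $\mathfs R$, the $H$-orbits of $y$ and $z$ coincide as subsets of $\mathfs R$, and so there is a unique integer shift $\ell=\ell(\un R',t')$ such that $H^n(z)=H^{n+\ell}(y)$ for every $n\in\Z$. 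The value $|\ell|$ is uniformly bounded, since the number of returns to $\mathfs R$ in a time window of length $\rho$ is finite (the return times $\widehat r$ are bounded below, as one sees by comparing $\widehat r$ with $r$ via Proposition~\ref{Prop-relation-codings} and using $\inf r_\Lambda>0$).

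Next I would propagate the affiliation along the full sequences. For each $n\in\Z$, applying Lemma~\ref{Lemma-affiliation} to the shifted sequences $\widehat\sigma^{n+\ell}(\un R)$ and $\widehat\sigma^n(\un R')$, which share the same $\widehat\pi$-image $H^{n+\ell}(y)=H^n(z)$ (so $t=0$ in the lemma), yields $R_{n+\ell}\sim R'_n$. Specializing to the infinitely many $n>0$ with $R_{n+\ell}=R$, every such $R'_n$ lies in $\mathrm{Affil}(R):=\{R^*\in\mathfs R:R^*\sim R\}$, a finite set of cardinality at most $N(R)$. Since $\un R'\in\widehat\Sigma^\#$, pigeonhole forces a specific rectangle $R^*=R^*(\un R')\in\mathrm{Affil}(R)$ to appear in $\un R'$ at infinitely many positive indices; symmetrically, some $S^*=S^*(\un R')\in\mathrm{Affil}(S)$ appears at infinitely many negative indices.

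The final step is to show that the assignment $(\un R',t')\mapsto(R^*(\un R'),S^*(\un R'))$ is injective on the set of preimages of $x$ in $\widehat\Sigma_{\widehat r}^\#$, which immediately produces the bound $N(R)\cdot N(S)$. Suppose two preimages $(\un R',t'),(\un R'',t'')$ yield the same pair $(R^*,S^*)$. Lifting via Proposition~\ref{Prop-relation-codings} to $\ve$--gpo's $\un w',\un w''\in\Sigma^\#$, the infinite repetition of $R^*$ in the future of both sequences, combined with the Markov property (Proposition~\ref{Prop-R}(3)), forces the two stable manifolds $V^s[\un w'],V^s[\un w'']$ to pass through a common orbit segment containing $R^*$ and to coincide; symmetrically for the unstable leaves produced by $S^*$. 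Since a stable and an unstable leaf meet in a single point (Lemma~\ref{Lemma-admissible-manifolds} and Proposition~\ref{Prop-disjointness}), one concludes $z'=z''$, whence $t'=t''$ and, via the already established affiliations $R_{n+\ell}\sim R'_n$ together with the canonical $H$-itinerary from property~\eqref{i.canonical} of Theorem~\ref{t.main}, $\un R'=\un R''$.

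The main obstacle will be making the injectivity step truly rigorous. Two distinct $\ve$--gpo lifts could a priori shadow the same geometric invariant leaves, so one cannot simply read off $\un R'$ from its stable/unstable manifolds. The resolution requires using the inverse theorem (Theorem~\ref{Thm-inverse}) to control the parameters of any $\ve$--gpo lift of $\un R'$ up to a small tolerance depending only on the orbit, and then invoking the refinement construction from Section~\ref{Section-refinement}, which pins down the symbolic sequence uniquely from the geometric data together with the specified infinitely-repeating rectangle in future and past. This is the place where the combined strength of the inverse theorem and of the Markov refinement is essential.
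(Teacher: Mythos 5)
Your first two paragraphs set up the same skeleton as the argument the paper invokes (Sarig's Theorem 12.8 via \cite{BCL23}): synchronize the two $H$--orbits by a bounded integer shift, propagate the affiliation $R_{n+\ell}\sim R'_n$ with Lemma~\ref{Lemma-affiliation}, and pigeonhole into a finite marker set. Two remarks already here: the marker should be a pair $(S^*,Z^*)\in A(R)$ (an affiliated rectangle \emph{together with} a containing $Z$--rectangle coming from the $\ve$--gpo lift of Proposition~\ref{Prop-relation-codings}) — that is why $N(R)$ counts pairs — because the fibres $W^{s/u}(\cdot,Z)$ and the brackets $[\cdot,\cdot]_Z$ are only defined relative to a $Z$--rectangle; and one must also pigeonhole so that the marker occurs at a \emph{common} subsequence of positions for the two competing preimages, not merely infinitely often in each.

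The genuine gap is the injectivity step, which is the entire content of the theorem. Your argument there is circular: from "$V^s$ and $V^u$ coincide" you deduce $z'=z''$ and then conclude $\un R'=\un R''$, but a point of $\mathfs Z$ does \emph{not} determine its symbolic sequence — $\widehat\pi(\un R')$ lies only in the \emph{closures} $\overline{_{-n}[R'_{-n},\ldots,R'_n]}$, and a single point typically lies in the closures of many distinct cylinders; if $z'=z''$ implied $\un R'=\un R''$, the map $\widehat\pi$ would be injective on $\widehat\Sigma^\#$ and no counting would be needed. Property~\eqref{i.canonical} only produces \emph{one} canonical lift of a point of $\mathfs R$; it says nothing about the other lifts. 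Moreover, the intermediate claim that infinite repetition of $R^*$ in the future of both sequences "forces the stable manifolds to coincide" is unsupported: two sequences can contain the same symbol infinitely often and have different stable leaves. The correct mechanism is the Bowen diamond: at the common positions where both sequences carry the marker $(S^*,Z^*)$, one forms the Smale brackets $[\,\cdot\,,\cdot\,]_{Z^*}$ of the corresponding points of the two orbits (using Propositions~\ref{Prop-overlapping-charts} and~\ref{Prop-overlapping-charts-2} to make the brackets compatible with the small flow displacements between the two preimages), and propagates membership with the geometric Markov property (Proposition~\ref{Prop-R}(3)) to produce, for \emph{every} $n$, an actual point lying in $R'_n\cap R''_n$. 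Since $\mathfs R$ is a partition into disjoint sets, this forces $R'_n=R''_n$ for all $n$, and uniqueness of shadowing (Proposition~\ref{Prop-shadowing}) then pins down the remaining time parameter. Without this construction the proof does not close.
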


For three dimensional flows, this statement is \cite[Theorem 9.5]{BCL23}. Its proof, which consists of an adaptation of \cite[Theorem 12.8]{Sarig-JAMS} for three dimensional flows, works equally well in high dimension. The idea, due to Bowen \cite[pp. 13--14]{Bowen-Regional-Conference} and commonly known as the ``Bowen diamond'', explores the (non-uniform) expansiveness of $\vf$, expressed in terms of the uniqueness of shadowing (Proposition \ref{Prop-shadowing}). Assuming by contradiction that $x$ has more than $N(R)N(S)$ pre-images in $\widehat\Sigma_{\widehat r}^\#$, two of them must coincide in arbitrarly large positions in the past and future. Using the compatibility of the Smale bracket under holonomy maps (Propositions \ref{Prop-overlapping-charts} and \ref{Prop-overlapping-charts-2}) and the geometrical Markov property (Proposition \ref{Prop-R}), we thus obtain a contradiction.

\subsection{Conclusion of the proof of Theorem~\ref{t.main}}\label{ss.conclusion}

Except for arguments involving the angles between $N^s$ and $N^u$, this section is the same of \cite[Section 9.4]{BCL23}. As it is the conclusion of the construction developed in the article, we include it for completeness.

We already proved Part (1) and the first half of Part (2). Also, Theorem \ref{Thm-finite-extension}
establishes Part (3). For the second half of Part (2), we note that
every point of $\nuh^\#$ has a finite and nonzero number of lifts to $\widehat \Sigma^\#_{\widehat r}$,
hence every ergodic $\chi$--hyperbolic measure on $M$, which is supported in $\nuh^\#$,
can be lifted to an ergodic $\widehat\sigma_{\widehat r}$--invariant measure $\overline\mu$,
exactly as in the argument performed in~\cite[Section 13]{Sarig-JAMS}. This concludes the proof of 
Part (2) of Theorem~\ref{t.main}.

We now prove the remaining Parts (4)--(8) stated in Theorem~\ref{t.main}.

\paragraph{\bf Part~(\ref{i.splitting}).} Using Theorem~\ref{Thm-stable-manifolds}, 
we define $N^{s/u}_{z}$ as follows:
\begin{enumerate}[$\circ$]
\item  For $z=(\un R,0)\in\widehat\Sigma_{\widehat r}$, fix some $Z \in \mathfs Z$ such that $R_0\subset Z$ and let $V^{s/u}(z):=V^{s/u}(\widehat{\pi}(z),Z)$. Then define $N^{s/u}_{z}:=T_{\widehat\pi(\un R)}V^{s/u}(z)$. Note that $V^{s/u}(z)$ depends on the choice of $Z$, but $N^{s/u}_{z}$ does not. By definition, $N_{\wh\pi(\un R)}=N^s_z\oplus N^u_z$.
\item For $z=(\un R,t)\in\widehat\Sigma_{\widehat r}$, define
$N^{s/u}_{z}=\Phi^t\left(N^{s/u}_{(\un R,0)}\right)$.
Since $\Phi$ is an isomorphism, $N_{\widehat\pi_{\widehat r}(\un R,t)}=N^{s}_{z}\oplus N^{u}_{z}$.
\end{enumerate}
The geometrical Markov property of Proposition \ref{Prop-R}(3) 
implies that the families $\{N^{s/u}_z\}$ are invariant under $\Phi$.
The convergence rates along $N^{s/u}_z$ follow from Theorem \ref{Thm-stable-manifolds}(3),
taking $\lambda:=\tfrac{2\chi}{3}-\tfrac{\beta\ve}{6\inf(r_\Lambda)}$. 
These estimates show, in particular, that these spaces only depend on $x:=\widehat\pi_{\widehat r}(z)$,
hence one can set $N^{s/u}_x:=N^{s/u}_z$.
Finally, the H\"older continuity follows from 
Theorem \ref{Thm-stable-manifolds}(4). This concludes the proof of part~(\ref{i.splitting}).
\medskip

\paragraph{\bf Part~\eqref{i.manifold}.}
For $z=(\un R,t)\in\widehat\Sigma_{\widehat r}$, 
one defines the manifolds $V^{cs/cu}(z):=\varphi^{[t-1,t+1]}(V^{s/u}(\un R,0))$.
By construction, $V^{cs/cu}(z)$ is tangent to $N^{s/u}_z+\mathbb{R}\cdot X(\wh{\pi}_{\wh{r}}(z))$.
Setting $\alpha:=\tfrac{\chi\inf(r_\Lambda)}{4 \sup(r_\Lambda)}$, by Proposition~\ref{Prop-center-stable}, for any $y\in V^{cs}(z)$ there exists $\tau\in \mathbb{R}$
such that $d(\varphi^t(\widehat\pi_{\widehat r}(z)),\varphi^{t+\tau}(y))\leq e^{-\alpha t}$
for all $t\geq 0$. The same holds for $V^{cu}(z)$, thus concluding the proof of Part~\eqref{i.manifold}.

\medskip

\paragraph{\bf Part~\eqref{i.canonical}.} The proof of this part is almost automatic.
The measurable set $\mathfs Z=\mathfs R$ contains $\Lambda\cap \nuh^\#$, hence
the orbit of any point $x\in \nuh^\#$ intersects $\mathfs R$, which proves item (a).
Item (b) was proved in the beginning of Section~\ref{subsec-fundpropZ}.
Finally, any $x\in \mathfs R$ defines $\{R_n\}_{n\in\Z}$
such that $H^n(x)\in R_n$ for all $n\in\Z$.
In particular, $H(R_n)\cap R_{n+1}\neq \emptyset$ for all $n\in\Z$ and so
$\un R=\{R_n\}\in\widehat\Sigma$. Since $\mathfs R=\pi[\Sigma^\#]$, we also
have $x=\pi(\un v)$ for some $\un v=\{v_n\}_{n\in\Z}\in\Sigma^\#$. For each $k\in\Z$,
the point $\pi[\sigma^k(\un v)]$ is a return of $x$ to $\mathfs R$, hence
there is an increasing sequence such that $\pi[\sigma^k(\un v)]=H^{n_k}(x)$.
Therefore $R_{n_k}\subset Z(v_k)$.
Using that $\un v\in\Sigma^\#$ and Lemma \ref{Lemma-local-finite}(1), it follows that
$\un R\in\widehat\Sigma^\#$.
\medskip

\paragraph{\bf Part~\eqref{i.lift}.}
Assume $K\subset M$ is a compact, transitive, invariant, hyperbolic set
such that all ergodic $\vf$--invariant measures supported by it are $\chi$--hyperbolic. Let
$TK=E^s\oplus X\oplus E^u$ be the continuous hyperbolic splitting.
Proceeding as in \cite[Proposition 2.8]{BCS-MME}, there are constants $C_0=C_0(K)>0$ and $\kappa>\chi$
such that
$$
\|d\vf^t v^s\|\leq C_0e^{-\kappa t}\|v^s\| \text{ and } \|d\vf^{-t} v^u\|\leq C_0e^{-\kappa t}\|v^u\|\text{,\ \ for all }v^s\in E^s_K,
v^u\in E^u_K\text{ and }t\geq 0.
$$ 
Since $\mathfrak p\restriction_{E^{s/u}}:E^{s/u}\to N^{s/u}$
is an isomorphism, and since the maps $x\in K\to E^{s/u}_x$ and $x\in M\to N^{s/u}_x$ are continuous, we have $\|\mathfrak p_x^{\pm 1}\|=e^{\pm{\rm const}}$ for all $x\in K$.
Hence, there is $C_1=C_1(K)>0$ s.t.
$$
\|\Phi^t v^s\|\leq C_1e^{-\kappa t}\|v^s\| \text{ and } \|\Phi^{-t} v^u\|\leq C_1e^{-\kappa t}\|v^u\|\text{,\ \ for all }v^s\in N^s_K,
v^u\in N^u_K\text{ and }t\geq 0.
$$ 
This clearly implies that there is a constant 
$C_2>0$ s.t. $\vertiii{v}\leq C_2\|v\|$ for
all $v\in N^{s/u}_K$. Therefore, for non-zero $v=v^s+v^u\in N^s_K\oplus N^u_K$ we have
$$
\tfrac{\vertiii{v}}{\|v\|}=\tfrac{\sqrt{\vertiii{v^s}^2 + \vertiii{v^u}^2}}{\|v^s +v^u\|}\leq C_2\tfrac{\sqrt{\|v^s\|^2 + \|v^u\|^2}}{\|v^s +v^u\|}\leq 
C_2\tfrac{\|v^s\|+\|v^u\|}{\|v^s +v^u\|}\cdot
$$
Since $\inf_{x\in K}\angle (N^s_x,N^u_x)>0$, this later fraction has an upper bound $C_3$, thus 
by Lemma \ref{Lemma-linear-reduction} we get 
that $\|C(x)^{-1}\|\leq C_2C_3$ for all $x\in K$. Thus $\inf_{x\in K}Q(x)>0$, which then implies that 
$\inf_{x\in K}q(x)>0$.
In particular, $K\subset\nuh^\#$.
This property is enough to reproduce the method of proof of \cite[Prop. 3.9]{BCS-MME}, as follows.
We recall that $X\subset\widehat\Sigma_{\widehat r}$ is $\widehat\sigma_{\widehat r}$--invariant if 
$\widehat\sigma_{\widehat r}^t(X)=X$ for all $t\in\R$.

\medskip
\noindent
{\sc Step 1:} There is a $\widehat\sigma_{\widehat r}$--invariant compact set $X_0\subset \widehat\Sigma_{\widehat r}$ such that $\widehat\pi_{\widehat r}(X_0)\supset K$.

\begin{proof}[Proof of Step $1$]
For each $x\in K\cap\mathfs R$, consider its canonical coding $\un R(x)=\{R_n(x)\}_{n\in\Z}$.
Since $\inf_{x\in K}q(x)>0$, $K$ intersects finitely many rectangles of $\mathfs R$. Hence there is a finite
set $V_0\subset\mathfs R$ such that $R_0(x)\in V_0$ for all $x\in K\cap\mathfs R$.
By invariance, the same happens for all $n\in\Z$, i.e.
$R_n(x)\in V_0$ for all $x\in K\cap\mathfs R$. Therefore 
the subshift $\Sigma_0$ induced by $V_0$, which is compact since $V_0$ is finite,
satisfies $\widehat\pi(\Sigma_0)\supset K\cap\mathfs R$.
Let $X_0$ be the TMF defined by $(\Sigma_0,\sigma)$ with roof function $\widehat r\restriction_{\Sigma_0}$.
Saturating the latter inclusion under $\vf$ and using part~(7)(a), we conclude that
$\widehat\pi_{\widehat r}(X_0)\supset K$.
\end{proof}

\medskip
\noindent
{\sc Step 2:} There is a transitive $\widehat\sigma_{\widehat r}$--invariant compact subset
$X\subset X_0$ such that $\widehat\pi_{\widehat r}(X)=K$.

\begin{proof}[Proof of Step $2$] Among all compact $\widehat\sigma_{\widehat r}$--invariant sets
$X\subset X_0$ with $\widehat\pi_{\widehat r}(X)\supset K$,
consider one which is minimal for the inclusion (it exists by Zorn's lemma). We claim that such an $X$ satisfies
Step 2. 
To see that, let $z\in K$ whose forward orbit is dense in $K$, let
$x\in X$ be a lift of $z$,
and let $Y$ be the $\omega$--limit set of the forward orbit of $x$,
$$
Y=\{y\in \widehat\Sigma_{\widehat r}:\exists t_n\to+\infty \text{ s.t. }\widehat\sigma_{\widehat r}^{t_n}(x)\to y\}.
$$
For any $n\geq 1$, the set
$Y_n:=\{\sigma^t_{\widehat r}(x), t\geq n\}\cup Y\subset X$ is compact and forward invariant.
Hence the projection $\widehat\pi_{\widehat r}(Y_n)$ is compact and contains
$\{\varphi^t(z), t\geq n\}$. Since the forward orbit of $z$ is dense in $K$,
we have $\widehat\pi_{\widehat r}(Y_n)\supset K$.
Taking the intersection over $n$, one deduces that the projection of the
$\sigma^t_{\widehat r}$--invariant compact set $Y$ contains $K$.
By the minimality of $X$, it follows that $X=Y$.
\end{proof}

This concludes the proof of Part~\eqref{i.lift}.
\bigskip

\paragraph{\bf Part~\eqref{i.Bowen}, items (a) and (b)-(i).} We will use the affiliation relation. Item (a) of Part~\eqref{i.Bowen}, the local finiteness of the affiliation, was proved at the beginning of Section~\ref{s.finite-one}.
Item (b) claims that the affiliation $\sim$ is a Bowen relation. This splits into two properties (i)~and~(ii).

To prove item (i) of the Bowen relation,  let $(\un R,t),(\un S,s)\in \widehat \Sigma^\#_{\widehat r}$ with 
$\widehat \pi_{\widehat r}(\un R,t)=\widehat \pi_{\widehat r}(\un S,s)$, i.e.
$\widehat \pi(\un R)=\vf^{s-t}\widehat \pi(\un S)$. 
Since $|s-t|\leq\sup(\widehat r)\le \rho$, Lemma~\ref{Lemma-affiliation} implies that $R_0\sim S_0$.
\bigskip

\paragraph{\bf Part~\eqref{i.Bowen}, item (b)-(ii).}
We turn to property (ii) of the Bowen relation. We take $\gamma=3\rho$.
Let $z,z'\in\widehat\Sigma_{\widehat r}^\#$ 
such that $\operatorname{v}(\widehat\sigma_{\widehat r}^t z)\sim\operatorname{v}(\widehat\sigma_{\widehat r}^tz')$
for all $t\in\R$. By flowing the two orbits, we can assume that $z=(\un R,0)$ and $z'=(\un S,s)$.
Let $x=\widehat\pi(\un R)$ and $y=\widehat\pi(\un S)$. We wish to show that $x=\vf^{t+s}(y)$ for some 
$|t|< \gamma$. We will deduce from the affiliation condition that the orbit of $y$ must be shadowed by an
$\varepsilon$--gpo that shadows $x$.  By Proposition~\ref{Prop-shadowing}, the two orbits are equal
and the time shift between $x$ and $\vf^s(y)$ will be easily bounded.

To do this, we first apply Proposition \ref{Prop-relation-codings}(1) and get $\varepsilon$--gpo's
$\un v,\un w\in \Sigma^\#$ such that $x=\widehat\pi(\un R)=\pi(\un v)$ and $y=\widehat\pi(\un S)=\pi(\un w)$ 
with $R_0\subset Z(v_0)$ and $S_0\subset Z(w_0)$. Moreover, there are increasing integer sequences $(n_i)_{i\in\Z}$, $(\widetilde m_i)_{i\in\Z}$ such that $R_{n_i}\subset Z(v_i)$ and $S_{\widetilde m_i}\subset Z(w_i)$. For each $i\in\Z$, we locate affiliated symbols in the codings of $x$ and $y$ as follows.

We start with $\vf^{t_i}(x)\in Z(v_i)$ for $t_i=r_i(\un v)=\widehat r_{n_i}(\un R)$.
We have $\widehat\sigma^{t_i}_{\widehat r}(\un R,0)=(\widehat\sigma^{n_i}(\un R),0)$,
hence $\operatorname{v}(\widehat\sigma_{\widehat r}^{t_i}(z))=R_{n_i}$.
We also have $\widehat\sigma^{t_i}_{\widehat r}(\un S,s)=(\widehat\sigma^{\ell_i}(\un S),t_i+s-\widehat r_{\ell_i}(\un S))$,
where $\ell_i$ is the unique integer such that $\widehat r_{\ell_i}(\un S)\le t_i+s<\widehat r_{\ell_i+1}(\un S)$.
Thus $\operatorname{v}(\widehat\sigma_{\widehat r}^{t_i}(z'))=S_{\ell_i}$ and, by assumption, $R_{n_i}\sim S_{\ell_i}$.

Let $a_i\in\Z$ be the largest integer such that $m_i:=\widetilde m_{a_i}\le\ell_i$. Hence, $S_{m_i}\subset Z(w_{a_i})$.
We have $R_{n_i}\subset Z(v_i)\subset D_i$ and likewise $S_{m_i}\subset Z(w_{a_i})\subset E_i$ for some unique connected components $D_i,E_i$  of the section $\widehat \Lambda$.

Write $\Psi_{X_i}^{P^s_i,P^u_i}$ for $v_i$ and $\Psi_{Y_i}^{Q^s_i,Q^u_i}$ for $w_{a_i}$ for all $i\in\Z$.
Finally, set $\widetilde y_i:=\pi(\sigma^{a_i}\un w)\in Z(w_{a_i})$  and $y_i:=\mathfrak q_{D_i}(\widetilde y_i)$.
We are going to show that, for all $i\in\Z$:
 \begin{enumerate}[(1)]\label{id.for.bowen}
  \item[(1)] $y_i$ is well-defined, and for $i=0$ we have $y_0=\vf^{u}(\widetilde y_0)$
  with $|u|\leq 2\rho$;
  \item[(2)] $y_{i+1}=g_{X_i}^+(y_i)$. 
 \end{enumerate}
Proposition~\ref{Prop-shadowing} will then imply that 
$x=y_0=\vf^u(\widetilde y_0)=\vf^u(y)=\vf^{u-s}(\widehat\pi_{\widehat r}(\un S,s))$, where
$|u-s|\le 2\rho+\sup(\widehat r)<3\rho$. 
Property (ii) and therefore the Bowen relation claimed by Part (6)(b) will be established.

It remains to prove the above identities. As in \cite{BCL23}, they require checking that some holonomies along the flow are compatible. The idea is that affiliation implies
that charts have comparable parameters and their images fall inside $\widehat\Lambda$
far from its boundary. The claims below and their proofs are the same of those in \cite[Section 9.4]{BCL23}.

\medskip
\noindent
{\sc Claim 1:} Let $Z_1,Z_2\in\mathfs Z$ such that $Z_1\cap \vf^{[-\rho,\rho]}Z_2\neq\emptyset$.
Write $Z_i=Z(\Psi_{x_i}^{p^s_i,p^u_i})$ and let $D_i$ be the connected component
of $\widehat\Lambda$ containing $Z_i$. 
Then $\tfrac{p^s_1\wedge p^u_1}{p^s_2\wedge p^u_2}=e^{\pm(O(\sqrt[3]{\ve})+O(\rho))}$ and  
$$
\mathfrak q_{D_1}(\Psi_{x_2}(R[c(p^s_2\wedge p^u_2)]))\subset \Psi_{x_1}(R[2c(p^s_1\wedge p^u_1)])
$$
for all $1\leq c\leq 64$.\\

\begin{proof}[Proof of Claim $1$.] Same of Proposition \ref{Prop-overlapping-charts}(1).
\end{proof}

\medskip
\noindent
{\sc Claim 2:} Let $R_1,R_2\in\mathfs R$ such that $R_1\sim R_2$. For $i=1,2$,
let $D_i$ be the connected component of
$\widehat\Lambda$ containing $R_i$, and let $Z_i=Z(\Psi_{x_i}^{p^s_i,p^u_i})\in\mathfs Z$ 
such that $Z_i\supset R_i$. 
Then $\tfrac{p^s_1\wedge p^u_1}{p^s_2\wedge p^u_2}=e^{\pm(O(\sqrt[3]{\ve})+O(\rho))}$ and
$$
\mathfrak q_{D_1}(\Psi_{x_2}(R[c(p^s_2\wedge p^u_2)]))\subset \Psi_{x_1}(R[8c(p^s_1\wedge p^u_1)]).
$$
for all $1\leq c\leq 16$.

\begin{proof}[Proof of Claim $2$.]
Same as in \cite{BCL23}, applying Claim 1 three times.
\end{proof}
\medskip
\noindent
{\sc Claim 3:} Let $R_1,R_2,R_3\in\mathfs R$ such that $R_1\sim R_2$ and $R_2\sim R_3$.
For $i=1,2,3$, let $D_i$ be the connected component of
$\widehat\Lambda$ containing $R_i$, and let $Z_i=Z(\Psi_{x_i}^{p^s_i,p^u_i})\in\mathfs Z$ 
such that $Z_i\supset R_i$. 
Then $\tfrac{p^s_3\wedge p^u_3}{p^s_1\wedge p^u_1}=e^{\pm(O(\sqrt[3]{\ve})+O(\rho))}$ and
$$
(\mathfrak q_{D_1}\circ \mathfrak q_{D_2})(\Psi_{x_3}(R[c(p^s_3\wedge p^u_3)]))=
\mathfrak q_{D_1}(\Psi_{x_3}(R[c(p^s_3\wedge p^u_3)]))\subset 
\Psi_{x_1}(R[64c(p^s_1\wedge p^u_1)])
$$
for all $1\leq c\leq 2$.

\begin{proof}[Proof of Claim $3$.]
Same as in \cite{BCL23}, applying Claim 2 twice.
\end{proof}

Now we apply the above claims to our particular situation. Write $v_i=\Psi_{x_i}^{p^s_i,p^u_i}$
and $w_i=\Psi_{z_i}^{q^s_i,q^u_i}$, so that $Q^{s/u}_i=q^{s/u}_{a_i}$.

\medskip
\noindent
{\sc Claim 4:} Let $i\in\Z$. We have
$$
\mathfrak q_{E_{i+1}}(\Psi_{Y_{i}}(R[Q^s_{i}\wedge Q^u_{i}]))\subset
\Psi_{Y_{{i+1}}}(R[2(Q^s_{{i+1}}\wedge Q^u_{{i+1}})]).
$$

\begin{proof}[Proof of Claim $4$.] 
By Lemma \ref{Lemma-minimum} and Claim 3, 
$$
\tfrac{q^s_{a_{i+1}}\wedge q^u_{a_{i+1}}}{q^s_{a_i}\wedge q^u_{a_i}}=
\tfrac{q^s_{a_{i+1}}\wedge q^u_{a_{i+1}}}{p^s_{i+1}\wedge p^u_{i+1}}\cdot 
\tfrac{p^s_{i+1}\wedge p^u_{i+1}}{p^s_{i}\wedge p^u_{i}}\cdot
\tfrac{p^s_{i}\wedge p^u_{i}}{q^s_{a_i}\wedge q^u_{a_i}}=e^{\pm(O(\sqrt[3]{\ve})+O(\rho))}.
$$
This estimate allows to apply the same proof of Proposition \ref{Prop-overlapping-charts}(1), 
and so we can obtain the claimed inclusion in the same way.
\end{proof}

\medskip
\noindent
{\sc Claim 5:} Let $i\in\Z$. Restricted to the set $\Psi_{Y_{i}}(R[Q^s_{i}\wedge Q^u_{i}])$, we have the equality
$\mathfrak q_{D_{i+1}}\circ \mathfrak q_{E_{i+1}}=\mathfrak q_{D_{i+1}}=g_{X_i}^+\circ \mathfrak q_{D_i}$.

\begin{proof}[Proof of Claim $5$.] It is enough to prove the equality for $i=0$, i.e. that
$\mathfrak q_{D_1}\circ\mathfrak q_{E_1} =\mathfrak q_{D_1}=g_{X_0}^+\circ \mathfrak q_{D_0}$ 
when restricted to $\Psi_{Y_0}(R[Q^s_0\wedge Q^u_0])$. 
By Claim 4, $\mathfrak q_{E_1}[\Psi_{Y_0}(R[Q^s_0\wedge Q^u_0])]\subset \Psi_{Y_1}(R[2(Q^s_1\wedge Q^u_1)])$.
Applying Claim 3 with $c=2$ to the triple $(R_{n_1},S_{\ell_1},S_{m_1})$, we get that
$\mathfrak q_{D_1}[\Psi_{Y_1}(R[2(Q^s_1\wedge Q^u_1)])]$
is well-defined, hence $\mathfrak q_{D_1}\circ \mathfrak q_{E_1}=\mathfrak q_{D_1}$ when
restricted to $\Psi_{Y_0}(R[Q^s_0\wedge Q^u_0])$. On the other hand, applying Claim 3
with $c=1$ to the triple $(R_{n_0},S_{\ell_0},S_{m_0})$, we have that
$\mathfrak q_{D_0}[\Psi_{Y_0}(R[Q^s_0\wedge Q^u_0])]\subset \Psi_{X_0}(R[64(P^s_0\wedge P^u_0)])$.
By definition, $g_{X_0}^+=\mathfrak q_{D_1}$ when restricted to $R[64(P^s_0\wedge P^u_0)]$.
Therefore, $g_{X_0}^+\circ \mathfrak q_{D_0}=\mathfrak q_{D_1}$ when
restricted to $\Psi_{Y_0}(R[Q^s_0\wedge Q^u_0])$. This proves Claim 5.
\end{proof}

We now complete the proof of identities (1) and (2) of page \pageref{id.for.bowen}, 
which in turn will complete the proof of part (6) of Theorem \ref{t.main}.
For that, we use the claims we just proved.

Firstly we check that $y_i:=\mathfrak q_{D_i}(\widetilde y_i)$ is well-defined. By assumption $R_{n_i}\sim S_{\ell_i}$,
and by construction the orbit of $y$ between $S_{m_i}$ and $S_{\ell_i}$ flows for a time at most $\sup(r)<\rho$,
hence $S_{\ell_i}\sim S_{m_i}$. This allows us to apply Claim 3 for $c=1$ and get 
that $y_i:=\mathfrak q_{D_i}(\widetilde y_i)$ is well-defined. To calculate the time displacement for
$i=0$, recall that $m_0=\ell_0=0$. Since $R_0\sim S_0$, inclusion (\ref{e.2rho}) 
implies that $y_0=\vf^u(\widehat y_0)$ with $|u|\leq 2\rho$.

Finally, Claim 5 implies that 
 $$
   g_{X_i}^+(y_i)= g_{X_i}^+\circ \mathfrak q_{D_i}(\widetilde y_i)=\mathfrak q_{D_{i+1}}\circ \mathfrak q_{E_{i+1}}(\widetilde y_i)
    = \mathfrak q_{D_{i+1}}(\widetilde y_{i+1})=y_{i+1},
  $$
finishing  the proof of Theorem~\ref{t.main}.

\section{Homoclinic classes of measures}\label{sec.homoclinic}

In this final section, we prove Theorem \ref{thm.homoclinic} stated in the Introduction,
as well as Corollary~\ref{cor.local-uniq}.

\subsection{The homoclinic relation}

For any hyperbolic measure $\mu$ and $\mu$--a.e. $x$,
the stable set $W^s(x)$ of the orbit of $x$ is
the set of points $y$ such that there exists an increasing homeomorphism
$h\colon \mathbb{R}\to \mathbb{R}$ satisfying $d(\varphi^t(x),\varphi^{h(t)}(y))\to 0$
as $t\to +\infty$. This is an injectively immersed submanifold which is tangent to $E^s_x\oplus X(x)$ 
and invariant under the flow. 
We define similarly the unstable manifold $W^u(x)$ by considering past orbits.

\medskip
\noindent
{\sc Homoclinic relation of measures:} 
We say that two hyperbolic measures $\mu,\nu$ are
\emph{homoclinically related} if for $\mu$--a.e. $x$ and $\nu$--a.e. $y$
there exist transverse intersections $W^s(x)\pitchfork W^u(y)\ne\emptyset$
and  $W^u(x)\pitchfork W^s(y)\ne\emptyset$.

\medskip
Since any hyperbolic periodic orbit supports a (unique) ergodic measure,
the above homoclinic relation is also defined between
hyperbolic periodic orbits, in which case it coincides with the classical notion,
see, e.g. \cite{Newhouse-Lectures-dynamical-systems}.

\begin{proposition}
The homoclinic relation is an equivalence relation among ergodic hyperbolic measures.
\end{proposition}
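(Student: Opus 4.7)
The plan is to verify the three defining properties—reflexivity, symmetry, and transitivity—on the set of ergodic hyperbolic $\varphi$-invariant probability measures.

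Symmetry is immediate: the defining conditions \( W^s(x)\pitchfork W^u(y)\neq\emptyset \) and \( W^u(x)\pitchfork W^s(y)\neq\emptyset \), required for $\mu$-a.e.\ $x$ and $\nu$-a.e.\ $y$, are invariant under swapping the roles of $(\mu,x)$ and $(\nu,y)$.

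For reflexivity $\mu\sim\mu$, I would use a Pesin block argument. Choose a compact set $B$ of positive $\mu$-measure on which the local stable and unstable discs of $\varphi$ have size bounded below by some $\delta>0$, the splitting $E^s\oplus\langle X\rangle\oplus E^u$ is continuous, and the angles between these subspaces are bounded below. By ergodicity, $\mu$-a.e.\ $x$ returns to $B$ along a sequence of positive times, and $\mu$-a.e.\ $y$ returns to $B$ along a sequence of negative times. Applying Poincar\'e recurrence to the product system $(M\times M,\varphi\times\varphi,\mu\otimes\mu)$ with respect to $B\times B$, one produces times $t_n\to+\infty$ and $s_n\to-\infty$ at which $\varphi^{t_n}(x)$ and $\varphi^{s_n}(y)$ come arbitrarily close inside $B$. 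By the continuity of the splitting and the lower bounds on the sizes of local invariant manifolds on $B$, the local unstable disc at $\varphi^{t_n}(x)$ meets the local stable disc at $\varphi^{s_n}(y)$ transversally (dimension count: $(d_u+1)+(d_s+1)=d+2>\dim M$). Flowing this intersection back/forward yields $W^u(x)\pitchfork W^s(y)\neq\emptyset$. The other intersection is produced symmetrically.

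For transitivity, suppose $\mu\sim\nu$ and $\nu\sim\eta$ and pick, by intersecting three full-measure sets, typical points $x,y,z$ for $\mu,\nu,\eta$ for which all four transverse intersections in the hypotheses exist. Fix $p\in W^s(x)\pitchfork W^u(y)$ and $q\in W^s(y)\pitchfork W^u(z)$. Take a small submanifold $D\subset W^u(z)$ around $q$ transverse to $W^s(y)$. The Inclination Lemma for flows (the flow $\lambda$-lemma applied in a Poincar\'e section or, equivalently, to a time-$1$ map), combined with the forward $\varphi$-invariance of $W^u$, implies that $\bigcup_{t\geq 0}\varphi^t(D)$ accumulates in the $C^1$ topology on any compact piece of $W^u(y)$, in particular on a neighborhood of $p$ in $W^u(y)$. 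Since $W^s(x)$ cuts $W^u(y)$ transversally at $p$, and $C^1$-transversality is open, a suitable forward image $\varphi^{t_n}(D)$ meets $W^s(x)$ transversally; by $\varphi$-invariance of $W^s(x)$ and $W^u(z)$ this yields $W^s(x)\pitchfork W^u(z)\neq\emptyset$. The other intersection follows by reversing time. Running this argument for almost every $(x,z)$ with respect to $\mu\otimes\eta$ (choosing $y$ appropriately in a full $\nu$-measure set) gives $\mu\sim\eta$.

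The main obstacle is transitivity, and within it the careful bookkeeping of the flow direction: transverse intersections of stable and unstable manifolds must be understood modulo the one-dimensional flow line, and the $\lambda$-lemma must be applied in a way that preserves this kind of transversality uniformly along the approximation. This is handled by working in Poincar\'e sections transverse to $X$ (as constructed in Section~\ref{Section-sections}), where the statement reduces to the standard inclination lemma for non-uniformly hyperbolic diffeomorphisms on Pesin blocks, a setting in which the required arguments are by now classical.
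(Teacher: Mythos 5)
Your proof is correct and, for the one nontrivial point (transitivity), follows the same route as the paper: apply the inclination lemma at a typical point of the middle measure to push forward a disc of one unstable manifold until it $C^1$-accumulates on a piece of the middle unstable manifold carrying a transverse intersection with the other stable manifold, then reverse time for the second intersection. The paper treats reflexivity and symmetry as immediate, and your Pesin-block argument for reflexivity is the standard justification; the only imprecision is that recurrence of $(x,y)$ to $B\times B$ under the product flow does not by itself make $\varphi^{t_n}(x)$ and $\varphi^{s_n}(y)$ close to \emph{each other} --- one should instead fix a single ball $B_0\subset B$ of radius much smaller than $\delta$ and of positive measure, and use that $\mu$-a.e.\ $x$ enters $B_0$ in forward time while $\mu$-a.e.\ $y$ enters $B_0$ in backward time.
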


For three dimensional flows, this is \cite[Prop. 10.1]{BCL23}, and the same proof applies. The only property that is not direct is to check the transitivity of the relation, which uses the following standard lemma, whose proof is sketched in \cite{BCL23} and works equally well in any dimension. Recall that $d_{s/u}(x)$ is the dimension of $E^{s/u}_x$.

\medskip
\noindent
{\bf Inclination lemma.}
\emph{For any $\chi$--hyperbolic measure $\mu$,
there is a set $Y\subset M$ of full $\mu$--measure satisfying the following:
if $x\in Y$, $D\subset W^u(x)$ is a $(d_u(x)+1)$--dimensional disc 
and $\Delta$ is a $(d_u(x)+1)$--dimensional disc tangent to $X$ having a transverse intersection point with $W^s(x)$,
then there are discs $\Delta_k\subset \varphi_{(k,+\infty)}(\Delta)$ which converge to $D$ in the $C^1$ topology.}

In order to prove the proposition, let us consider three measures $\mu_1,\mu_2,\mu_3$
such that $\mu_1,\mu_2$ are homoclinically related and $\mu_2,\mu_3$ are homoclinically related.
For each measure $\mu_i$, let $x_i$ be a point in the full measure set implied by the homoclinic relation.
In particular, there exists a disc $\Delta\subset W^u(x_1)$ which intersects transversally $W^s(x_2)$
and a disc $D\subset W^u(x_2)$ which intersects transversally $W^s(x_3)$.
By the inclination lemma, the orbit of $\Delta$ contains discs that converge to $D$ for the $C^1$ topology.
This proves that $W^u(x_1)$ has a transverse intersection point with $W^s(x_3)$.
The same argument shows that $W^u(x_3)$ has a transverse intersection with $W^s(x_1)$.
Hence $\mu_1$ and $\mu_3$ are homoclinically related.

\medskip
\noindent
{\sc Homoclinic classes of measures:}
The equivalence classes for the homoclinic relation on the set of hyperbolic measures
are called \emph{homoclinic classes of measures.}

\subsection{Proof of Theorem \ref{thm.homoclinic}}
The proof is essentially the same of \cite[Theorem 1.1]{BCL23}, which in turn follows closely the argument in~\cite[Section 3]{BCS-MME}.
We consider the setting of the Main Theorem and especially a topological Markov flow
$(\widehat \Sigma_{\widehat r},\widehat\sigma_{\widehat r})$ satisfying the properties
stated in Theorem~\ref{t.main}.

We begin with some preliminary lemmas.
The first two correspond to properties (C6), (C7) in~\cite{BCS-MME}.

\begin{lemma}\label{C6}
For any two ergodic measures supported on a common irreducible component of $\widehat \Sigma_{\widehat r}$,
their projections under $\widehat\pi_{\widehat r}$ are hyperbolic ergodic measures that are homoclinically related.
\end{lemma}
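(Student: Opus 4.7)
My plan splits into establishing hyperbolicity and ergodicity of the projected measures, and then constructing homoclinic intersections using irreducibility.

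For the first part, I would let $\mu_i := (\widehat\pi_{\widehat r})_* \nu_i$. Because $\widehat\pi_{\widehat r}$ is a continuous flow-equivariant factor map (Theorem~\ref{t.main}), $\mu_i$ is ergodic and $\varphi$-invariant, supported in $\widehat\pi_{\widehat r}(\widehat\Sigma_{\widehat r})$. On this set, Theorem~\ref{t.main}(\ref{i.splitting}) provides a $\Phi$-invariant continuous splitting $N = N^s \oplus N^u$ with uniform exponential rates $\pm\lambda$, so every Lyapunov exponent of $\mu_i$ along $N$ has absolute value at least $\lambda>0$. Hence both $\mu_i$ are hyperbolic.

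For the second part, fix $\nu_1, \nu_2$ ergodic on a common irreducible component $\Sigma'_{\widehat r}$, with underlying irreducible base shift $\Sigma'$. First I would pick vertices $R, S \in \Sigma'$ with $\nu_1$ (respectively $\nu_2$) giving positive mass to the cylinder based at $R$ (respectively $S$). By ergodicity and Poincaré recurrence, $\nu_1$-a.e.\ lift $(\un R, t)$ has $R_n = R$ for infinitely many $n$ of both signs, and likewise for $\nu_2$-a.e.\ $(\un S, s)$, placing such lifts in $\widehat\Sigma^\#$. Irreducibility of $\Sigma'$ then supplies admissible paths $R \to \cdots \to S$ and $S \to \cdots \to R$ in the graph. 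Given a generic pair of lifts, I will splice them: pick $N_1 < 0$ with $R_{N_1} = R$ and $N_2 > 0$ with $S_{N_2} = S$, and define $\un Q \in \Sigma'$ to agree with $\un R$ for indices $\le N_1$, traverse the chosen $R$-to-$S$ path across the gap, and then follow a shift of $\un S$ beyond. The candidate homoclinic intersection is $z := \widehat\pi_{\widehat r}(\un Q, 0)$.

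To conclude $z \in W^u(x) \cap W^s(y)$, where $x, y$ are the projections of the two generic lifts, I would use the Markov property of $\mathfs R$ (Proposition~\ref{Prop-R}(3)) together with the compatibility of $u$-fibres in overlapping rectangles (Proposition~\ref{Prop-overlapping-charts}) to identify the backward $\varphi$-orbit of $z$ with a piece of the backward $\varphi$-orbit of $x$ (up to reparametrization), and symmetrically for the forward orbit versus $y$. Theorem~\ref{t.main}(\ref{i.manifold})(b)--(c) then upgrades the symbolically defined center manifolds $V^{cs}(z), V^{cu}(z)$ to honest pieces of $W^s(y), W^u(x)$. Transversality at $z$ is automatic: $T_z W^u(x) + T_z W^s(y) = (N^u_z + \mathbb{R} X(z)) + (N^s_z + \mathbb{R} X(z)) = T_z M$. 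The reverse intersection $W^s(x) \pitchfork W^u(y) \neq \emptyset$ follows by splicing the other way, using the $S$-to-$R$ path.

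The hard part will be the splicing step: verifying that the spliced point $z$ literally lies on the dynamical (Pesin) manifolds of $x$ and $y$, rather than merely shadowing them. This amounts to propagating the coincidence of symbolic $u$-fibres across the finitely many transitional indices where $\un Q$ differs from $\un R$, using the geometric Markov property, and then translating the resulting shadowing estimate into equality of flow orbits via the characterization in Theorem~\ref{t.main}(\ref{i.manifold})(c).
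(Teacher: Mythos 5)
Your proposal is correct and follows essentially the same route as the paper, which proves this lemma by deferring to \cite[Lemma 10.2]{BCL23} (itself modeled on property (C6) of \cite{BCS-MME}): hyperbolicity of the projections comes from the uniform splitting of Theorem~\ref{t.main}(4), and the homoclinic relation comes from splicing generic lifts through a connecting path in the irreducible component, so that the projection of the spliced itinerary lies in $W^u(x)\cap W^s(y)$ with transversality automatic from $N^s_z\oplus N^u_z\oplus\mathbb{R}X(z)=T_zM$. The only wording to adjust is ``equality of flow orbits'' at the end: the shadowing estimate yields asymptotic convergence of reparametrized orbits (membership in the Pesin sets $W^u(x)$ and $W^s(y)$ as defined in Section~\ref{sec.homoclinic}), not equality of orbits.
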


For three dimensional flows, this is \cite[Lemma 10.2]{BCL23} and the same proof applies in high dimension.

\begin{lemma}\label{C7}
For any $\chi’>0$, the set of ergodic measures on $\widehat \Sigma_{\widehat r}$
whose projection is $\chi’$--hyperbolic is open for the weak--* topology.
\end{lemma}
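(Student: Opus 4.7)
The plan is to reformulate $\chi'$--hyperbolicity of $\mu:=\widehat\pi_{\widehat r *}\overline\mu$ as an intersection of two open conditions on $\overline\mu$, using the H\"older continuous $\Phi$--invariant splitting $z\mapsto N^s_{\widehat\pi_{\widehat r}(z)}\oplus N^u_{\widehat\pi_{\widehat r}(z)}$ provided by Theorem~\ref{t.main}(\ref{i.splitting}). For any ergodic $\overline\mu$, the Oseledets decomposition of $\mu$ in directions transverse to $X$ is a refinement of this splitting: all its Lyapunov exponents along $N^s$ are negative and all those along $N^u$ are positive. Consequently $\mu$ is $\chi'$--hyperbolic if and only if
\[\lambda^s_{\max}(\overline\mu)<-\chi'\quad\text{and}\quad \lambda^u_{\min}(\overline\mu)>\chi',\]
where $\lambda^s_{\max}(\overline\mu)$ (resp.\ $\lambda^u_{\min}(\overline\mu)$) denotes the top (resp.\ bottom) Lyapunov exponent of the cocycle $\Phi$ along $N^s$ (resp.\ $N^u$).

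The next step would be to show that these extremal exponents are semicontinuous functions of $\overline\mu$ for the weak--$*$ topology. Apply Kingman's subadditive ergodic theorem to the subadditive cocycle $\overline f_T(z):=\log\|\Phi^T|_{N^s_{\widehat\pi_{\widehat r}(z)}}\|$: for ergodic $\overline\mu$,
\[\lambda^s_{\max}(\overline\mu)=\inf_{T>0}\tfrac{1}{T}\int \overline f_T\,d\overline\mu.\]
The crucial point is that for each fixed $T>0$ the function $\overline f_T$ is continuous and uniformly bounded on $\widehat\Sigma_{\widehat r}$: continuity follows from the H\"older continuity of $z\mapsto N^s_{\widehat\pi_{\widehat r}(z)}$ together with the continuity of $\Phi$, and uniform boundedness follows from the estimate $\|\Phi^T\|\leq e^{\rho+|T|}$ in \eqref{Norm of ILPF}. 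Hence $\overline\mu\mapsto \tfrac{1}{T}\int \overline f_T\,d\overline\mu$ is continuous in the weak--$*$ topology for each $T$, and $\lambda^s_{\max}$ is upper semicontinuous as an infimum of continuous functions. The analogous argument applied to $z\mapsto -\log m(\Phi^T|_{N^u_{\widehat\pi_{\widehat r}(z)}})$ gives that $\lambda^u_{\min}$ is lower semicontinuous.

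Therefore the set
\[\{\overline\mu\colon \lambda^s_{\max}(\overline\mu)<-\chi'\}\cap\{\overline\mu\colon \lambda^u_{\min}(\overline\mu)>\chi'\}\]
is open in the weak--$*$ topology on ergodic measures, which is exactly the set described in the lemma. The only point requiring verification is that the splitting of Theorem~\ref{t.main}(\ref{i.splitting}) genuinely exhausts the Oseledets decomposition of $\mu$ transverse to $X$; this is immediate from the construction, since $N^s\oplus N^u=N$ is the full normal bundle on $\widehat\pi_{\widehat r}(\widehat\Sigma_{\widehat r})$ and $\Phi$ is by definition the cocycle of $\vf$ on $N$. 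I do not expect any substantive obstacle: the main ingredients (existence of a continuous invariant splitting and subadditive control of its growth) are already packaged in Theorem~\ref{t.main}.
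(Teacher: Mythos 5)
Your proof is correct and follows essentially the same route as the paper: both arguments rest on the continuity of the splitting $z\mapsto N^{s/u}_{\widehat\pi_{\widehat r}(z)}$ from Theorem~\ref{t.main}(4) together with the subadditivity of $t\mapsto\log\|\Phi^t\restriction_{N^s}\|$ --- the paper phrases this as a Portmanteau argument on an open set of uniformly contracted vectors of large measure, while you package it as upper (resp.\ lower) semicontinuity of the extremal exponents $\lambda^s_{\max}$ (resp.\ $\lambda^u_{\min}$) via Kingman, which is a cleaner but equivalent formulation. The one step you declare ``immediate'' --- that $\chi'$--hyperbolicity, defined through the Lyapunov exponents of $d\vf^t$ transverse to $X$, is equivalent to the stated conditions on the exponents of $\Phi^t$ along $N^{s/u}$ --- does require the identification of the exponents of the two cocycles, which the paper obtains from the computation in the proof of Proposition~\ref{Prop-NUH}.
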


\begin{proof}
For three dimensional flows, this is \cite[Lemma 10.3]{BCL23}, whose proof is inspired by \cite[Prop. 3.7]{BCS-MME}. The proof below is a mixture of both. 
Let $\overline\mu$ be an ergodic probability measure on $\widehat \Sigma_{\widehat r}$ such that its projection $\mu=\overline\mu\circ \wh\pi_{\wh r}^{-1}$ is $\chi'$--hyperbolic. We wish to show that if $\overline\nu$ is close to $\overline\mu$ in the weak--* topology, then all Lyapunov exponents of $\nu=\overline\nu\circ \wh\pi_{\wh r}^{-1}$ in the stable direction $E^s$ are smaller than $-\chi'$. Since the same applies to the unstable direction, the proof will follow.

Let $\lambda<\chi'$ such that the Lyapunov exponents of $\mu$ along $E^s$ are smaller than $-\lambda$. By the proof of Proposition \ref{Prop-NUH}, the Lyapunov exponents of the cocycles $(d\vf^t)_{t\in\R}$ and $(\Phi^t)_{t\in\R}$ coincide almost everywhere for every $\vf$--invariant probability measure.
Therefore, for a fixed $\delta>0$, there is $T=T(\delta)>0$ s.t.
$$
A:=\left\{x\in\wh\Sigma_{\wh r}:\|\Phi^T v\|<e^{-\lambda T}\|v\|\text{ for all non-zero }v\in N^s_{\wh\pi_{\wh r}(x)}\right\}
$$
has $\overline\mu$--measure larger than $1-\delta$. By Theorem \ref{s.detailed}(4), the map $x\in\wh\Sigma_{\wh r}\mapsto N^s_{\wh\pi_{\wh r}(x)}$ is continuous and so $\overline\nu(A)>1-\delta$ for $\overline\nu$ close to $\overline\mu$ in the weak--* topology. Since $\delta>0$ can be chosen arbitrarily small, it follows that $\tfrac{1}{T}\int_{\wh \Sigma_{\wh r}} \log\|\Phi^t\restriction_{N^s}\|d\overline\nu<-\chi'$.
By the subadditivity of $t\in\R\mapsto \|\Phi^t\restriction_{N^s}\|$, it follows that
$$
\lim_{t\to+\infty} \frac{1}{t}\int \log \|\Phi^t\restriction_{N^s}\|d\overline\nu\leq 
\frac{1}{T}\int \log \|\Phi^T\restriction_{N^s}\|d\overline\nu<-\chi'.
$$
Since the limit above equals the largest exponent along $N^s$, we conclude that all exponents of $(\Phi^t)_{t\in\R}$ along $N^s$ are smaller than $-\chi'$, and so the same holds for the exponents of $(d\vf^t)_{t\in\R}$ along $E^s$.
\end{proof}

\begin{lemma}
There exists an irreducible component $\widehat\Sigma'_{\widehat r}\subset \widehat \Sigma_{\widehat r}$
to which one can lift all $\chi$--hyperbolic periodic orbits that are homoclinically related to $\mu$.
\end{lemma}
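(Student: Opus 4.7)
The plan is to reduce the lemma to a graph-theoretic statement and then produce it from a pair of heteroclinic orbits. First, using Theorem~\ref{t.main}(2), lift $\mu$ to an ergodic $\widehat\sigma_{\widehat r}$--invariant measure $\overline\mu$; by ergodicity, $\overline\mu$ is supported on a single irreducible component of $\widehat\Sigma_{\widehat r}$, which will be our candidate $\widehat\Sigma'_{\widehat r}$, with vertex set $V'$. For any $\chi$--hyperbolic periodic orbit $\gamma$ homoclinically related to $\mu$, Theorem~\ref{t.main}(8) gives a transitive compact invariant set $X_\gamma\subset \widehat\Sigma_{\widehat r}$ with $\widehat\pi_{\widehat r}(X_\gamma)=\gamma$; the symbols used by $X_\gamma$ form a finite set contained in some irreducible component, with vertex set $V''$. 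The lemma thus reduces to showing $V'=V''$, for which, since each of $V',V''$ is maximal strongly connected, it suffices to produce directed paths in $\widehat{\mathfs G}$ from $V''$ to $V'$ and back.

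I would produce these paths from heteroclinic orbits. Fix a Birkhoff-generic point $z\in \mathrm{supp}(\mu)\cap \nuh^\#$ and $p\in\gamma$. By the homoclinic relation there are transverse heteroclinic points
\[
q^+\in W^s(z)\pitchfork W^u(p),\qquad q^-\in W^u(z)\pitchfork W^s(p).
\]
The first step is to verify that $q^\pm\in \nuh^\#$. The Lyapunov exponents transverse to $X$ are forced outside $[-\chi,\chi]$ because the forward and backward orbits of $q^+$ shadow the $\chi$--hyperbolic orbit of $z$ and the hyperbolic periodic orbit $\gamma$ respectively, so that vectors along the stable/unstable directions inherit the exponents of $z$ and $\gamma$. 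The recurrence condition (NUH5) follows because the backward orbit of $q^+$ enters the periodic hyperbolic set $\gamma$, where $q$ is uniformly positive, while in forward time the orbit of $q^+$ shadows that of $z$; choosing $z$ so that its orbit returns infinitely often to a fixed Pesin block on which $q$ is uniformly bounded away from zero transfers this lower bound to $q^+$. This control of the parameter $q$ along heteroclinic orbits is the main obstacle.

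Once $q^\pm\in\nuh^\#$, Theorem~\ref{t.main}(7) provides canonical lifts $\widehat q^\pm=(\un R(q^\pm),0)\in \widehat\Sigma^\#_{\widehat r}$. The key geometric observation is that because $q^+\in W^s(z)$, for all $n$ large enough the return $H^n(q^+)$ lies in a local stable fiber of $H^{n+k}(z)$ sitting inside the rectangle $R_{n+k}(z)\in V'$, whence $R_n(q^+)=R_{n+k}(z)\in V'$; the symmetric argument using $q^+\in W^u(p)$ together with the periodicity of $p$ gives $R_n(q^+)\in V''$ for all $n$ sufficiently negative. Therefore $\widehat q^+$ realizes a directed path in $\widehat{\mathfs G}$ from some vertex of $V''$ to some vertex of $V'$, and applying the same reasoning to $\widehat q^-$ produces the reverse path. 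Maximality of each of $V',V''$ as a strongly connected set then forces $V'=V''$, showing that $\gamma$ is coded by $\widehat\Sigma'_{\widehat r}$. The argument is modeled on the diffeomorphism case of~\cite[Section 3]{BCS-MME} and applies verbatim to any ergodic $\chi$--hyperbolic measure homoclinically related to $\mu$, which is what will be needed when upgrading the lemma to Theorem~\ref{thm.homoclinic}.
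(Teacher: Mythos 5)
Your overall strategy is genuinely different from the one the paper intends (the paper defers to \cite[Lemma 10.4]{BCL23}, whose argument, following \cite[Section 3]{BCS-MME}, compares periodic orbits pairwise: two homoclinically related $\chi$--hyperbolic periodic orbits are contained in a common Birkhoff--Smale horseshoe, i.e.\ a compact transitive uniformly hyperbolic set that can be chosen so that \emph{all} its ergodic measures are $\chi$--hyperbolic; Theorem~\ref{t.main}(8) then lifts that horseshoe to a transitive compact invariant set, which lies in a single irreducible component, so all such periodic orbits share one component). That route never leaves the uniformly hyperbolic world, which is exactly why part~(8) was designed the way it is.

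Your route has a genuine gap at the step you yourself flag: you need $q^{\pm}\in\nuh^{\#}$ in order to invoke the canonical lift of Theorem~\ref{t.main}(7), and nothing in the paper delivers this. Membership in $\nuh$ requires finiteness of $s(q^{+})$ and $u(q^{+})$, i.e.\ of integrals such as $\int_{0}^{\infty}e^{2\chi t}\|\Phi^{t}v\|^{2}\,dt$ along the \emph{orbit of $q^{+}$}; the fact that the forward orbit of $q^{+}$ converges to the orbit of $z$ does not control this integral, because the functions $s(\cdot)$, $u(\cdot)$, $Q(\cdot)$, $q(\cdot)$ are only measurable and the comparison of Lyapunov norms between a point and a nearby point on its stable set is precisely the delicate content of Sections~\ref{Section-improvement}--\ref{Section-finite} (Lemma~\ref{Lemma-finite-norm-1}, Corollary~\ref{Corollary-improvement}), which is proved only for points already shadowed by $\ve$--gpo's, not for arbitrary transverse heteroclinic intersections. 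The same applies to (NUH5): returning infinitely often to a Pesin block is a property of the orbit of $z$, and transferring the lower bound on $q$ to the orbit of $q^{+}$ is an unproved quantitative statement, not a soft consequence of shadowing. A second, smaller gap is the claim that $R_{n}(q^{+})=R_{n+k}(z)$ for all large $n$ with a single shift $k$: this requires the returns of $q^{+}$ and $z$ to the section $\mathfs R$ to synchronize eventually, which is an instance of the parsing problem and again needs an argument. I would recommend abandoning the heteroclinic points of the $\mu$--generic orbit and instead running the argument only between periodic orbits: fix one $\chi$--hyperbolic periodic orbit $O_{0}$ homoclinically related to $\mu$ (if none exists the lemma is vacuous), let $\widehat\Sigma'_{\widehat r}$ be the irreducible component containing the transitive lift of $O_{0}$ given by Theorem~\ref{t.main}(8), and for any other such periodic orbit $O$ apply Theorem~\ref{t.main}(8) to a horseshoe containing $O_{0}\cup O$ whose ergodic measures are all $\chi$--hyperbolic; transitivity of the lift forces both orbits into the same component.
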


In other words, there is an irreducible component that lifts periodic orbits. For three dimensional flows, this is \cite[Lemma 10.4]{BCL23} and the same proof applies here.

Now we complete the proof of Theorem \ref{thm.homoclinic}.
Let $\nu$ be a $\chi$--hyperbolic ergodic measure that is homoclinically related to $\mu$.
By Theorem~\ref{t.main}(2), there exists an ergodic lift $\overline\nu$ of $\nu$ to
$\widehat \Sigma_{\widehat r}$.
Consider a point $ q\in \widehat \Sigma_{\widehat r}$ that is recurrent (such that there exists a sequence of forward iterates
$\widehat \sigma^{t_i}_{\wh r}(q)$ which converges to $q$) and generic for $\overline\nu$,
and let $x=\widehat\pi_{\widehat r}(q)$.

Using the recurrence of $q$, there is a sequence of periodic points $ q^i$ in $\widehat \Sigma_{\widehat r}$
which converge to $q$ (hence are in a same irreducible component)
and whose orbits weak--* converge to $\overline\nu$.
By Lemma~\ref{C7} the projections of these periodic orbits are $\chi$--hyperbolic and
by Lemma~\ref{C6} they are homoclinically related to $\mu$. Therefore there are periodic
orbits $p^i$ in the irreducible component $\widehat\Sigma'_{\widehat r}$
which have the same projections as the periodic orbits $ q^i$.

Write $q^i=(\underline R^i,t^i)$ and $p^i=(\underline S^i,s^i)$.
Since $(q^i)$ is converging
and $\widehat \Sigma$ is locally compact,
the sequence $(\underline R^i)$ is relatively compact.
The Bowen property of Theorem~\ref{t.main}(6) implies that
$\operatorname{v}(\widehat \sigma_{\widehat r}^t(q^i))\sim \operatorname{v}(\widehat\sigma_{\widehat r}^t(p^i))
\text{ for all }t\in \mathbb{R}$ so, by the local finiteness of the affiliation,
the sequence $(\underline S^i)$ is relatively compact.
This implies that $(p^i)$ is relatively compact and (up to taking a subsequence)
converges to some $p\in \widehat\Sigma'_{\widehat r}$.
By continuity of the projection, $\widehat\pi_{\widehat r}(p)=\widehat\pi_{\widehat r}(q)=x$.

We claim that $p\in  \widehat{\Sigma'}_{\widehat r}^\#$.
This follows from the fact that $q$ is recurrent and that the Bowen relation is locally finite.
More precisely, there are some vertex $A\in\widehat V$ and integers $m_k,n_k\to\infty$ such that $q_{m_k}=q_{-n_k}=A$. 
In particular, for each $k\ge1$ we have $q^i_{m_k}=q^i_{-n_k}=A$ for all large $i$.
Hence $p^i_{m_k},p^i_{-n_k}$ are related to $A$, and so they belong to the set $\{B\in\widehat V:B\sim A\}$.
This latter set is finite, hence some symbol must repeat
as required and this passes to the limit $p$, proving the claim.

We have thus proved that $\nu$--almost every point has a lift in $\widehat{\Sigma'}_{\widehat r}^\#$.
The finiteness-to-one property of Theorem~\ref{t.main}(3) and
the same averaging argument used in the proof of Theorem~\ref{t.main}(2) imply
that $\nu$ has a lift in $\widehat\Sigma'_{\widehat r}$. Considering the ergodic decomposition,
we can choose an ergodic lift, as claimed.
Theorem \ref{thm.homoclinic} is now proved.\qed

\subsection{Proof of Corollary~\ref{cor.local-uniq}}
Let $\mathcal H$ be some homoclinic class of hyperbolic ergodic measures.
Let us deduce from Theorem~\ref{thm.homoclinic} that there is at most one
$\nu\in\mathcal H$ such that  $h(\vf,\nu)=\sup\{h(\vf,\mu):\mu\in\mathcal H\}$. 
Let  $\nu,\nu'\in\mathcal H$ be two measures with this property.  They are both hyperbolic, 
hence $\chi$--hyperbolic for some $\chi>0$. For one such fixed parameter $\chi$,
let $\pi_r:\Sigma_r\to M$ be the coding given by the Main Theorem.

By Theorem~\ref{thm.homoclinic}, there is an irreducible component $\Sigma'_r$ of $\Sigma_r$
to which both $\nu$ and $\nu'$ lift. Since the factor map $\pi_r$ preserves the entropy
and since the projection of any ergodic measure on $\Sigma'_r$ is homoclinically related 
to $\nu$ and $\nu'$ by Lemma~\ref{C6},
the two lifts are measures of maximal entropy for  $\Sigma'_r$.  But the measure of maximal entropy of an irreducible component of a topological Markov flow with a H\"older continuous roof function $r$ is unique (see
e.g. \cite[Proof of Theorem 6.2]{Lima-Sarig}). Hence $\nu=\nu'$, which proves Corollary~\ref{cor.local-uniq}.

\appendix

\section{Standard proofs}\label{Appendix-proofs}

\renewcommand\thetheorem{\Alph{section}.\arabic{theorem}}

Remind we are assuming that $\|\nabla X\|\leq 1$, and that this implies two facts:
\begin{enumerate}[$\circ$]
\item Every Lyapunov exponent of $\vf$ has absolute value $\leq 1$, hence we consider $\chi\in (0,1)$.
\item $\|\Phi^t\|=e^{\pm (\rho+|t|)}$ for all $t\in\R$, see estimate (\ref{Norm of ILPF}).
\end{enumerate}

\begin{proof}[Proof of Lemma \ref{Lemma-linear-reduction}.]
(1)  Let $v=v^s+v^u$, where $v^s\in N^s_x$
and $v^u\in N^u_x$. Then
$$\vertiii{v}^2 = \vertiii{v^s}^2 + \vertiii{v^u}^2 \geq  \tfrac{2}{1- \chi} (\| v^s \|^2 + \| v^u \|^2). $$
Using the arithmetic-quadratic mean inequality and the triangle inequality,
$$\vertiii{v}^2 \geq \tfrac{2}{1- \chi} (\| v^s\|^2 + \| v^u \|^2) \geq \tfrac{1}{1- \chi}( \| v^s \| +  \| v^u \|)^2\geq \tfrac{1}{1-\chi} \|v \|^2.$$

Recalling the definition of $C(x)$, we have $\| C(x)^{-1} v \|^2 = \vertiii{v}^2 \geq \frac{1}{1-\chi} \| v \|^2$. Setting $w=C(x)^{-1}v$ yields $\| C(x) w \| \leq \sqrt{1- \chi} \|w\|$, showing that $C(x)$ is a contraction.

The formula for $\|C(x)^{-1}\|$ is direct:
if $v=v^s+v^u\in N^s_x\oplus N^u_x$ then 
$\|C(x)^{-1}v\|^2=\vertiii{v}^2= \vertiii{v^s}^2+\vertiii{v^u}^2$ and so
$$
\|C(x)^{-1}\|^2=\sup_{v\in N_x\setminus\{0\}}\frac{\vertiii{v}^2}{\|v\|^2}=\sup_{v\in N_x\setminus\{0\}}\frac{\vertiii{v^s}^2+\vertiii{v^u}^2}{\|v\|^2}\cdot
$$
This proves part (1).

\medskip
\noindent
(2) Start noting that, by the definition of $C(\cdot)$, we have
$$
\R^{d_s(x)}\times\{0\} \xrightarrow[]{C(x)} N^s_x \xrightarrow[]{\ \Phi^t\ }
N^s_{\vf^t(x)} \xrightarrow[]{C(\vf^t(x))^{-1}} \R^{d_s(\vf^t(x))}\times \{0\}.
$$
Similarly,
$$
\{0\}\times\R^{d_u(x)} \xrightarrow[]{C(x)} N^u_x \xrightarrow[]{\ \Phi^t\ }
N^u_{\vf^t(x)} \xrightarrow[]{C(\vf^t(x))^{-1}} \{0\}
\times\R^{d_u(\vf^t(x))}
$$
and so $D(x,t)$ has the block form 
$$
D(x,t)=\left[\begin{array}{cc}D_s(x,t) & 0 \\
0 & D_u(x,t)
\end{array}\right],
$$
where $D_s(x,t)$ has dimension $d_s(x)$ and $D_u(x,t)$ has dimension $d_u(x)$. It remains to estimate $D_{s/u}(x,t)$.
Observe that if $v_1\in \mathbb{R}^{d_s(x)}\times\{0\}$
then for $v^s=C(x)v_1$ the  definition of the Lyapunov inner product implies that
$\|v_1\|=\|C(x)^{-1}v^s\|=\vertiii{v^s}$ and 
$\|D_s(x,t)v_1\|=\|[C(\vf^t(x))^{-1}\circ\Phi^t](v^s)\|=\vertiii{\Phi^t v^s}$. Since analogous equations hold for $v_2\in \{0\}\times \mathbb R^{d_u(x)}$, we just need to estimate the ratios $\frac{\vertiii{\Phi^tv^{s/u}}^2}{\vertiii{v^{s/u}}^2}$ for non-zero $v^{s/u}\in N^{s/u}_x$. Define $\kappa(t):=e^{-\chi t}\left[1-e^{2\rho}\left(1-e^{-2(1-\chi)t}\right) \right]^{1/2}$.

\medskip
\noindent
{\sc Claim 1:}  For all nonzero $v^{s/u}\in N^{s/u}$ and $t\geq 0$, it holds 
$$\kappa(t)<\tfrac{\vertiii{\Phi^tv^s}}{\vertiii{v^s}}<e^{-\chi t}\ \text{ and }\ e^{\chi t}<\tfrac{\vertiii{\Phi^tv^u}}{\vertiii{v^u}}<\kappa(t)^{-1}.$$

\medskip
Noticing that $\kappa(t) \geq e^{-4 \rho}$ for $0\leq t\leq 2\rho$, this claim clearly implies part (2).

\begin{proof}[Proof of Claim $1$]
We prove the estimate for $v^s$ (the argument for $v^u$ is analogous, by symmetry). Given $v^s\in N^s$, decomposing the 
integral defining $\vertiii{v^s}$ into two parts, we have
\begin{equation*}
\begin{split}
&\vertiii{v^s}^2 = 4 e^{2 \rho}\int_0^t e^{2 \chi t'} \| \Phi^{t'} v^s\|^2 dt'+4 e^{2 \rho}\int_t^\infty e^{2 \chi t'} \| \Phi^{t'} v^s\|^2 dt'\\
&=4 e^{2 \rho}\underbrace{\int_0^t e^{2 \chi t'} \| \Phi^{t'} v^s\|^2 dt'}_{=:A}+
e^{2\chi t}\vertiii{\Phi^t v^s}^2
\end{split}
\end{equation*}
and so
\begin{equation}\label{Lemma3.2,auxiliar1}
    \tfrac{\vertiii{\Phi^t v^s}^2}{\vertiii{v^s}^2}= e^{-2 \chi t} \left( 1 - \tfrac{4 e^{2 \rho} A}{ \vertiii{v^s}^2} \right)\cdot
\end{equation}
Recalling that $\chi\in (0,1)$, this estimate already gives the upper bound 
$\tfrac{\vertiii{\Phi^t v^s}}{\vertiii{v^s}}<e^{-\chi t}$. For the lower bound, we estimate the ratio $\frac{A}{\vertiii{v^s}^2}$ from above. The idea is to decompose $\vertiii{v^s}^2$ into a sum of integrals
$$
\vertiii{v^s}^2 = 4 e^{2 \rho} \int_0^{\infty} e^{2 \chi t'} \| \Phi^{t'} v^s \|^2 dt'
    = 4 e^{2 \rho} \sum_{j \geq 0} \int_{jt}^{(j+1)t} e^{2 \chi t'} \| \Phi^{t'} v^s \|^2 dt'.
$$
and estimate each integral in terms of $A$.
By the change of variables $t'=jt+r$, we have 
\begin{align*}
&\,\int_{jt}^{(j+1)t} e^{2 \chi t'} \| \Phi^{t'} v^s \|^2 dt'
=e^{2\chi jt}\int_0^t e^{2\chi r}\|\Phi^{jt}\Phi^r v^s\|^2dr
\geq e^{2\chi jt}\int_0^t e^{2\chi r-2jt-2\rho}\|\Phi^r v^s\|^2dr\\
&=A e^{-2(1-\chi)jt-2\rho}
\end{align*}
and so 
$$
\vertiii{v^s}^2\geq 4e^{2\rho}A\sum_{j\geq 0}e^{-2(1-\chi)jt-2\rho}= \tfrac{4A}{1-e^{-2(1-\chi)t}},
$$
thus giving that $\tfrac{A}{\vertiii{v^s}^2}\leq \tfrac{1-e^{-2(1-\chi)t}}{4}$. Therefore
$$\tfrac{\vertiii{\Phi^t v^s}^2}{\vertiii{v^s}^2} \geq e^{-2 \chi t} \left[ 1- e^{2 \rho}\left( 1-e^{-2(1-\chi)t}\right) \right]=\kappa(t)^2,$$
which is the required lower bound.
\end{proof}

\medskip
\noindent
(3) We begin with the following estimate.

\medskip
\noindent
{\sc Claim 2:}  For all non-zero $v\in N$ and $t\in\R$, it holds 
$$
\tfrac{\vertiii{\Phi^tv}}{\vertiii{v}}=e^{\pm(\rho+|t|)}.
$$

\medskip
Contrary to Claim 1, the above estimate holds for every non-zero vector and also for negative values of $t$.

\begin{proof}[Proof of Claim $2$]
We first prove the result for $v\in N^s$.
By the estimate $\|\Phi^t\|=e^{\pm (\rho+|t|)}$, we have 
$$
\vertiii{\Phi^t v}^2=4 e^{2\rho}\int_0^\infty e^{2\chi t'}\|\Phi^{t'}\Phi^t v\|^2dt'=
e^{\pm 2(\rho+|t|)} 4 e^{2\rho}\int_0^\infty e^{2\chi t'}\|\Phi^{t'}v\|^2dt'=e^{\pm 2(\rho+|t|)}\vertiii{v}^2
$$
and we obtain the estimate. Analogously, the estimate holds for $v\in N^u$. Finally, for a general $v=v^s+v^u$, using the $\Phi$--invariance of the decomposition $N^s\oplus N^u$, we have
$$
\vertiii{\Phi^t v}^2=\vertiii{\Phi^tv^s}^2+\vertiii{\Phi^tv^u}^2=
e^{\pm 2(\rho+|t|)}\left(\vertiii{v^s}^2+\vertiii{v^u}^2\right)
=e^{\pm 2(\rho+|t|)}\vertiii{v}^2,
$$
which concludes the proof of the claim.
\end{proof}

Now we prove part (3). By symmetry, it is enough to prove the upper estimate $\tfrac{\|C(\vf^t(x))^{-1}\|}{\|C(x)^{-1}\|}\leq e^{2(\rho+|t|)}$. Fix a non-zero $w\in N$ and let $v=\Phi^{-t}w$. We have
\begin{align*}
\tfrac{\|C(\vf^t(x))^{-1}w\|/\|w\|}{\|C(x)^{-1}\|}
\leq \tfrac{\|C(\vf^t(x))^{-1}w\|\cdot \|v\|}{\|w\|\cdot \|C(x)^{-1}v\|}=
\tfrac{\vertiii{w}}{\vertiii{v}}\cdot \tfrac{\|v\|}{\|w\|}\leq e^{2(\rho+|t|)},
\end{align*}
where in the last inequality we used the estimate $\|\Phi^t\|=e^{\pm (\rho+|t|)}$ and Claim 2. Since $w$ is arbitrary, part (3) follows.
\end{proof}

\begin{proof}[Proof of Theorem \ref{Thm-non-linear-Pesin}]
Remember that $B_x = B(x, 2\mathfrak{r})$. If $\ve >0$ is small enough, then by Lemma \ref{lemma3.7}(1)
$$
\Psi_x (R[10 Q(x)]) \subset B (x, 20 \sqrt{2} Q(x))\subset B_x
$$
and inside this ball conditions (Exp1)--(Exp4) are satisfied. We start showing that $f_x^+:R[10 Q(x)] \to \R^d$ is well-defined. Since $C(x)$ is a contraction, we have $C(x)(R[10 Q(x)])\subset B_x[10 \sqrt{2} Q(x)]$. Using that $C(f(x))^{-1}$ is globally defined and (Exp1), it is enough to show that
$$
(g_x^+ \circ \exp{x})(B_x[10 \sqrt{2} Q(x)]) \subset B_{f(x)}.
$$
For $\ve >0$ small, we have:
\begin{enumerate}[$\circ$]
    \item By (Exp2), $\exp{x}$ maps $B_x [10\sqrt{2} Q(x)]$ diffeomorphically into $B(x, 20 \sqrt{2} Q(x))$. 
    \item Since $20 \sqrt{2} Q(x) < 2 \mathfrak{r}$, we have $B(x, 20 \sqrt{2} Q(x)) \subset B_x$ and so, by Lemma \ref{Lemma-map-g}, $g_x^+$ maps $B(x, 20 \sqrt{2} Q(x))$ diffeomorphically into $B(f(x), 40 \sqrt{2} Q(x))$.
    \item Since $40 \sqrt{2} Q(x) <2 \mathfrak{r}$, we have $B(f(x), 40 \sqrt{2} Q(x)) \subset B_{f(x)}$.
\end{enumerate}
Hence $f_x^+ : R[10 Q(x)] \rightarrow \R^d$ is a diffeomorphism onto its image.

Now we verify parts (1)--(2). Using the equalities $d(\Psi_x)_0 = C(x)$, $d(\Psi_{f(x)})_0 = C(f(x))$ and Lemma \ref{Lemma-map-g}, we have that
$$
d(f_x^+)_0 = C(f(x))^{-1} \circ \Phi^{r_\Lambda (x)} \circ C(x).
$$
By Lemma \ref{Lemma-linear-reduction},
$d(f_x^+)_0 = \begin{bmatrix} D_s (x) && \\ && D_u (x) \end{bmatrix}$ with $e^{-4\rho}\leq \|D_s (x)\|,\|D_u(x)^{-1}\| \leq e^{- \chi r_{\Lambda} (x)}$
and so part (1) is proved.

\medskip
\noindent
(2) Items (a)--(b) are automatic, hence we focus on (c).

\medskip
\noindent
{\sc Claim:} $\| d(f_x^+)_{v_1} - d(f_x^+)_{v_2} \| \leq \frac{\ve}{3} \| v_1 - v_2 \|^{\frac{\beta}{2}}$ for all $v_1, v_2 \in R[10 Q(x)]$.

\medskip
Before proving the claim, we show how it implies (c). If $\ve>0$ is small enough, then $R[10 Q(x)] \subset B_x[1]$. Applying the claim for $v_2 =0$, we have $\|d H_v \| \leq \frac{\ve}{3} \| v \|^{\frac{\beta}{2}} < \frac{\ve}{3}$. By the mean value inequality, $\| H(v) \| \leq \frac{\ve}{3} \| v \| \leq \frac{\ve}{3}$ and so $\| H \|_{C^{1 + \frac{\beta}{2}}}< \ve$ (the norm is taken in $R[10 Q(x)]$).

\begin{proof}[Proof of the claim] Fix $L >\Hol{\beta} (dg_x^+)$. For $i=1,2$, write $w_i = C(x) v_i$ and put
$$A_i = \widetilde{d(\exp{f(x)}^{-1})_{(g_x^+ \circ \exp{x})(w_i)}}\,, \ B_i = \widetilde{d (g_x^+)_{\exp{x} (w_i)}}\,, \ C_i = \widetilde{d(\exp{x})_{w_i}}.$$
We  estimate $\|A_1 B_1 C_1-A_2 B_2 C_2\|$.
\begin{enumerate}[$\circ$]
\item By (Exp2), $\|A_i\|\leq 2$. By (Exp2), (Exp3) and Lemma \ref{Lemma-map-g}:
\begin{align*}
\|A_1-A_2\|\leq \mathfrak Kd((g_x^+\circ \exp{x})(w_1),(g_x^+\circ \exp{x})(w_2))
\leq 4\mathfrak K\|w_1-w_2\|.
\end{align*}
\item By Lemma \ref{Lemma-map-g}, $\|B_i\|\leq 2$.
By (Exp2) and Lemma \ref{Lemma-map-g}:
$$
\|B_1-B_2\|\leq Ld(\exp{x}(w_1),\exp{x}(w_2))^{\beta}\leq 2L\|w_1-w_2\|^\beta.
$$
\item By (Exp2), $\|C_i\|\leq 2$. By (Exp3), $\|C_1-C_2\|\leq \mathfrak K\|w_1-w_2\|$.
\end{enumerate}
Hence
\begin{align*}
   &\, \| A_1 B_1 C_1 - A_2 B_2 C_2 \| 
   \leq \|(A_1-A_2)B_1C_1\|+\|A_2(B_1-B_2)C_1 \|+\|A_2B_2 (C_1-C_2)\|\\
   & \leq 16\mathfrak{K}\|w_1-w_2\|+8L\| w_1 - w_2 \|^\beta + 4\mathfrak{K} \| w_1 - w_2 \|
   \leq 28 \mathfrak{K} L \| w_1 - w_2 \|^\beta
\end{align*}
and so
\begin{align*}
&\,\|d(f_x^+)_{v_1}-d(f_x^+)_{v_2}\| \leq \|C(f(x))^{-1} \|\|A_1 B_1 C_1-A_2 B_2 C_2\| \|C(x)\|\\
&\leq 28 \mathfrak{K}L\|C(f(x))^{-1}\|\|w_1-w_2\|^\beta
\leq 28 \mathfrak{K}L\|C(f(x))^{-1}\|\|v_1-v_2\|^\beta.
\end{align*}

Using estimate \ref{Relation-Q(x)} and that $\| v_1 - v_2 \| \leq 20 \sqrt{2} Q(x)$, we conclude that for $\ve > 0$ small:
\begin{align*}
  & \ 28 \mathfrak{K} L \| C(f(x))^{-1} \| \| v_1 - v_2 \|^{\beta /2} \leq 800 \mathfrak{K} L \| C(f(x))^{-1} \| Q(x)^{\beta / 2} \\
    & \leq 800 \mathfrak{K} L e^{144 \rho}  \| C(f(x))^{-1} \| Q(f(x))^{\beta/2} 
    \leq 800 \mathfrak{K} L e^{144 \rho}  \ve^{3}
 \leq \ve.
\end{align*}
The proof of the claim is complete.
\end{proof}
This finishes the proof of the theorem.
\end{proof}

\begin{remark}\label{rmk-holonomy}
The only property of $g_x^+$ used in the above proof is Lemma \ref{Lemma-map-g}.
Since any holonomy map $\mathfrak q_{D_j}$ also satisfies this lemma, we conclude that $\mathfrak q_{D_j}$
satisfies a statement analogous to Theorem \ref{Thm-non-linear-Pesin}. We will use this
observation in the proof of Proposition \ref{Prop-overlapping-charts}.
\end{remark}

\begin{proof}[Proof of Proposition \ref{Lemma-overlap}]
Recall that $C_i = \widetilde{C(x_i)}$ for $i=1,2$.

\medskip
\noindent
(1) We have $\| C_1^{-1} - C_2^{-1} \| = C_1^{-1} (C_2 - C_1) C_2^{-1}$, hence
$$
\| C_1^{-1} - C_2^{-1} \| \leq \| C_1^{-1} \|\cdot\| C_2^{-1} \|\cdot\|C_1 - C_2 \| \leq \ve^{1/4} (\eta_1 \eta_2)^{4 - \beta / 48} \ll \frac{1}{2} (\eta_1 \eta_2)^3,
$$
which gives the first estimate. Additionally, 
$$
\left| \tfrac{\|C_1^{-1}\|}{\| C_2^{-1}\|} - 1 \right| \leq \| C_1^{-1} - C_2^{-1} \| \ll \frac{1}{2}(\eta_1 \eta_2)^3
$$
and so $\frac{\|C_1^{-1}\|}{\| C_2^{-1}\|} = e^{\pm (\eta_1 \eta_2)^3}$.

\medskip
\noindent
(2) By the latter estimate, 
$$
\tfrac{Q(x_1)}{Q(x_2)} = \left( \tfrac{\|C_1^{-1}\|}{\| C_2^{-1}\|} \right)^{-48/ \beta} = e^{ \pm \frac{48}{\beta}(\eta_1 \eta_2)^3} = e^{ \pm (\eta_1 \eta_2 )^2}.
$$

\medskip
\noindent
(3) We prove that $\Psi_{x_1} (R[e^{-2 \ve} \eta_1]) \subset \Psi_{x_2} (R[\eta_2])$. If $v \in R[e^{-2 \ve} \eta_1]$ then $\|C(x_1)v\| \leq \sqrt{2}e^{-2\ve}\eta_1<2\mathfrak{r}$ (since $\ve$ is small enough), hence by (Exp1):
$$\Sas (C(x_1) v, C(x_2) v) \leq 2 ( d(x_1, x_2) + \| C_1 v - C_2 v \|) \leq 2 (\eta_1 \eta_2)^4.$$
By (Exp2), $d(\Psi_{x_1} (v), \Psi_{x_2} (v)) < 4 (\eta_1 \eta_2)^4$, thus $\Psi_{x_1}(v) \in B(\Psi_{x_2}(v), 4(\eta_1 \eta_2)^4)$. By Lemma \ref{lemma3.7}, $B(\Psi_{x_2} (v), 4(\eta_1 \eta_2)^4) \subset \Psi_{x_2} (B)$ where $B \subset \R^d$ is the ball with center $v$ and radius $8 \| C_2^{-1} \| (\eta_1 \eta_2)^4$, hence it is enough to show that $B \subset R[\eta_2]$. If $w \in B$ then $\| w \|_{\infty} \leq \| v \|_{\infty} + 8 \| C_2^{-1} \| (\eta_1 \eta_2)^4 \leq (e^{- \ve} + 8 \ve^{1/8}) \eta_2< \eta_2$ for $\ve >0$ small enough.

\medskip
\noindent
(4) The proof that $\Psi_{x_2}^{-1} \circ \Psi_{x_1}$ is well-defined in $R[\mathfrak r]$ is similar to the proof of (3), the only difference being the last calculation: if $\ve > 0$ is small, then for $w \in B$ it holds
$$
\|w\|\leq \| v \| + 8 \|C_2^{-1} \| (\eta_1 \eta_2)^4 \leq \sqrt{2} \mathfrak{r} + 8 (\eta_1 \eta_2)^3 \leq (\sqrt{2} + 8 \ve^{1/8}) \mathfrak{r} < 2 \mathfrak{r}.
$$
Therefore $B$ is contained in the ball of radius $2\mathfrak{r}$ and center $0$ in $\R^d$, and restricted to this ball $\Psi_{x_2}$ is a diffeomorphism onto its image. It remains to estimate the $C^2$ norm of $\Psi_{x_2}^{-1} \circ \Psi_{x_1}-{\rm Id}$. We have:
\begin{align*}
   & \,\Psi_{x_2}^{-1} \circ \Psi_{x_1} - {\rm Id}  = C(x_2)^{-1} \circ \exp{x_2}^{-1} \circ \exp{x_1} \circ C(x_1) - {\rm Id} \\
    & = [C_2^{-1} \circ P_{x_2,x_1}] \circ [\exp{x_2}^{-1} \circ \exp{x_1} - P_{x_1,x_2}] \circ [P_{x_1,x_1} \circ C_1] + C_2^{-1} (C_1 - C_2) \\
    & = [C_2^{-1} \circ P_{x_2,x_1}] \circ [\exp{x_2}^{-1} - P_{x_1,x_2} \circ \exp{x_1}^{-1}] \circ \Psi_{x_1} + C_2^{-1} (C_1 - C_2).
\end{align*}
We will calculate the $C^2$ norm of $[\exp{x_2}^{-1} - P_{x_1,x_2} \circ \exp{x_1}^{-1}] \circ \Psi_{x_1}$ in the domain $R[ \mathfrak{r}]$. By Lemma \ref{lemma3.7},
$\|d \Psi_{x_1} \|_{C^0} \leq 2$ and $\Lip ( d \Psi_{x_1}) \leq \mathfrak{K}$.
Call $\Theta := \exp{x_2}^{-1} - P_{x_1, x_2} \circ \exp{x_1}^{-1}$. For $\ve >0$ small, in $B_{x_1}$ we have:
\begin{enumerate}[$\circ$]
    \item By (Exp2), $\| \Theta (y) \| \leq 2 \Sas(\exp{x_2}^{-1}(y),\exp{x_1}^{-1}(y)) \leq 4 d(x_1, x_2) <\ve^{6/ \beta}(\eta_1 \eta_2)^3$ and therefore $\| \Theta \circ \Psi_{x_1} \|_{C^0} <\ve^{6/ \beta} (\eta_1 \eta_2)^3$.
    \item By (Exp3), $\| d \Theta_y \| = \| \tau (x_2, y) - \tau (x_1, y) \| \leq \mathfrak{K} d(x_1, x_2) < \mathfrak{K}\ve^{12/ \beta} (\eta_1 \eta_2)^3$. Therefore $\| d \Theta \|_{C^0} < \mathfrak{K}\ve^{12 / \beta} (\eta_1 \eta_2)^3$ and $\| d( \Theta \circ \Psi_{x_1} ) \|_{C^0} \leq 2\mathfrak{K} \ve^{12/ \beta} (\eta_1 \eta_2)^3 < \ve^{6 / \beta} (\eta_1 \eta_2)^3$.
    \item By(Exp4), 
 $$
       \| \widetilde{ d \Theta_y } - \widetilde{ d \Theta_z} \| = \| \tau (x_2,y) - \tau(x_1, y) - [\tau (x_2, z) - \tau (x_1, z)] \| \leq \mathfrak{K} d(x_1, x_2) d(y,z),
 $$
hence $\Lip(d \Theta) \leq \mathfrak{K} d(x_1, x_2)<\mathfrak{K}(\eta_1\eta_2)^4$.
    \item Using that $\Lip(d(\Theta_1 \circ \Theta_2)) \leq \| d \Theta_1 \|_{C^0}\Lip (d \Theta_2)+\Lip (d \Theta_1) \| d \Theta_2 \|_{C^0}^2$, we have
    \begin{align*}
        &\,\Lip (d( \Theta \circ \Psi_{x_1}))  \leq \| d \Theta \|_{C^0} \Lip( d \Psi_{x_1}) + \Lip(d \Theta) \| d \Psi_{x_1} \|_{C^0}^2 \\
        & < \mathfrak{K}\ve^{6/\beta} (\eta_1 \eta_2)^3  + 4 \mathfrak{K}(\eta_1\eta_2)^4 < \ve^{3 / \beta} (\eta_1 \eta_2)^3.
    \end{align*}
\end{enumerate}
This implies that $\| \Theta \circ \Psi_{x_1} \|_{C^2} < 3\ve^{3 / \beta} (\eta_1 \eta_2)^3$, therefore
$$
\| C_2^{-1} \circ P_{x_2, x_1} \circ \Theta \circ \Psi_{x_1} \|_{C^2} \leq \| C_2^{-1} \| 3\ve^{3 / \beta} (\eta_1 \eta_2)^3 \leq \ve^{3/\beta} (\eta_1 \eta_2)^2.
$$
Thus
$\| \Psi_{x_2}^{-1} \circ \Psi_{x_1} - {\rm Id} \|_{C^2}  \leq \ve^{3 / \beta} (\eta_1 \eta_2)^2 + \| C_2^{-1} \| (\eta_1 \eta_2)^4 < 2\ve^{3 / \beta}(\eta_1 \eta_2)^2 \ll \ve (\eta_1 \eta_2)^2$. This finishes the proof of the proposition.
\end{proof}

\begin{proof}[Proof of Proposition \ref{Prop-center-stable}]
Write $v_n=\Psi_{x_n}^{p^s_n,p^u_n}$ with $\Psi_{x_0}^{p^s_0,p^u_0}=\Psi_x^{p^s,p^u}$. 
The idea is the same used in the proof of \cite[Proposition 4.8]{BCL23}: $\Delta$ is the cumulative shear of a point of $V^s$ under iterations
of the maps $g_{x_n}^+$. Recall that $g_{x_n}^+=\vf^{T_n}$ where $T_n:B_{x_n}\to\R$
is a $C^{1+\beta}$ function with $T_n(x_n)=r_\Lambda(x_n)$, $G_0={\rm Id}$,
$G_n:=g_{x_{n-1}}^+\circ\cdots\circ g_{x_0}^+$ for $n\geq 1$, and $\tau_n:B^{d_s (x)}[p^s]\to\R$ for $n\geq 0$ is the map defined by
$$
\tau_n(w):=\sum_{k=0}^{n-1}T_k(G_k[\Psi_x(w,F(w))]),
$$
equal to the flow displacement of $\Psi_x(w,F(w))$
under the maps $g_{x_0}^+,g_{x_1}^+$,\ldots, $g_{x_{n-1}}^+$. Define $\Delta_n:B^{d_s (x)}[p^s]\to\R$
by $\Delta_n(w):=\tau_n(w)-\tau_n(0)$ for $n\geq 0$,
and $\Delta:B^{d_s (x)}[p^s]\to\R$ by $\Delta(w):=\lim\limits_{n\to+\infty}\Delta_n(w)$. We have the following estimates:
\begin{enumerate}[$\circ$]
\item $\Lip{}(T_n)<1$, by Lemma \ref{lemma-local-coord}(3).
\item $\|\Delta-\Delta_n\|_{C^0}<\ve e^{-\frac{\chi\inf(r_\Lambda)}{2}n}$ for all $n\geq 0$, since
\begin{align*}
&\, \|\Delta-\Delta_n\|_{C^0}\leq \sum_{k=n}^\infty \|T_k(G_k[\Psi_x(\cdot,F(\cdot))])-T_k(G_k[\Psi_x(0,F(0))])\|_{C^0}\\
&\overset{!}{\leq}\sum_{k=n}^\infty \Lip{}(T_k) 6 p^s e^{-\frac{\chi\inf(r_\Lambda)}{2}k}
\leq \frac{6 p^s}{1-e^{-\frac{\chi\inf(r_\Lambda)}{2}}}e^{-\frac{\chi\inf(r_\Lambda)}{2}n}\overset{!!}{<}
\ve e^{-\frac{\chi\inf(r_\Lambda)}{2}n},
\end{align*}
where in $\overset{!}{\leq}$ we used Theorem~\ref{Thm-stable-manifolds}(3) and
in $\overset{!!}{<}$ we used that
$\frac{6 p^s}{1-e^{-\frac{\chi\inf(r_\Lambda)}{2}}}<\frac{6 \ve^{6/\beta}}{1-e^{-\frac{\chi\inf(r_\Lambda)}{2}}}<\ve$
for small enough $\ve>0$.
\end{enumerate}

\medskip
Let $\wt V^s:=\{\vf^{\Delta(w)}[\Psi_x(w,F(w))]: w \in B^{d_s (x)}[p^s] \}$. Fix
$\wt y,\wt z\in\wt V^s$, say 
$$
\wt y=\vf^{\Delta(w_0)}[\Psi_x(w_0,F(w_0))]=\vf^{\Delta(w_0)}(y)\ \text { and }\
\wt z=\vf^{\Delta(w_1)}[\Psi_x(w_1,F(w_1))]=\vf^{\Delta(w_1)}(z)
$$
with $w_0,w_1\in B^{d_s (x)}[p^s]$ and $y,z\in V^s$.
Fix $t\geq 0$, and take the unique $n\geq 0$ such that $\tau_{n-1}(0)<t\leq\tau_n(0)$. For this $n$,
write $\Delta=\Delta_n+E$, where $\|E\|_{C^0}<\ve e^{-\frac{\chi\inf(r_\Lambda)}{2}n}$.
We have
$$
\vf^t(\wt y)=\vf^{t+\Delta(w_0)}(y)=\vf^{t+\Delta_n(w_0)+E(w_0)}(y)=
\vf^{t-\tau_n(0)+E(w_0)}[G_n(y)],
$$
and similarly $\vf^t(\wt z)=\vf^{t-\tau_n(0)+E(w_1)}[G_n(z)]$, therefore $d(\vf^t(\wt y),\vf^t(\wt z))$ is bounded by
\begin{align*}
&\ d(\vf^{t-\tau_n(0)+E(w_0)}[G_n(y)],\vf^{t-\tau_n(0)+E(w_0)}[G_n(z)])+\\
&\ d(\vf^{t-\tau_n(0)+E(w_0)}[G_n(z)],\vf^{t-\tau_n(0)+E(w_1)}[G_n(z)])\\
&\leq \sup_{|\zeta|\leq 1}\Lip{}(\vf^\zeta) d(G_n(y),G_n(z))+\|X\|_{C^0}|E(w_0)-E(w_1)|\\
&\leq \left[6 p^s\sup_{|\zeta|\leq 1}\Lip{}(\vf^\zeta)+2\ve \|X\|_{C^0}\right]e^{-\frac{\chi\inf(r_\Lambda)}{2}n}
\leq e^{-\frac{\chi\inf(r_\Lambda)}{2}n}
\end{align*}
for $\ve>0$ small. Since $t\leq \tau_n(0)\leq 2n\sup(r_\Lambda)$,
we get that $d(\vf^t(\wt y),\vf^t(\wt z))\leq e^{-\frac{\chi\inf(r_\Lambda)}{4\sup(r_\Lambda)}t}$.
\end{proof}

\begin{proof}[Proof of Proposition \ref{Prop-disjointness}]
The proof is the same of \cite[Proposition 4.9]{BCL23}, and we include it here for completeness. Let $\un v^+=\{v_n\}_{n\geq 0}$ and $\un w^+=\{w_n\}_{n\geq 0}$ be positive $\ve$--gpo's,
with $v_0=\Psi_x^{p^s,p^u}$ and $w_0=\Psi_x^{q^s,q^u}$.
Write $V^s=V^s[\un v^+]$ and $U^s=V^s[\un w^+]$.
If $V^s\cap U^s=\emptyset$, we are done, 
so assume there is $z\in V^s\cap U^s$. Assuming without loss of generality
that $q^s\leq p^s$, we will prove that $U^s\subset V^s$. The proof will follow from three claims as
in \cite[Prop. 6.4]{Sarig-JAMS}. Write $\un v^+=\{\Psi_{x_n}^{p^s_n,p^u_n}\}_{n\geq 0}$.
We continue using the same terminology used in the proof of the previous proposition, with $g_{x_ n}^+=\vf^{T_n}$ for $n\geq 0$, $G_0={\rm Id}$,
and $G_n=g_{x_{n-1}}^+\circ\cdots\circ g_{x_0}^+$ for $n\geq 1$.

\medskip
\noindent
{\sc Claim 1:} If $n$ is large enough then $G_ n(V^s)\subset \Psi_{x_n}(R[\tfrac{1}{2}Q(x_n)])$.

\begin{proof}[Proof of Claim $1$.] Same as \cite[Prop. 6.4]{Sarig-JAMS}, using that the representation 
of $g_{x_n}^+$ in Pesin charts satisfies Theorem \ref{Thm-non-linear-Pesin-2}.
\end{proof}

\medskip
\noindent
{\sc Claim 2:} If $n$ is large enough then $G_ n(U^s)\subset \Psi_{x_n}(R[Q(x_n)])$.

\begin{proof}[Proof of Claim $2$.] Lift $U^s$ to a curve $\wt U^s$ passing through $z$
and satisfying Proposition \ref{Prop-center-stable}. Fix $n\geq 0$, and let
$t_n=\sum_{k=0}^{n-1}T_k(G_k(z))$ be the total flow time of $z$ under $G_n$.
Let $z_n=G_n(z)=\vf^{t_n}(z)$. If $D\subset\widehat\Lambda$ is the disc containing $x_n$ then 
$$
G_n(U^s)=\mathfrak q_D[\vf^{t_n}(\wt U^s)].
$$
Let $c:=\inf(r_\Lambda)^2/4\sup(r_\Lambda)$. Since $\mathfrak q_D$ is $2$--Lipschitz (Lemma \ref{lemma-local-coord}(2)),
Lemma \ref{Lemma-map-g} and Proposition \ref{Prop-center-stable}
imply that
$$
{\rm diam}(G_n(U^s))={\rm diam}(\mathfrak q_D[\vf^{t_n}(\wt U^s)])\leq 2\,{\rm diam}(\vf^{t_n}(\wt U^s))
\leq 2e^{-\frac{\chi\inf(r_\Lambda)}{4\sup(r_\Lambda)}t_n}\leq 2e^{-\chi c n},
$$
since $t_n\geq \inf(r_\Lambda)n$. Hence
$\Psi_{x_n}^{-1}[G_n(U^s)]$ is contained in the ball with center $\Psi_{x_n}^{-1}(z_n)$
and radius $4\|C(x_n)^{-1}\|e^{-\chi c n}$. Since by Claim 1 we have
$\Psi_{x_n}^{-1}(z_n)\in R[\tfrac{1}{2}Q(x_n)]$, it is enough to prove that 
$4\|C(x_n)^{-1}\|e^{-\chi c n}<\tfrac{1}{2}Q(x_n)$. Using that
$Q(x_n)<\|C(x_n)^{-1}\|^{-1}$, it is enough to prove
that $8Q(x_n)^{-2}e^{-\chi cn}<1$. We claim that $Q(x_n)^{-2}e^{-\chi cn}$
converges to zero exponentially fast as $n$ increases. Indeed, by Lemma \ref{Lemma-minimum}
we have
$Q(x_n)\geq p^s_n\wedge p^u_n\geq e^{-2\ve n}(p^s_0\wedge p^u_0)$
and so 
$$
Q(x_n)^{-2}e^{-\chi cn}\leq e^{4\ve n}(p^s_0\wedge p^u_0)^{-2}e^{-\chi cn}=
(p^s_0\wedge p^u_0)^{-2}e^{-(\chi c-4\ve)n}
$$
which converges to zero if $\ve>0$ is small enough.
\end{proof}

By Theorem \ref{Thm-stable-manifolds}(1), we conclude that
$G_n(U^s)\subset V^s[\{\Psi_{x_k}^{p^s_k,p^u_k}\}_{k\geq n}]$ for all large $n$. 

\medskip
\noindent
{\sc Claim 3:} $U^s\subset V^s$.

\begin{proof}[Proof of Claim $3$.] Fix $n$ large enough so that
$G_n(U^s)\subset V^s[\{\Psi_{x_k}^{p^s_k,p^u_k}\}_{k\geq n}]$, 
and proceed as in Claim 3 of \cite[Prop. 6.4]{Sarig-JAMS}.
\end{proof}

This concludes the proof of the proposition.
\end{proof}
\begin{proof}[Proof of Proposition \ref{Prop-overlapping-charts}]
Let $d_{s/u}=d_{s/u}(x)$, $z\in Z$, $z'=\vf^t(z)\in Z'$ with $|t|\leq2\rho$, and assume that $Z'\subset D'$.
Define $\Upsilon:=\Psi_y^{-1}\circ \mathfrak q_{D'}\circ\Psi_x$. We will write $\Upsilon$
as a small perturbation of an isometry $O$ that preservers the splitting $\R^{d_s}\oplus\R^{d_u}$. For ease of notation, write $p:=p^s\wedge p^u$
and $q:=q^s\wedge q^u$.
By Lemma \ref{Lemma-q}, Proposition \ref{Prop-Z-par}(1), and Theorem \ref{Thm-inverse}(4):
$$
\tfrac{p}{q}=\tfrac{p}{p^s(z)\wedge p^u(z)}\cdot\tfrac{p^s(z)\wedge p^u(z)}{q(z)}\cdot\tfrac{q(z)}{q(z')}\cdot
\tfrac{q(z')}{p^s(z')\wedge p^u(z')}\cdot \tfrac{p^s(z')\wedge p^u(z')}{q}=e^{\pm[O(\sqrt[3]{\ve})+O(\rho)]}.$$
We write
$\Upsilon=
(\Psi_y^{-1}\circ \Psi_{z'})\circ(\Psi_{z'}^{-1}\circ\mathfrak q_{D'}\circ\Psi_z)\circ(\Psi_z^{-1}\circ \Psi_x)$.
By Theorem \ref{Thm-inverse}(5), we have:
\begin{enumerate}[$\circ$]
\item $\Psi_y^{-1}\circ\Psi_{z'}=O_1+\Delta_1(v)$ where 
$\|\Delta_1(0)\|<50^{-1}q$, and $\|d\Delta_1\|_{C^0}<5\sqrt{\ve}$ on $R[10Q(z')]$.
\item $\Psi_z^{-1}\circ \Psi_x= O_2+\Delta_2(v)$ where 
$\|\Delta_2(0)\|<50^{-1}p$, and $\|d\Delta_2\|_{C^0}<5\sqrt{\ve}$ on $R[10Q(x)]$.
\end{enumerate}
Firstly, we prove that $\Psi_{z'}^{-1} \circ \mathfrak{q}_{D'} \circ \Psi_z$ is a perturbation of an orthogonal linear map.

\medskip
\noindent
{\sc Claim 1:}
$\Psi_{z'}^{-1}\circ\mathfrak q_{D'}\circ\Psi_z=O_3+ \Delta_3$, where $O$ is an orthogonal linear map preserving the splitting $\R^{d_s}\oplus \R^{d_u}$, $\|\Delta_3(0)\|=0$ and $\|d\Delta_3\|_{C^0}=O(\rho)+O(\ve)
$ on $R[10Q(z)]$.

\begin{proof}

Applying the same method of proof of Theorem \ref{Thm-non-linear-Pesin} to
$\mathfrak q_{D'}$ (see Remark \ref{rmk-holonomy}), we get that 
$\Psi_{z'}^{-1}\circ\mathfrak q_{D'}\circ\Psi_z$ can be written in the form
$\begin{bmatrix}
    D_s & \\ & D_u
\end{bmatrix}+H$,
    where $D_s, D_u,H$ satisfy Theorem \ref{Thm-non-linear-Pesin}(2) with $\rho$ changed to $2\rho$.

In the following, we proceed similarly to the proof of Theorem \ref{Thm-inverse}(5), which in turn is inspired by \cite[Thm 4.13(3)]{O18}.
By Lemma \ref{Lemma-linear-reduction}(2), 
$\|C(z')^{-1}\Phi^t C(z)(v)\|\leq e^{8\rho}\|v\|$
for all $v \in \R^d$, hence by the polar decomposition for matrices we can write $C(z')^{-1}\Phi^t C(z)=O_3R$, where:
\begin{enumerate}[$\circ$]
\item $O_3$ is an orthogonal linear map that preserves the splitting $\mathbb{R}^{d_s} \oplus \mathbb{R}^{d_u}$;
\item $R$ is a positive symmetric matrix preserving the splitting $\mathbb{R}^{d_s} \oplus \mathbb{R}^{d_u}$ with $\|R - {\rm Id}\| =O(\rho)$.
\end{enumerate}
Write $\Psi^{-1}_{z'}\circ \mathfrak q_{D'}\circ \Psi_z=O_3+\Delta_3$. Note that $\Delta_3(0)=0$ and
\begin{align*}
  &\ \Delta_3 = C(z')^{-1}(\exp{z'}^{-1}\circ \mathfrak q_{D'}\circ \exp{z}-\Phi^t)C(z)+(C(z')^{-1}\Phi^t C(z)-O_3)\\
    &=C(z')^{-1}(\exp{z'}^{-1} \circ \mathfrak q_{D'}\circ \exp{z}-\Phi^t)C(z)+O_3(R-{\rm Id}).
\end{align*}
To estimate $\|d(\Delta_3)_v\|$, we analyze the derivative of each term separately:
\begin{enumerate}[$\circ$]
\item $\|d(O_3(R-{\rm Id}))_v\|=\|O_3(R-{\rm Id})\|=\|R-{\rm Id}\|=O(\rho)$.
\item Using Lemma \ref{Lemma-map-g}, we have 
\begin{align*}
&\ d(\exp{{z'}}^{-1} \circ \mathfrak q_{D'}\circ \exp{z}-\Phi^t)_v=d(\exp{z'}^{-1} \circ \mathfrak q_{D'}\circ \exp{z})_v-\Phi^t\\
&=d(\exp{z'}^{-1} \circ \mathfrak q_{D'}\circ \exp{z})_v -d(\exp{z'}^{-1} \circ \mathfrak q_{D'} \circ \exp{z})_0.
\end{align*}
Since  $\exp{z'}^{-1} \circ \mathfrak q_{D'}\circ \exp{z}$ is $C^{1+\beta}$, we have $d(\exp{z'}^{-1} \circ \mathfrak q_{D'}\circ \exp{z}-\Phi^t)_v\leq {\rm const}\cdot\|v\|^\beta$, and so
$\|d(C(z')^{-1}(\exp{z'}^{-1} \circ\mathfrak q_{D'}\circ \exp{z}-\Phi^t)C(z))_v\|\leq {\rm const}\cdot\|C(z')\|^{-1}\cdot\|v\|^\beta=O(\ve)$.
\end{enumerate}
This completes the proof of Claim 1.
\end{proof}

We now proceed to prove that $\Upsilon$ is a perturbation of an orthogonal linear map. By Claim 1,
\begin{align*}
    &\,\Upsilon = (O_1 + \Delta_1)(O_3 + \Delta_3)(O_2 + \Delta_2)\\
    &= \underbrace{O_1 O_3 O_2}_{=:O} + \underbrace{O_1 O_3 \Delta_2 + O_1 \Delta_3 (O_2 + \Delta_2) + \Delta_1 (O_3+\Delta_3)(O_2+\Delta_2)}_{=:\Delta}\\
    &=: O+\Delta,
\end{align*}
where $O=O_1O_3O_2$. We  estimate $\|d\Delta\|_{C^0}$ on $R[5Q(x)]$. Letting $v_2=(O_2+\Delta_2)(v)$ and $v_3=(O_3+\Delta_3)(O_2+\Delta_2)(v)$, we have
$$
d\Delta_v=O_1O_3d(\Delta_2)_v+O_1d(\Delta_3)_{v_2}(O_2+d(\Delta_2)_v)+d(\Delta_1)_{v_3}(O_3+d(\Delta_3)_{v_2})(O_2+d(\Delta_2)_{v}).
$$
Assuming momentarily that $v_2\in R[10Q(z)]$ and $v_3\in R[10Q(z')]$, we then have that
\begin{align*}
&\,\|d\Delta_v\|\leq \|d\Delta_2\|_{C^0}+2\|d\Delta_3\|_{C^0}+4\|d\Delta_1\|_{C^0}=
O(\rho)+O(\ve^{1/2}).
\end{align*}
Now we show that $v_2,v_3$ are in the aforementioned sets:
\begin{enumerate}[$\circ$]
\item $\|v_2\|\leq \|v\|+\|\Delta_2(v)\|\leq \|\Delta_2(0)\|+\left[1+{\rm Lip}(\Delta_2)\right]\|v\|\leq 50^{-1}p+\left[1+O(\ve^{1/2})\right]5\sqrt{2}Q(x)\leq 5\left[\sqrt{2}+250^{-1}+O(\ve^{1/2})\right]Q(x)$ which, by Theorem \ref{Thm-inverse}(3), gives us that
$$
\|v_2\|\leq 5\left[\sqrt{2}+250^{-1}+O(\ve^{1/2})\right]e^{\sqrt[3]{\ve}}Q(z)<10Q(z).
$$
\item Since $v_3=(O_3+\Delta_3)(v_2)$ and $\Delta_3(0)=0$, proceeding as above and using the estimate for $\|v_2\|$ implies
\begin{align*}
&\,\|v_3\|\leq \left[1+{\rm Lip}(\Delta_3)\right]\|v_2\|
\leq 5\left[1+O(\rho)+O(\ve)\right]\left[\sqrt{2}+250^{-1}+O(\ve^{1/2})\right]e^{\sqrt[3]{\ve}}Q(z)\\
&=5\left[\sqrt{2}+250^{-1}+O(\rho)+O(\ve^{1/2})\right]e^{O(\sqrt[3]{\ve})+O(\rho)}Q(z')<10Q(z').
\end{align*}
\end{enumerate}
We also estimate $\|\Delta(0)\|$. Using the above estimates for $v=0$ and Theorem \ref{Thm-inverse}(4),
\begin{align*}
&\,\|\Delta(0)\|=\|(O_1+\Delta_1)(v_3)\|\leq \|\Delta_1(0)\|+\left[1+{\rm Lip}(\Delta_1)\right]\|v_3\|\\
&\leq 50^{-1}q+\left[1+O(\ve^{1/2})\right]\left[ 1+O(\rho)+O(\ve)\right]50^{-1}p\\
&\leq 50^{-1}q+50^{-1}\left[1+O(\rho)+O(\ve^{1/2})\right]e^{O(\sqrt[3]{\ve})+O(\rho)}q\\
&= 50^{-1}\left[2+O(\rho)+O(\ve^{1/3})\right]q < \tfrac{3}{50}q.
\end{align*}
We have thus shown that $\|\Upsilon(0)\|<\tfrac{3}{50}q$ and 
$\|d\Upsilon\|_{C^0}\leq 1+O(\rho)+O(\ve^{1/2})$ on $R[5Q(x)]$.
Now we prove the proposition.

\noindent
(1) We have $\Upsilon(R[\frac{1}{2}p]) \subset \Upsilon\left(B_0 \left( \frac{1}{\sqrt{2}}p \right) \right) \subset B_{\Upsilon(0)} \left[ \frac{1}{\sqrt{2}} \Lip(\Upsilon) p \right] \subset B$ where $B \subset \R^d$ is the ball with center 0 and radius $\|\Upsilon(0)\|+\frac{1}{\sqrt{2}} \Lip(\Upsilon) p $. By the estimates obtained above,
\begin{align*}
       &\, \|\Upsilon(0)\|+\tfrac{1}{\sqrt{2}}\Lip(\Upsilon)p  \leq \tfrac{3}{50}q + \tfrac{1}{\sqrt{2}}[1+O(\rho)+ O(\ve^{1/2})]p\\ &\leq \left(\tfrac{3}{50} + \tfrac{1}{\sqrt{2}}[1+O(\rho)+ O(\ve^{1/3})]\right)q
\end{align*}
and this latter expression is smaller than $q$, 
since $\tfrac{3}{50} + \tfrac{1}{\sqrt{2}}[1+O(\rho)+ O(\ve^{1/3})]<1$ for $\ve \ll\rho \ll 1$. Hence $B \subset B_0[q] \subset R[q]$. \\

\noindent   
(2) Fix $z\in Z$ such that $z'=\mathfrak q_{D'}(z)\in Z'$. We will show that
$\mathfrak q_{D'}[W^{s}(z,Z)]\subset V^{s}(z',Z')$ (the other inclusion is identical).
Write $W=\mathfrak q_{D'}[W^s(z,Z)]$ and $V=V^s(z',Z')$. We wish to show that $W\subset V$.
Let $\un v=\{\Psi_{x_n}^{p^s_n,p^u_n}\}_{n\in\Z},\un w=\{\Psi_{y_n}^{q^s_n,q^u_n}\}_{n\in\Z}$
such that $z=\pi(\un v)$ and $z'=\pi(\un w)$. For $n\geq 0$, let
$G^n_{\un v}=g_{x_{n-1}}^+\circ\cdots\circ g_{x_0}^+ \text{ and } G^n_{\un w}=g_{y_{n-1}}^+\circ\cdots\circ g_{y_0}^+$.
By Theorem \ref{Thm-stable-manifolds}(1), we need to show that
$G^n_{\un w}[W]\subset\Psi_{y_n}(R[10Q(y_n)])$ for all $n\geq 0$.

Fix $n\geq 0$. If $z'=\vf^t(z)$, $|t|\leq2\rho$, then there is a unique $m\geq 0$ such that
$r_m(\un v)<r_n(\un w)+t\leq r_{m+1}(\un v)$. Let $D_k$ be the disc containing
$\vf^{r_n(\un w)}(z')$. We claim that $G^n_{\un w}\circ{\mathfrak q}_{D'}=\mathfrak q_{D_k}\circ G^m_{\un v}$
wherever these maps are well-defined. To see this, firstly note that these maps are both
of the form $\vf^{\tau}$ for some continuous function $\tau$. Secondly, we claim that they coincide
at $z$. Indeed, $(G^n_{\un w}\circ{\mathfrak q}_{D'})(z)=G^n_{\un w}(z')=\vf^{r_n(\un w)}(z')$ and
$(\mathfrak q_{D_k}\circ G^m_{\un v})(z)=\mathfrak q_{D_k}[\vf^{r_m(\un v)}(z)]$.
Writing $\vf^{r_n(\un w)}(z')=z_n'$ and $\vf^{r_m(\un v)}(z)=z_m$, we have $z_n'=\vf^{t'}(z_m)$
for $t'=r_n(\un w)+t-r_m(\un v) \in (0,\rho]$, therefore $\mathfrak q_{D_k}(z_m)=z_n'$. Hence
$G^n_{\un w}[W]=(G^n_{\un w}\circ{\mathfrak q}_{D'})[W^s(z,Z)]=(\mathfrak q_{D_k}\circ G^m_{\un v})[W^s(z,Z)]
\subset \mathfrak q_{D_k}[W^s(\vf^{r_m(\un v)}(z),Z(v_m))]$, where we used Proposition \ref{Prop-Z}(4)
in the last inclusion. Since
$W^s(\vf^{r_m(\un v)}(z),Z(v_m))\subset \Psi_{x_m}(R[10^{-2}(p^s_m\wedge p^u_m)])$,
part (1) gives that $\mathfrak q_{D_k}[W^s(\vf^{r_m(\un v)}(z),Z(v_m))]\subset \Psi_{y_n}(R[q^s_n\wedge q^u_n])$,
and this last set is contained in $\Psi_{y_n}(R[10Q(y_n)])$.\\

\noindent    
(3) When $M$ has dimension 3, this result is shown in \cite[Proposition 7.2(3)]{BCL23}, where the authors adapt \cite[Lemma 10.8]{Sarig-JAMS} to the context of flows. In both cases, the change of coordinates $\Upsilon$ is a small perturbation of the identity, allowing control over the geometry of admissible manifolds. When $M$ is a closed manifold of arbitrary finite dimension, a similar approach is made in \cite[Lemma 5.8]{O18}, where the change of coordinates $\Upsilon$ is shown to be a small perturbation of an isometry that preserves the splitting $\mathbb{R}^{d_s} \oplus \mathbb{R}^{d_u}$. Since in our setting we also obtained this property, the same method of proof applies and so $[z, z']_{Z'}$ is well-defined. Similarly,
$[z,z']_Z$ is well-defined.

It remains to prove that $[z,z']_Z=\mathfrak q_D([z,z']_{Z'})$.
To see this, observe that the composition $\mathfrak q_D\circ\mathfrak q_{D'}$ is the identity
where it is defined, hence
$$
\mathfrak q_D([z,z']_{Z'})=\mathfrak q_D(\mathfrak q_{D'}[V^s(z,Z)]\cap V^u(z',Z'))
= V^s(z,Z)\cap \mathfrak q_D[V^u(z',Z')]=[z,z']_Z.
$$
This completes the proof of the proposition.
\end{proof}

\begin{proof}[Proof of Proposition \ref{Prop-overlapping-charts-2}]
Let $Z, Z',Z''$ such that $Z\cap \vf^{[-2\rho,2\rho]}Z'\neq\emptyset$, $Z\cap \vf^{[-2\rho,2\rho]}Z''\neq\emptyset$,
and assume that $z'\in Z'$ such that $\vf^t(z')\in Z''$ for some $|t|\leq 2\rho$. 
We need to show that for every $z\in Z$ it holds
$$
[z,z']_Z=[z,\vf^t(z')]_Z.
$$
For this, we will show that:
\begin{enumerate}[$\circ$] 
\item  $\mathfrak q_{D''}[V^u(z',Z')]$ and $V^u(\vf^t(z'),Z'')$ coincide in a small window, where $D''$ is the connected component of $\widehat\Lambda$ with $Z''\subset D''$. 
\item If $Z=Z(\Psi_x^{p^s,p^u})$ and $G$ is the representing function of $V^s(z,Z)$,
then $[z,z']_Z=\Psi_x(s,G(s))$ for some $|s|\leq \tfrac{1}{3}(p^s\wedge p^u)$.
\end{enumerate}
The precise statements are in the next claims. Write $Z'=Z(\Psi_y^{q^s,q^u})$,
$p=p^s\wedge p^u$ and $q=q^s\wedge q^u$, and
let $D$ be the connected components of $\widehat\Lambda$ with $Z\subset D$.
Since $d_{s/u}(x) = d_{s/u}(z)=d_{s/u}(z')=d_{s/u}(y)$, we will denote $B^{d_{s/u}(\cdot)}[r]$ and $\R^{d_{s/u}(\cdot)}$ simply by $B^{d_{s/u}}[r]$ and $\R^{d_{s/u}}$, respectively.
 
\medskip
\noindent
{\sc Claim 1:} $\mathfrak q_D[V^u(z',Z')\cap \Psi_y(R[\tfrac{1}{2}q])]$ contains
$\Psi_x\{(H(w),w):w \in B^{d_u}[\tfrac{1}{3}p]\}$ for some function $H:B^{d_u}[\tfrac{1}{3}p]\to\R^{d_s}$
such that $\|H(0)\|<\tfrac{3}{50}p$ and $\|dH\|_{C^0}<\tfrac{1}{2}$.
Additionally, $[z,z']_Z=\Psi_x(s,G(s))$ for some $|s|\leq \tfrac{1}{3}p$.

\medskip
\noindent
{\sc Claim 2:} Recalling that $D''$ is the connected components of $\widehat\Lambda$ such that $Z''\subset D''$,
then
$$
\mathfrak q_{D''}[V^{s/u}(z',Z')\cap \Psi_y(R[\tfrac{1}{2}q])]\subset V^{s/u}(z'',Z'').
$$

Once these claims are proved, the result follows:
Claim 2 implies that $\mathfrak q_D[V^u(z',Z')\cap \Psi_y(R[\tfrac{1}{2}q])]
\subset \mathfrak q_D[V^u(z'',Z'')]$ and so by Claim 1
we conclude that
\begin{align*}
&\ \{[z,z']_Z\}=V^s(z,Z)\cap \mathfrak q_D[V^u(z',Z')\cap \Psi_y(R[\tfrac{1}{2}q])]\\
&\subset V^s(z,Z)\cap \mathfrak q_D[V^u(z'',Z'')]=\{[z,z'']_Z\}.
\end{align*}

\begin{proof}[Proof of Claim $1$]
With the estimates obtained in the beginning of the proof of Proposition \ref{Prop-overlapping-charts},
we just need to proceed as in the proof of \cite[Lemma 10.8]{Sarig-JAMS}.
We include the details for completeness. By the proof of Proposition \ref{Prop-overlapping-charts},
$\Upsilon:=\Psi_x^{-1}\circ \mathfrak q_D\circ \Psi_y=O+\Delta$ where:
\begin{enumerate}[$\circ$]
\item $O = (O^s,O^u)$ is a linear orthogonal map with $O^{s/u}: \R^{d_{s/u}} \to \R^{d_{s/u}}$.
\item $\|d\Delta\|_{C^0}\leq O(\rho)+O(\ve^{1/2})$.
\item $\|\Delta(0)\|\leq \tfrac{2}{50}\left[1+O(\rho)+O(\ve^{1/3})\right]p$.
\end{enumerate} 
In particular, $\|\Delta\|_{C^0(R[p])}\leq \tfrac{2}{50}\left[1+O(\rho)+O(\ve^{1/3})\right]p$.
 Write $\Delta=(\Delta_1,\Delta_2)$, and let $F$ be the representing function of $V^u(z',Z')$, i.e.
$V^u(z',Z')=\Psi_y\{(F(v),v): v \in B^{d_u}[ q^u]\}$. Hence
$V^u(z',Z')\cap \Psi_y(R[\tfrac{1}{2}q])=\Psi_y\{(F(v),v):v \in B^{d_u}[\frac{1}{2}q]\}$, and since
$\mathfrak q_D\circ \Psi_y=\Psi_x\circ\Upsilon$ we have
\begin{align*}
&\ \mathfrak q_D[V^u(z',Z')\cap \Psi_y(R[\tfrac{1}{2}q])]=
(\Psi_x\circ\Upsilon)\left\{(F(v),v): v \in B^{d_u}[\tfrac{1}{2}q]\right\}\\
&=\Psi_x\left\{(O^sF(v)+\Delta_1(F(v),v),O^uv+\Delta_2(F(v),v)):v \in B^{d_u}[\tfrac{1}{2}q]\right\}.
\end{align*}
We represent the pair inside $\Psi_x$ above as a graph on the second coordinate.
Call $\tau(v):=O^uv+\Delta_2(F(v),v)$. We have:
\begin{enumerate}[$\circ$]
\item $\|\tau(0)\|=\|\Delta_2(F(0),0)\|\leq \|\Delta(F(0),0)\|\leq \|\Delta(0)\|+\|d\Delta\|_{C^0}\|F(0)\|\leq
\tfrac{2}{50}[1+O(\rho)+O(\ve^{1/3})]p+[O(\rho)+O(\ve^{1/2})]10^{-3}q \leq
\tfrac{2}{50}[1+O(\rho)+O(\ve^{1/3})]p$.
\item $\|d\tau_w\|=1\pm\|d\Delta\|_{C^0}(1+\|dF\|_{C^0})=1\pm[O(\rho)+O(\ve^{1/2})](1+\ve)\leq 1+O(\rho)+O(\ve^{1/3})$
for every $w \in B^{d_u}[\tfrac{1}{2}q]$.
\end{enumerate}

Now we prove that $\tau({B^{d_u}[\frac{1}{2}q]}) \supset B^{d_u}[\frac{1}{3}p]$.
First, notice that $\tau$ is injective: if $v,v'$ satisfy $\tau(v)=\tau(v')$, then $O^uv+\Delta_2(F(v),v) = O^uv'+\Delta_2(F(v'),v')$ and so 
\begin{align*}
&\,\|O^u(v-v')\|=\|\Delta_2(F(v),v)-\Delta_2(F(v'),v')\|\leq \|d\Delta\|_{C^0}(1+\|dF\|_{C^0})\|v-v'\|\\
&\leq 2[O(\rho)+O(\ve^{1/2})]\|v-v'\|,
\end{align*}
which implies $v=v'$ since $0<\ve\ll \rho\ll 1$.

Next, we show that for every $z \in B^{d_u}[\frac{1}{3}p]$ there is $v \in B^{d_u}[\tfrac{1}{2}q]$
such that $\tau(v)=z$. This is equivalent to $v$ being a fixed point of the map $T_z(v) = (O^u)^{-1}[z-\Delta_2(F(v),v)]$.
We will verify this via the Banach fixed-point theorem. For each $z$, the map $T_z$ is contraction, since
$$\|T_z(v)-T_z(v')\|=\|\Delta_2(F(v),v)-\Delta_2(F(v'),v')\| \leq \underbrace{2[O(\rho)+O(\ve^{1/2})]}_{\ll 1}\|v-v'\|.$$
Furthermore, for every $z \in B^{d_u}[\frac{1}{3}p]$, the map $T_z$ takes $B^{d_u}[\frac{1}{2}q]$ into itself:
\begin{align*}
&\,\|T_z(v)\| \leq \|z\| + \|\Delta_2(F(v),v)\| \leq \|z\| + \|\Delta(0)\| + \|d\Delta\|_{C^0}\|\left(\|F(0)\|+[1+\|dF\|_{C^0}]\|v\|\right) \\
& \leq \tfrac{1}{3}p + \tfrac{2}{50}[1+O(\rho)+O(\ve^{1/3})]p+[O(\rho)+O(\ve^{1/2})]2q \leq \tfrac{1}{2}q.
\end{align*}
Therefore, each $T_z$ has a unique fixed point.

Now, we write the first coordinate $F(v)+\Delta_1(F(v),v)$ as a function of $\tau$.
Start noticing that, since $\tau$ is injective, it has an inverse $\theta:\tau(B^{d_u}[\tfrac{1}{2}q])\to 
B^{d_u}[\tfrac{1}{2}q]$ such that $\|d\theta\|_{C^0}=1+O(\rho)+O(\ve^{1/3})$. This follows from calculations analogous to \cite[p.103]{ALP}.
In particular,
$$
\|\theta(0)\|=\|\theta(0)-\theta(\tau(0))\|\leq \|\theta'\|_{C^0}\|\tau(0)\|\leq \tfrac{2}{50}[1+O(\rho)+O(\ve^{1/3})]p<\tfrac{1}{5}p.
$$
Defining $H:B^{d_u}[\frac{1}{3}p]\to\R^{d_s}$ by
$$
H(\tau)=O^sF(v)+\Delta_1(F(v),v)=O^sF(\theta(\tau))+\Delta_1(F(\theta(\tau)),\theta(\tau)),
$$
we have:
\begin{enumerate}[$\circ$]
\item $\|H(0)\|\leq \|F(\theta(0))\|+\|\Delta_1(F(\theta(0)),\theta(0))\|\leq \|F(0)\|+\|dF\|_{C^0}\|\theta(0)\|+\|\Delta\|_{C^0}\leq
10^{-3}q+\ve \tfrac{1}{5}p+\tfrac{2}{50}\left[1+O(\rho)+O(\ve^{1/3})\right]p<\tfrac{3}{50}p$.
\item $\|dH\|_{C^0}\leq \|dF\|_{C^0}\|d\theta\|_{C^0}+\|d\Delta\|_{C^0}(1+\|dF\|_{C^0})\|d\theta\|_{C^0}\leq 
2\ve+2[O(\rho)+O(\ve^{1/2})][1+\ve]=O(\rho)+O(\ve^{1/2})$
which is smaller than $\tfrac{1}{2}$ for $\rho,\ve>0$ small.
\end{enumerate}
This proves the first part of Claim 1. For the second part, note that
$\|H(\tau)\|\leq \|H(0)\|+\|dH\|_{C^0}\|\tau\|\leq \tfrac{3}{50}p+\tfrac{1}{2}\cdot\tfrac{1}{3}p<\tfrac{1}{3}p$, thus
$H:B^{d_u}[\frac{1}{3}p]\to B^{d_s}[\tfrac{1}{3}p]$ is a contraction.
We have $[z,z']_Z=\Psi_x(v,G(v))$,
where $v$ is the unique $v\in B^{d_s}[p^s]$ such that $(v,G(v))=(H(\tau),\tau)$. Necessarily 
$H(G(v))=v$, i.e. $v$ is a fixed point of $H\circ G$. Using the admissibility of $G$ and the above estimates,
the restriction of $H\circ G$ to $B^{d_s}[\tfrac{1}{3}p]$ is a contraction into 
$B^{d_s}[\tfrac{1}{3}p]$, and so it has a unique fixed point in this interval, proving that
$\|v\|\leq \tfrac{1}{3}p$.
\end{proof}

\begin{proof}[Proof of Claim $2$]
The proof is very similar to the proof of Proposition \ref{Prop-disjointness}.
Let us prove the inclusion for $V^s$.
Let $V^s=V^s(z'',Z'')=V^s[\un v^+]$ with $\un v^+=\{\Psi_{y_n}^{q^s_n,q^u_n}\}$,
and let $G_n=g_{y_{n-1}}^+\circ\cdots\circ g_{y_0}^+$.
Let  $U^s=\mathfrak q_{D''}[V^s(z',Z')\cap \Psi_y(R[\tfrac{1}{2}q])]$.
By Proposition \ref{Prop-overlapping-charts}(1), $U^s\subset \Psi_{y_0}(R[q^s_0\wedge q^u_0])$.
Now we proceed as in the proof of Proposition \ref{Prop-disjointness} to get that:
\begin{enumerate}[$\circ$]
\item If $n$ is large enough then $G_n(U^s)\subset \Psi_{y_n}(R[Q(y_n)])$: this is exactly 
Claim 2 in the proof of Proposition \ref{Prop-disjointness}.
\item $U^s\subset V^s$: this is exactly Claim 3 in the proof of Proposition \ref{Prop-disjointness}.
\end{enumerate}
Hence Claim 2 is proved.
\end{proof}
The proof of the proposition is complete.
\end{proof}

\bibliographystyle{alpha}
\bibliography{bib}

\end{document}